\newfont{\fiverm}{cmr5}
\font\tenfrakt=eufm10   \font\tenscript=eusm10
\font\sevenfrakt=eufm7  \font\sevenscript=eusm7
\font\fivefrakt=eufm5   \font\fivescript=eusm5
\font\tenbbb=msbm10   \font\tenscript=msbm10
\font\sevenbbb=msbm7  \font\sevenscript=msbm7
\font\fivebbb=msbm5   \font\fivescript=msbm5
\def\tr{{\rm tr}}
\newtheorem{theorem}{Theorem}
\newtheorem{definition}[theorem]{Definition}
\newtheorem{lemma}[theorem]{Lemma}
\newtheorem{proposition}[theorem]{Proposition}
\newtheorem{corollary}[theorem]{Corollary}
\def\myarrow{\ \hbox to 2em{\leaders
\hbox to 0.5ex{\hss\raise 0.55ex\hbox to 0.3ex{\hrulefill}\hss}
\hfill\,\llap{$>$}}\ }
\title{ Galois groups of co-abelian ball quotient covers    }
\author{ Azniv Kasparian  }
\date{      }
\begin{document}
\maketitle

\thispagestyle{empty}

\begin{abstract}

If $X'= \left( {\mathbb B} / \Gamma \right)'$ is a torsion free toroidal compactification of a discrete ball  quotient $X_o={\mathbb B} / \Gamma$ and
$\xi : (X', T = X'\setminus X_o) \rightarrow (X, D = \xi (T))$ is the blow-down of the $(-1)$-curves to the corresponding minimal model, then  $G'= Aut (X',T)$ coincides with the finite group $G=Aut(X,D)$. In particular, for an elliptic curve $E$  with endomorphism ring $R = End(E)$ and a split abelian surface $X = A = E \times E$, $G$ is a finite subgroup of $Aut(A) = \mathcal{T}_A \leftthreetimes GL(2,R)$, where $(\mathcal{T}_A,+) \simeq (A,+)$ is the translation group of $A$ and $GL(2,R) = \{ g \in R_{2 \times 2} \ \ \vert  \ \ \det(g) \in R^* \}$.

The present work classifies the finite subgroups  $H$ of $Aut (A = E \times E)$ for an arbitrary elliptic curve $E$. By the means of the  geometric invariants theory, it characterizes the Kodaira-Enriques types of $A/H \simeq \left( {\mathbb B} / \Gamma \right)'/H$, in terms of the fixed point sets of $H$ on $A$. The   abelian and  the  K3  surfaces $A/H$ are elaborated in \cite{KN}.  The first section provides necessary and sufficient conditions for $A/H$ to be a hyper-elliptic, ruled with elliptic base, Enriques or a rational surface. In such a way, it depletes the Kodaira-Enriques  classification of the finite Galois quotients $A/H$ of a split abelian surface $A = E \times E$. The second section   derives a  complete list of   the  conjugacy classes of the   linear automorphisms $g \in GL(2,R)$ of $A$ of finite order, by the means of their eigenvalues. The third section classifies the finite subgroups $H$ of $GL(2,R)$. The last section provides explicit generators and relations for the finite subgroups $H$ of $Aut(A)$ with K3, hyper-elliptic, rules with elliptic base or Enriques quotients $A/H \simeq \left( {\mathbb B} / \Gamma \right)'/H$.
\end{abstract}

%%%%%%%%%%%%%%%%%%%%%%%%%%%%5

Let
\[
{\mathbb B} = \{ z = (z_1, z_2) \in {\mathbb C} ^2  \ \ \vert \ \  |z_1|^2 + |z_2|^2 < 1 \} \simeq SU_{2,1} / S(U_2 \times U_1).
\]
be the complex $2$-ball. In \cite{BSA} Holzapfel settled the problem of the characterization of the projective surfaces, which are birational to an eventually singular ball quotient ${\mathbb B} / \Gamma $ by a lattice $\Gamma$ of $SU_{2,1}$. Note that if $\gamma \in \Gamma$ is a torsion element with isolated fixed points on ${\mathbb B}$ then ${\mathbb B} / \Gamma$ has isolated cyclic quotient singularity, which ought to be resolved in order to obtain a smooth open surface. The aforementioned resolution creates smooth rational curves of self-intersection $\leq -2$, which alter the local differential geometry of ${\mathbb B} / \Gamma$, modeled by ${\mathbb B}$. That is why we split the problem to the description of the minimal models $X_o$ of the smooth toroidal compactifications $X'_o = \left(  {\mathbb B} / \Gamma _o \right)'$ of torsion free $\Gamma _o$ and to the characterization of the birational equivalence classes of $X_o / H$ for appropriate finite automorphism groups $H$.
This reduction is based on   the fact that any finitely generated lattice $\Gamma$ in the simple Lie group $SU_{2,1}$ has a torsion free normal subgroup $\Gamma _o$ of finite index $[ \Gamma : \Gamma _o]$. Therefore ${\mathbb B} / \Gamma = \left( {\mathbb B} / \Gamma _o \right) / \left( \Gamma / \Gamma _o \right)$ and the classification of ${\mathbb B} / \Gamma$ is attempted by the classification of ${\mathbb B} / \Gamma _o$ and the finite automorphism groups $H = \Gamma / \Gamma _o$ of ${\mathbb B} / \Gamma _o$.

 According to the next proposition,  for any torsion free ball lattice $\Gamma _o$ and any $\Gamma < SU_{2,1}$, containing $\Gamma _o$ as a normal subgroup of finite index, the quotient group $\Gamma / \Gamma_o$ acts on the toroidal compactifying divisor $T = \left( {\mathbb B} / \Gamma _o \right)'\setminus \left( {\mathbb B} / \Gamma _o \right)$ and provides a compactification $\overline{{\mathbb B} / \Gamma} = \left( {\mathbb B} / \Gamma _o \right)'/ \left( \Gamma / \Gamma _o \right)$ of ${\mathbb B} / \Gamma$ with at worst isolated cyclic quotient singularities.
Therefore $H = \Gamma / \Gamma _o$ is a subgroup of $Aut ( X'_o, T)$. The birational equivalence classes of $\overline{{\mathbb B} / \Gamma }$ are to be described by the numerical invariants of the minimal resolutions $Y$ of the singularities of $\overline{{\mathbb B} / \Gamma }$. These can be computed by the means of the geometric invariant theory, applied to $X_o$ and a finite subgroup  $H$ of the biholomorphism group $Aut (X_o)$.

\begin{proposition}      \label{LatticeExtensionOverT}
Let $\Gamma$ be a lattice of $SU_{2,1}$ and $\Gamma _o$ be a normal torsion free subgroup of $\Gamma$ with finite index $[ \Gamma : \Gamma _o]$. Then the group $G = \Gamma / \Gamma _o$ acts on the toroidal compactifying divisor $T = \left( {\mathbb B} / \Gamma _o \right) '\setminus \left( {\mathbb B} / \Gamma _o \right)$ and the quotient $\left( {\mathbb B} / \Gamma _o \right)'/ G = \left( {\mathbb B} / \Gamma \right) \cup ( T/G) = \overline{{\mathbb B} / \Gamma }$ is a compactification of ${\mathbb B} / \Gamma$ with at worst isolated cyclic quotient singularities.
\end{proposition}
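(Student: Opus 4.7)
The plan is to (i) extend the natural descent action of $G = \Gamma/\Gamma_o$ on $\mathbb{B}/\Gamma_o$ across the toroidal boundary, (ii) read off the stratification of the quotient, and (iii) classify the local analytic types of the orbits with non-trivial stabiliser.

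\emph{Extending the action across $T$.} Normality of $\Gamma_o$ in $\Gamma$ first ensures that each $\gamma \in \Gamma$ induces a biholomorphism $\bar\gamma \colon \mathbb{B}/\Gamma_o \to \mathbb{B}/\Gamma_o$ depending only on the coset $\gamma\Gamma_o$, producing a homomorphism $G \to Aut_{\mathrm{hol}}(\mathbb{B}/\Gamma_o)$. To extend across the cusp divisor I would exploit the local nature of the toroidal construction: $(\mathbb{B}/\Gamma_o)'$ is glued from Siegel-domain neighbourhoods of the $\Gamma_o$-cusps $\kappa \in \partial \mathbb{B}/\Gamma_o$, modulo the parabolic isotropy $(\Gamma_o)_\kappa$. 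Since $\gamma$ normalises $\Gamma_o$, it conjugates $(\Gamma_o)_\kappa$ onto $(\Gamma_o)_{\gamma(\kappa)}$ and intertwines the corresponding local charts, extending $\bar\gamma$ to a biholomorphism of $(\mathbb{B}/\Gamma_o)'$ that preserves $T$ set-theoretically. Dividing by $G$ separates the two $G$-invariant strata and yields $(\mathbb{B}/\Gamma_o)'/G = \mathbb{B}/\Gamma \cup T/G = \overline{\mathbb{B}/\Gamma}$, a compactification of $\mathbb{B}/\Gamma$ by construction.

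\emph{Singularities of the quotient.} Since $G$ is finite and $(\mathbb{B}/\Gamma_o)'$ is a smooth projective surface, the orbit space is normal with only finitely many singularities, each locally of the form $(\mathbb{C}^2,0)/G_p$ for the stabiliser $G_p$ at a representative $p$. For an interior lift $\tilde p \in \mathbb{B}$, torsion-freeness of $\Gamma_o$ gives $G_p \cong \Gamma_{\tilde p}$, a finite subgroup of the maximal compact isotropy $S(U_2 \times U_1) \simeq U(2)$ whose action on $T_{\tilde p}\mathbb{B}$ has an isolated fixed point; the effective isotropy representation is shown to be by a cyclic rotation group, yielding a Hirzebruch--Jung cyclic quotient singularity. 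At a boundary point $p \in T$, the linearisation of $G_p$ splits along the tangent direction of the cusp elliptic curve and its normal bundle, again through cyclic actions, so the local model is again a cyclic quotient of $\mathbb{C}^2$.

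The main obstacle will be the cyclicity assertion in step (iii): $U(2)$ a priori contains non-cyclic small finite subgroups (binary dihedral, binary polyhedral), and excluding them as effective isotropies in a ball lattice requires using the discreteness of $\Gamma$ together with the compatibility of $\Gamma_{\tilde p}$ with the parabolic structure at the cusps. Steps (i)--(ii) are by contrast essentially formal, resting only on the normality of $\Gamma_o$ in $\Gamma$ and on the functoriality of the toroidal construction with respect to conjugation of the parabolic data.
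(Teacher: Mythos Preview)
Your extension argument (step i) takes a different route from the paper. You invoke functoriality of the toroidal construction under conjugation of parabolic isotropy groups; the paper instead works hands-on with explicit cusp neighbourhoods $N(\kappa_i) = T_i \times \Delta^*(0,\varepsilon)$ (citing Mok), first checks that $\partial_{\Gamma}\mathbb{B} = \partial_{\Gamma_o}\mathbb{B}$ via the finite-index condition, and then extends each $\gamma \in \Gamma$ across the compactifying curves by showing the induced map between punctured-disc neighbourhoods is bounded, hence extends holomorphically by the Riemann removable-singularities theorem. Both approaches are valid; yours is more conceptual, the paper's more concrete and self-contained in that it avoids appealing to a general functoriality statement for the toroidal machine.

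On step (iii): the paper's proof in fact stops after establishing the extension of the $G$-action and does not address the ``cyclic quotient singularity'' clause at all. It is apparently regarded as background, consistent with the remark in the introduction that a torsion element with an isolated fixed point yields a cyclic quotient singularity. You go further by attempting a local analysis, and your worry is legitimate: the effective stabiliser inside $U(2)$ is not forced to be cyclic by general principles, and ruling out binary dihedral or polyhedral isotropy is not a formality. So your proposal is at least as complete as the paper's own proof on this point; the cyclicity assertion is either an imported fact about $SU_{2,1}$-lattices or a mild imprecision in the statement (``quotient'' versus ``cyclic quotient''), and neither you nor the paper supplies the missing argument.
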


\begin{proof}

Recall that $p \in \partial _{\Gamma} {\mathbb B}$ is a $\Gamma$-rational boundary point exactly when the intersection $\Gamma \cap Stab _{SU_{2,1}} (p)$ is a lattice of $Stab _{SU_{2,1}} (p)$. Since $[ \Gamma : \Gamma _o] < \infty$, the quotient
\[
Stab _{SU_{2,1}} (p) / [ \Gamma \cap Stab _{SU_{2,1}} (p) ] =
\]
\[
= \left \{ Stab _{SU_{2,1}} (p) / \left[ \Gamma _o \cap Stab _{SU_{2,1}} (p) \right] \right \} /
\left \{ [ \Gamma \cap Stab _{SU_{2,1}} (p) / [ \Gamma _o \cap Stab _{SU_{2,1}} (p) ] \right \}
\]
has finite invariant volume exactly when $Stab _{SU_{2,1}} (p) / [ \Gamma _o \cap Stab _{SU_{2,1}} (p) ]$ has finite invariant volume. Therefore the $\Gamma$-rational boundary points coincide with the $\Gamma _o$-rational boundary points, $\partial _{\Gamma} {\mathbb B} = \partial _{\Gamma _o} {\mathbb B}$. It suffices to establish that the $\Gamma$-action on ${\mathbb B}$ admits local extensions on neighborhoods of the liftings of $T_i$ to complex lines through $p_i \in \partial _{\Gamma _o} {\mathbb B}$ with $Orb _{\Gamma _o} (p_i) = \kappa _i$. According to \cite{Mok}, the cusp $\kappa _i$, associated with the smooth elliptic curve $T_i$ has a neighborhood $N( \kappa _i) = T_i \times \Delta ^* (0,\varepsilon) \subset \left( {\mathbb B} / \Gamma _o \right)$ for a sufficiently small punctured disc $ \Delta ^* (0, \varepsilon ) = \{ z \in {\mathbb C} \ \ \vert \ \  |z| < \varepsilon \}$. The biholomorphisms $\gamma : {\mathbb B} \rightarrow {\mathbb B}$ from  $\Gamma$ extend to $\gamma : {\mathbb B} \cup \partial _{\Gamma _o} {\mathbb B} \rightarrow  {\mathbb B} \cup \partial _{\Gamma _o} {\mathbb B}$, as far as $\partial _{\Gamma _o} {\mathbb B}$ consists of isolated points. If $p_i \in \partial _{\Gamma _o} {\mathbb B}$, $\gamma (p_i) = p_j \in \partial _{\Gamma _o} {\mathbb B}$ and $\kappa _j = Orb _{\Gamma _o} (p_j)$ then there is a biholomorphism
\[
\gamma : N ( \kappa _i) \cap \gamma ^{-1} N(\kappa _j) \longrightarrow \gamma N(\kappa _i) \cap N(\kappa _j).
\]
For any $q_i \in T_i$ let $\Delta _{T_i} (q_i, \eta )$ be a sufficiently small disc on $T_i$, centered at $q_i$, which is contained in a $\pi _1 (T_i)$-fundamental domain, centered at $q_i$. One can view $\Delta _{T_i} (q_i, \eta) = \Delta _{\widetilde{T_i}} (q_i, \eta)$ as a disc on the lifting $\widetilde{T_i}$ of $T_i$ to a complex line through $p_i$. Then $N( \kappa _i, q_i) := \Delta _{\widetilde{T_i}} (q_i, \eta ) \times \Delta ^* (0, \varepsilon)$ is a bounded neighborhood of $q_i \in T_i$ on ${\mathbb B} /\Gamma _o$ and the holomorphic map
\[
\gamma : N( \kappa _i, q_i) \cap \gamma ^{-1} N(\kappa _j, q_j) \rightarrow \gamma N( \kappa _i, q_i) \cap N( \kappa _j, q_j) \subseteq N( \kappa _j, q_j) = \Delta _{\widetilde{T_j}} (q_j, \eta) \times \Delta ^* ( 0, \varepsilon)
\]
is bounded. Thus, $\gamma : {\mathbb B} \rightarrow {\mathbb B}$ is locally bounded around $\widetilde{T} = \sum\limits _{p_i \in \partial _{\Gamma _o} {\mathbb B}} \widetilde{T_i} (p_i)$ and admits a holomorphic extension $\gamma : {\mathbb B} \cup \widetilde{T} \rightarrow {\mathbb B} \cup \widetilde{T}$. This  induces a biholomorphism $\gamma \Gamma _o : \left( {\mathbb B} / \Gamma _o \right)'\rightarrow \left( {\mathbb B} / \Gamma _o \right)'$.

\end{proof}

The next proposition establishes that an arbitrary torsion free toroidal com\-pac\-ti\-fi\-ca\-tion  $\left( {\mathbb B} / \Gamma _o \right)'$ has finitely many  Galois quotients $\left( {\mathbb B} / \Gamma _o \right)'/ H = \overline{{\mathbb B} / \Gamma _H}$ with $\Gamma _H / \Gamma _o = H$. For torsion free $\left( {\mathbb B} / \Gamma _o \right)'$ with an abelian minimal model $X_o=A$, the result  is proved in \cite{WeakHolzConject}.
 Note also that \cite{APA} constructs an infinite series $\left \{ \left( {\mathbb B} / \Gamma _n \right)'\right \} _{n=1} ^{\infty}$ of mutually non-birational torsion free toroidal compactifications with abelian minimal models, which are finite Galois covers of a fixed $\left( \overline{ {\mathbb B} / \Gamma _{H_1}}, T(1) / H \right) = \left( \left( {\mathbb B} / \Gamma _n \right)', T(n) \right) / H_n$, $H_n \leq Aut \left( \left( {\mathbb B} / \Gamma _n \right)', T(n) \right)$.

\begin{proposition}   \label{EqualFiniteAutGroups}
Let $X'= \left( {\mathbb B} / \Gamma \right)' = \left( {\mathbb B} / \Gamma \right) \cup T$  be a torsion free toroidal  com\-pac\-ti\-fi\-ca\-tion  and $\xi : (X',T) \rightarrow (X = \xi (X'), D = \xi (T))$ be the blow-down of the $(-1)$-curves to the minimal model $X$ of $X'$. Then $Aut (X',T)$ is a finite group, which coincides with $Aut (X,D)$.
\end{proposition}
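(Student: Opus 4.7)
The proof has two parts: the finiteness of $Aut(X',T)$, and its identification with $Aut(X,D)$ via $\xi$. My plan derives both from the universal cover description of $X_o = X' \setminus T$ and the uniqueness of the minimal model of a non-ruled surface.

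For the finiteness, I would restrict each $\phi \in Aut(X',T)$ to a biholomorphism of $X_o = {\mathbb B} / \Gamma$ and lift it through the universal covering ${\mathbb B} \rightarrow {\mathbb B} / \Gamma$ to an element of the normalizer $N = N _{SU _{2,1}} ( \Gamma )$. This yields an injection $Aut(X',T) \hookrightarrow N / \Gamma$. The group $N$ is discrete, since its identity component centralizes the lattice $\Gamma$ and is therefore trivial in the simple Lie group $SU_{2,1}$. The covolume identity $\mathrm{vol} ({\mathbb B} / \Gamma ) = [N : \Gamma] \cdot \mathrm{vol} ({\mathbb B} / N)$, together with the finite covolume of $\Gamma$, forces $[N : \Gamma] < \infty$, whence $Aut(X',T)$ is finite.

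For the identification I would construct mutually inverse homomorphisms $\Phi : Aut(X',T) \rightarrow Aut(X,D)$ and $\Psi : Aut(X,D) \rightarrow Aut(X',T)$. Because the minimal model of a non-ruled surface is unique, any $\phi \in Aut(X')$ descends through $\xi$ to a unique $\tilde{\phi} \in Aut(X)$ satisfying $\xi \circ \phi = \tilde{\phi} \circ \xi$; the condition $\phi(T) = T$ translates into $\tilde{\phi}(D) = D$, defining $\Phi$. Conversely, any $\tilde{\phi} \in Aut(X,D)$ yields a birational self-map $\xi ^{-1} \circ \tilde{\phi} \circ \xi$ of $X'$, regular away from the exceptional locus of $\xi$. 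To upgrade this to an automorphism $\Psi(\tilde{\phi}) \in Aut(X',T)$, it suffices to verify that $\tilde{\phi}$ permutes the blow-up centers of $\xi$. Since ${\mathbb B}$ is Kobayashi hyperbolic, $X_o$ contains no rational curves; consequently every $(-1)$-curve contracted by $\xi$ meets $T$ and is mapped by $\xi$ to a distinguished point of $D$, typically a singular point where distinct components of $T$ are identified. The automorphism group $Aut(X,D)$ preserves the singular locus of $D$, hence the blow-up centers, so $\tilde{\phi}$ lifts and the lift sends $T$ to $T$.

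The main obstacle lies in making precise the assertion that the blow-up centers of $\xi$ are intrinsically determined by the pair $(X,D)$, so that every $\tilde{\phi} \in Aut(X,D)$ automatically preserves them. This hinges on the Kobayashi hyperbolicity of ${\mathbb B}$, which forbids $(-1)$-curves inside $X_o$, together with a local analysis of how each contracted $(-1)$-curve meets $T$ and contributes a distinguished feature to $D$ in $X$. Once this is secured, $\Phi$ and $\Psi$ are visibly mutually inverse, which, combined with the finiteness of $N / \Gamma$, gives both assertions of the proposition.
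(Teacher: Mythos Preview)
Your approach is sound but differs from the paper on both fronts. For finiteness, the paper argues purely on the divisor side: it uses the permutation representation of $Aut(X,D)$ on the components $D_1,\ldots,D_h$, then on the finite intersection set $D_i\cap D_j$, and finally embeds the residual stabilizer of a point $p\in D^{\mathrm{sing}}$ into $\mathrm{End}^*(D_i)\times\mathrm{End}^*(D_j)$, a product of finite cyclic groups of order at most~$6$. Your normalizer argument is more structural and would transfer to any finite-volume locally symmetric quotient; the paper's argument is more elementary and uses nothing beyond the fact that the $D_i$ are elliptic curves. For the identification $Aut(X',T)=Aut(X,D)$, the paper does not invoke uniqueness of minimal models at all: it observes directly that $X'$ is the blow-up of $X$ precisely at $D^{\mathrm{sing}}=\bigcup_{i<j}D_i\cap D_j$, so $Aut(X,D)$ manifestly permutes the centers and lifts via the universal property of blowing up, while conversely any $g'\in Aut(X',T)$ permutes the $(-1)$-curves (an intrinsic class) and hence descends. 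This sidesteps your appeal to non-ruledness of $X'$, and it resolves in one line the ``main obstacle'' you flag: the blow-up centers are exactly the singular points of $D$, hence visibly $Aut(X,D)$-invariant.
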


\begin{proof}

Let us denote $G = Aut (X,D)$, $G'=Aut (X',T)$ and observe that $X'$ is the blow-up of $X$ at the singular locus  $D^{\rm sing}$ of $D$. Since $D=\sum\limits _{i=1} ^h D_i$ has smooth elliptic  irreducible components $D_i$, the singular locus $D^{\rm sing} = \sum\limits _{1 \leq i < j \leq h} D_i \cap D_j$ and its complement $X \setminus D^{\rm sing}$ are $G$-invariant. We claim that the $G$-action extends to the exceptional divisor $E = \xi ^{-1} (D^{\rm sing})$ of $\xi$, so that $X'= (X \setminus D^{\rm sing}) \cup E$ is $G$-invariant. Indeed, for any $g \in G$ and $p \in D^{\rm sing}$ with $q=g(p)$, let us choose local holomorphic coordinates $x=(x_1,x_2)$, respectively, $y=(y_1,y_2)$  on   sufficiently small neighborhoods $N(p)$, $N(q)$  of $p$ and $q$  on $X$ with $gN(p) \subseteq N(q)$.
 Then $g: N(p) \rightarrow N(q) \subset {\mathbb C}^2$ consists of two local holomorphic functions $g=(g_1, g_2)$ on $N(p)$. By the very definition of a blow-up, \[
 \xi ^{-1} N (p) = \{ (x_1,x_2) \times [x_1 : x_2] \ \ \vert \ \  (x_1, x_2) \in N(p) \} \ \ \vert \mbox{  and }
 \]
 \[
 \xi ^{-1} N(q) = \{ (g_1(x), g_2 (x) ) \times [ g_1(x) : g_2 (x)] \ \ \vert \ \  g(x) = (g_1(x), g_2(x)) \in N(q) \},
\]
so that
\[
g: \xi ^{-1} N(p) \rightarrow \xi ^{-1} N(q),
\]
\[
 (x_1,x_2) \times [x_1 : x_2]  \mapsto (g_1(x), g_2(x)) \times [ g_1(x) : g_2(x) ]
 \]
extends the action of $g \in G$ to $\xi ^{-1}(D^{\rm sing} )$ and $G \subset Aut (X')$. Towards the $G$-invariance of $T$, note that the birational maps  $\xi : T_i \rightarrow \xi (T_i) = D_i$  of the smooth irreducible components $T_i$ of $T$ are biregular. Thus, the $G$-invariance of $D = \sum\limits _{i=1} ^h D_i$ implies the $G$-invariance of $T=\sum\limits _{i=1} ^h T_i$ and $G \subseteq G'=Aut(X',T)$. For the opposite inclusion $G'=Aut(X',T) \subseteq G=Aut(X,D)$ observe  that an arbitrary $g'\in G'$ acts on the union $E$ of the $(-1)$-curves on $X'$ and permutes the finite set  $\xi (E) = D^{\rm sing}$. In such a way, $g'$ turns to be a biregular morphism of $X = (X' \setminus E) \cup D^{\rm sing}$. The restriction of $g'$ on $T_i$ has image $g'(T_i) = T_j$ for some $1 \leq j \leq h$ and induces a biholomorphism $g': D_i \rightarrow D_j$. As a result, $g'\in G'$ acts on $D$ and $g'\in G=Aut(X,D)$.

In order to justify that $G = {\rm Aut} (X,D)$ is a finite group, let us consider the natural representation
\[
\varphi : G \rightarrow {\rm Sym} (D_1, \ldots , D_h)
 \]
 in the permutation group of the irreducible compo\-nents $D_1, \ldots , D_h$ of $D$. As far as the image $\varphi (G)$ is a finite group, it suffices to prove that the kernel $\ker \varphi$ is finite. Fix $p \in D^{\rm sing}$ and two local irreducible branches $U_o$ and $V_o$ of $D$ through $p$. If $U_o \subset D_i$ and $V_o \subset D_j$ for $i \neq j$ then consider the natural representation
 \[
 \varphi _o : \ker \varphi \rightarrow {\rm Sym} (D_i \cap D_j).
  \]
 The group homomorphism $\varphi _o$ has a finite image, so that the problem reduces to the finiteness of $G_o := \ker \left( \varphi _o \vert _{\ker \varphi} \right)$. By its  very definition, $G_o  \leq {\rm Stab} _G (p)$.  Let us move the origin of $D_i$ at $p$ and realize $G _o$ as  a subgroup of the finite cyclic group ${\rm End} ^* (D_i)$. After an eventual shrinking, $U_o$ is contained in a coordinate chart of $X$. Then $U = \cap _{g_o \in G_o} [g_o(U_o)]$ is a $G_o$-invariant neighborhood of $p$ on $D_i$. Similarly, pass to a $G_o$-invariant neighborhood $V \subset V_o$ of $p$ on  $D_j$, intersecting transversally $U$. Through any point $v \in V$ there is a local complex line $U(v)$, parallel to $U$. The union $W = \cup _{v \in V} U(v)$ is a neighborhood of $p$ on $X$, biholomorphic to $U \times V$. In holomorphic coordinates $(u,v) \in W$, one gets $G_o \leq {\rm End}^*(U) \times {\rm End}^*(V)$. Note that ${\rm End} ^*(U) \subseteq  {\rm End} ^*(D_i)$ and ${\rm End} ^*(D_i)$ is a finite cyclic group of order $1,2,3,4$ or $6$, so that   $|G_o| < \infty$.

\end{proof}

\newpage
%%%%%%%%%%%%%%%%%%%%%%%%%%%%%%%%%%%%%%%%%%%%%%%%%%%%%%%%%%%%%%%%%%%%%%%%%%%%%%%%%%%%%%%%%%%%%%%%%%%%%%%%%%%%%%%%%%%%%%%%%%%%%%%%%%%%%%%%%%%%%%%%%%%%%%%%%%%%%%%%

\section{ Kodaira-Enriques classification of the finite Galois quotients of a split abelian surface}

Let $A = E \times E$ be the Cartesian square of an elliptic curve $E$.  For an arbitrary finite automorphism group $H \leq {\rm Aut} (A)$, we characterize the Kodaira-Enriques classification type of $A/H$ in terms of the fixed point set $Fix_A(H)$ of $H$ on $A$.  Partial results for this problem are provided by \cite{KN}. Namely, any $A/H$ is a finite cyclic Galois quotient of a smooth abelian surface $A/K$ or a normal model $A/K$ of a K3 surface.
  The surface $A/K$ is abelian exactly when $K = \mathcal{T}(H)$ is a translation group. The note \cite{KN} specifies  that  a necessary and sufficient condition for $A/[ \mathcal{T}(H) \langle h \rangle ]$  to have  irregularity $q(Y) = h^{1,0}(Y) =1$ is the presence of an entire  elliptic curve  in the fixed point set $Fix _A(h)$ of $h$. This result is similar to   S. Tokunaga and M. Yoshida's  study \cite{TY} of the discrete subgroups $\Lambda \leq {\mathbb C}^n \leftthreetimes U(n)$ with compact quotient ${\mathbb C}^n / \Lambda$. Namely, \cite{TY} establishes that if the linear part $\mathcal{L}(\Lambda)$ of such $\Lambda$ does not fix pointwise a complex line on ${\mathbb C}^2$, then ${\mathbb C}^n / \Lambda$ has vanishing irregularity.  Further, \cite{KN} observes  that if some $h \in H$ fixes pointwise an entire elliptic curve on $A$, then  the Kodaira dimension    $\kappa ( A/H) = - \infty$ drops down. Tokunaga and Yoshida prove the same statement for discrete subgroups $\Lambda \leq {\mathbb C}^n \leftthreetimes U(n)$ with compact quotient ${\mathbb C}^n / \Lambda$. The note \cite{KN} proves also  that if  $A/K$  is a K3 double cover of $A/H$ then $A/H$ is birational to an Enriques surface if and only if $A/K \rightarrow A/H$ is unramified.

      The present note establishes  that  an arbitrary   cyclic cover $\zeta ^K_H: A / K \rightarrow A/H$ of degree $\geq 3$ by  a K3 surfaces $A/K$ with isolated cyclic quotient singularities is ramified over a  finite set of points and $A/H$ is a  rational surface.
       If  a K3 surface $A/K$ is a double cover   $\zeta ^K _H : A/K \rightarrow A/H$  of $A/H$ then $A/H$ is birational to an Enriques surface exactly when $\zeta ^K_H$ is unramified. The quotients $A/H$ with ramified K3 double covers $\zeta ^K_H : A/K \rightarrow A/H$ are rational surfaces. If $H = \mathcal{T}(H) \langle h \rangle$ and the fixed points of  $\mathcal{L}(h)$ on $A$ contain an elliptic curve then $A/H$ is hyper-elliptic (respectively, ruled with an elliptic base) if and only if $H$ has not a fixed point on $A$ (respectively, $H$ has a fixed point on $A$, whereas $H$ has a pointwise fixed elliptic curve on $A$). If $H = \mathcal{T}(H)  \langle h \rangle$ and $\mathcal{L}(h)$ has isolated fixed points on $A$ then $A/H$  is a rational surface.

 In order to construct the normal subgroup $K$ of $H$, let us recall that the auto\-mor\-phism group ${\rm Aut} (A) = \mathcal{T}_A \rightthreetimes {\rm Aut} _{\check{o}_A} (A)$ of $A$ is a semi-direct product of the translation group $\mathcal{T} _A \simeq (A, +)$ and the stabilizer ${\rm Aut} _{\check{o}_A} (A)$ of the origin $\check{o}_A \in A$. Each $g \in {\rm Aut} _{\check{o}_A} (A)$ is a linear  transformation
 \[
 g = \left( \begin{array}{cc}
 a_{11}   &   a_{12}  \\
 a_{21}  &    a_{22}
 \end{array}    \right) \in GL_2 ({\mathbb C}),
 \]
 leaving invariant the fundamental group $\pi _1(A) = \pi _1(E) \times \pi _1(E)$ of $A = E \times E$. Therefore $a_{ij} \pi _1(E) \subseteq \pi _1(E)$ for all $1 \leq i, j \leq 2$ and  $ a_{ij} \in R$  for the endomorphism ring $R$ of $E$. The same holds for the entries of the inverse matrix
\begin{equation}      \label{inverse}
g^{-1} = \frac{1}{a_{11} a_{22} - a_{12}a_{21}} \left(  \begin{array}{rr}
a_{22}  &  -a_{12}  \\
-a_{21}  &  a_{11}
\end{array}   \right) \in {\rm Aut} _{\check{o}_A} (A).
\end{equation}
 Now, $\det (g)\in R$ and  $\det (g^{-1}) = ( \det(g)) ^{-1} \in R$ imply that  $\det (g) \in R^*$ is a unit. Thus, ${\rm Aut} _{\check{o}_A} (A)$ is contained in
\[
Gl (2, R) := \{ g \in ( R) _{2 \times 2} \ \ \vert \ \  \det (g) \in R ^* \}.
\]
The opposite inclusion $Gl(2, R ) \subseteq {\rm Aut} _{\check{o}_A} (A)$ is clear from (\ref{inverse}) and ${\rm Aut} _{\check{o}_A} (A) = Gl(2,  R)$.

 The map $\mathcal{L} : {\rm Aut}(A) \rightarrow   Gl (2, R)$, associating to $g \in {\rm Aut} (A)$ its linear part $\mathcal{L}(g) \in Gl (2, R)$ is a group homomorphism with kernel $\ker (\mathcal{L})  = \mathcal{T}_A$. Denote by $\mathcal{O}_{-d}$ the integers ring of an imaginary quadratic number field ${\mathbb Q} (\sqrt{-d})$. The determinant $\det : Gl( 2,R) \rightarrow  R^*$ is a group homomorphism in the cyclic units group
 \[
 R^* = \langle \zeta _{-d} \rangle \simeq
 \begin{cases}
 {\mathbb C}_2 &  \text{ for $R \neq {\mathbb Z}[i], \mathcal{O}_{-3}$,} \\
 {\mathbb C}_4  &  \text{ for $R = {\mathbb Z}[i] = \mathcal{O}_{-1}$,} \\
 {\mathbb C}_6 & \text{ for $R = \mathcal{O}_{-3}$ }
\end{cases}
\]
of order $o(R )$. For an arbitrary subgroup $H$ of ${\rm Aut} (A)$, let us denote by $K = K_H$ the kernel of the group homomorphism $\det \mathcal{L} : H \rightarrow  R^*$.  The image $\det \mathcal{L} (H) \leq ( R^*, .)$ is a cyclic group of order $m$, dividing $o( R^*)$, i.e., $\det \mathcal{L} (H) = \langle \zeta _{-d} ^k \rangle$ for some natural divisor  $k = \frac{o ( R^* )}{m}$ of $o(R^*)$. For an arbitrary $h_0 \in H$ with $\det \mathcal{L} (h_0) = \zeta _{-d} ^k$ the first homomorphism theorem reads as
\[
\{ K_H, h_0 K_H, \ldots , h_0 ^{m-1} K_H \} = H / K_H \simeq \langle \zeta _{-d} ^k \rangle =
\{ 1, \zeta _{-d} ^k, \zeta _{-d} ^{2k}, \ldots , \zeta _{-d} ^{(m-1)k} \}.
\]
Therefore $H = K_H \langle h_0 \rangle$ is a  product of $K_H = \ker ( \det \mathcal{L} \vert _H )$ and the  cyclic subgroup $\langle h_0 \rangle$ of $H$.

Denote by $E_1(H)$ the set of $h \in H$, whose linear parts $\mathcal{L}(h) \in GL_2 (R)$ have eigenvalue $1$ of multiplicity $1$. In other words, $h \in E_1(H)$ exactly when $\mathcal{L}(h)$ fixes pointwise an elliptic curve on $A$ through the origin $\check{o}_A$. Put $E_0(H)$ for the set of $h \in H$, whose linear parts have no eigenvalue $1$. Observe that $h \in E_0(H)$ if and only if $\mathcal{L}(h) \in GL(2, R)$ has isolated fixed points on $A$.

An automorphism $h \in H \setminus \{ {\rm Id} \}$ is called a reflection if fixes pointwise an elliptic curve on $A$. We claim that $h \in H$ is a reflection if and only if $h \in E_1(H)$ and $h$ has a fixed point on $A$. Indeed, if $h$ fixes an elliptic curve $F$ on $A$, then one can move the origin $\check{o}_A$ of $A$ on $F$, in order to represent $h$ by a linear transformation $h = \mathcal{L}(h) \in GL ( 2,R ) \setminus \{ {\rm Id} \} = E_1( GL(2,R)) \cup E_0 ( GL (2,R ))$. Any $h  = \mathcal{L}(h) \in E_0 ( GL (2,R))$ has isolated fixed points on $A$, so that $h = \mathcal{L}(h) \in E_1 (H)$ and $Fix _A(h) \neq \emptyset$. Conversely, if $h \in E_1(H)$  and $Fix_A(h) \neq \emptyset$, then after moving the origin  of $A$ at $\check{o}_A \in Fix_A(h)$, one attains $h = \mathcal{L}(h)$. Thus, $h$ fixes pointwise an elliptic curve on $A$ or $h$ is a reflection.

Towards the complete classification of the Kodaira-Enriques type of $A/H$, we use the following results from \cite{KN}:

\begin{proposition}   \label{AbelianK3}
(i) {\rm (cf. Corollary 5 from \cite{KN})} The quotient $A/H$ of $A = E \times E$ by a finite automorphism group $H$ is an abelian surface if and only if $H = \ker ( \mathcal{L} \vert _H ) = \mathcal{T}(H)$ is a translation group.

(ii) {\rm (Lemma 7 from \cite{KN})} The quotient $A/H$ is birational to a K3 surface if and only if $H = \ker ( \det \mathcal{L} \vert _H )$ and $H \varsupsetneq \ker ( \mathcal{L} \vert _H ) = \mathcal{T}(H)$.
\end{proposition}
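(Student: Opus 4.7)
The plan is to exploit the semi-direct decomposition $\operatorname{Aut}(A) = \mathcal{T}_A \rightthreetimes GL(2,R)$ established in the text and treat the two statements separately.

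For part (i), one direction is immediate: if $H = \mathcal{T}(H) \leq \mathcal{T}_A$, then $H$ acts on $A$ by translations, hence freely, and the quotient inherits a group law, making $A/H$ an abelian surface isogenous to $A$. For the converse, suppose $A/H$ is an abelian surface $B$. The quotient morphism $\pi : A \rightarrow B$ is a finite surjective map of smooth abelian varieties; after translating on $B$ so that $\pi(\check{o}_A) = \check{o}_B$, the classical rigidity theorem (every pointed morphism of abelian varieties is automatically a group homomorphism) turns $\pi$ into an isogeny. Its deck transformation group, which is exactly $H$, therefore acts by translations by elements of $\ker \pi \subset A$, i.e.\ $H = \mathcal{T}(H)$.

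For part (ii), I would reformulate the two hypotheses as $\mathcal{L}(H) \subset SL(2,R)$ (every $\det \mathcal{L}(h) = 1$) and $\mathcal{L}(H) \neq \{\operatorname{Id}\}$, and then verify the Kodaira--Enriques invariants on a minimal resolution $Y \rightarrow A/H$. The first condition is equivalent to $H$-invariance of the holomorphic $2$-form $\omega = dz_1 \wedge dz_2$, so $\omega$ descends to a regular $2$-form on the smooth locus of $A/H$. At each fixed point the stabiliser acts linearly through an element of $SL(2,\mathbb{C})$, producing a Du Val singularity whose minimal resolution is crepant; hence $\omega$ extends to a nowhere-vanishing section of $K_Y$, giving $K_Y \sim 0$ and $p_g(Y) = 1$. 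For $q(Y) = 0$, I would observe that any torsion $\ell \in SL(2,\mathbb{C}) \setminus \{\operatorname{Id}\}$ has eigenvalues $(\zeta, \zeta^{-1})$ with $\zeta \neq 1$, so fixes no nonzero linear form on $\mathbb{C}^2$; taking $\mathcal{L}(H)$-invariants of $H^0(A, \Omega^1_A) \simeq \mathbb{C}^2$ then forces $q(A/H) = 0$. Since $H \varsupsetneq \mathcal{T}(H)$ rules out the abelian case by part (i), the Enriques--Kodaira classification identifies $Y$ as a K3 surface.

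The hardest step will be the crepancy of the resolution of the isolated $SL_2$-quotient singularities: one has to verify that $\omega$ extends with neither zeros nor poles across each exceptional ADE configuration, which requires the local classification of finite subgroups of $SL(2,\mathbb{C})$ and a careful computation on the minimal resolution. A secondary but subtle point is distinguishing the K3 case from the Enriques case; this is handled precisely by the descent of $\omega$, since an Enriques surface has $2$-torsion, rather than trivial, canonical bundle and carries no nonzero global holomorphic $2$-form.
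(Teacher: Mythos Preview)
The paper does not supply its own proof of this proposition: both parts are quoted from the reference \cite{KN} (Corollary~5 and Lemma~7 there), so there is no in-text argument against which to compare your proposal. Your sketch is a reasonable reconstruction of how such a result is proved.

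Two remarks on the proposal itself. In part (i) the rigidity argument is correct; the step you leave implicit is that once $\pi : A \to B$ is a group homomorphism, its differential at the origin is an isomorphism, so for every $h \in H$ the identity $\pi \circ h = \pi$ forces $d\mathcal{L}(h)_{\check{o}_A} = I_2$ on tangent spaces and hence $\mathcal{L}(h) = I_2$. In part (ii) you argue only the implication from the two hypotheses to the K3 conclusion. The converse is short but should be recorded: if a smooth model $Y$ of $A/H$ is K3 then $p_g(Y)=1$, and since pullback of top forms embeds $H^0(Y,K_Y)$ into the one-dimensional space $H^0(A,K_A)^H$, the generator $\omega = dz_1 \wedge dz_2$ must be $H$-invariant, giving $\det\mathcal{L}(H)=\{1\}$; the strict containment $H \varsupsetneq \mathcal{T}(H)$ then follows from part (i), since a K3 surface is not abelian.
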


\begin{proposition}   \label{PRAndQ1}
(i){\rm (cf. Lemma 11 from \cite{KN})}  If a finite automorphism group $H \leq {\rm Aut}(A)$ contains a reflection then $A/H$ is of Kodaira dimension $\kappa ( A/H) = - \infty$.

(ii) {\rm (cf. Proposition 12 from \cite{KN})} A smooth model $Y$ of $A/H$ is of irregularity $q(Y) = h^{1,0}(Y) =1$ if and only if
 $H = \mathcal{T}(H)  \langle h \rangle$ is a product of its normal  translation subgroup $\mathcal{T}(H) = \ker ( \mathcal{L} \vert _H)$ and a cyclic group
  $\langle h \rangle$, generated by $h \in E_1(H)$.
\end{proposition}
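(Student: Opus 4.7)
My plan splits by parts. For (i), I would first reduce to the case $H = \langle h \rangle$ using monotonicity of Kodaira dimension under finite dominant morphisms: since $\langle h \rangle \subseteq H$, the natural projection $A/\langle h \rangle \twoheadrightarrow A/H$ is finite surjective, whence $\kappa(A/H) \leq \kappa(A/\langle h \rangle)$, and it suffices to prove $\kappa(Y) = -\infty$ for $Y := A/\langle h \rangle$. Let $F \subset A$ be the elliptic curve fixed pointwise by the reflection $h$ of order $n$. At each $p \in F$, $h$ acts on $T_p A$ by the identity on $T_p F$ and by a primitive $n$-th root of unity on the normal direction; this is a pseudo-reflection, so by Chevalley-Shephard-Todd the quotient $\pi: A \to Y$ is smooth along $\pi(F)$ (the remaining isolated fixed points of $\langle h \rangle$ produce cyclic, hence rational, quotient singularities on $Y$). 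The Hurwitz ramification formula then yields $\pi^* K_Y = K_A - (n-1) F = -(n-1) F$ on the smooth locus.

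Since $F$ is a non-trivial effective curve on the abelian surface $A$, its intersection with any ample class is strictly positive, whence $-m(n-1)F$ is not effective and $H^0(A, \mathcal{O}_A(-m(n-1) F)) = 0$ for all $m \geq 1$. Injectivity of the pull-back $\pi^*: H^0(Y, mK_Y) \hookrightarrow H^0(A, -m(n-1) F)$ forces every plurigenus of $Y$ to vanish; rationality of the remaining isolated quotient singularities of $Y$ transfers this vanishing to a smooth model, giving $\kappa(Y) = -\infty$, and the reduction above completes~(i).

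For (ii), let $\widetilde{Y}$ be a smooth model of $A/H$. Translation-invariant holomorphic $1$-forms identify $H^0(A, \Omega^1_A) \simeq {\mathbb C}^2$, on which $H$ acts through its linear part $\mathcal{L}(H) \subseteq GL(2, R)$; since the quotient has rational singularities, descent and extension give $q(\widetilde{Y}) = \dim_{{\mathbb C}} ({\mathbb C}^2)^{\mathcal{L}(H)}$. Hence $q(\widetilde{Y}) = 1$ if and only if $\mathcal{L}(H)$ admits a one-dimensional common $1$-eigenspace $\langle v \rangle \subset {\mathbb C}^2$. The forward direction is immediate: given $H = \mathcal{T}(H) \langle h \rangle$ with $h \in E_1(H)$, one has $\mathcal{L}(H) = \langle \mathcal{L}(h) \rangle$ with eigenvalues $\{1, \lambda\}$, $\lambda \neq 1$, whose common $1$-eigenspace is a line.

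The crux is the converse. Assuming $\mathcal{L}(H)$ has a unique common $1$-eigenspace $\langle v \rangle$, I would first prove $\mathcal{L}(H)$ is abelian by a commutator lemma: for any $g_1, g_2 \in \mathcal{L}(H)$, the commutator $[g_1, g_2]$ satisfies $\det [g_1, g_2] = 1$, fixes $v$, and --- being of finite order in $GL_2({\mathbb C})$, hence semisimple --- has eigenvalue spectrum $\{1, 1\}$, forcing $[g_1, g_2] = {\rm Id}$. Simultaneous diagonalization then embeds $\mathcal{L}(H)$ as a finite cyclic subgroup of ${\mathbb C}^*$, so $\mathcal{L}(H) = \langle \mathcal{L}(h) \rangle$ for some $h \in E_1(H)$; applying the first isomorphism theorem to $\mathcal{L}|_H : H \twoheadrightarrow \langle \mathcal{L}(h) \rangle$ with kernel $\mathcal{T}(H)$ yields $H = \mathcal{T}(H) \langle h \rangle$. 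The main obstacle in (i) is separating the pseudo-reflection ramification from the isolated cyclic singularities in the pluricanonical computation, cleanly handled via Chevalley-Shephard-Todd plus rationality; the main technical step in (ii) is the commutator lemma, which relies essentially on the semisimplicity of torsion elements in $GL_2({\mathbb C})$.
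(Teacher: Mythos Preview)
The paper does not supply its own proof of this proposition: both parts are quoted from the reference \cite{KN} (as Lemma~11 and Proposition~12 there), so there is no in-paper argument to compare against. Your proposal must therefore be judged on its own merits, and it is essentially correct.

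For part~(i), your reduction to $H=\langle h\rangle$ via $\kappa(A/H)\le\kappa(A/\langle h\rangle)$ and the Hurwitz argument are sound. Two small points deserve tightening. First, when $h$ is a reflection, \emph{every} nontrivial power $h^{k}$ is again a pseudo-reflection (its linear part has the same $1$-eigenspace), so $A/\langle h\rangle$ is smooth everywhere by Chevalley--Shephard--Todd and your parenthetical about ``remaining isolated fixed points'' is unnecessary. Second, the ramification divisor is not literally $(n-1)F$ for a single elliptic curve $F$: the fixed locus of $h^{k}$ may strictly contain $\mathrm{Fix}(h)$ when $\gcd(k,n)>1$, so the Hurwitz ramification $R$ is a positive integral combination of several elliptic curves. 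This does not affect your conclusion, since all you need is that $R$ is a nonzero effective divisor on $A$, whence $\pi^{*}(mK_{Y})=-mR$ has no global sections.

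For part~(ii), your identification $q(\widetilde{Y})=\dim({\mathbb C}^{2})^{\mathcal{L}(H)}$ is the standard descent-plus-extension fact for holomorphic $1$-forms through quotient (hence rational) surface singularities, and your commutator lemma is clean: $[g_{1},g_{2}]$ has determinant $1$, fixes the common $1$-eigenvector $v$, and is diagonalizable of finite order, hence equals $I_{2}$. From the resulting embedding $\mathcal{L}(H)\hookrightarrow{\mathbb C}^{*}$ you correctly extract a cyclic generator $\mathcal{L}(h)$ with $h\in E_{1}(H)$ and conclude $H=\mathcal{T}(H)\langle h\rangle$ via the first isomorphism theorem.
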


From now on, we consider only  subgroups $H \leq {\rm Aut}(A,T)$ with $\det \mathcal{L}(H) \neq \{ 1 \}$ and distinguish between translation
 $K = \ker ( \det \mathcal{L} \vert _H ) = \ker ( \mathcal{L} \vert _H ) = \mathcal{T}(H)$ and non-translation
 $K = \ker ( \det \mathcal{L} \vert _H ) \supsetneq \ker ( \mathcal{L} \vert _H) = \mathcal{T}(H)$.
Any $h \not \in K = \ker ( \det \mathcal{L} \vert _H)$ belongs to $E_1(H)$ or to $E_0(H)$.

\begin{proposition}    \label{AbelianCoverL1}
Let $H = \mathcal{T}(H) \langle h \rangle$ be a product of its (normal) translation subgroup $\mathcal{T}(H) = \ker ( \mathcal{L} \vert _H)$ and a cyclic group $\langle h \rangle$, generated by $h \in E_1 (H)$. Then:

(i) the fixed point set $Fix_A(H) = \emptyset$ of $H$ on $A$ is empty if and only if $A/H$ is a smooth hyper-elliptic surface;

(ii) the fixed point set $Fix_A(H) \neq \emptyset$ is non-empty if and only if $A/H$ is a smooth ruled surface with an elliptic base. If so, then $Fix_A(H)$ is of codimension $1$ in $A$.
\end{proposition}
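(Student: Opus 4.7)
The plan is to combine Proposition~\ref{PRAndQ1} with the Enriques--Kodaira classification, reducing everything to whether $H$ contains a reflection. By Proposition~\ref{PRAndQ1}(ii), the hypothesis $H = \mathcal{T}(H)\langle h \rangle$ with $h \in E_1(H)$ is exactly what forces a smooth birational model $Y$ of $A/H$ to satisfy $q(Y) = 1$. The Enriques--Kodaira classification then restricts $Y$ to three possibilities: a ruled surface over an elliptic curve ($\kappa = -\infty$), a hyper-elliptic surface ($\kappa = 0$), or a properly elliptic surface ($\kappa = 1$).

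Next I would analyse a general nontrivial $g = \tau h^k \in H$ with $\tau \in \mathcal{T}(H)$. If $\mathcal{L}(h)^k = I$ then $g$ is a nontrivial translation and has no fixed point on $A$; otherwise $\mathcal{L}(g) = \mathcal{L}(h)^k$ retains the simple eigenvalue $1$ and places $g$ in $E_1(H)$. Lifting to the universal cover $\mathbb{C}^2$ and solving $(\mathcal{L}(g) - I)z = \ell - w$ for $\ell$ in the period lattice shows that $Fix_A(g)$ is either empty or a finite disjoint union of translates of the one-dimensional eigenvalue-$1$ eigensubtorus. Consequently $g$ has a fixed point on $A$ if and only if $g$ is a reflection, and $Fix_A(H) = \bigcup_{g \neq \mathrm{Id}} Fix_A(g)$ coincides with the union of the elliptic curves fixed by the reflections in $H$.

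The two cases then split cleanly. If $H$ contains no reflection, the $H$-action on $A$ is free, the map $A \to A/H$ is \'etale, and $A/H$ is smooth with $\kappa(A/H) = \kappa(A) = 0$; together with $q(A/H) = 1$ this forces $A/H$ to be smooth hyper-elliptic, and by construction $Fix_A(H) = \emptyset$. If instead $H$ contains a reflection $r$, Proposition~\ref{PRAndQ1}(i) gives $\kappa(A/H) = -\infty$, which combined with $q = 1$ makes $A/H$ ruled over an elliptic base, and $Fix_A(H) \supseteq Fix_A(r)$ is of codimension one in $A$. The smoothness of $A/H$ itself (not merely a birational model) follows from the Chevalley--Shephard--Todd theorem: the stabilizer $H_p$ of every point $p \in A$ consists of the identity and reflections, each acting as a pseudo-reflection on $T_p A$.

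The step I expect to be the main technical obstacle is the fixed-point analysis of the affine elements $\tau h^k$, in particular showing that their fixed loci are either empty or full one-dimensional eigensubtori: the proof must combine the eigenspace decomposition of $\mathcal{L}(h)^k$ with the lattice-theoretic condition that $\ell - w$ lie in the image of $\mathcal{L}(h)^k - I$ on $\mathbb{C}^2$. Once this is in hand, the dichotomy between reflections and non-reflections is clean, and the converse implications of (i) and (ii) follow by elimination, since hyper-elliptic and ruled elliptic have different Kodaira dimensions and the properly elliptic option is excluded in both scenarios.
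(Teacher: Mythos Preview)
Your proposal is correct and follows essentially the same route as the paper: both invoke Proposition~\ref{PRAndQ1}(ii) to pin down $q=1$, reduce the dichotomy to whether $H$ contains a reflection, use the unramified covering $A\to A/H$ to get $\kappa=0$ in the fixed-point-free case, apply Proposition~\ref{PRAndQ1}(i) in the reflection case to get $\kappa=-\infty$, and appeal to Chevalley for smoothness, with the converses following by elimination. The only cosmetic differences are that you list properly elliptic surfaces as an a~priori possibility (the paper silently uses $\kappa(A/H)\le\kappa(A)=0$ to drop it from the start) and that you carry out the fixed-locus analysis of $\tau h^k$ more explicitly on $\mathbb{C}^2$, whereas the paper argues by moving the origin to a point with nontrivial stabiliser and reading off that every nontrivial isotropy element is a reflection.
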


\begin{proof}

According to Proposition \ref{PRAndQ1} (ii), $H = \mathcal{T}(H) \langle h \rangle$ with $h \in E_1 (H)$ if and only if any smooth model $Y$ of $A/H$ has irregularity $q(Y) = h^{1,0} (Y) =1$. More precisely, $Y$ is a hyper-elliptic surface or a ruled surface with an elliptic base.

If $Fix _A(H) = \emptyset$ then $A \rightarrow A/H$ is an unramified cover and the Kodaira dimension $\kappa (A/H) = \kappa (A) = 0$. Therefore $A/H$ is hyper-elliptic.

Suppose that there is an $H$-fixed point $p \in Fix _A(H)$ and move the origin $\check{o}_A$ of $A$ at $p$. For any $h_1 \in Stab _H( \check{o} _A) \setminus \{ Id_A \}$ one has $\check{o}_A = h_1 ( \check{o}_A) = \tau ( h_1) \mathcal{L}(h_1) ( \check{o}_A) = \tau (h_1) ( \check{o}_A)$, so that $h_1$ has trivial translation part $\tau (h_1) =  \tau _{\check{o}_A}$. As a result, $h_1 = \mathcal{L} (h_1) \in E_1(H) \setminus \{ Id_A \}$ is a reflection and fixes pointwise an elliptic curve on $A$.  In particular, $Fix_A(H)$ is of complex  codimension $1$.  If
\[
\mathcal{L}(h) = \left( \begin{array}{cc}
1  &  0  \\
0  &  \lambda _2(h)
\end{array}  \right) \quad  \mbox{  with  } \ \  \lambda _2(h) \neq 1
\]
then
\[
h_1 = \left( \begin{array}{cc}
1  &  0  \\
0   &  \lambda _2 (h) ^i
\end{array}  \right) \quad \mbox{  with  }  \ \ i \in {\mathbb Z}, \ \ \lambda _2 (h) ^i \neq 1.
\]
By Proposition \ref{PRAndQ1} (i), the quotient $A / \langle h_1 \rangle $ by the cyclic  group  $\langle h_1 \rangle$, generated by the reflection $h_1 = \mathcal{L}(h_1) \in E_1(H)$ is of Kodaira dimension
 $\kappa ( A / \langle h_1 \rangle ) = - \infty$. Along the finite (not necessarily Galois) cover $A / \langle h_1 \rangle \rightarrow A/H$, one has $\kappa ( A / \langle h_1 \rangle ) \geq \kappa (A / H)$, whereas $\kappa ( A/H) = - \infty$ and $A/H$ is birational to a ruled surface with an elliptic base. Note that all $h \in H$ with $Fix _A (h) \neq \emptyset$ are reflections, so that the quotient $A/H$ is a smooth surface by a result of Chevalley \cite{Chevalley}.

That proves the proposition, as far as the assumption $Fix_A(H) \neq \emptyset$ for a hyper-elliptic $A/H$ leads to a contradiction, as well as the assumption $Fix _A(H) = \emptyset$ for a ruled $A/H$ with an elliptic base.

\end{proof}

\begin{proposition}    \label{AbelianCoverE0}
Let $H = \mathcal{T}(H) \langle h \rangle $ for some
\[
h \in E_0 (H) = \{ h \in H \ \ \vert \ \  \lambda _j \mathcal{L}(h) \neq 1, \ \ 1 \leq j \leq 2 \}
\]
  with $\det \mathcal{L}(h) \neq 1$. Then $A/H$ is a rational surface.
\end{proposition}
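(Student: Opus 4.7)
\emph{Strategy.} The plan is to verify $q(A/H)=0$ and $\kappa(A/H)=-\infty$, which by the Enriques--Kodaira classification forces $A/H$ to be a rational surface: the remaining classes (abelian, K3, Enriques, hyper-elliptic) all have $\kappa\ge 0$, and ruled surfaces with positive-genus base have $q\ge 1$.

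\emph{Irregularity.} By Proposition~\ref{PRAndQ1}(ii), $q(A/H)=1$ would require $H=\mathcal{T}(H)\langle h'\rangle$ for some $h'\in E_1(H)$. Any such $h'$ projects onto a generator of $H/\mathcal{T}(H)=\langle\bar h\rangle$, so $h'=\tau h^i$ with $\gcd(i,n)=1$, where $n$ is the order of $\mathcal{L}(h)\in GL_2(R)$. The eigenvalues of $\mathcal{L}(h')=\mathcal{L}(h)^i$ are the $i$-th powers of the eigenvalues of $\mathcal{L}(h)$; since these latter are $n$-th roots of unity and $\gcd(i,n)=1$, a relation $\lambda_j^i=1$ forces $\lambda_j=1$, contradicting $h\in E_0(H)$. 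Hence no such $h'$ exists and $q(A/H)\ne 1$; the case $q(A/H)=2$ is excluded by Proposition~\ref{AbelianK3}(i), since $\mathcal{L}(h)\ne I$. Therefore $q(A/H)=0$.

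\emph{Kodaira dimension.} If $H$ contains a reflection, Proposition~\ref{PRAndQ1}(i) yields $\kappa(A/H)=-\infty$ at once. Otherwise $A/H$ has only isolated cyclic quotient singularities; let $\pi:Y\to A/H$ be a smooth resolution. The plurigenera $P_m(Y)=h^0(Y,mK_Y)$ are bounded from above by $\dim H^0(A,mK_A)^H$: any section of $mK_Y$ restricts to $Y\setminus E=(A/H)^{\mathrm{sm}}$ and lifts to an $H$-invariant holomorphic section of $mK_A$ on $A$ off the isolated fixed locus, extending across it by codimension-two reasons. Since $H$ acts on $H^0(A,mK_A)=\mathbb{C}\omega^m$ through the character $(\det\mathcal{L})^m$, these invariants vanish unless $d\mid m$, where $d=\mathrm{ord}(\det\mathcal{L}(h))\ge 2$. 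For $d\mid m$ the only candidate is $\omega^m$ itself, and the key claim is that it does not extend to a section of $mK_Y$. Because $\mathcal{L}(h)-I$ is an isogeny of $A$, $h$ admits a fixed point $p\in A$; its isotropy $H_p\supseteq\langle h\rangle$ acts on $T_pA$ through $\mathcal{L}$, with $h$ contributing determinant $\det\mathcal{L}(h)\ne 1$, so the local cyclic quotient singularity at $\pi(p)$ is non-Gorenstein. In a Hirzebruch--Jung chart of the standard $\tfrac{1}{n}(a_1,a_2)$-model with $a_1+a_2\not\equiv 0\pmod n$, all exceptional discrepancies are strictly negative, and a direct local chart computation shows that the pullback of $\omega^m$ acquires a pole of positive order along every exceptional curve over $\pi(p)$. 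Consequently $P_m(Y)=0$ for every $m\ge 1$, so $\kappa(A/H)=-\infty$.

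\emph{Main obstacle.} The technical heart of the argument is this last local pole computation at a non-Gorenstein cyclic quotient singularity; the hypothesis $\det\mathcal{L}(h)\ne 1$ is precisely what guarantees that the fixed-point singularity is not Du Val, that discrepancies are nonzero, and that the $H$-invariant pluricanonical form fails to extend. Were this hypothesis violated, the quotient would instead fall into the K3/Enriques regime. Combined with the vanishing of the irregularity, the resulting $\kappa(A/H)=-\infty$ forces $A/H$ to be rational.
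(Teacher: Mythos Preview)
Your proof is correct but takes a genuinely different route from the paper's. The paper argues topologically: it lifts $H$ to $\widetilde{H}\le\mathrm{Aff}(\mathbb{C}^2,R)$ containing $\pi_1(A)$, observes that since $\mathcal{L}(h)$ has no eigenvalue $1$ every affine map of the form $\tau_{(a,b)}\mathcal{L}(h)$ has a fixed point in $\mathbb{C}^2$, and hence $\widetilde{H}$ can be generated by elements with fixed points (replacing each translation generator $\Lambda$ by $\Lambda\widetilde{h}$). Armstrong's theorem then gives $\pi_1(A/H)=\widetilde{H}/\widetilde{N}=1$, so $A/H$ is rational or K3, and K3 is excluded since $\det\mathcal{L}(h)\neq 1$.

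Your approach instead computes the birational invariants $q$ and $\kappa$ directly. The irregularity argument is fine, though it can be shortened: $q(Y)=\dim H^0(A,\Omega^1)^{H}$ already vanishes because $\mathcal{L}(h)$ has no $1$-eigenvector. The substance lies in the Kodaira-dimension step. Your discrepancy argument is correct: the stabilizer $H_p$ injects into the cyclic group $\langle\mathcal{L}(h)\rangle$ (translations fix no point), so the singularity at $\pi(p)$ is a cyclic quotient $\tfrac{1}{n}(a_1,a_2)$; it contains $h$ with $\det\mathcal{L}(h)\neq 1$, hence is non-Gorenstein, hence non--Du Val, and for such singularities all discrepancies are strictly negative. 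Since $K_{A/H}$ is $\mathbb{Q}$-trivial (no reflections, so the quotient map is \'etale in codimension one), $mK_Y$ is linearly equivalent to an exceptional divisor with some negative coefficients, forcing $P_m(Y)=0$.

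The paper's fundamental-group argument is shorter and sidesteps any resolution analysis; your approach, while heavier, makes transparent exactly where the hypothesis $\det\mathcal{L}(h)\neq 1$ enters --- it produces a non-Gorenstein fixed point and kills all plurigenera, whereas the remaining hypothesis $h\in E_0(H)$ is what yields both $q=0$ and the existence of that fixed point.
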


\begin{proof}

We claim that $A/H$ with $A = E \times E$ is simply connected. To this end, let us denote by $R$ the endomorphism ring of $E$ and  lift $H$ to a subgroup $\widetilde{H}$ of the affine-linear group  $Aff ({\mathbb C}^2, R) = ( {\mathbb C}^2, +) \leftthreetimes GL(2,R)$ , containing  $( \pi _1(A), +)$ as a normal subgroup with quotient $\widetilde{H} / \pi _1 (A) = H$. Then
\[
A/H = \left[ {\mathbb C}^2 / \pi _1(A) \right] / \left[ \widetilde{H} / \pi _1(A) \right] \simeq {\mathbb C}^2 / \widetilde{H}.
\]
The universal cover $\widetilde{A} = {\mathbb C}^2$ of $A$ is a path connected, simply connected locally compact metric space and $\widetilde{H}$ is a discontinuous group of homeomorphisms of ${\mathbb C}^2$. That allows to apply Armstrong's result \cite{Armstrong} and conclude that
\[
\pi _1(A/H) = \pi _1 \left( {\mathbb C}^2 / \widetilde{H} \right) \simeq \widetilde{H} / \widetilde{N},
\]
where $\widetilde{N}$ is the normal subgroup of $\widetilde{H}$, generated by $\widetilde{h} \in \widetilde{H}$ with $Fix _{{\mathbb C}^2} ( \widetilde{h} ) \neq \emptyset$. There remains to be shown the coincidence $\widetilde{H}$ = $\widetilde{N}$.
In the case under consideration, let us choose generators $\tau _{(P_i, Q_i)}$ of $\mathcal{T}(H)$, $1 \leq i \leq m$ and fix liftings $(p_i,q_i) \in {\mathbb C} ^2 = \widetilde{A}$ of $( p_i + \pi _1(E), q_i + \pi _1(E) ) = (P_i, Q_i)$. If $\pi _1(E) = \lambda _1 {\mathbb Z} + \lambda _2 {\mathbb Z}$ for some $\lambda _1, \lambda _2 \in {\mathbb C}^*$ with $\frac{\lambda _2}{\lambda _1} \in {\mathbb C} \setminus {\mathbb R}$, then $\pi _1(A) = \pi _1(E) \times \pi _1(E)$ is generated by
\[
\Lambda _{11} = ( \lambda _1, 0), \ \ \Lambda _{12} = ( \lambda _2, 0), \ \ \Lambda _{21} = (0, \lambda _1) \ \
 \mbox{   and   } \ \ \Lambda _{22} = (0, \lambda _2).
\]
Let $\widetilde{h} = \tau _{(u,v)} \mathcal{L}(h) \in \widetilde{H}$ be a lifting of $h = \tau _{(U,V)} \mathcal{L}(h) \in H$, i.e., $( u + \pi _1(E), v + \pi _1(E)) = (U,V)$. Then $\widetilde{H}$ is generated by its subset
\[
S = \left \{ \Lambda _{ij}, \ \  \tau _{(p_k,q_k)}, \ \ \widetilde{h} \ \ \vert \ \  1 \leq i, j \leq 2, \ \ 1 \leq k \leq m \right \}.
\]
Since $\mathcal{L}(h)$ has eigenvalues $\lambda _1 \mathcal{L}(h) \neq 1$, $\lambda _2 \mathcal{L}(h) \neq 1$, for any $(a,b) \in {\mathbb C}^2$ the au\-to\-mor\-phism $\tau _{(a,b)} \mathcal{L}(h) \in Aut ( {\mathbb C}^2 )$ has a fixed point on ${\mathbb C}^2$. One can replace the generators $\Lambda _{ij}$ and $\tau _{(p_k,q_k)}$ of $\widetilde{H}$ by $\Lambda _{ij} \widetilde{h}$, respectively, $\tau _{(p_k, q_k)} \widetilde{h}$, since
\[
\langle S \rangle  \supseteq \{ \Lambda _{ij} \widetilde{h}, \ \ \tau _{(p_k,q_k)} \widetilde{h},  \ \ \widetilde{h} \ \ \vert \ \  1 \leq i, j \leq 2, \ \ 1 \leq k \leq m \}
 \]
 and $\Lambda _{ij}, \tau _{(p_k,q_k)} \in \langle  \{ \Lambda _{ij} \widetilde{h}, \ \ \tau _{(p_k,q_k)} \widetilde{h}, \ \  \widetilde{h} \ \ \vert \ \
  1 \leq i, j \leq 2, \ \  1 \leq k \leq m \} \rangle $.
 Thus
 \[
 \widetilde{H} = \langle \Lambda _{ij} \widetilde{h}, \ \ \tau _{(p_k,q_k)} \widetilde{h}, \ \ \widetilde{h} \ \ \vert \ \
  1 \leq i, j \leq 2, \ \ 1 \leq k \leq m \rangle
 \]
 coincides with $\widetilde{N}$, because $\widetilde{H}$ is generated by elements with fixed points. As a result, $\pi _1 (A/H) = \{ 1 \}$.

 Note that the simply connected surfaces $A/H$ are either rational or K3. According to $\det \mathcal{L}(h) \neq 1$, the quotient $A/H$ is not birational to a K3 surface, so that $A/H$ is a rational surface with isolated cyclic quotient singularities.

 \end{proof}

 \begin{proposition}    \label{K3Cover}
 Let $H  < Aut (A)$ be a finite subgroup of the form $H = K \langle h \rangle$ with $\mathcal{L} (K) < SL(2, R)$ and $\det \mathcal{L}(H) = \langle \det \mathcal{L} (h) \rangle \neq \{ 1 \}$.

 (i) The complement $H \setminus K$ has fixed points on $A$, $Fix _A(H \setminus K) \neq \emptyset$ if and only if $A/H$ is a rational surface;

 (ii) The complement $H \setminus K$ has no fixed points on $A$, $Fix _A(H \setminus K) = \emptyset$ if and only if $A/H$ is birational to an Enriques surface $Y$. If so, then the K3 universal cover $\widetilde{Y}$ of $Y$ is birational to $A/K$ and the index $[ H : K] =2$.
  \end{proposition}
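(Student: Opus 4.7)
The plan is to analyze the cyclic cover $\zeta^K_H:A/K\to A/H$ induced by the quotient group $H/K$, whose degree $m=[H:K]$ equals the order of $\det\mathcal{L}(h)\in R^*$. Since $R^*\in\{{\mathbb C}_2,{\mathbb C}_4,{\mathbb C}_6\}$, the possible values are $m\in\{2,3,4,6\}$. The hypothesis $\mathcal{L}(K)\leq SL(2,R)$ combined with $h\in H\setminus K$ makes $K=\ker(\det\mathcal{L}\vert_H)$, and Proposition \ref{AbelianK3}(ii)---whose standing hypothesis $K\supsetneq\mathcal{T}(H)$ is in force, as otherwise $A/K$ would be abelian rather than K3---identifies $A/K$ as a K3 surface with at worst isolated cyclic quotient singularities. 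The branch locus of $\zeta^K_H$ on $A/K$ is precisely the image of $Fix_A(H\setminus K)\subset A$, so the dichotomy in the statement is exactly the dichotomy between ramified and unramified $\zeta^K_H$.

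For part (ii), assume $Fix_A(H\setminus K)=\emptyset$, so that $\zeta^K_H$ is \'etale. Passing to minimal resolutions of $A/K$ and $A/H$ produces a smooth \'etale cyclic cover of degree $m$ by a K3 surface. Multiplicativity of the Euler characteristic under \'etale covers forces $\chi(Y)=24/m$, while Kodaira dimension is preserved and equals $0$ on the target $Y$. Among minimal surfaces of Kodaira dimension $0$, only the Enriques surface admits an \'etale cyclic K3 cover, and that cover is its degree-$2$ universal cover. Hence $m=[H:K]=2$, $Y$ is birational to an Enriques surface, and $\widetilde Y$ coincides with the minimal resolution of $A/K$, as claimed.

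For part (i), assume $Fix_A(H\setminus K)\neq\emptyset$. I would first establish $\pi_1(A/H)=\{1\}$ by the Armstrong argument of Proposition \ref{AbelianCoverE0}: lift $H$ to a group $\widetilde H< Aff({\mathbb C}^2,R)$ containing $\pi_1(A)$, fix a lift $\widetilde h'$ of some $h'\in H\setminus K$ having a fixed point on ${\mathbb C}^2$ (which exists by hypothesis), and replace the standard translation generators of $\widetilde H$ by their products with $\widetilde h'$, so that $\widetilde H$ becomes generated by elements with fixed points and hence equals the subgroup $\widetilde N$ generated by them. Next, the canonical bundle formula for the ramified cyclic cover $\zeta^K_H$, combined with $K_{A/K}\sim 0$ and a nontrivial ramification divisor $R>0$, yields $-K_{A/H}$ effective and $\kappa(A/H)=-\infty$; a simply connected surface of Kodaira dimension $-\infty$ is necessarily rational. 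The converse implications in both (i) and (ii) then follow automatically from the birational disjointness of rational and Enriques surfaces. The main obstacle I anticipate is the coherent treatment of the isolated cyclic quotient singularities of $A/K$ arising from $K$-fixed points: one must verify they do not lie in the image of $Fix_A(H\setminus K)$, and that the Euler characteristic comparison and the canonical bundle/ramification formula descend unambiguously to the minimal resolutions on both sides of $\zeta^K_H$.
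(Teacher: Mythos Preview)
Your treatment of part (ii) via the Euler characteristic is correct and cleaner than the paper's route: once the \'etale cover of smooth minimal models is in place, $\chi(Y)=24/m$ together with the classification of minimal $\kappa=0$ surfaces forces $m=2$ and $Y$ Enriques in one step, whereas the paper deduces $[H:K]=2$ only at the end by lifting through the K3 universal cover of $X$ and observing that the lift has degree one.

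Part (i), however, has a real gap in the canonical-bundle step. You assert a nontrivial ramification divisor $R>0$, but this can fail: if every element of $H\setminus K$ with a fixed point lies in $E_0(H)$, the fixed locus on $A$ is zero-dimensional, $\zeta^K_H$ is \'etale in codimension one, and the ramification \emph{divisor} on the singular model $A/K$ vanishes. Riemann--Hurwitz then gives only $\zeta^*K_{A/H}\sim 0$, i.e.\ $K_{A/H}$ is torsion, from which $\kappa(A/H)=-\infty$ does not follow. Your Armstrong argument does not rescue this: even when $\mathcal L(h')\in E_0$ so that the translation replacements all acquire fixed points on ${\mathbb C}^2$, you have only placed the single coset $h'K$ inside $\widetilde N$, and if $h'K$ fails to generate $H/K\simeq{\mathbb C}_m$ (which can happen for $m\in\{4,6\}$) you cannot conclude $\widetilde H=\widetilde N$. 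The paper bypasses both difficulties. It reads off $q(X)=0$ directly from the K3 cover (no appeal to $\pi_1$), narrows to ``rational or Enriques'', and then, assuming Enriques for contradiction, passes to smooth resolutions on both sides \emph{before} invoking Riemann--Hurwitz. By purity of the branch locus on smooth surfaces, the induced morphism $\widetilde\zeta:Z\to\widetilde X$ between K3 surfaces is ramified along a genuine divisor $B(\widetilde\zeta)>0$, and $\mathcal O_Z=K_Z=\widetilde\zeta^*K_{\widetilde X}+B(\widetilde\zeta)$ is the contradiction. The singularity obstacle you flag in your last sentence is exactly what this resolve-first strategy is built to absorb.
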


 \begin{proof}

 First of all, the $H/K$-Galois cover $\zeta : A/K \rightarrow A/H$ is ramified  if and only if the complement $H \setminus K$ has a fixed point on $A$. More precisely, a point $Orb_K(p) \in A/K$, $p \in A$ is fixed by $hK \in H/K \setminus \{ K \}$ exactly when $h Orb _K (p) = Orb _K(p)$ or
 \begin{equation}   \label{FixedOrbit}
 \{ hk (p) \ \ \vert \ \  k \in K \} = \{ k(p) \ \ \vert \ \  k \in K \}.
 \end{equation}
 The condition (\ref{FixedOrbit}) implies the existence of $k_o \in K$ with $h(p) = k_o (p)$. Therefore $h _1 = k_o^{-1} h  \in Stab _H(p) \setminus K$ has a fixed point and
 \[
 h_1 K = (k_o^{-1} h) K = k_o ^{-1} (hK) = k_o ^{-1} K h = K h = h K,
  \]
  as far as $K$ is a normal subgroup of $H$. Conversely, if $h_1(p) = p$ for some $h_1 \in H \setminus K$ then $K_p = K h_1(p) = h_1 K(p)$ and the point $Orb _K (p) \in A/K$ is fixed by $h_1 K \in H /K$.

  Note that the presence of a covering $\zeta  : A/K \rightarrow A/H$ by a (singular) K3 model $A/K$ implies the vanishing $q(X) = h^{1,0}(X)$ of the irregularity of any smooth model $X$ of $A/H$, as far as $q(X) \leq q(Y) = 0$ for any smooth $H/K$-Galois cover $Y$ of $X$, birational to $A/K$.
  The smooth projective surfaces  $S$  with irregularity  $q(S)=0$ and Kodaira dimension $\kappa (S) \leq 0$ are the rational, K3 and Enriques $S$. Due to $\mathcal{L}(h) \neq 1$, the smooth model $X$ of  $A/H$  is not a K3 surface. Thus, $X$ is either an Enriques or a rational surface.

  If $Fix _A(H \setminus K) = \emptyset$ and $\zeta : A/K \rightarrow A/H$ in unramified, then $\kappa (X) = \kappa (Y) =0$ by \cite{Peters} and $X$ is an Enriques surface.

  Let us assume that $Fix _A (H \setminus K) \neq \emptyset$ and the minimal resolution $Y$ of the singularities of $A/H$ is an Enriques surface. Consider the minimal resolution $\rho _1 : Y \rightarrow A/K$ of the singularities of $A/K$ and the resolution $\nu _2:  X_2  \rightarrow A/H$ of $\zeta (A/H)^{\rm sing}$. Then there is a commutative diagram
  \begin{equation}   \label{FirstResolution}
  \begin{diagram}
  \node{A/K}   \arrow{s,r}{\zeta} \node{Y}   \arrow{w,t}{\rho _1}  \arrow{s,r}{\zeta _1}  \\
  \node{A/H}  \node{X_2}  \arrow{w,t}{\nu _2}
  \end{diagram}
  \end{equation}
  with $H/K$-Galois cover $\zeta _1$, ramified over the pull-back $\nu _2 ^{-1} B( \zeta)$ of the branch locus $B(\zeta) \subset A/H$ of $\zeta$. The minimal resolution $\mu _2 : X \rightarrow X_2$ of  the singularities $X_2^{\rm sing} = ( A/H) ^{\rm sing} \setminus \zeta ( A/K) ^{\rm sing}$ of $X_2$  and
  $\zeta _1 : Y \rightarrow X_2$ give rise to the fibered product commutative diagram
  \begin{equation}    \label{SecondResolution}
  \begin{diagram}
  \node{Y}  \arrow{s,r}{\zeta _1} \node{Z = Y \times _{X_2} X} \arrow{w,t}{{\rm  pr}_1}   \arrow{s,r}{\zeta _2}  \\
  \node{X_2}  \node{X}  \arrow{w,t}{\mu _2}
  \end{diagram},
  \end{equation}
  with ramified $H/K$-Galois cover $\zeta _2$ and birational ${\rm pr} _1$. Note that $Z$ is a smooth surface, since otherwise $\emptyset \neq {\rm pr} _1 ( Z^{\rm sing} ) \subseteq X^{\rm sing} = \emptyset$. Moreover, $Z$ is of type K3.
  Let us consider the universal double covering $U_X : \widetilde{X} \rightarrow X$ of $X$ by a K3 surface $\widetilde{X}$. Since $Z$ is simply connected and $U_X : \widetilde{X} \rightarrow X$ is unramified, the finite cover $\zeta _2 : Z \rightarrow X$ lifts to a morphism $\widetilde{\zeta} : Z \widetilde{X}$, closing the commutative diagram
  \begin{equation}    \label{K3UnivCover}
  \begin{diagram}
  \node{\mbox{}}  \node{\widetilde{X}}  \arrow{s,r}{U_X}  \\
  \node{Z}  \arrow{ne,l}{\widetilde{\zeta}}  \arrow{e,t}{\zeta _2} \node{X}
  \end{diagram}.
  \end{equation}
The finite ramified morphism $\zeta _2 = U_X \widetilde{\zeta}$ has finite ramified factor $\widetilde{\zeta}$, as far as the universal covering  $U_X : \widetilde{X} \rightarrow X$ is unramified. If $B(\widetilde{\zeta}) \subset Z$ is the branch locus of $\widetilde{\zeta}$ then the canonical divisor
\[
\mathcal{O}_Z = \mathcal{K}_Z = \widetilde{\zeta}^* \mathcal {K}_{\widetilde{X}} + B( \widetilde{\zeta}) =
\widetilde{\zeta} ^* \mathcal{O} _{\widetilde{X}} + B( \widetilde{\zeta}),
\]
which is an absurd. Therefore, $Fix _A (H \setminus K) \neq \emptyset$ implies that $A/H$ is a rational surface.

If $\zeta : A/K \rightarrow A/H$ is unramified and $A/H$ is an Enriques surface then $\zeta _1 : Y \rightarrow X_2$ from diagram (\ref{FirstResolution}) and $\zeta _2 : Z \rightarrow X$ from (\ref{SecondResolution}) are unramified.  As a result, $\widetilde{\zeta} : Z \rightarrow \widetilde{X}$ from diagram (\ref{K3UnivCover}) is a finite ramified cover of smooth simply connected surfaces, whereas $\deg ( \widetilde{\zeta}) =1$ and $Z$ coincides with the universal cover $\widetilde{X}$ of $X$. Thus, $\widetilde{X}$ is birational to $A/K$ and
\[
\deg ( \zeta) = \deg ( \zeta _1) = \deg ( \zeta _2) = \deg ( U_X) =2,
\]
so that $[H : K] = | H/K| = \deg (\zeta) =2$.

 \end{proof}

By the very construction, the surfaces $A/H$ and $\overline{ {\mathbb B} / \Gamma _H} = \left( {\mathbb B} / \Gamma \right)'/ H $ are si\-mul\-ta\-neous\-ly singular. The classical work \cite{Chevalley} of Chevalley  establishes that $A/H$ is singular if and only if there is $h \in H$, whose linear part $\mathcal{L}(h) \in GL(2,R)$ has eigenvalues $\{ \lambda _1 \mathcal{L}(h), \lambda _2 \mathcal{L} (h) \} \not \ni 1$. Thus, $A/H$ and $\overline{{\mathbb B} / \Gamma _H}$ are smooth exactly when birational  to a  hyper-elliptic or a  ruled surface with an  elliptic base.

 Let $T_i$ be an irreducible compo\-nent of  $T = \left( {\mathbb B} / \Gamma \right)'\setminus \left( {\mathbb B} / \Gamma \right)$  of ${\mathbb B} / \Gamma$. Then the irreducible component $Orb _H(T_i) / H$ of $T/H = \left( \overline{{\mathbb B} / \Gamma _H} \right)  \setminus \left( {\mathbb B} / \Gamma _H \right)$ is elliptic (respectively, rational) if and only if $Fix _A(H) \cap D_i = \emptyset$ (respectively, $Fix _A(H) \cap D_i \neq \emptyset$) for the image $D_i = \xi (T_i)$ of $T_i$ under the blow-down $\xi : \left( {\mathbb B} / \Gamma \right)'\rightarrow A$ of the $(-1)$-curves.

%%%%%%%%%%%%%%%%%%%%%%%%%%%%%%%%%%%%%%%%%%%%%%%%%%%%%%%%%%%%%%%%%%%%%%%%%%%%%%%%%%%%%%%%%%%%%%%%%%%%%%%%%%%%%%%%%%%%%%%%%%%%%%%%%%%%%%%%%%%%%%%%%%%%%%%%%%%%5
%%%%%%%%%%%%%%%%%%%%%%%%%%%%%%%%%%%%%%%%%%%%%%%%%%%%%%%%%%%%%%%%%%%%%%%%%%%%%%%%%%%%%%%%%%%%%%%%%%%%%%%%%%%%%%%%%%%%%%%%%%%%%%%%%%%%%%%%%%%%%%%%%%%%%%%%%%%%
%%%%%%%%%%%%%%%%%%%%%%%%%%%%%%%%%%%%%%%%%%%%%%%%%%%%%%%%%%%%%%%%%%%%%%%%%%%%%%%%%%%%%%%%%%%%%%%%%%%%%%%%%%%%%%%%%%%%%%%%%%%%%%%%%%%%%%%%%%%%%%%%%%%%%%%%%%%%%

\newpage
\section{Linear automorphisms of finite order }

Throughout this section, let $R$ be the endomorphism ring of an elliptic curve $E$.  It is well known that $R = {
\mathbb Z} + f \mathcal{O}_{-d}$ for a natural number $f \in {\mathbb N}$, called the conductor of $E$ and integers ring $\mathcal{O}_{-d}$ of an imaginary quadratic number field ${\mathbb Q} ( \sqrt{-d})$. More precisely, $\mathcal{O}_{-d} = {\mathbb Z} + \omega _{-d} {\mathbb Z}$ with
\[
\omega _{-d} = \begin{cases}
\sqrt{-d}  & \text{ for $-d \not \equiv 1 ({\rm mod}  \ \ 4)$,} \\
\frac{1 + \sqrt{-d}}{2} & \text{for $-d \equiv 1 ({\rm mod} \ \ 4)$.}
\end{cases}
\]
and $R = {\mathbb Z} + f \omega _{-d} {\mathbb Z}$ for $R \neq  {\mathbb Z}$. In particular, $R$ is a subring of ${\mathbb Q} ( \sqrt{-d})$. We write $R \subset {\mathbb Q} (\sqrt{-d})$ both, for the case of $R = {\mathbb Z} + f \omega _{-d} {\mathbb Z}$ or $R = {\mathbb Z}$, without specifying the presence of a complex multiplication on $E$. (For $R = {\mathbb Z}$ one hat $R \subset {\mathbb Q} (\sqrt{-d})$ for $\forall d \in {\mathbb N}$.)

The automorphism group of the abelian surface $A = E \times E$ is a semi-direct product
\[
{\rm Aut} (A) = (A, +) \rtimes GL(2,R)
\]
of its translation subgroup $(A,+)$ and the isotropy group
\[
{\rm Aut} _{\check{o}_A} (A) = GL(2,R) = \{ g \in R_{2 \times 2} \ \ \vert \ \  \det (g) \in R^* \}
 \]
 of the origin $\check{o}_A \in A$.

 \begin{lemma}    \label{UnitsGroupR}
 Let $R$ be the endomorphism ring of an elliptic curve $E$. If $R$ is different from $\mathcal{O}_{-1} = {\mathbb Z} [i]$ and $\mathcal{O}_{-3}$ then
 \[
 R^* = \langle -1 \rangle = \{ \pm 1 \} = {\mathbb C} _2
 \]
 is the cyclic group of the square roots of the unity.

 If $R = {\mathbb Z}[i]$ is the ring of the Gaussian integers then
 \[
 R^* = \langle i \rangle = \{ \pm 1, \pm i \} = {\mathbb C} _4
 \]
 is the cyclic group of the roots of unity of order $4$.

 The units group of Eisensten integers $R = \mathcal{O}_{-3}$ is the cyclic group
 \[
 R^* =
 \langle e^{ \frac{ 2 \pi i}{6}} \rangle = \{ \pm 1, \ \ e^{ \pm \frac{ 2 \pi i}{3}}, \ \ e^{ \pm \frac{\pi i}{3}} \} = {\mathbb C}_6
 \]
 of the sixth roots of unity.
  \end{lemma}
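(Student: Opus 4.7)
Since $R \subset \mathbb{Q}(\sqrt{-d})$ is an order in an imaginary quadratic field, I will prove the lemma by solving the Diophantine equation $N(\alpha) = 1$, where $N$ is the norm inherited from $\mathbb{Q}(\sqrt{-d})$. The key preliminary observation is that an element $\alpha \in R$ is a unit if and only if $\alpha\bar{\alpha}$ is a unit in $\mathbb{Z}$, i.e.\ $N(\alpha) = \pm 1$; since $N$ takes only non-negative values on an imaginary quadratic field, this reduces to $N(\alpha)=1$. Moreover, since $R \subset R \otimes_{\mathbb{Z}} \mathbb{Q} = \mathbb{Q}(\sqrt{-d})$, the absolute value $|\alpha|$ equals $\sqrt{N(\alpha)}$, so units are precisely the elements of $R$ on the unit circle.

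Next I would write a general element of $R = \mathbb{Z} + f\omega_{-d}\mathbb{Z}$ as $\alpha = a + bf\omega_{-d}$ with $a, b \in \mathbb{Z}$, and split into two cases according to the residue of $-d$ modulo $4$.

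\emph{Case 1:} $-d \not\equiv 1 \pmod{4}$, so $\omega_{-d} = \sqrt{-d}$. Then
\[
N(\alpha) = a^2 + b^2 f^2 d.
\]
The equation $a^2 + b^2 f^2 d = 1$ forces $b^2 f^2 d \leq 1$. If $b = 0$, then $a = \pm 1$, contributing the trivial units. If $b \neq 0$, the only possibility is $|b| = f = d = 1$, which gives $R = \mathbb{Z}[\sqrt{-1}] = \mathbb{Z}[i]$ with the extra units $\pm i$; in every other sub-case of Case 1 one obtains $R^* = \{\pm 1\} = \mathbb{C}_2$.

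\emph{Case 2:} $-d \equiv 1 \pmod 4$, so $\omega_{-d} = \tfrac{1+\sqrt{-d}}{2}$ and
\[
N(\alpha) = a^2 + abf + b^2 f^2 \cdot \tfrac{1+d}{4}.
\]
Completing the square shows that for $b \neq 0$ one has $N(\alpha) \geq b^2 f^2 d/4$, so $b^2 f^2 d \leq 4$. Since $d \geq 3$ in this case, this forces $|b| = f = 1$ and $d = 3$, yielding $R = \mathcal{O}_{-3}$ and the equation $a^2 + a + 1 = 1$, i.e.\ $a \in \{0, -1\}$; in every other sub-case only $b=0$, $a = \pm 1$ works. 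For $R = \mathcal{O}_{-3}$, collecting the six solutions $(a,b) \in \{(\pm 1,0),(0,\pm 1),(1,-1),(-1,1)\}$ and rewriting them in the form $e^{k\pi i/3}$, $0 \leq k \leq 5$, identifies $R^* = \langle e^{2\pi i/6}\rangle = \mathbb{C}_6$.

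Finally, to conclude the cyclic structure in the $\mathbb{Z}[i]$-case, I would note that $\{1,i,-1,-i\}$ is the group of fourth roots of unity, generated by $i$, so $R^* = \langle i\rangle \simeq \mathbb{C}_4$. The main obstacle is simply the careful bookkeeping of the two congruence cases and eliminating the ``large $d$ or $f$'' possibilities by the elementary inequalities above; the rest is a direct enumeration.
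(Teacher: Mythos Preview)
Your proof is correct and self-contained. The paper takes a different, shorter route: it quotes the classical fact that $\mathcal{O}_{-d}^* = \{\pm 1\}$ for $d\neq 1,3$, $\mathcal{O}_{-1}^* = \langle i\rangle$, and $\mathcal{O}_{-3}^* = \langle e^{\pi i/3}\rangle$, and then observes that $R^*$ is a subgroup of $\mathcal{O}_{-d}^*$ since $R = \mathbb{Z} + f\mathcal{O}_{-d} \subseteq \mathcal{O}_{-d}$. This immediately gives $R^* = \{\pm 1\}$ whenever $d\neq 1,3$; for $d=1$ the paper notes that $i\in R$ forces $R=\mathcal{O}_{-1}$ (so $f=1$), and for $d=3$ that $e^{2\pi i/3} = -1+\omega_{-3}\in R$ forces $\omega_{-3}\in R$ and hence $R=\mathcal{O}_{-3}$.

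In effect, the paper argues top-down (restrict from the maximal order), whereas you argue bottom-up (solve $N(\alpha)=1$ directly in the order). Your approach has the advantage of not relying on the quoted description of $\mathcal{O}_{-d}^*$ --- it proves that statement along the way --- at the cost of the explicit case split on $-d\bmod 4$ and the small Diophantine bookkeeping. The paper's approach is quicker once one is willing to cite the standard units theorem for imaginary quadratic rings. One small omission in your write-up: you should also note the trivial case $R=\mathbb{Z}$ (no complex multiplication), where $R^*=\{\pm 1\}$ is immediate.
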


\begin{proof}

Recall that the units group $\mathcal{O}_{-d}^*$ of the integers ring $\mathcal{O}_{-d}$ of an imaginary quadratic number field ${\mathbb Q}  (\sqrt{-d})$ is
\[
\mathcal{O}_{-d} ^* = \langle -1 \rangle \simeq {\mathbb C}_2 \ \ \mbox{  for   } \ \ d \neq 1,3 \ \ \mbox{ and}
\]
\[
\mathcal{O}_{-1} ^* =  {\mathbb Z} [i] ^* = \langle i \rangle = {\mathbb C}_4,
\]
\[
\mathcal{O}_{-3} ^* = \langle e^{ \frac{2\pi i}{6}}  \rangle = {\mathbb C}_6.
\]
The units group $R^*$ of the subring $R = {\mathbb Z} + f \mathcal{O}_{-d}$ of $\mathcal{O}_{-d}$ is a subgroup of $\mathcal{O}_{-d} ^*$, so that $R^* = \langle -1 \rangle \simeq {\mathbb C}_2$ for $R = {\mathbb Z}$ or $R = {\mathbb Z} + f \mathcal{O}_{-d}$ with $d \in {\mathbb N} \setminus \{ 1, 3 \}$, $f \in {\mathbb N}$. In the case of $R = {\mathbb Z} + f \mathcal{O}_{-1}$, the assumption $i \in R^*$ implies $R = \mathcal{O}_{-1}$ and happens only for the conductor $f=1$. Similarly, the existence of $e^{\frac{2 \pi i}{3}} \in R^* \setminus  \{ \pm 1 \}$ for $R = {\mathbb Z} + f \mathcal{O}_{-3}$ forces
\[
e^{\frac{2 \pi i}{3}} = - \frac{1}{2} + \frac{\sqrt{3}i}{2} = - 1 + \frac{1 + \sqrt{-3}}{2} = -1 + \omega _{-3} \in R^*,
\]
whereas $\omega _{-3} \in R$ and $R = \mathcal{O}_{-3}$.

\end{proof}

Towards the description of $g \in GL(2,R)$ of finite order, let us recall that the polynomials
\[
f(x) = x^n + a_1 x^{n-1} + \ldots + a_{n-1} x + a_n \in {\mathbb Z}[x]
\]
with leading coefficient $1$ are called monic.

\begin{definition}   \label{IntegralElement}
If $A$ is a subring with unity of a ring $B$ then $b \in B$ is integral over $A$ if annihilates  a monic polynomial
\[
f(x) = x^n + a_1 x^{n-1} + \ldots + a_{n-1} x + a_n \in A[x]
\]
with coefficients from $A$.
\end{definition}

It is well known (cf. \cite{AM}) that $b \in B$ is integral over $A$ if and only if the polynomial ring $A[b]$ is a finitely generated $A$-module.

\begin{definition}   \label{AlgebraicInteger}
The complex numbers $c \in {\mathbb C}$, which are integral over ${\mathbb Z}$ are called algebraic integers.
\end{definition}

Any algebraic integer $c$ is algebraic over ${\mathbb Q}$. If $g(x) \in {\mathbb Q} [x] \setminus {\mathbb Q}$ is a polynomial of minimal degree $k$ with a root $c$ then $g(x)$ divides any $h(x) \in {\mathbb Q} [x] \setminus {\mathbb Q}$ with $h(c) =0$. An arbitrary  $g'(x) \in {\mathbb Q} [x]$ of degree $k$ with a root $c$ is of the form $g'(x) = q g(x)$ for some ${\mathbb Q}^*$. The polynomials $q g(x)$ with arbitrary $q \in {\mathbb  Q}^*$ are referred to as minimal polynomials of $c$ over ${\mathbb Q}$. If $c$ is algebraic over ${\mathbb Q}$ then the ring of the  polynomials ${\mathbb Q}[c]$ of $c$ with rational coefficients coincides with the field ${\mathbb Q}(c)$ of the rational functions of $c$, ${\mathbb Q}[c] = {\mathbb Q}(c)$ and the degree $[ {\mathbb Q} (c) : {\mathbb Q}]$ equals the degree of a minimal polynomial of $c$ over ${\mathbb Q}$.

\begin{definition}    \label{DegreeOverQ}
If $c \in {\mathbb C}$ is algebraic over ${\mathbb Q}$, then $[ {\mathbb Q} (c) : {\mathbb Q}] = \dim _{\mathbb Q} {\mathbb Q}(c)$ is called the degree of $c$ over ${\mathbb Q}$.
\end{definition}

Let $c$ be an algebraic integer and $f(x) \in {\mathbb Z} [x] \setminus {\mathbb Z}$ be a monic polynomial of minimal degree with a root $c$. Then any $h(x) \in {\mathbb Z}[x]$ with $h(c)=0$ is divisible by $f(x)$. Thus, $f(x)$ is unique and referred to as the minimal integral relation of $c$. If $f(x) \in {\mathbb Z}[x] \setminus {\mathbb Z}$ is the minimal integral relation of $c \in {\mathbb C}$ and $g(x) \in {\mathbb Q} [x] \setminus {\mathbb Q}$ is a minimal polynomial of $c$ over ${\mathbb Q}$, then $g(x) = q f(x)$ for the leading coefficient $q = LC(g) \in {\mathbb Q}^*$ of $g(x)$. More precisely, $g(x)$ divides $f(x)$ and $f(x)$ is indecomposable over ${\mathbb Q}$, as far as it is indecomposable over ${\mathbb Z}$. In such a way, one obtains the following

\begin{lemma}   \label{DegreeOfMinIntegralRelation}
If $c \in {\mathbb C}$ is an algebraic integer, then the degree $\deg _{\mathbb Q} (c) = [ {\mathbb Q} (c) : {\mathbb Q}]$ of $c$ over ${\mathbb Q}$ equals the degree of the minimal integral relation
\[
f(x) = x^n + a_1 x^{n-1} + \ldots + a_{n-1} x + a_n \in {\mathbb Z}[x] \ \ \mbox{ of  } \ \ c.
\]
\end{lemma}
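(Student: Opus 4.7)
The plan is to show that the unique monic $f(x) \in {\mathbb Z}[x]$ of minimal degree vanishing at $c$ is, up to the rational scalar $LC(g)$, the same polynomial as any minimal polynomial $g(x) \in {\mathbb Q}[x] \setminus {\mathbb Q}$ of $c$ over ${\mathbb Q}$. Once this is established, Definition \ref{DegreeOverQ} gives $[{\mathbb Q}(c) : {\mathbb Q}] = \deg g = \deg f$, as required.

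First I would observe that a minimal polynomial $g(x) \in {\mathbb Q}[x] \setminus {\mathbb Q}$ of $c$ over ${\mathbb Q}$ divides $f(x)$ in ${\mathbb Q}[x]$, because $g$ divides every $h(x) \in {\mathbb Q}[x] \setminus {\mathbb Q}$ with $h(c) = 0$ (this was recalled in the paragraph preceding Definition \ref{DegreeOverQ}). Hence $\deg g \leq \deg f$. The real content of the lemma is the opposite inequality, which amounts to showing that $f(x)$ is irreducible in ${\mathbb Q}[x]$; for then $g(x)$, being a divisor of the irreducible $f(x)$, must equal $LC(g) \cdot f(x)$.

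The main step, and the obstacle I would focus on, is this irreducibility. I would first argue that $f(x)$ is irreducible in ${\mathbb Z}[x]$: any nontrivial factorization $f(x) = f_1(x) f_2(x)$ with $f_1, f_2 \in {\mathbb Z}[x]$ of strictly smaller degree may be taken with both factors monic (since $f$ is monic, one collects signs appropriately), but then $c$ is a root of either $f_1$ or $f_2$, contradicting the minimality of $\deg f$ in Definition \ref{IntegralElement}. Passing from irreducibility in ${\mathbb Z}[x]$ to irreducibility in ${\mathbb Q}[x]$ is exactly Gauss's Lemma applied to the primitive (indeed monic) polynomial $f$: any nontrivial factorization $f = u v$ with $u, v \in {\mathbb Q}[x]$ of smaller degree can be rescaled to a factorization of $f$ in ${\mathbb Z}[x]$, contradicting the previous step.

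Combining these, $g(x) = LC(g) \cdot f(x)$, so $\deg_{\mathbb Q}(c) = [{\mathbb Q}(c) : {\mathbb Q}] = \deg g = \deg f = n$, which is the claim of the lemma. The only delicate ingredient is the appeal to Gauss's Lemma; the rest is bookkeeping around minimality.
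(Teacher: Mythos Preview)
Your proof is correct and follows essentially the same approach as the paper: the paper's argument (given in the paragraph immediately preceding the lemma) likewise observes that $g(x)$ divides $f(x)$, that $f(x)$ is indecomposable over ${\mathbb Z}$ by minimality, and then invokes the passage from irreducibility over ${\mathbb Z}$ to irreducibility over ${\mathbb Q}$ to conclude $g(x) = LC(g)\, f(x)$. Your version is slightly more explicit in naming Gauss's Lemma and in spelling out why a monic factorization over ${\mathbb Z}$ can be taken with monic factors, but the route is identical.
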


\begin{lemma}   \label{EigenvaluesArbitraryGL2R}
Let  $E$ be an elliptic curve, $R = End(E)$ and  $g \in GL(2, R)$.  Then any eigenvalue  $\lambda _1$  of $g$ is an algebraic integer of degree
 $1, 2$ or $4$ over ${\mathbb Q}$.
\end{lemma}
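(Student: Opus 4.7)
The strategy is to exploit the fact that the entries of $g$ lie in a ring of algebraic integers, so the characteristic polynomial of $g$ is a monic polynomial whose coefficients themselves are algebraic integers. A tower-of-fields argument then pins down the possible degrees.

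First I would write down the characteristic polynomial
\[
\chi _g (x) = x^2 - \operatorname{tr}(g)\, x + \det (g) \in R[x],
\]
and recall that $R \subseteq \mathcal{O}_{-d}$ lies inside the ring of integers of the imaginary quadratic field $\mathbb{Q}(\sqrt{-d})$ (with the convention $R \subset \mathbb{Q}(\sqrt{-d})$ for every $d$ when $R=\mathbb{Z}$). In particular $\operatorname{tr}(g), \det(g) \in \mathcal{O}_{-d}$ are algebraic integers, so $\lambda_1$ is a root of a monic polynomial whose coefficients are algebraic integers. By Definition \ref{IntegralElement} and the remark after it, $\lambda_1$ is integral over $\mathcal{O}_{-d}$, and transitivity of integral dependence (applied to the tower $\mathbb{Z} \subseteq \mathcal{O}_{-d} \subseteq \mathcal{O}_{-d}[\lambda_1]$) yields that $\lambda_1$ is integral over $\mathbb{Z}$, i.e., $\lambda_1$ is an algebraic integer in the sense of Definition \ref{AlgebraicInteger}.

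To bound the degree $[\mathbb{Q}(\lambda_1):\mathbb{Q}]$ I would consider the composite field $L=\mathbb{Q}(\sqrt{-d})(\lambda_1)$. Since $\lambda_1$ satisfies the degree-$2$ polynomial $\chi_g(x) \in \mathbb{Q}(\sqrt{-d})[x]$, one has $[L:\mathbb{Q}(\sqrt{-d})] \le 2$. Combined with $[\mathbb{Q}(\sqrt{-d}):\mathbb{Q}] \le 2$, multiplicativity of degrees gives
\[
[L:\mathbb{Q}] = [L:\mathbb{Q}(\sqrt{-d})] \cdot [\mathbb{Q}(\sqrt{-d}):\mathbb{Q}] \le 4.
\]
Then from the tower $\mathbb{Q} \subseteq \mathbb{Q}(\lambda_1) \subseteq L$, the degree $[\mathbb{Q}(\lambda_1):\mathbb{Q}]$ divides $[L:\mathbb{Q}] \le 4$, so it takes one of the values $1, 2$ or $4$ (the divisor $3$ is excluded because it does not divide $4$). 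By Lemma \ref{DegreeOfMinIntegralRelation} this is also the degree of the minimal integral relation of $\lambda_1$.

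The main point I would be careful about is that the divisor $3$ is ruled out only because $[L:\mathbb{Q}]$ is itself bounded by $4$, not merely by some multiple of the degree of $\lambda_1$; the tower argument must pass through the quadratic extension $\mathbb{Q}(\sqrt{-d})$ rather than directly over $\mathbb{Q}$, since the characteristic polynomial has coefficients in $\mathcal{O}_{-d}$ and need not lie in $\mathbb{Z}[x]$. Once this subtlety is made explicit, the proof is essentially the two-step integrality and tower computation sketched above.
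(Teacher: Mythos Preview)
Your proof is correct and follows essentially the same route as the paper's: both argue that $\lambda_1$ is integral over $R$ via the characteristic polynomial, transfer integrality down to $\mathbb{Z}$ by transitivity (the paper proves this via the Noetherian/finitely-generated-module criterion, you quote it directly), and then run the same tower-of-fields computation through $\mathbb{Q}(\sqrt{-d})$ to bound $[\mathbb{Q}(\lambda_1):\mathbb{Q}]$. The only cosmetic difference is that you pass through $\mathcal{O}_{-d}$ rather than $R$ itself, which changes nothing.
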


\begin{proof}

It suffices to observe that if $A  \subset B$ are subrings with unity of a ring $C$, $A$ is a Noetherian ring, $B$ is a  finitely generated $A$-module and $c \in C$ is integral over $B$, then $c$ is integral over $A$. Indeed, let $f \in {\mathbb N}$  be the conductor of $E$ and
\begin{equation}   \label{StandardGenerator}
\omega _{-d} = \begin{cases}
\sqrt{-d}  &  \text{ for $-d \not \equiv 1 ({\rm mod} 4)$,}   \\
\frac{1 + \sqrt{-d}}{2}  &  \text{ for $-d \equiv 1 ({\rm mod} 4)$.}
\end{cases}
\end{equation}
Then the integers ring ${\mathbb Z}$ is Noetherian and  the endomorphism ring
\[
R = {\mathbb Z} + f \mathcal{O}_{-d} = {\mathbb Z} + f \omega _{-d} {\mathbb Z}
\]
of $E$ is a free ${\mathbb Z}$-module of rank $2$. The eigenvalue $\lambda _1 \in {\mathbb C}$ of $g \in GL(2, R)$ is a root of the characteristic polynomial
\[
\mathcal{X}_g ( \lambda) = \lambda ^2 - \tr(g) \lambda + \det(g) \in R[ \lambda]
\]
of $g$, so that $\lambda _1$ is integral over $R$. According to the claim, $\lambda _1$ is integral over ${\mathbb Z}$ or $\lambda _1 \in {\mathbb C}$ is an algebraic integer. On one hand, the degree of $\lambda _1 $ over ${\mathbb Q} (\sqrt{-d})$ is
\[
\deg _{{\mathbb Q} ( \sqrt{-d})} ( \lambda _1) = [ {\mathbb Q} ( \sqrt{-d}, \lambda _1) : {\mathbb Q} ( \sqrt{-d}) ] = 1 \ \ \mbox{  or  } \ \ 2,
\]
so that
\[
[ {\mathbb Q} ( \sqrt{-d}, \lambda _1 ) : {\mathbb Q}] =
[ {\mathbb Q} ( \sqrt{-d}, \lambda _1 ) : {\mathbb Q} ( \sqrt{-d}) ] [ {\mathbb Q} ( \sqrt{-d}) : {\mathbb Q} ] = 2 \ \ \mbox{ or  } \ \  4.
\]
On the other hand, the inclusions
\[
{\mathbb Q} \subseteq {\mathbb Q} ( \lambda _1) \subseteq {\mathbb Q} ( \sqrt{-d}, \lambda _1)
\]
of subfields imply that
\[
[ {\mathbb Q} ( \lambda _1) : {\mathbb Q}] =
\frac{ [ {\mathbb Q} ( \sqrt{-d}, \lambda _1) : {\mathbb Q}]}{[ {\mathbb Q} ( \sqrt{-d}, \lambda _1) : {\mathbb Q} ( \lambda _1) ]}.
\]
Therefore, the degree $\deg _{\mathbb Q} ( \lambda _1) = [ {\mathbb Q}  ( \lambda _1 ) : {\mathbb Q} ]$ of $\lambda _1$ over ${\mathbb Q}$ is a divisor of the degree  $[ {\mathbb Q} ( \sqrt{-d}, \lambda _1 ) : {\mathbb Q} ]$ or $\deg _{\mathbb Q} ( \lambda _1) \in \{ 1, 2, 4 \}$.

In order to justify the claim, recall that $c \in C$ is integral over $B$ if and only if the polynomial ring $B [c] = B + Bc + \ldots + B c^{n-1}$ is a finitely generated $B$-module. If $B = A \beta _1 + \ldots + A \beta _s$  is a finitely generated $A$-module, then
\[
B [c] = \sum\limits _{i=1} ^s \sum\limits _{j=0} ^{n-1} A \beta _i c^j
\]
is a finitely generated $A$-module. Since $A$ is a Noetherian ring, the $A$-submodule $A[c]$ of $B[c]$ is a finitely generated $A$-module.

\end{proof}

Note that if $h = \tau _{(U,V)} \mathcal{L}(h) \in H \leq Aut(A)$ is an automorphism of $A = E \times E$  of finite order $r$ then
\[
h^r = \tau _{ \sum\limits _{s=0} ^{r-1} \mathcal{L}(h) ^s \left(  \begin{smallmatrix}
U  \\
V
\end{smallmatrix}  \right)} \mathcal{L}(h) ^r = Id
\]
implies that  $\sum\limits _{s=0} ^{r-1} \mathcal{L}(h) ^s \left(  \begin{smallmatrix}
U  \\
V
\end{smallmatrix}  \right) = \check{o}_A$ and $\mathcal{L}(h) ^r = I_2$.  In other words, the automorphisms $h \in Aut(A)$ of finite order  have linear parts $\mathcal{L}(h) \in GL(2,R)$ of finite order.

From now on, we concentrate on $g \in GL(2,R)$ of finite order.

\begin{proposition}   \label{EigenvaluesFiniteOrder}
If $R$ is the endomorphism ring of an elliptic curve $E$ and $g \in GL(2, R)$ is of finite order $r$, then $g$ is diagonalizable and  the eigenvalues $\lambda _j$ of $g$ are primitive roots of unity of degree $r_j = 1,2,3,4, 6,8$ or $12$.
\end{proposition}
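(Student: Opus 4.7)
The plan has three stages: diagonalisability of $g$, a cyclotomic degree bound via Lemma~\ref{EigenvaluesArbitraryGL2R}, and the exclusion of orders $5$ and $10$ by a tower-law argument on quadratic subfields of $\mathbb{Q}(\zeta_5)$. For the first stage, $g^r = I_2$ implies that the minimal polynomial $m_g(x) \in \mathbb{C}[x]$ divides the separable polynomial $x^r - 1$, so $m_g$ has distinct roots in $\mathbb{C}$ and $g$ is diagonalisable. Each eigenvalue $\lambda_j$ then satisfies $\lambda_j^r = 1$ and is therefore a primitive $r_j$-th root of unity for some $r_j \mid r$. For the second stage, the degree of $\zeta_{r_j}$ over $\mathbb{Q}$ equals Euler's totient $\varphi(r_j)$, while Lemma~\ref{EigenvaluesArbitraryGL2R} forces $\deg_{\mathbb{Q}}(\lambda_j) \in \{1,2,4\}$, so $\varphi(r_j) \in \{1,2,4\}$. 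A direct enumeration yields the preliminary list $r_j \in \{1,2,3,4,5,6,8,10,12\}$.

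The main (third) stage is to exclude $r_j = 5$ and $r_j = 10$. Since $\mathbb{Q}(\zeta_{10}) = \mathbb{Q}(\zeta_5)$, both cases reduce to $\lambda_j = \zeta_5$. The characteristic polynomial $\mathcal{X}_g(\lambda) = \lambda^2 - \mathrm{tr}(g)\lambda + \det(g) \in R[\lambda]$ has coefficients in $R \subseteq \mathbb{Q}(\sqrt{-d})$, so $[\mathbb{Q}(\sqrt{-d}, \lambda_j) : \mathbb{Q}(\sqrt{-d})] \leq 2$ and hence $[\mathbb{Q}(\sqrt{-d}, \lambda_j) : \mathbb{Q}] \leq 4$. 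If $R = \mathbb{Z}$, the same reasoning over $\mathbb{Q}$ gives $[\mathbb{Q}(\lambda_j):\mathbb{Q}] \leq 2$, which contradicts $[\mathbb{Q}(\zeta_5):\mathbb{Q}] = 4$. If $R \neq \mathbb{Z}$, then $\mathbb{Q}(\sqrt{-d})$ is imaginary quadratic; but the unique quadratic subfield of $\mathbb{Q}(\zeta_5)$ is the totally real $\mathbb{Q}(\sqrt{5})$ (visible from the quadratic Gauss sum), so $\mathbb{Q}(\sqrt{-d}) \not\subseteq \mathbb{Q}(\zeta_5)$, the compositum $\mathbb{Q}(\sqrt{-d}, \zeta_5)$ has degree $8$ over $\mathbb{Q}$, and $[\mathbb{Q}(\sqrt{-d}, \zeta_5) : \mathbb{Q}(\sqrt{-d})] = 4$ contradicts the upper bound. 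Hence $r_j \in \{1,2,3,4,6,8,12\}$ as claimed.

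The only serious step is this final exclusion. The degree bound of Lemma~\ref{EigenvaluesArbitraryGL2R} is not by itself strong enough to rule out $r_j = 5, 10$; one needs the sharper constraint that the eigenvalues lie in a quadratic extension of $\mathbb{Q}(\sqrt{-d})$, combined with the elementary incompatibility of signature between any imaginary quadratic field $\mathbb{Q}(\sqrt{-d})$ and the unique real quadratic subfield of $\mathbb{Q}(\zeta_5)$. The orders $8$ and $12$ survive precisely because $\mathbb{Q}(\zeta_8) = \mathbb{Q}(i, \sqrt{2})$ and $\mathbb{Q}(\zeta_{12}) = \mathbb{Q}(i, \sqrt{3})$ each contain imaginary quadratic subfields ($\mathbb{Q}(i), \mathbb{Q}(\sqrt{-2})$ and $\mathbb{Q}(i), \mathbb{Q}(\sqrt{-3})$ respectively), and the same argument then merely recovers, rather than contradicts, the degree bound.
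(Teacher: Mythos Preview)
Your proof is correct, and the first two stages match the paper's approach (your diagonalisability argument via the separable minimal polynomial is in fact cleaner than the paper's explicit Jordan-block computation). The genuine difference is in the exclusion of $r_j = 5, 10$. The paper also deduces that $\mathbb{Q}(\sqrt{-d}) \subseteq \mathbb{Q}(\lambda_j)$ from the degree tower, but rather than immediately exploiting the signature mismatch, it identifies the unique index-$2$ subfield as the fixed field of $\lambda_j \mapsto \lambda_j^{-1}$, uses $\det(g) \in R^* \subset \mathbb{Q}(\sqrt{-d})$ to force $\lambda_2 = \lambda_1^{-1}$, and then obtains $\tr(g) = 2\cos(2\pi s/r_j) \in R \cap \mathbb{R} = \mathbb{Z}$, contradicting $r_j \in \{5,10\}$. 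Your route is shorter: once one knows the unique quadratic subfield of $\mathbb{Q}(\zeta_5)$ is the totally real $\mathbb{Q}(\sqrt{5})$, the containment $\mathbb{Q}(\sqrt{-d}) \subseteq \mathbb{Q}(\zeta_5)$ is already absurd, and the compositum degree $8$ finishes it. What the paper's longer argument buys is an explicit description of the trace constraint $\tr(g) \in \{0,\pm 1,\pm 2\}$, which it reuses verbatim in the subsequent Proposition~\ref{ListK}; your argument trades that reusability for directness.
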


\begin{proof}

Let us assume that $g \in GL(2,R)$ of finite order $r$ is not diagonalizable. Then there exists $S \in GL(2, {\mathbb C})$, reducing $g$ to its Jordan normal form
\[
J = S^{-1} g S = \left( \begin{array}{cc}
\lambda _1  &  1  \\
0  &  \lambda _1
\end{array}  \right).
\]
By an induction on $n$, one verifies that
\[
J^n = \left( \begin{array}{cc}
\lambda _1 ^n  &  (n-1) \lambda _1 ^{n-1}  \\
0  & \lambda _1 ^n
\end{array}  \right) \ \ \mbox{  for   } \ \ \forall n \in {\mathbb N}.
\]
In particular,
\[
I_2 = S^{-1} I_2 S = S^{-1} g^r S = (S^{-1} g S) ^r = J^r =
\left( \begin{array}{cc}
\lambda _1 ^r  &  (r-1) \lambda _1 ^{r-1}  \\
0  & \lambda _1 ^r
\end{array}  \right)
\]
is an absurd, justifying the diagonalizability of $g$.

If
\[
D = S^{-1} g S = \left( \begin{array}{cc}
\lambda _1  &  0  \\
0  &  \lambda _2
\end{array}  \right)
\]
is a diagonal form of $g$ then
\[
I_2 = S^{-1} I_2 S = S^{-1} g^r S = (S^{-1}gS)^r = \left( \begin{array}{cc}
\lambda _1 ^r  &  0  \\
0  &  \lambda _2 ^r
\end{array} \right)
\]
reveals that $\lambda _1$ and $\lambda _2$ are $r$-th roots of unity.

Thus, $\lambda _j$ are of finite order $r_j$, dividing $r$ and the least common multiple $m = LCM( r_1,r_2) \in {\mathbb N}$ divides $r$. Conversely,
\[
I_2 = \left( \begin{array}{cc}
\lambda _1 ^m  & 0  \\
0  & \lambda _2 ^m
\end{array}  \right) = (S^{-1} g S) ^m = S^{-1} g^m S
\]
implies that $g^m = S I_2 S^{-1} = I_2$, so that $r \in {\mathbb N}$ divides $m \in {\mathbb N}$ and $r=m$.

Let $\lambda _j \in {\mathbb C}^*$ be a primitive $r_j$-th root of unity. Then the cyclotomic polynomials $\Phi _{r_j}(x) \in {\mathbb Z}[x]$ are the minimal integral relations of $\lambda _j$. More precisely, the minimal integral relations $f_j(x) \in {\mathbb Z}[x] \setminus {\mathbb Z}$ of $\lambda _j$ are monic polynomials of degree $\deg _{\mathbb Q} ( \lambda _j)$. On the other hand, $\Phi _{r_j} (x) \in {\mathbb Z}[x] \setminus {\mathbb Z}$ are irreducible over ${\mathbb Z}$ and ${\mathbb Q}$. Therefore $\Psi _{r_j} (x)$ are minimal polynomials of $\lambda _j$ over ${\mathbb Q}$ and $\Psi _{r_j} (x) = q f_j(x)$ for some $q \in {\mathbb Q}^*$. As far as $\Phi _{r_j} (x)$ and $f_j(x)$ are monic, there follows $q=1$ and $\Phi _{r_j} (x) \equiv f_j(x) \in {\mathbb Z}[x]$.

 Recall Euler's function
\[
\varphi : {\mathbb N} \longrightarrow {\mathbb N},
\]
associating to each $n \in {\mathbb N}$ the number of the residues $0 \leq r \leq n-1$ modulo $n$, which are relatively prime to $n$. The degree of $\Phi _{r_j}(x)$ is $\varphi (r_j)$. If $r_j = p_1 ^{a_1} \ldots p_m ^{a_m}$ is the unique factorization of $r_j \in {\mathbb N}$ into a product of different  prime numbers $p_s$, then
\[
\varphi \left( p_1 ^{a_1} \ldots p_m ^{a_m} \right) = \varphi \left( p_1 ^{a_1} \right) \ldots \varphi \left( p_m ^{a_m} \right) =
p_1 ^{a_1-1} (p_1-1) \ldots p_m ^{a_m-1} (p_m-1).
\]
According to Lemma \ref{EigenvaluesArbitraryGL2R}, the algebraic integers $\lambda _j$ are of degree
\[
\deg _{\mathbb Q} ( \lambda _j) = \deg \Phi _{r_j} (x) = \varphi ( r_j) = 1, 2, \mbox{ or } 4.
\]
If $r_j$ has a prime divisor $p \geq 7$ then $\varphi (r_j)$ has a factor $p-1 \geq 6$, so that $\varphi ( r_j) > 4$. Therefore $r_j = 2^a 3^b 5^c$ for some non-negative integers $a,b,c$. If $c \geq 1$ then
\[
\varphi (r_j) = \varphi ( 2^a 3^b ) \varphi ( 5^c) = \varphi ( 2^a 3^b) 5^{c-1} .4 \in \{ 1,2,4 \}
\]
exactly when $\varphi ( r_j) = 4$, $c=1$ and $\varphi ( 2^a 3^b) =1$. For $b \geq 1$ one has
\[
\varphi ( 2^a 3^b) = \varphi ( 2^a) 3^{b-1} .2 > 1,
\]
so that $\varphi ( 2^a 3^b) =1$ requires $b=0$ and $\varphi ( 2^a) = 1$. As a result, $a=0$ or $1$ and $r_j =5$ or $10$, if $5$ divides $r_j$. From now on, let us assume that $r_j = 2^a 3^b$ with $a, b \in {\mathbb N} \cup \{ 0 \}$. If $b \geq 2$ then $\varphi ( r_j) = \varphi ( 2^a).3^{b-1}.2$ with $b-1 \geq 1$ is divisible by $3$ and cannot equal $1,2$ or $4$. Therefore $r_j = 2^a .3$ or $r_j = 2^a$ with $a \geq 0$. Straightforwardly,
\[
\varphi ( 2^a .3) = 2 \varphi ( 2^a) \in \{ 1,2,4 \}
\]
exactly when $\varphi ( 2^a) =1$ or $\varphi (2^a ) =2$. These amount to $a \in \{ 0,1,2 \}$ and reveal that $3,6,12$ are possible values for $r_j$. Finally, $\varphi (r_j) = \varphi (2^a) \in \{ 1,2,4\}$ for $r_j = 1,2,4$ or $8$. Thus, $\varphi (r_j) \in \{ 1,2,4\}$ if and only if
\[
r _j \in \{ 1,2,3,4,5,6,8,10,12 \}.
\]

In order to exclude $r_j=5$ and $r_j=10$ with $\varphi (5) = \varphi (10) =4$, recall that $\lambda _j$ is of degree $\deg _{{\mathbb Q}( \sqrt{-d})} ( \lambda _j) = [ {\mathbb Q} ( \sqrt{-d}, \lambda _j) : {\mathbb Q} ( \sqrt{-d}) ] \leq 2$ over ${\mathbb Q} ( \sqrt{-d})$, so that
\[
[ {\mathbb Q} ( \sqrt{-d}, \lambda _j) : {\mathbb Q}] =
 [ {\mathbb Q} ( \sqrt{-d}, \lambda _j) : {\mathbb Q} ( \sqrt{-d})][ {\mathbb Q} ( \sqrt{-d}) : {\mathbb Q} ] \leq 4.
\]
On the other hand,
\[
{\mathbb Q} \subset {\mathbb Q} ( \lambda _j) \subseteq {\mathbb Q} ( \sqrt{-d}, \lambda _j)
\]
implies that
\[
[ {\mathbb Q} ( \sqrt{-d}, \lambda _j) : {\mathbb Q}] =
[ {\mathbb Q} ( \sqrt{-d}, \lambda _j) : {\mathbb Q} ( \lambda _j)][{\mathbb Q} ( \lambda _j) : {\mathbb Q}] =
4 [ {\mathbb Q} ( \sqrt{-d}, \lambda _j) : {\mathbb Q} ( \lambda _j)] \geq 4,
\]
whereas $[ {\mathbb Q} ( \sqrt{-d}, \lambda _j) : {\mathbb Q} ] = [ {\mathbb Q} ( \lambda _j) : {\mathbb Q}] = 4$ and $[ {\mathbb Q} ( \sqrt{-d}, \lambda _j) : {\mathbb Q} ( \lambda _j)] =1$. Therefore ${\mathbb Q} ( \sqrt{-d}, \lambda _j) = {\mathbb Q} ( \lambda _j)$, so that $\sqrt{-d} \in {\mathbb Q} ( \lambda _j)$ and ${\mathbb Q} ( \sqrt{-d}) \subset {\mathbb Q} ( \lambda _j)$ with
\[
[ {\mathbb Q} ( \lambda _j) : {\mathbb Q} ( \sqrt{-d})] =
\frac{[ {\mathbb Q} ( \lambda _j) : {\mathbb Q}]}{[ {\mathbb Q} ( \sqrt{-d}) : {\mathbb Q}]} = \frac{4}{2} =2.
\]
As far as ${\mathbb Q} ( \sqrt{-d})$ and ${\mathbb Q} ( \lambda _j)$ are finite Galois extensions of ${\mathbb Q}$ (i.e., normal and separable), the subfield ${\mathbb Q} ( \sqrt{-d})$ of ${\mathbb Q} ( \lambda _1)$ of index $[ {\mathbb Q} ( \lambda _1) : {\mathbb Q} ( \sqrt{-d}) ] =2$ is the fixed point set of a subgroup $H$ of the Galois group $Gal ( {\mathbb Q} ( \lambda _j) / {\mathbb Q})$ with  $|H| =2$. The minimal polynomial of $\lambda _j$ over ${\mathbb Q}$ is the cyclotomic polynomial $\Phi _{r_j} (x) \in {\mathbb Z}[x]$ of degree $\deg  ( \Phi _{r_j}) = \varphi (r_j) =4$ for $r_j \in \{ 5, 10 \}$ and the Galois group
\[
Gal ( {\mathbb Q} ( \lambda _j) / {\mathbb Q}) \simeq {\mathbb Z}_{r_j}^*
\]
coincides with the multiplicative group ${\mathbb Z}_{r_j}^*$ of the congruence ring ${\mathbb Z}_{r_j}$ modulo $r_j$. More precisely, the roots of $\Phi _{r_j}(x)$ are $\{ \lambda _j ^s \ \ \vert \ \  s \in {\mathbb Z}_{r_j} ^* \}$ and for any $s \in {\mathbb Z}_{r_j}^*$  the correspondence $\lambda _j \mapsto \lambda _j ^s$ extends to an automorphism of ${\mathbb Q} ( \lambda _j)$, fixing ${\mathbb Q}$. The groups
\[
{\mathbb Z} _5 ^* = \{ \pm 1 ({\rm mod} 5), \ \  \pm 3 ({\rm mod} 5) \} = \langle 3 ({\rm mod} 5) \rangle =  \langle -3 ({\rm mod} 5) \rangle \simeq {\mathbb C}_4
\]
and
\[
{\mathbb Z}_{10}^* = \{  \{ \pm 1 ({\rm mod} 10), \ \ \pm 3 ({\rm mod} 10) \} = \langle 3 ({\rm mod} 10) \rangle =  \langle -3 ({\rm mod} 10) \rangle \simeq
 {\mathbb C}_4
\]
are cyclic and contain unique subgroups $H_5 = \langle -1 ({\rm mod} 5) \rangle$, respectively, $H_{10} = \langle -1 ({\rm mod} 10) \rangle$ or order $2$. Denote by $h$ the generator of $H_5$ or $H_{10}$ with $h( \lambda _j) = \lambda _j^{-1}$, $h \vert {\mathbb Q} = Id_{\mathbb Q}$. In both cases, the degree
\[
\deg _{{\mathbb Q}( \sqrt{-d})} ( \lambda _j) = [ {\mathbb Q} ( \lambda _j , \sqrt{-d}) : {\mathbb Q} ( \sqrt{-d})] =
[ {\mathbb Q}(\lambda _j) : {\mathbb Q}( \sqrt{-d})] =2,
\]
so that the characteristic polynomial
\[
\mathcal{X} _g(\lambda ) = \lambda ^2 - \tr(g) \lambda + \det(g) \in R[ \lambda ] \subset {\mathbb Q}( \sqrt{-d}) [\lambda]
\]
of $g$ is irreducible over ${\mathbb Q}( \sqrt{-d})$. In fact, $\mathcal{X} _g( \lambda )$ is a minimal polynomial of $\lambda _j$ over ${\mathbb Q} ( \sqrt{-d})$ and divides the cyclotomic polynomial $\Phi _{r_j} ( \lambda ) \in {\mathbb Z} [ \lambda] \subset {\mathbb Q} ( \sqrt{-d} ) [ \lambda]$ with $\Phi _{r_j} ( \lambda _j) =0$. In particular, the other eigenvalue $\lambda _{3-j}$ of $g$ is a root of $\Phi _{r_j}( \lambda)$ or a primitive $r_j$-th root of unity. That allows to express $\lambda _{3-j} = \lambda _j^t$ by some $t \in {\mathbb Z} _{r_j} ^*$. According to
\[
\lambda _j ^{t+1} = \lambda _j \lambda _j ^t = \lambda _j \lambda _{3-j}= \det(g) \in R^* \subset {\mathbb Q}( \sqrt{-d}) = {\mathbb  Q} ( \lambda _j) ^{ \langle h \rangle},
\]
one has
\[
\lambda _j ^{t+1} = h( \lambda _j ^{t+1}) = \lambda _j ^{-t-1} \ \ \mbox{  or  } \ \  \lambda _j ^{2(t+1)} =1.
\]
If $\lambda _j$ is a primitive fifth root of unity then $\lambda _j ^{2(t+1)} =1$ requires that $2(t+1)$ to be divisible by $5$. Since $GCD(2,5)=1$, $5$ is to divide $t+1$ or $t \equiv -1 ({\rm mod} 5)$. Similarly, if $\lambda _j$ is a primitive tenth root of unity then $10$ divides $2 (t+1)$, i.e., $2(t+1) = 10z$ for some $z \in {\mathbb Z}$. As a result, $5$ divides $t+1$ and $ t \equiv -1 ({\rm mod} 10)$. Thus, for any $r_1 \in \{ 5, 10 \}$ there follows $\lambda _{3-j} = \lambda _j ^t = \lambda _j ^{-1}$. Expressing
 $\lambda _j = e^{ \frac{2 \pi is}{r_j}}$ for some natural number $1 \leq s \leq r_j-1$, relatively prime to $r_j$, one observes  that
 \[
 \tr(g) = \lambda _j + \lambda _{3-j} = \lambda _j + \lambda _j^{-1} = e^{\frac{2 \pi is}{r_j}} + e^{ - \frac{2 \pi is}{r_j}} = 2 \cos \left( \frac{2 \pi s}{r_j} \right) \in R \cap {\mathbb R}.
 \]
We claim that $R \cap {\mathbb R} = {\mathbb Z}$. The inclusion ${\mathbb Z} \subseteq R \cap {\mathbb R}$ is clear. Conversely, let
\[
r \in {\mathbb R} \cap R = {\mathbb R} \cap ( {\mathbb Z} + f \omega _{-d} {\mathbb Z})
\]
for the conductor $f \in {\mathbb N}$ of $E$ and $\omega _{-d}$ from  (\ref{StandardGenerator}). In the case of $ -d \not \equiv 1 ({\rm mod} 4)$ there exist $a,b \in {\mathbb Z}$ with $r = a + f \sqrt{-d} b$. The complex number $a -r + f \sqrt{-d} b =0$ vanishes exactly when its real part $a-r=0$ and its imaginary part $f \sqrt{d} b=0$ are zero. Therefore $b=0$ and $r=a \in {\mathbb Z}$, i.e., ${\mathbb R} \cap R  \subseteq {\mathbb Z}$ for $-d \not \equiv 1 ({\rm mod} 4)$.

If $-d \equiv 1({\rm mod} 4)$ then
\[
r = a + f b \frac{( 1 + \sqrt{-d})}{2} \ \ \mbox{for some }  \ \ a, b \in {\mathbb Z}
\]
yields
\[
\left|  \begin{array}{c}
r = a + \frac{fb}{2}  \\
\frac{f \sqrt{d}}{2} b =0
\end{array}  \right.
\]
by comparison of the real and imaginary parts. As a result, again $b=0$ and $r=a \in {\mathbb Z}$, i.e., ${\mathbb R} \cap R \subseteq {\mathbb Z}$ for $-d \equiv 1 ({\rm mod} 4)$. That justifies ${\mathbb R} \cap R =  {\mathbb Z}$ and implies that $\tr(g) = 2 \cos \left( \frac{2 \pi s}{r_j} \right) \in {\mathbb Z}$. Bearing in mind the $\cos \left( \frac{2 \pi s}{r_j} \right) \in [ -1,1]$, one concludes
\begin{equation}  \label{ListCosValues}
\tr(g) = 2 \cos \left( \frac{2 \pi s}{r_j} \right) \in [ -2, 2] \cap {\mathbb Z} = \{ 0 , \pm1, \pm 2 \} \ \ \mbox{ or  }
\end{equation}
\[
\cos \left( \frac{2 \pi s}{r_j} \right) \in \left \{ 0, \pm \frac{1}{2}, \pm 1 \right \}.
\]
For a natural  number $1 \leq s \leq r_j-1$, one has $\frac{2 \pi s}{r_1} \in [ 0, 2 \pi )$. The solutions of $\cos (x) =0$ in $[0, 2 \pi )$ are $\frac{\pi }{2}$ and $\frac{3 \pi }{2}$, while $\cos (x) = \pm 1$ holds for $x \in \{ 0, \pi \}$. Finally, $\cos (x) = \pm \frac{1}{2}$ is satisfied by $x \in \left \{ \frac{\pi}{3}, \frac{2 \pi }{3}, \frac{4 \pi }{3}, \frac{5 \pi}{3}  \right \}$,  so that (\ref{ListCosValues}) implies
\begin{equation}  \label{ValuesList}
\frac{2 \pi s}{r_j} \in \left \{ 0, \ \ \frac{\pi}{2}, \pi, \ \  \frac{3 \pi}{2}, \ \  \frac{\pi}{3}, \ \ \frac{2 \pi }{3}, \ \  \frac{4 \pi }{3}, \frac{5 \pi }{3} \right \}.
\end{equation}
For $r_j =5$ or $10$ this is an absurd, so that
\[
r_j \in \{ 1,2,3,4,6,8,12 \}.
\]

\end{proof}

Now we are ready to describe the elements of $GL(2,R)$ of finite order, by specifying their eigenvalues $\lambda _1, \lambda _2$. The roots $\lambda _1, \lambda _2$ of the characteristic polynomial
\[
\mathcal{X} _g ( \lambda ) = \lambda ^2 - \tr(g) \lambda + \det(g) \in R[\lambda ]
\]
of $g$ are in a bijective correspondence with the trace $\tr(g) = \lambda _1 + \lambda _2 \in R $ and the determinant $\det(g) = \lambda _1 \lambda _2 \in R^*$ of $g$. Making use of Lemma \ref{UnitsGroupR}, we subdivide the problem to the description of finite order $g \in GL(2,R)$ with a fixed determinant $\det(g) \in R^*$. The traces of such $g$ take finitely many values and allow to list explicitly the eigenvalues of all $g \in GL(2,R)$ of finite order. The classification of the unordered pairs of eigenvalues $\lambda _1, \lambda _2$ of $g \in GL(2,R)$ of finite order is a more specific result than Proposition \ref{EigenvaluesFiniteOrder}. Note that the next classification of $\lambda _1, \lambda _2$ is derived independently of Proposition \ref{EigenvaluesFiniteOrder}.

Let us start with the case of $\det(g)=1$. The next proposition puts in a bijective correspondence the traces $\tr(g)$ of $g \in SL(2,R)$ with the orders $r$ of $g$.

\begin{proposition}   \label{ListK}
If $g \in SL(2,R)$ is of finite order $r$ then the trace
\begin{equation}    \label{TraceValues}
{\rm tr} (g) \in \{ \pm 2, \ \  \pm 1, \ \  0 \}.
\end{equation}
The eigenvalues $\lambda _1, \lambda _2$ of $g$ are of order
\begin{equation}    \label{OrdersList}
r_1 = r_2 = r \in \{ 1,  2,   3,   4,   6 \}.
\end{equation}
More precisely,

(i) ${\rm tr} (g) =2$ or $\lambda _1 = \lambda _2 =1$, $g = I_2$ if and only if $g$ is of order $1$;

(ii) ${\rm tr} (g) = -2$ or $\lambda _1 = \lambda _2 = -1$, $g = - I_2$ if and only if $g$ is of order $2$;

(iii) ${\rm tr} (g) = 1$ or $\lambda _1 = e^{\frac{\pi i}{3}},$ $\lambda _2 = e ^{ - \frac{\pi i}{3}}$ if and only if $g$ is of order $6$;

(iv) ${\rm tr} (g) = - 1$ or $\lambda _1 = e^{\frac{2 \pi i}{3}}$, $\lambda _2 = e^{ - \frac{2 \pi i}{3}}$ if and only if $g$ is of order $3$;

(v) ${\rm tr} (g) = 0$ or $\lambda _1 = i$, $\lambda _2 = -i$ if and only if $g$ is of order $4$.
\end{proposition}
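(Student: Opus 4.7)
The plan is to leverage the diagonalizability and the restriction on eigenvalues established in Proposition \ref{EigenvaluesFiniteOrder}, together with the identity $R\cap\mathbb{R}=\mathbb{Z}$ already proved in the preceding proof, to reduce the problem to a tiny finite check.

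First I would record the setup. Since $g\in SL(2,R)\subset GL(2,R)$ is of finite order $r$, Proposition \ref{EigenvaluesFiniteOrder} guarantees that $g$ is diagonalizable and its eigenvalues $\lambda_1,\lambda_2$ are roots of unity. In particular $|\lambda_j|=1$, so $\lambda_j^{-1}=\overline{\lambda_j}$. From $\det(g)=\lambda_1\lambda_2=1$ I conclude $\lambda_2=\lambda_1^{-1}=\overline{\lambda_1}$, hence
\[
\tr(g)=\lambda_1+\lambda_1^{-1}=2\cos\theta,\qquad \lambda_1=e^{i\theta}.
\]
On the other hand $\tr(g)\in R$ and $\tr(g)\in\mathbb{R}$, so by the identity $R\cap\mathbb{R}=\mathbb{Z}$ (established in the proof of Proposition \ref{EigenvaluesFiniteOrder}) one has $\tr(g)\in\mathbb{Z}$. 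Combined with $|2\cos\theta|\le 2$, this forces
\[
\tr(g)\in\{-2,-1,0,1,2\},
\]
which is exactly (\ref{TraceValues}).

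Next I would run through the five possible values of $\tr(g)$ and read off $\lambda_1,\lambda_2$ from $\cos\theta=\tr(g)/2$. For $\tr(g)=\pm 2$ one gets the double eigenvalue $\lambda_1=\lambda_2=\pm 1$; here the diagonalizability from Proposition \ref{EigenvaluesFiniteOrder} is essential, as it forces $g=\pm I_2$ rather than a non-trivial Jordan block, giving the orders $r=1$ and $r=2$ respectively. For $\tr(g)=1$ we get $\cos\theta=1/2$, so $\lambda_1=e^{\pi i/3}$, $\lambda_2=e^{-\pi i/3}$, primitive sixth roots of unity, and hence $g$ has order $6$. For $\tr(g)=-1$ we get $\cos\theta=-1/2$, so $\lambda_j=e^{\pm 2\pi i/3}$, primitive cube roots and $r=3$. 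For $\tr(g)=0$ we get $\lambda_j=\pm i$ and $r=4$. In each case the order of the diagonalizable matrix $g$ is the common order $r_1=r_2$ of its eigenvalues, which yields (\ref{OrdersList}) and items (i)–(v).

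The converse implications in (i)–(v) are immediate: if $g$ is of a given order $r\in\{1,2,3,4,6\}$, the same argument shows that its pair of eigenvalues is determined by $r$ up to the listed choice, so $\tr(g)$ is the stated integer. I do not anticipate any serious obstacle: the real work was already done in Proposition \ref{EigenvaluesFiniteOrder} (diagonalizability, integrality of the trace, and the restriction $R\cap\mathbb{R}=\mathbb{Z}$), and what remains is purely the bookkeeping of the five values of $2\cos\theta\in\mathbb{Z}\cap[-2,2]$. The only place requiring a bit of care is the degenerate case $\tr(g)=\pm 2$, where one must explicitly invoke diagonalizability to rule out a Jordan block; this is precisely what Proposition \ref{EigenvaluesFiniteOrder} provides.
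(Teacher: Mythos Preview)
Your proof is correct and follows essentially the same route as the paper: from $\det(g)=1$ you get $\lambda_2=\lambda_1^{-1}$, hence $\tr(g)=2\cos\theta\in R\cap\mathbb{R}=\mathbb{Z}\cap[-2,2]$, and then run through the five integer values, invoking diagonalizability to handle $\tr(g)=\pm 2$. The paper's proof differs only in cosmetic details (it parametrizes $\theta=2\pi s/r_1$ and at the end exhibits explicit matrices $g_1,g_2,g_3\in SL(2,\mathbb{Z})$ realizing the non-scalar cases, which you omit but which the statement does not require).
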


\begin{proof}

If $g \in SL(2,R)$ is of order $r$ then the eigenvalues $\lambda _j$ of $g$ are of finite order $r_j$, dividing $r = LCM(r_1, r_2)$. According to
\[
1 = \det(g) = \lambda _1 \lambda _2,
\]
one has $\lambda _1 = e^{\frac{ 2 \pi i s}{r_1}}$, $\lambda _2 = e^{ - \frac{2 \pi is}{r_1}}$ for some natural number $1 \leq s \leq r_1-1$, relatively prime to $r_1$. Thus, $\lambda _2$ is a primitive $r_1$-th root and $r_1 =r_2 = LCM(r_1,r_2) =r$. As in the proof of Proposition \ref{EigenvaluesFiniteOrder},
\[
\tr(g) = \lambda _1 + \lambda _2 = e^{ \frac{ 2 \pi is}{r_1}} + e^{ - \frac{2 \pi is}{r_1}} =
2 \cos \left( \frac{2 \pi s}{r_1} \right) \in {\mathbb R} \cap R = {\mathbb Z}
\]
and $\cos \left( \frac{2 \pi s}{r_1} \right) \in [ -1,1]$ specify (\ref{TraceValues}). Consequently,
\[
\cos \left( \frac{2 \pi s}{r_1} \right) \in \left \{ 0, \ \ \pm  \frac{1}{2}, \ \  \pm1 \right \} \ \ \mbox{  and  }
\]
\[
\frac{2 \pi s}{r_1} \in \left \{ 0, \ \ \frac{\pi}{2}, \ \  \pi, \ \ \frac{3\pi}{2}, \ \  \frac{\pi}{3}, \ \ \frac{2 \pi}{3}, \ \ \frac{4 \pi}{3}, \ \
\frac{5 \pi}{3} \right \},
\]
as in (\ref{ValuesList}). Straightforwardly, $\lambda _1 = e^0 =1$ is of order $1$, $\lambda _1 = e^{ \pi i} = -1$ is of order $2$, $\lambda _1 \in \left \{ e^{\frac{\pi i}{2}}, e^{\frac{3 \pi i}{2}} \right \}$ are of order $4$, $\lambda _1 \in \left \{ e^{\frac{2 \pi i}{3}}, e^{\frac{4 \pi i}{3}} \right \}$ are of order $3$ and $\lambda _1 \in \left \{ e^{\frac{\pi i}{3}}, e^{\frac{5 \pi i}{3}} \right \}$ are of order $6$. That justifies (\ref{OrdersList}).

If $g$ is of order $r=1$ then $\lambda _1  \in {\mathbb C}^*$ is of order $r_1=1$, so that $\lambda _1 =1$. Consequently, $\lambda _2=1$ and $g=I_2$, as far as $I_2$ is the only conjugate of the scalar matrix $I_2$. The trace $\tr(g) = \tr(I_2) =2$. Conversely, if  $\lambda _1 = \lambda _2 =1$, then $g=I_2$ is of order $1$.

An automorphism $g \in SL(2,R)$ of order $r=2$ has eigenvalues $\lambda _1, \lambda _2 \in {\mathbb C}^*$ of order $2$, or $\lambda _1 = \lambda _2 = -1$. Consequently, $g = - I_2$ and $\tr(g) = -2$. Conversely, for  $\lambda _1 = \lambda _2 -1$ the matrix $g = - I_2$ is of order $2$.

Let us suppose that $g \in SL(2,R)$ is of order $3$. Then the eigenvalues $\lambda _1, \lambda _2$ of $g$ are of order $3$ or $\lambda _1 = e^{ \frac{2 \pi i}{3}}$, $\lambda _2 = e^{ - \frac{ 2 \pi i}{3}}$, up to a transposition. The trace $\tr(g) = \lambda _1 + \lambda _2 = -1$. Conversely, if  $\lambda _1 = e^{ \frac{2 \pi i}{3}}$, $\lambda _2 = e^{ - \frac{2 \pi i}{3}}$ then $r=r_1=r_2 =3$.

For $g \in SL(2,R)$ of order $4$ one has $\lambda _1, \lambda _2 \in {\mathbb C}^*$ of order $4$ or $\lambda _1 =i$, $\lambda _2 = -i$, up to a transposition.
The trace $\tr(g) = \lambda _1 + \lambda _2 = 0$. Conversely, for $\lambda _1=i$, $\lambda _2 = -i$ there follows $r=r_1=r_2=4$.

Suppose that $g \in SL(2,R)$ is of order $6$. Then $\lambda _1, \lambda _2 \in {\mathbb C}^*$ are of order $6$ or $\lambda _1 = e^{\frac{\pi i}{3}}$, $\lambda _2 = e^{ - \frac{ \pi i}{3}}$, up to a transposition. The trace $\tr(g) = \lambda _1 + \lambda _2 =1$. Conversely, the assumption $\lambda _1 = e^{\frac{\pi i}{3}}$, $\lambda _2 = e^{ - \frac{ \pi i}{3}}$ implies $r=r_1=r_2=6$.

Note that
\[
g_1 = \left( \begin{array}{rr}
1  &   1  \\
-3  &  -2
\end{array} \right), \ \
g_2 = \left( \begin{array}{rr}
1  &  -2  \\
1  & -1
\end{array}  \right), \ \
g_3 = \left( \begin{array}{rr}
2  &  1   \\
-3  & -1
\end{array}  \right) \in SL(2,{\mathbb Z}) \subseteq SL(2, R)
\]
with $\tr(g_1) = -1$, $\tr(g_2) =0$, $\tr(g_3) =1$ realize all the possibilities,  listed in the statement of the proposition.

\end{proof}

If $E$ is an elliptic curve with complex multiplication by an imaginary quadratic number field ${\mathbb Q} ( \sqrt{-d})$ and conductor $f \in {\mathbb N}$ then we denote the endomorphism ring of $E$ by
\[
R_{-d,f} = {\mathbb Z} + f \mathcal{O}_{-d} = {\mathbb Z} + f \omega _{-d} {\mathbb Z},
\]
where $\omega _{-d}$ is the non-trivial generator of $\mathcal{O}_{-d}$ as a ${\mathbb Z}$-module, given in (\ref{StandardGenerator}). If $E$ has no complex multiplication, we put
\[
R_{0,1} := {\mathbb Z}.
\]

\begin{proposition}   \label{ListS2}
Let $g \in GL(2, R_{-d,f})$ be a linear automorphism of $A = E \times E$  of order $r$, with $\det(g) = -1$ and eigenvalues $\lambda _1(g), \lambda _2(g) \in {\mathbb C}^*$.

(i) The automorphism  $g$ is of order $2$ if and only if  its trace is $\tr(g)=0$ or, equivalently, $\lambda _1 (g) = -1$, $\lambda _2 (g) =1$.

(ii) If $R_{-d,f} \neq {\mathbb Z}[i], \mathcal{O}_{-2}, \mathcal{O}_{-3}, R_{-3,2}$ then any $g \in GL(2, R_{-d,f}) \setminus SL(2,R)$ is of order $2$.

(iii) If $g \in GL(2, \mathcal{O}_{-2})$  is of order $r >2$  and  $\det(g) = -1$ then $r=8$ and the trace  $\tr(g) \in \{ \pm \sqrt{-2} \}$.

More precisely,

(a)  $\tr(g) = \sqrt{-2}$ if and only if $\lambda _1 (g) = e^{ \frac{\pi i}{4}}$, $\lambda _2 (g) = e^{ \frac{3 \pi i}{4}}$;

(b) $\tr(g) = - \sqrt{-2}$ if and only if $\lambda _1 (g) = e^{ \frac{5 \pi i}{4}}$, $\lambda _2 (g) = e^{ - \frac{\pi i}{4}}$.

(iv)  If $g \in GL(2, {\mathbb Z}[i])$ is of order $r >2$ and  $\det(g) = -1$, then $r \in \{4, 12 \}$ and the trace $\tr(g) \in \{ \pm i, \pm 2i \}$.

More precisely,

(a) $\tr(g) =2i$ exactly when $g = iI_2$;

(b) $\tr(g) = -2i$ exactly when $g = - i I_2$;

(c) $\tr(g) =i$ exactly when $\lambda _1(g) = e^{\frac{\pi i}{6}}$, $\lambda _2(g) = e^{ \frac{5 \pi i}{6}}$;

(d) $\tr(g) = -i$ exactly when $\lambda _1 (g) = e^{ \frac{ 7 \pi i}{6}}$, $\lambda _2 (g) = e^{ - \frac{\pi i}{6}}$.

(v) If $g \in GL(2, R_{-3,f})$ with  $R_{-3,f}  \in \{ R_{-3,1} = \mathcal{O}_{-3}, R_{-3,2} = {\mathbb Z} + \sqrt{-3} {\mathbb Z} \}$
 is of order $r >2$ and  $\det(g) = -1$ then $r=6$ and the trace $\tr(g) \in \{ \pm \sqrt{-3} \}$.

 More precisely,

(a) $\tr(g) = \sqrt{-3}$ if and only if $\lambda _1 (g) = e^{\frac{\pi i}{3}}$, $\lambda _2 (g) = e^{ \frac{2 \pi i}{3}}$;

(b) $\tr(g) = - \sqrt{-3}$ if and only if $\lambda _1 (g) = e^{ - \frac{2 \pi i}{3}}$, $\lambda _2 (g) = e^{ - \frac{ \pi i}{3}}$.
\end{proposition}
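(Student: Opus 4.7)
The plan is to combine Proposition \ref{EigenvaluesFiniteOrder}, which constrains the eigenvalues of any finite order $g \in GL(2,R)$, with the rigid constraint imposed by $\det(g) = -1$, and then to filter the resulting finite list of possibilities through the arithmetic of each ring $R = R_{-d,f}$.

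First I would apply Proposition \ref{EigenvaluesFiniteOrder} to conclude that $g$ is diagonalizable over $\mathbb{C}$ with eigenvalues $\lambda_1, \lambda_2$ that are primitive roots of unity of orders $r_j \in \{1,2,3,4,6,8,12\}$, and that the order of $g$ equals $\mathrm{lcm}(r_1,r_2)$. Since $|\lambda_j|=1$ and $\lambda_1\lambda_2 = -1$, I would deduce $\lambda_2 = -\overline{\lambda_1}$, whence
\[
\tr(g) = \lambda_1 + \lambda_2 = \lambda_1 - \overline{\lambda_1} = 2i\,\mathrm{Im}(\lambda_1)
\]
is purely imaginary. Writing $\lambda_1 = e^{i\theta}$ and enumerating over the admissible $r_1$, I would obtain exactly five possibilities: $r_1 \in \{1,2\}$ gives $\{\lambda_1,\lambda_2\}=\{1,-1\}$, $r=2$, $\tr(g)=0$ (this is part (i)); $r_1 = 4$ forces $\lambda_1 = \lambda_2 = \pm i$ and hence $g = \pm iI_2$ with $r=4$, $\tr(g) = \pm 2i$; $r_1 \in \{3,6\}$ both yield $r=6$ and $\tr(g) = \pm\sqrt{-3}$; $r_1 = 8$ yields $r=8$, $\tr(g) = \pm\sqrt{-2}$; and $r_1 = 12$ yields $r=12$, $\tr(g) = \pm i$. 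The converse direction in each case --- that the listed trace forces the listed eigenvalue pair and order --- follows from the characteristic polynomial $\lambda^2 - t\lambda - 1$ determining the unordered pair $\{\lambda_1,\lambda_2\}$.

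Next I would filter these five possibilities through $R = \mathbb{Z}+f\omega_{-d}\mathbb{Z}$ by imposing $\tr(g) \in R$. Matching real and imaginary parts against the $\mathbb{Z}$-basis $\{1, f\omega_{-d}\}$, with the usual split on $-d \bmod 4$, produces the characterizations: $i \in R$ only for $R = \mathbb{Z}[i]$, $\sqrt{-2} \in R$ only for $R = \mathcal{O}_{-2}$, and $\sqrt{-3} \in R$ only for $R \in \{\mathcal{O}_{-3}, R_{-3,2}\}$. The order-$4$ case additionally requires the stronger condition $i \in R$, since $g = \pm iI_2$ must lie in $GL(2,R)$. Outside the four exceptional rings this eliminates every nonzero trace, giving (ii); inside each exceptional ring exactly the orders and traces listed in (iii)--(v) survive. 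For existence, the companion matrix $\bigl(\begin{smallmatrix} 0 & 1 \\ 1 & t \end{smallmatrix}\bigr) \in GL(2,R)$ has determinant $-1$, trace $t$, and discriminant $t^2+4 \neq 0$ for each admissible nonzero $t$, so it is diagonalizable with the required eigenvalue pair; the order-$4$ matrices $\pm iI_2$ realize $\tr(g) = \pm 2i$.

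The main obstacle I anticipate is this last filtering step: the careful ring-theoretic bookkeeping that characterizes purely imaginary elements of a non-maximal order $\mathbb{Z}+f\omega_{-d}\mathbb{Z}$ as $d$ and the conductor $f$ vary. The key technical fact is $R \cap \mathbb{R} = \mathbb{Z}$, already established in the proof of Proposition \ref{EigenvaluesFiniteOrder}; combined with a case split on $-d \bmod 4$, it pins down precisely which of the pure imaginary algebraic integers $\pm i, \pm 2i, \pm\sqrt{-2}, \pm\sqrt{-3}$ lie in $R$, and thereby cleanly separates the four exceptional rings of items (iii)--(v) from the generic case (ii).
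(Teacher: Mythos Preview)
Your proposal is correct and the core strategy matches the paper's: both arguments observe that $\det(g)=-1$ forces $\lambda_2=-\overline{\lambda_1}$, hence $\tr(g)=2i\,\mathrm{Im}(\lambda_1)\in R\cap i\mathbb{R}$, and both then compute $R_{-d,f}\cap i\mathbb{R}$ explicitly (with the case split on $-d\bmod 4$ and on the parity of $f$) to isolate the four exceptional rings. The organizational difference is that you invoke Proposition~\ref{EigenvaluesFiniteOrder} up front to obtain the finite list $r_1\in\{1,2,3,4,6,8,12\}$ and then filter traces through $R$, whereas the paper deliberately works independently of that proposition (as it announces just before Proposition~\ref{ListK}): it first computes $R\cap i\mathbb{R}$, intersects with $[-2,2]$, and only then solves $\sin(2\pi s/r_1)=c$ to recover the eigenvalue orders. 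Your route is shorter and reuses existing machinery; the paper's is self-contained and makes the dependence on the conductor $f$ more visibly explicit, e.g.\ handling $R_{-1,2}$ by noting that $\tr(g)=\pm 2i$ is admissible there but the forced matrix $\pm iI_2$ fails to lie in $GL(2,R_{-1,2})$ --- exactly the subtlety you flag with your ``stronger condition $i\in R$'' remark.
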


\begin{proof}

 The eigenvalues $\lambda _1(g), \lambda _2(g) \in {\mathbb C}^*$ of $g \in GL(2,R_{-d,f})$ with $\det(g) =-1$ are subject to  $\lambda _2 (g) = - \lambda _1 (g) ^{-1}$. More precisely, if  $\lambda _1 (g) = e^{ \frac{ 2 \pi si}{r_1}}$ is a primitive $r_1$-th root of unity then $\lambda _2 (g) = - e^{ - \frac{ 2 \pi si}{r_1}}$. The trace
\begin{equation}  \label{TraceForDet-1}
\tr(g) = \lambda _1 (g) + \lambda _2 (g) = e^{ \frac{ 2 \pi si}{r_1}} - e^{ - \frac{ 2 \pi si}{r_1}} = 2 i \sin \left( \frac{ 2 \pi s}{r_1} \right) \in R_{-d,f} \cap i{\mathbb R}.
\end{equation}
We claim that
\[
R_{-d,f} \cap i {\mathbb R} = \begin{cases}
f \sqrt{-d} {\mathbb Z} &  \text{ for $-d \not \equiv 1 ({\rm mod} 4)$ or $-d \equiv 1({\rm mod}4)$, $f \equiv 1({\rm mod}2)$,}   \\
\frac{f}{2} \sqrt{-d} {\mathbb Z}  &  \text{ for $-d \equiv 1({\rm mod} 4)$, $f \equiv 0 ({\rm mod} 2)$.}
\end{cases}
\]
Indeed, if $-d \not \equiv 1( {\rm mod} 4)$ then $\mathcal{O}_{-d} = {\mathbb Z} + \sqrt{-d} {\mathbb Z}$ and $R_{-d,f} = {\mathbb Z} + f \sqrt{-d} {\mathbb Z}$ contains $f \sqrt{-d}$, i.e., $f \sqrt{-d} {\mathbb Z} \subseteq R_{-d,f} \cap i {\mathbb R}$. Any $ir = a + bf \sqrt{-d} \in i {\mathbb R} \cap R_{-d,f}$ with $r \in {\mathbb R}$, $a,b \in {\mathbb Z}$ has imaginary part $r = bf \sqrt{d}$, so that $i {\mathbb R} \cap R_{-d,f} \subseteq f \sqrt{-d} {\mathbb Z}$ and $i {\mathbb R} \cap R_{-d,f} = f \sqrt{-d} {\mathbb Z}$.

Suppose that $-d \equiv 1 ({\rm mod} 4)$ and the conductor $f = 2k+1 \in {\mathbb N}$ is odd. Then $R_{-d,2k+1} = {\mathbb Z} + f \frac{(1 + \sqrt{-d})}{2} {\mathbb Z}$ contains $f \sqrt{-d} = - f + (2f) \frac{(1 + \sqrt{-d})}{2}$, so that $f \sqrt{-d} {\mathbb Z} \subseteq R_{-d,2k+1} \cap i {\mathbb R}$. Any $ir = a + \frac{bf}{2} (1 + \sqrt{-d})$ with $r \in {\mathbb R}$, $a,b \in {\mathbb Z}$ has real part $a + \frac{bf}{2} =0$ and imaginary part $r = \frac{bf}{2} \sqrt{d}$. Note that $\frac{bf}{2} = \frac{b(2k+1)}{2} = -a \in {\mathbb Z}$ is an integer only for an even $b = 2b_1$, $b_1  \in {\mathbb Z}$, so that $r = b_1 f \sqrt{d}$ and $i {\mathbb R} \cap R_{-d,2k+1} \subseteq f \sqrt{-d} {\mathbb Z}$. That justifies $i {\mathbb R} \cap R_{-d,2k+1} = f \sqrt{-d} {\mathbb Z}$ for $ -d \equiv 1 ({\rm mod} 4)$, $f \equiv 1({\rm mod}2)$.

Finally, for $ -d \equiv 1 ({\rm mod} 4)$ and an even conductor $f=2k \in {\mathbb N}$ the endomorphism ring $R_{-d,2k} = {\mathbb Z} + k (1 + \sqrt{-d}) {\mathbb Z}$ contains $k \sqrt{-d}$, so that $k \sqrt{-d} {\mathbb Z} \subseteq i {\mathbb R} \cap R_{-d,2k}$. Note that $ir = a + bk ( 1 + \sqrt{-d})$ with $r \in {\mathbb R}$, $a,b \in {\mathbb Z}$ has real part $a + bk =0$ and imaginary part $r = bk \sqrt{d}$, so that $i {\mathbb R} \cap R_{-d,2k} \subseteq k \sqrt{-d} {\mathbb Z}$ and $i {\mathbb R} \cap R_{-d,2k} = k \sqrt{-d} {\mathbb Z}$.

Now, (\ref{TraceForDet-1}) implies that
\[
2 \sin \left( \frac{2 \pi s}{r_1} \right) \in [ -2,2] \cap i ( R_{-d,f} \cap i {\mathbb R}) =
\]
\[
= \begin{cases}
[-2,2] \cap f \sqrt{d} {\mathbb Z} & \text{ for $-d \not \equiv 1 ({\rm mod} 4)$ or $-d \equiv 1 ({\rm mod} 4)$, $f \equiv 1 ({\rm mod} 2)$,}  \\
[-2,2] \cap \frac{f}{2} \sqrt{d} {\mathbb Z} & \text{ for $-d \equiv 1({\rm mod} 4)$, $f \equiv 0 ({\rm mod} 2)$.}
\end{cases}
\]
If $d \geq 5$ then $\sqrt{d} \geq \sqrt{5} > 2$ and $[-2,2] \cap f \sqrt{d} {\mathbb Z} = \{ 0 \}$ for $\forall f \in {\mathbb N}$ and $[-2,2] \cap \frac{f}{2} \sqrt{d} {\mathbb Z} = \{ 0 \}$ for $\forall f \in 2 {\mathbb N}$. Note that $\sin \left( \frac{2 \pi s}{r_1} \right) =0$ for some natural number $1 \leq s \leq r_1-1$ with $GCD(s,r_1) =1$ has unique solution $\frac{2 \pi s}{r_1} = \pi$, since $\frac{2 \pi s}{r_1} \in (0, 2 \pi )$. That implies $2s = r_1$, whereas $s$ divides $r_1$ and $s = GCD(s,r_1) =1$, $r_1=2$. Thus, $\lambda _1 = e^{ \frac{2 \pi i}{2}} = e^{\pi i} = -1$, $\lambda _2 = -(-1) =1$ and $g $ is conjugate to
\[
D_2 = \left( \begin{array}{rr}
-1  &  0   \\
0  &  1
\end{array} \right).
\]
In particular, $g$ is of order $2$. Note that the case of $g \in GL(2,R)$ with $\lambda _1 =-1$, $\lambda _2 =1$ is realized by the diagonal matrix $D_2 \in GL(2, {\mathbb Z}) \leq GL(2,R_{-d,f})$.

If $d=1$ and $f \geq 3$ then $2 \sin \left( \frac{2 \pi s}{r_1} \right) \in [-2,2] \cap f {\mathbb Z} = \{ 0 \}$ and $D_2$ is the only diagonal form  for $g$. For $d=2$ and $f \geq 2$ the intersection $[-2,2] \cap f \sqrt{2} {\mathbb Z} = \{ 0 \}$, so that any $g \in GL(2, R_{-2,f})$  with $f \geq 2$ and $\det(g)=-1$ is conjugate to $D_2$. If $d=3$ and $f = 2k+1 \geq 3$ then $[-2,2] \cap f \sqrt{3} {\mathbb Z} = \{ 0 \}$. Similarly, for $d=3$ and $f =2k \geq 4$ one has $[-2,2] \cap k \sqrt{3} {\mathbb Z} = \{ 0 \}$. In such a way, the existence of $g \in GL(2, R_{-d,f})$ with $\det(g) = -1$, $\tr(g) \neq 0$ requires $R_{-d,f}$ to be among
\[
R_{-1,1} = \mathcal{O}_{-1} = {\mathbb Z}[i],  \ \ R_{-1,2} = {\mathbb Z} + 2i {\mathbb Z},  \ \
R_{-2,1} = \mathcal{O}_{-2} = {\mathbb Z} + \sqrt{-2} {\mathbb Z},
\]
\[
R_{-3,1} = \mathcal{O}_{-3} = {\mathbb Z} + \frac{1 + \sqrt{-3}}{2} {\mathbb Z} \ \ \mbox{ or  } \ \
R_{-3,2} = {\mathbb Z} + 2 \left( \frac{1 + \sqrt{-3}}{2} \right) {\mathbb Z} = {\mathbb Z} + \sqrt{-3} {\mathbb Z}.
\]

The next considerations exploit the following simple observation: If $a,b$ are relatively prime natural numbers and $s,r_1$ are relatively prime natural numbers then $as = br_1$ if and only if $s=b$ and $r_1=a$. Namely, $b$ divides $as$ and $GCD(a,b)=1$ requires $b$ to divide $s$. Thus, $s = b s_1$ for some $s_1 \in {\mathbb N}$ and $as_1 = r_1$. Now $s_1$ is a natural common divisor of the relatively prime $s,r_1$, so that $s_1=1$, $s=b$ and $r_1=a$.

For $d=1$ and $f=2$ one has $2 \sin \left( \frac{2 \pi s}{r_1} \right) \in [ -2,2] \cap f {\mathbb Z} = \{ 0 , \pm 2 \}$. Let $\tr(g) =2i$ or
 $\sin \left( \frac{2 \pi s}{r_1} \right) =1$ for $r_1 \in {\mathbb N}$ and some natural number $1 \leq s \leq r_1-1$, $GCD(s,r_1)=1$. Then
$\frac{2 \pi s}{r_1} = \frac{\pi}{2}$ or $4s =r_1$. As a result, $s=1$, $r_1=4$ and $\lambda _1 = e^{ \frac{\pi i}{2}} =i$, $\lambda _2 = - e^{ - \frac{ \pi i}{2}} = i$. Now $g = iI_2$ as the unique matrix, conjugate to the scalar matrix $iI_2$. However, $i I_2 \not \in GL(2,R_{-1,2}) = GL(2, {\mathbb Z} + 2i {\mathbb Z})$, so that $g=iI_2$ is not a solution of the problem. For $\tr(g) = -2i$ one has $\sin \left( \frac{ 2 \pi s}{r_1} \right) = -1$, whereas $\frac{2 \pi s}{r_1} = \frac{ 3 \pi}{2}$ and $4s=3r_1$. Thus, $s=3$, $r_1 =4$ and $\lambda _1 = e^{ \frac{ 3 \pi i}{3}} = -i$, $\lambda _2 = - e^{ - \frac{3 \pi i}{3}} = -i$. That determines a unique $g = -i I_2$. But $-iI_2 \not \in GL(2,R_{-1,2}) = GL(2, {\mathbb Z} + 2i {\mathbb Z})$, so that $\lambda _1=1$, $\lambda _2 =-1$ are the only possible eigenvalues for $g \in GL(2, R_{-1,2})$ of finite order with $\det(g) =-1$.

In the case of $d=1$ and $f=1$, note that $ 2 \sin \left( \frac{2 \pi s}{r_1} \right) \in [-2,2] \cap {\mathbb Z} = \{ 0, \pm 1, \pm 2\}$. Besides $g \in GL(2, {\mathbb Z}[i])$ with $\det(g) = -1$, $\tr(g)=0$, one has $g = iI_2 \in GL(2, {\mathbb Z}[i])$ and $g = -iI_2 \in GL(2, {\mathbb Z}[i])$. The case of $\tr(g) = i$ corresponds to $\sin \left( \frac{ 2 \pi s}{r_1} \right) = \frac{1}{2}$ and holds for $\frac{2 \pi s}{r_1} = \frac{\pi}{6}$ or $\frac{2 \pi s}{r_1} = \frac{5 \pi}{6}$. Note that $12s = r_1$ implies $s=1$, $r_1=12$ and $\lambda _1  = e^{ \frac{\pi i}{6}} = \frac{\sqrt{3}}{2} + \frac{1}{2}i$, $\lambda _2 = - e^{ - \frac{ \pi i}{6}} = - \frac{\sqrt{3}}{2} + \frac{1}{2}i = e^{ \frac{5 \pi i}{6}}$. Thus, $g$ is of order $r = LCM(12,12) =12$. This possibility is realized, for instance, by
\[
g(i) = \left(  \begin{array}{rr}
1  &   1  \\
i  & ( -1+i)
\end{array}  \right) \in GL(2, {\mathbb Z}[i]) \ \ \mbox{  with  } \ \  \det(g(i)) = -1, \ \ \tr(g (i)) =i.
\]

If $12s=5r_1$ then $s=5$, $r_1=12$ and $\lambda _1 = e^{\frac{5 \pi i}{6}} = - \frac{\sqrt{3}}{2} + \frac{1}{2}i$, $\lambda _2 = - e^{ - \frac{ 5 \pi i}{6}} = \frac{\sqrt{3}}{2} + \frac{1}{2}i = e^{ \frac{\pi i}{6}}$, which was already obtained. Note that $\tr(g) = -i$ amounts to $\sin \left( \frac{2 \pi s}{r_1} \right) = - \frac{1}{2}$ and holds for $\frac{ 2 \pi s}{r_1} = \frac{ 7 \pi}{6}$ or $\frac{ 2 \pi s}{r_1} = \frac{11 \pi}{6}$. If $12s =7r_1$ then $s=7$, $r_1 =12$ and $\lambda _1 = e^{ \frac{ 7 \pi i}{6}} = - \frac{\sqrt{3}}{2} - \frac{1}{2}i$, $\lambda _2 = - e^{ - \frac{ 7 \pi i}{6}} = \frac{\sqrt{3}}{2} - \frac{1}{2} i = e^{ - \frac{\pi i}{6}}$ and $g$ is of order $r = LCM (12,12) =12$. Note that
\[
g(-i) = \left( \begin{array}{rr}
1  &  1  \\
-i  &  (-1-i)
\end{array} \right) \in GL(2, {\mathbb Z} [i] ) \ \ \mbox{  with  } \ \  \det ( g(-i)) = -1, \ \ \tr(g(-i)) = -i
\]
realizes the aforementioned possibility.

 In the case of $12s = 11r_1$ one has $s=11$, $r_1 =12$ and $\lambda _1 = e^{ \frac{ 11 \pi i}{6}} = \frac{\sqrt{3}}{2} - \frac{1}{2} i$, $\lambda _2 = - e^{ \frac{\pi i}{6}} = - \frac{\sqrt{3}}{2} - \frac{1}{2}i$, which is already listed as a solution. That concludes the considerations for $g \in GL(2, {\mathbb Z}[i])$ with $\det(g) = -1$.

If $d=2$ and $f=1$ then $2 \sin \left( \frac{ 2 \pi s}{r_1} \right) \in [ -2,2] \cap \sqrt{2} {\mathbb Z} = \{ 0 , \pm \sqrt{2} \}$. Note that $\sin \left( \frac{ 2 \pi s}{r_1} \right) = \frac{\sqrt{2}}{2}$ holds for $\frac{ 2 \pi s}{r_1} = \frac{ \pi}{4}$ or $\frac{ 2 \pi s}{r_1} = \frac{3 \pi}{4}$. The equality $r_1 = 8s$ implies $s=1$ and $r_1=8$. As a result, $\lambda _1 = e^{\frac{ \pi i}{4}} = \frac{\sqrt{2}}{2} + \frac{\sqrt{2}}{2} i$, $\lambda _2 = - e^{ - \frac{ \pi i}{4}} = - \frac{\sqrt{2}}{2} + \frac{\sqrt{2}}{2} i = e^{ \frac{ 3 \pi i}{4}}$. Observe that
\[
g( \sqrt{-2}) = \left( \begin{array}{rr}
1  &  1  \\
\sqrt{-2}  & ( -1 + \sqrt{-2})
\end{array}  \right) \in GL(2, \mathcal{O}_{-2}), \mathcal{O}_{-2} = {\mathbb Z} + \sqrt{-2} {\mathbb Z}
\]
with $\det ( g( \sqrt{-2})) = -1$, $\tr(g ( \sqrt{-2})) = \sqrt{-2}$ realizes the aforementioned possibility.
If $8s=3r_1$ then $s=3$, $r_1 =8$ and $\lambda _1 = e^{ \frac{ 3 \pi i}{4}} = - \frac{\sqrt{2}}{2} + \frac{\sqrt{2}}{2} i$,
$\lambda _2 = - e^{ - \frac{3 \pi i}{4}} = \frac{\sqrt{2}}{2} + \frac{\sqrt{2}}{2} i = e^{ \frac{\pi i}{4}}$. These eigenvalues have been already mentioned.

For $\sin \left( \frac{ 2 \pi s}{r_1} \right) = - \frac{\sqrt{2}}{2}$ there follows  $\frac{ 2 \pi s}{r_1} = \frac{ 5 \pi }{4}$ or
$\frac{ 2 \pi s}{r_1} = \frac{7 \pi}{4}$.
If $8s = 5r_1$ then $s=5$, $r_1 =8$ and
$\lambda _1 = e^{ \frac{ 5 \pi i}{4}} = - \frac{\sqrt{2}}{2} - \frac{\sqrt{2}}{2} i$,
$\lambda _2 = - e^{ - \frac{ 5 \pi i}{4}} = \frac{\sqrt{2}}{2} - \frac{\sqrt{2}}{2} i = e^{ - \frac{\pi i}{4}}$.
The corresponding automorphism $g$ is of order $r = LCM(8,8) =8$. Note that
\[
g ( - \sqrt{-2}) = \left( \begin{array}{rr}
1 &    1   \\
- \sqrt{-2}   & (-1 -\sqrt{-2})
\end{array}  \right) \in GL(2, \mathcal{O}_{-2})
\]
with $\det ( g( - \sqrt{-2})) = -1$, $\tr(g ( - \sqrt{-2})) = - \sqrt{-2}$.
realizes this possibility.
In the case of $8s = 7r_1$, one has $s=7$, $r_1=8$. The eigenvalues
$\lambda _1 = e^{ \frac{ 7 \pi i}{4}} = \frac{\sqrt{2}}{2} - \frac{\sqrt{2}}{2} i$,
$\lambda _2 = - e^{ - \frac{7 \pi i}{4}} = - \frac{\sqrt{2}}{2} - \frac{\sqrt{2}}{2} i$ were already obtained. That concludes the considerations for $d=2$.

If $d=3$ and $f=1$, note that $2 \sin \left( \frac{2 \pi s}{r_1} \right) \in [-2,2] \cap \sqrt{3} {\mathbb Z} = \{ 0, \pm \sqrt{3} \}$. Similarly, for $d=3$ and $f=2$ one has $2 \sin \left( \frac{ 2 \pi s}{r_1} \right) \in [-2,2] \cap \sqrt{3} {\mathbb Z} = \{ 0, \pm \sqrt{3} \}$. If $\sin \left( \frac{ 2 \pi  s}{r_1} \right) = \frac{\sqrt{3}}{2}$ then $\frac{2 \pi s}{r_1} = \frac{\pi}{3}$ or $\frac{2 \pi s}{r_1} = \frac{2 \pi}{3}$. In the case of $6s=r_1$ there follows $s=1$, $r_1=6$. The eigenvalues
$\lambda _1 = e^{ \frac{ \pi i}{3}} = \frac{1}{2} + \frac{\sqrt{3}}{2} i$,
$\lambda _2 = - e^{ - \frac{\pi i}{3}} = - \frac{1}{2} + \frac{\sqrt{3}}{2} i = e^{ \frac{2 \pi i}{3}}$ and $g$ is of order $r = LCM(6,3) =6$. The automorphism
\[
g( \sqrt{-3}) = \left( \begin{array}{rr}
1  &  1  \\
\sqrt{-3}   &  (-1 + \sqrt{-3})
\end{array}  \right) \in  GL(2, R_{-3,2} ) \leq GL(2, \mathcal{O}_{-3})
\]
with $\det(g( \sqrt{-3})) = -1$, $\tr(g ( \sqrt{-3})) = \sqrt{-3}$ realizes the aforementioned possibility.
 If $3s=r_1$ then $s=1$, $r_1=3$ and  $\lambda _1 = e^{ \frac{ 2 \pi i}{3}} = - \frac{1}{2} + \frac{\sqrt{3}}{2} i$,
$\lambda _2 = - e^{ - \frac{ 2 \pi i}{3}} = \frac{1}{2} + \frac{\sqrt{3}}{2} i = e^{\frac{\pi i}{3}}$,
which was already  obtained.

If $\sin \left( \frac{ 2 \pi s}{r_1} \right) = - \frac{\sqrt{3}}{2}$ then $\frac{ 2 \pi s}{r_1} = \frac{4 \pi}{3}$ or
$\frac{2 \pi s}{r_1} = \frac{5 \pi}{3}$. In the case of $3s=2r_1$ note that $s=2$, $r_1=3$ and
$\lambda _1 = e^{ \frac{ 4 \pi i}{3}} = - \frac{1}{2} - \frac{\sqrt{3}}{2} i$,
$\lambda _2 = - e^{ -  \frac{ 4 \pi i}{3}} = \frac{1}{2} - \frac{\sqrt{3}}{2} i = e^{ - \frac{\pi i}{3}}$.
 The automorphisms $g$ with such eigenvalues are of order $r = LCM(3,6) = 6$. In particular,
\[
g ( - \sqrt{-3}) = \left( \begin{array}{rr}
1  &  1  \\
- \sqrt{-3}  & (-1-\sqrt{-3})
\end{array}  \right) \in  GL(2, R_{-3,2}) \leq GL(2, \mathcal{O}_{-3})
\]
with $\det (g ( - \sqrt{-3})) =-1$, $\tr(g( - \sqrt{-3})) = - \sqrt{-3}$ realizes the  aforementioned possibility.

If $6s=5r_1$ then $s=5$, $r_1=6$ and
$\lambda _1 = e^{\frac{ 5 \pi i}{3}} = e^{ - \frac{\pi i}{3} } = \frac{1}{2} - \frac{\sqrt{3}}{2} i$,
$\lambda _2 = - e^{ \frac{\pi i}{3}} = - \frac{1}{2} - \frac{\sqrt{3}}{2} i = e^{\frac{4 \pi i}{3}}$.
These eigenvalues are already obtained. That concludes the considerations for $d=3$ and the description of all $g \in GL(2, R_{-d,f})$ with $\det(g) =-1$.

\end{proof}

\begin{proposition}    \label{ListS4+}
If $g \in GL(2, {\mathbb Z}[i])$ is of finite order $r$ and $\det(g) =i$ then
\[
\tr(g) \in \{ 0, \pm (1+i) \}, \quad r \in \{ 4, 8 \}.
\]
 More precisely,

(i) $\tr(g) =0$ or $\lambda _1 = e^{\frac{3 \pi i}{4}}$, $\lambda _2 = e^{ - \frac{ \pi i}{4}}$ if and only if $g$ is of order $8$;

(ii) if $\tr(g) = 1+i$  or $\lambda _1 =i$, $\lambda _2 =1$ then $g$ is of order $4$;

(iii) if $\tr(g) = -1-i$ or $\lambda _1 = -i$, $\lambda _2 = -1$ then $g$ is of order $4$.
\end{proposition}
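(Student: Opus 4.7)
The plan is to mimic the diagonalize-and-reduce strategy of Propositions~\ref{ListK} and~\ref{ListS2}. Any $g \in GL(2,{\mathbb Z}[i])$ of finite order is diagonalizable by Proposition~\ref{EigenvaluesFiniteOrder}, so up to conjugacy the problem reduces to classifying the unordered pairs of roots of unity $(\lambda_1,\lambda_2)$ satisfying $\lambda_1 \lambda_2 = \det(g) = i$ and $\tr(g) = \lambda_1+\lambda_2 \in {\mathbb Z}[i]$.

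First I would parametrize $\lambda_1 = e^{i\theta}$, which forces $\lambda_2 = e^{i(\pi/2 - \theta)}$. The sum-to-product identity gives
\[
\tr(g) \;=\; e^{i\theta} + e^{i(\pi/2 - \theta)} \;=\; 2 e^{i\pi/4}\cos\!\left(\theta - \tfrac{\pi}{4}\right) \;=\; \sqrt{2}\,(1+i)\cos\!\left(\theta - \tfrac{\pi}{4}\right).
\]
The right-hand side has equal real and imaginary parts, so the condition $\tr(g) \in {\mathbb Z}[i]$ together with ${\mathbb Z}[i] \cap \{t(1+i) : t \in {\mathbb R}\} = {\mathbb Z}(1+i)$ forces $\tr(g) = n(1+i)$ for some $n = \sqrt{2}\cos(\theta - \pi/4) \in {\mathbb Z}$. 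Since $|\cos(\theta - \pi/4)| \leq 1$ one has $|n| \leq \sqrt{2}$, hence $n \in \{-1,0,1\}$ and $\tr(g) \in \{0, \pm(1+i)\}$ as asserted.

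A short case analysis then pins down $(\lambda_1,\lambda_2)$ and the order $r = \mathrm{LCM}(\mathrm{ord}(\lambda_1), \mathrm{ord}(\lambda_2))$. For $n=0$, $\cos(\theta - \pi/4) = 0$ yields $\theta - \pi/4 \equiv \pm \pi/2$ and $\{\lambda_1,\lambda_2\} = \{e^{3\pi i/4},\, e^{-\pi i/4}\}$, both primitive $8$th roots of unity, so $r=8$. For $n=1$, $\cos(\theta - \pi/4) = 1/\sqrt{2}$ yields $\theta - \pi/4 \equiv \pm \pi/4$ and $\{\lambda_1,\lambda_2\} = \{i,1\}$, whence $r = \mathrm{LCM}(4,1) = 4$. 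For $n=-1$, $\cos(\theta-\pi/4) = -1/\sqrt{2}$ yields $\theta - \pi/4 \equiv \pm 3\pi/4$ and $\{\lambda_1,\lambda_2\} = \{-1,-i\}$, whence $r = \mathrm{LCM}(2,4) = 4$. This settles the forward implications of (ii), (iii), and one direction of (i); the reverse direction of (i) follows because $r=8$ excludes the $n=\pm 1$ subcases, which each force $r=4$.

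Finally, to see that the three possibilities are actually realized (so the ``if and only if'' in (i) is non-vacuous), I would display explicit matrices: $\mathrm{diag}(i,1)$ for (ii), $\mathrm{diag}(-1,-i)$ for (iii), and, for (i), the matrix $\left(\begin{smallmatrix} 1 & 1 \\ -1-i & -1 \end{smallmatrix}\right) \in GL(2,{\mathbb Z}[i])$, whose trace is $0$ and whose determinant equals $-1 - (-1-i) = i$. I do not expect any serious obstacle: the only non-mechanical step is the trigonometric identity pinning $\tr(g)$ to the lattice line ${\mathbb Z}(1+i)$, after which the bound $|\cos|\le 1$ does all the combinatorial work.
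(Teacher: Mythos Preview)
Your proposal is correct and follows essentially the same approach as the paper: both write $\tr(g)$ as $\sqrt{2}(1+i)$ times a single trigonometric function of the eigenvalue argument (the paper uses $\sin(\theta+\pi/4)$, you use the equivalent $\cos(\theta-\pi/4)$), reduce the lattice constraint $\tr(g)\in\mathbb{Z}[i]$ to an integer bounded by $\sqrt{2}$, and then run the three-case analysis with explicit realizing matrices. The only differences are cosmetic---your parametrization by a general angle $\theta$ versus the paper's $2\pi s/r_1$, and a different choice of witness matrix for case~(i).
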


\begin{proof}

If $\lambda _1 = e^{ \frac{ 2 \pi si}{r_1}}$ for the order $r_1 \in {\mathbb N}$ of $\lambda _1 \in {\mathbb C}^*$ and some natural number $1 \leq s < r_1$, $GCD(s,r_1) =1$, then $\lambda _2 = \det (g) \lambda _1 ^{-1} = i e^{ - \frac{ 2 \pi si}{r_1}}$. Therefore, the trace
\[
\tr(g) = \lambda _1 + \lambda _2 = \left[ \cos \left( \frac{2 \pi s}{r_1} \right)  + \sin \left( \frac{2 \pi s}{r_1} \right) \right] (1+i) =
\]
\[
= \sqrt{2} \sin \left( \frac{2 \pi s}{r_1} + \frac{\pi}{4} \right) (1+i) \in {\mathbb Z}[i] = {\mathbb Z} + i {\mathbb Z}
\]
if and only if the real part
\[
\sqrt{2} \sin \left( \frac{2 \pi s}{r_1} + \frac{\pi}{4} \right) \in {\mathbb Z} \cap [ - \sqrt{2}, \sqrt{2} ] = \{ 0, \pm 1 \}.
\]
As a result, $\tr(g) \in \{ 0, \pm (1+i) \}$. If $\tr(g) =0$ or, equivalently, $\sin \left( \frac{2 \pi s}{r_1} + \frac{\pi}{4} \right) = 0$ for $\frac{2 \pi s}{r_1} + \frac{\pi}{4} \in \left( \frac{\pi}{4}, \frac{9 \pi}{4} \right)$ then $\frac{2 \pi s}{r_1} + \frac{\pi}{4} = \pi$ or $\frac{ 2 \pi s}{r_1} + \frac{\pi}{4} = 2 \pi$.  For $\frac{ 2s}{r_1} = \frac{3}{4}$ there follows $8s=3r_1$ and $s=3$, $r_1=8$. As a result,
$\lambda _1 = e^{ \frac{ 3 \pi i}{4}} = - \frac{\sqrt{2}}{2} + \frac{\sqrt{2}}{2} i$,
$\lambda _2 = i e^{ - \frac{ 3 \pi i}{4}} = \frac{\sqrt{2}}{2} - \frac{\sqrt{2}}{2} i = e^{ - \frac{\pi i}{4}}$ and $g$ is of order $r = LCM(8,8) =8$.
 For instance,
\[
g_i (0) = \left( \begin{array}{rr}
i  &  i  \\
(-1-i)  &  -i
\end{array}  \right) \in GL(2, {\mathbb Z}[i])
\]
with $\det( g_i(0)) = i$, $\tr(g _i(0)) = 0$ attains this possibility.

If $\frac{2s}{r_1} = \frac{7}{4}$ then  $8s=7r_1$ and $s=7$, $r_1=8$. The eigenvalues
$\lambda _1 = e^{ \frac{7 \pi i}{4}} = e^{ - \frac{ \pi i}{4}} = \frac{\sqrt{2}}{2} - \frac{\sqrt{2}}{2} i$,
$\lambda _2 = i e^{ \frac{\pi i}{4}} = - \frac{\sqrt{2}}{2} + \frac{\sqrt{2}}{2} i = e^{ \frac{3 \pi i}{4}}$
are already obtained.

In the case of $\tr(g) = 1+i$, one has $\sin \left( \frac{2 \pi s}{r_1} + \frac{\pi}{4} \right) = \frac{\sqrt{2}}{2}$, which is equivalent to
$\frac{ 2 \pi s}{r_1} + \frac{\pi}{4} = \frac{3 \pi}{4}$ for $\frac{2 \pi s}{r_1} + \frac{\pi}{4} \in \left( \frac{\pi}{4}, \frac{9 \pi}{4} \right)$. Now,  $\frac{2s}{r_1} = \frac{1}{2}$, whereas $4s=r_1$ and $s=1$, $r_1=4$. The eigenvalues are
$\lambda _1 = e^{ \frac{\pi i}{2}} = i$,
$\lambda _2 = i e^{ - \frac{\pi i}{2}} = 1$ and $g$ is of order $r = LCM(4,1)=4$. Note that
\[
g_i (1+i) = \left( \begin{array}{rr}
i  &   0  \\
0  &   1
\end{array}  \right) \in GL(2, {\mathbb Z}[i])
\]
with $\det( g_i(1+i)) =i$, $\tr( g_i(1+i)) = 1+i$ realizes this case.

Finally, for $\tr(g) = -1-i$ there follows $\sin \left( \frac{2 \pi s}{r_1} + \frac{\pi}{4} \right) = - \frac{\sqrt{2}}{2}$. Consequently, $\frac{2 \pi s}{r_1} + \frac{\pi}{4} = \frac{5 \pi}{4}$ or $\frac{2 \pi s}{r_1} + \frac{\pi}{4} = \frac{7 \pi}{4}$ for $\frac{2 \pi s}{r_1} + \frac{\pi}{4} \in \left( \frac{\pi}{4}, \frac{9 \pi}{4} \right)$. In the case of $\frac{2s}{r_1} =1$ one has $s=1$, $r_1=2$. The eigenvalues of $g$ are $\lambda _1 = e^{ \pi i} = -1$,
$\lambda _2 = i e^{ - \pi i} = -i$, so that $g$ is of order $r = LCM (2,4) =4$. This possibility is realized by
\[
g_i (-1-i) = \left( \begin{array}{cc}
-i  &  0  \\
0  &  -1
\end{array}  \right) \in GL(2,{\mathbb Z}[i])
\]
with $\det( g_i (-1-i)) = i$, $\tr(g _i (-1-i)) = -1-i$.

If $\frac{2s}{r_1} = \frac{3}{2}$ then $4s=3r_1$ and $s=3$, $r_1=4$. The eigenvalues
$ \lambda _1 = e^{ \frac{ 3 \pi i}{2}} = -i$,
$\lambda _2 = i e^{ - \frac{ 3 \pi i}{2}} = -1$
are already obtained. That concludes the description of the eigenvalues of all $g \in GL(2,{\mathbb Z}[i])$ of finite order with $\det(g) =i$.

\end{proof}

\begin{proposition}  \label{ListS4-}
If $g \in GL(2, {\mathbb Z}[i])$ is of finite order $r$ and $\det(g) = -i$ then
\[
\tr(g) \in \{ 0, \pm (1-i) \}, \quad r \in \{ 4, 8 \}.
\]
More precisely,

(i) $\tr(g) =0$ or $\lambda _1 = e^{\frac{\pi i}{4}}$,
$\lambda _2 = e^{ \frac{5 \pi i}{4}}$
if and only if $g$ is of order $8$;

  (ii) if $\tr(g) = 1-i$ or
$\lambda _1 = -i$, $\lambda _2 =1$  then $g$ is of order $4$;

(iii) if $\tr(g) = -1+i$ or $\lambda _1 = i$, $\lambda _2 =-1$ then $g$ is of order $4$.

\end{proposition}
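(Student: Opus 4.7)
The plan is to mimic the proof of Proposition~\ref{ListS4+} line by line, replacing $\det(g)=i$ by $\det(g)=-i$ throughout. Concretely, write any eigenvalue as $\lambda_1=e^{2\pi i s/r_1}$ with $r_1$ the order of $\lambda_1$ and $1\leq s\leq r_1-1$, $\gcd(s,r_1)=1$. Then $\lambda_2 = \det(g)\lambda_1^{-1} = -i\,e^{-2\pi i s/r_1}$, so
\[
\tr(g) = \lambda_1+\lambda_2 = \bigl[\cos(2\pi s/r_1)-\sin(2\pi s/r_1)\bigr](1-i) = \sqrt{2}\,\cos\!\Bigl(\tfrac{2\pi s}{r_1}+\tfrac{\pi}{4}\Bigr)(1-i).
\]
This sits in $\mathbb{Z}[i]={\mathbb Z}+i{\mathbb Z}$ exactly when its $(1-i)$-coefficient is an integer, i.e.
\[
\sqrt{2}\,\cos\!\Bigl(\tfrac{2\pi s}{r_1}+\tfrac{\pi}{4}\Bigr) \in [-\sqrt{2},\sqrt{2}]\cap{\mathbb Z} = \{0,\pm 1\},
\]
which forces $\tr(g)\in\{0,\pm(1-i)\}$. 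This is the $-i$-analogue of the sine-identity used before, and it provides the three-case split in the statement.

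The main body is then a routine case-by-case solution of $\cos(2\pi s/r_1+\pi/4)\in\{0,\pm\tfrac{\sqrt{2}}{2}\}$ with $\tfrac{2\pi s}{r_1}+\tfrac{\pi}{4}\in(\tfrac{\pi}{4},\tfrac{9\pi}{4})$. For $\tr(g)=0$ one gets $\tfrac{2\pi s}{r_1}\in\{\tfrac{\pi}{4},\tfrac{5\pi}{4}\}$, the first of which gives $s=1$, $r_1=8$, $\lambda_1=e^{\pi i/4}$, $\lambda_2=-ie^{-\pi i/4}=e^{5\pi i/4}$, while the second is a relabelling of the same pair; in both the order is $r=\mathrm{lcm}(8,8)=8$. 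For $\tr(g)=1-i$, the unique solution in the range is $\tfrac{2\pi s}{r_1}=\tfrac{3\pi}{2}$, yielding $s=3$, $r_1=4$, $\lambda_1=-i$, $\lambda_2=1$, and order $r=\mathrm{lcm}(4,1)=4$. For $\tr(g)=-1+i$, the two solutions $\tfrac{2\pi s}{r_1}\in\{\tfrac{\pi}{2},\pi\}$ both deliver $\lambda_1=i$, $\lambda_2=-1$ (up to a relabelling), of order $r=\mathrm{lcm}(4,2)=4$.

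To confirm realizability of each case, I would exhibit the explicit matrices mirroring those in the previous proof: the diagonal matrix $g_i(1-i)=\mathrm{diag}(-i,1)$ realizes (ii), the diagonal matrix $g_i(-1+i)=\mathrm{diag}(i,-1)$ realizes (iii), and for (i) one may take for instance
\[
g_{-i}(0)=\begin{pmatrix} -i & -i \\ 1-i & i\end{pmatrix}\in GL(2,{\mathbb Z}[i])
\]
with $\det=-i$ and trace $0$. Since each matrix above has entries in ${\mathbb Z}[i]$ and unit determinant $-i\in{\mathbb Z}[i]^*$, membership in $GL(2,{\mathbb Z}[i])$ is immediate.

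There is no genuine obstacle: the real content of the argument lies in identifying $\tr(g)$ as a real multiple of $1-i$ and intersecting with ${\mathbb Z}[i]$. The only mild care needed is (a) keeping the interval for $\tfrac{2\pi s}{r_1}+\tfrac{\pi}{4}$ open at $\tfrac{\pi}{4}$ since $s\geq 1$, which excludes the spurious $\cos=\tfrac{\sqrt{2}}{2}$ solution at $\pi/4$, and (b) verifying that apparent second solutions in each case yield the same unordered eigenvalue pair and hence the same conjugacy class, as the author does in Proposition~\ref{ListS4+}. The diagonalisability of $g$ and hence that $r=\mathrm{lcm}(r_1,r_2)$ is furnished by Proposition~\ref{EigenvaluesFiniteOrder}.
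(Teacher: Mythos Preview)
Your approach is essentially identical to the paper's own proof: the same parametrisation $\lambda_1=e^{2\pi is/r_1}$, $\lambda_2=-ie^{-2\pi is/r_1}$, the same identity $\tr(g)=\sqrt{2}\cos(2\pi s/r_1+\pi/4)(1-i)$, and the same case-by-case elimination on the interval $(\pi/4,9\pi/4)$; the paper's explicit witness matrices for (ii) and (iii) are exactly the diagonal ones you give. One slip: your proposed matrix $g_{-i}(0)=\left(\begin{smallmatrix}-i&-i\\1-i&i\end{smallmatrix}\right)$ has determinant $(-i)(i)-(-i)(1-i)=2+i\neq -i$; the paper's choice is $\left(\begin{smallmatrix}-i&-i\\-1+i&i\end{smallmatrix}\right)$, which does have $\det=-i$ and trace $0$.
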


\begin{proof}

If one of the eigenvalues of $g$ is $\lambda _1 = e^{ \frac{ 2 \pi s i}{r_1} }$ then the other one is $\lambda _2 = -i e^{ - \frac{ 2 \pi si}{r_1}}$. Thus, the trace
\[
\tr(g) = \lambda + \lambda _2 = \left[ \cos \left( \frac{ 2 \pi s}{r_1} \right)  - \sin \left( \frac{ 2 \pi s}{r_1} \right) \right] (1-i) =
\sqrt{2} \cos \left( \frac{ 2 \pi s}{r_1} + \frac{\pi}{4} \right) (1-i)
\]
belongs to ${\mathbb Z}[i] = {\mathbb Z} + {\mathbb Z}i$ if and only if $\sqrt{2} \cos \left( \frac{ 2 \pi s}{r_1} + \frac{\pi}{4} \right) \in {\mathbb Z}$.
As a result,
\[
\sqrt{2} \cos \left( \frac{ 2 \pi s}{r_1} + \frac{\pi}{4} \right) \in {\mathbb Z} \cap [ - \sqrt{2}, \sqrt{2}] = \{ 0, \pm 1 \}
\]
or $\tr(g) \in \{ 0, \pm (1-i) \}$. Note that $\tr(g) =0$ reduces to $\cos \left( \frac{ 2 \pi s}{r_1} + \frac{\pi}{4} \right) =0$ with solutions $\frac{ 2 \pi s}{r_1} + \frac{\pi}{4} = \frac{\pi}{2}$ or $\frac{ 2 \pi s}{r_1} + \frac{\pi}{4} = \frac{3 \pi}{2}$.
If $\frac{2s}{r_1} = \frac{1}{4}$ then $8s=r_1$ and $s=1$, $r_1=8$. The eigenvalues of $g$ are
$\lambda _1 = e^{ \frac{ \pi i}{4}} = \frac{\sqrt{2}}{2} + \frac{\sqrt{2}}{2} i$,
$\lambda _2 = -i e^{ - \frac{ \pi i}{4}} = - \frac{\sqrt{2}}{2} - \frac{\sqrt{2}}{2} i$ and $g$ is of order $r = LCM(8,8)=8$. Note that
\[
g_{-i} (0) = \left( \begin{array}{rr}
-i  &  -i  \\
(-1+i)  & i
\end{array}  \right) \in GL(2, {\mathbb Z}[i])
\]
with $\det ( g_{-i} (0)) = -i$, $\tr(g _{-i} (0)) = 0$ realizes the aforementioned possibility. In the case of $\frac{2 \pi s}{r_1} = \frac{5}{4}$ there holds $8s=5r_1$, whereas $s=5$, $r_1 =8$ and $\lambda _1 = e^{\frac{ 5 \pi i}{4}} = - \frac{\sqrt{2}}{2} - \frac{\sqrt{2}}{2} i$, $\lambda _2 = -i e^{ - \frac{ 5 \pi i}{4}} = \frac{\sqrt{2}}{2} + \frac{\sqrt{2}}{2} i = e^{\frac{ \pi i}{4}}$. This case has been already discussed.

For $\tr(g) = 1-i$ one has $\cos \left( \frac{ 2 \pi s}{r_1} + \frac{\pi}{4} \right) = \frac{\sqrt{2}}{2}$, which reduces to $\frac{ 2 \pi s}{r_1} + \frac{\pi}{4} = \frac{ 7 \pi}{4}$ for $\frac{ 2 \pi s}{r_1} + \frac{\pi}{4} \in \left( \frac{\pi}{4}, \frac{9 \pi}{4} \right)$. Now $\frac{2s}{r_1} = \frac{3}{2}$ reads as $4s=3r_1$ and determines $s=3$, $r_1=4$.  The eigenvalues of $g$ are
$\lambda _1 = e^{ \frac{ 3 \pi i}{2}} = -i$,
$ \lambda _2 = -i e^{ - \frac{ 3 \pi i}{2}} =1$
and $g$ is of order $r = LCM(4,1)=4$. This possibility is realized by
\[
g_{-i} (1-i) = \left( \begin{array}{rr}
-i  &  0  \\
0  &  1
\end{array}  \right) \in GL (2, {\mathbb Z}[i])
\]
with $\det( g_{-i} (1-i)) = -i$, $\tr(g_{-i} (1-i)) = 1-i$.

Finally, $\tr(g) = -1+i$ is equivalent to $\cos \left( \frac{ 2 \pi s}{r_1} + \frac{\pi}{4} \right) = - \frac{\sqrt{2}}{2}$ and holds for $\frac{ 2 \pi s}{r_1} + \frac{\pi}{4} = \frac{ 3 \pi}{4}$ or $\frac{ 2 \pi s}{r_1} + \frac{\pi}{4} = \frac{ 5 \pi}{4}$. In the case of $\frac{2s}{r_1} = \frac{1}{2}$, one has $4s=r_1$ and $s=1$, $r_1=4$. The eigenvalues of $g$ are
$\lambda _1 = e^{ \frac{ \pi i}{2}} = i$,
$\lambda _2 = -i e^{ - \frac{ \pi i}{2}} = -1$ and $g$ is of order $r = LCM(4,2)=4$. The automorphism
\[
g_{-i} (-1+i) = \left( \begin{array}{rr}
i  &  0  \\
0  &  -1
\end{array}  \right) \in GL(2, {\mathbb Z}[i])
\]
realizes the case under discussion. For $\frac{2s}{r_1} = 1$ there follow $s=1$, $r_1 =2$ and $\lambda _1 = e^{\pi i} = -1$, $\lambda _2 = - i e^{ - \pi i} = i$, which was already discussed.  That concludes the description of the automorphisms $g \in GL(2, {\mathbb Z} [i])$ with $\det(g) = -i$.

\end{proof}

\begin{proposition}    \label{ListS6+}
If $g \in GL(2, \mathcal{O}_{-3})$ is of finite order $r$ and $\det(g) = e^{ \frac{\pi i}{3}}$ then
\[
r=6 \ \ \mbox{  and  } \ \ \tr(g) \in \left
\{ 0, \pm \left( \frac{3}{2} + \frac{\sqrt{-3}}{2}  \right) \right \}.
\]
More precisely,

(i) $\tr(g) =0$ exactly when $\lambda _1 = e^{ \frac{ 2 \pi i}{3}}$,
$\lambda _2 = e^{ - \frac{ \pi i}{3}}$;

(ii) $\tr(g) = \frac{3}{2} + \frac{\sqrt{-3}}{2}$ exactly when
$\lambda _1 = e^{ \frac{ \pi i}{3}}$, $\lambda _2 =1$;

(iii) $\tr(g) = - \frac{3}{2} - \frac{\sqrt{-3}}{2}$ exactly when
$\lambda _1 = e^{ - \frac{ 2 \pi i}{3}}$, $\lambda _2 =-1$.
\end{proposition}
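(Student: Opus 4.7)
The plan is to mimic the strategy of Propositions \ref{ListS4+} and \ref{ListS4-}: parameterize the eigenvalues by a single angle, turn the condition $\tr(g)\in \mathcal{O}_{-3}$ into a real inequality, and then read off the finitely many admissible possibilities. Since $g\in GL(2,\mathcal{O}_{-3})$ has finite order, Proposition \ref{EigenvaluesFiniteOrder} (or the same argument used in its proof) shows $g$ is diagonalizable with eigenvalues that are roots of unity, so we may write $\lambda_1=e^{2\pi is/r_1}$ with $1\le s<r_1$ and $\gcd(s,r_1)=1$. The hypothesis $\det(g)=e^{\pi i/3}$ forces $\lambda_2=e^{\pi i/3}\lambda_1^{-1}$, and setting $\alpha=2\pi s/r_1$ one gets
\[
\tr(g)=e^{i\alpha}+e^{i(\pi/3-\alpha)}=e^{i\pi/6}\bigl(e^{i(\alpha-\pi/6)}+e^{-i(\alpha-\pi/6)}\bigr)=2\cos(\alpha-\pi/6)\,e^{i\pi/6}.
\]

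Next I would use $e^{i\pi/6}=\frac{\sqrt{3}}{2}+\frac{1}{2}i$ to rewrite $\tr(g)=\cos(\alpha-\pi/6)\,(\sqrt{3}+i)$ and impose $\tr(g)\in\mathcal{O}_{-3}=\mathbb{Z}+\omega_{-3}\mathbb{Z}$ with $\omega_{-3}=\tfrac{1+\sqrt{-3}}{2}$. A generic element $a+b\omega_{-3}$ has real part $a+b/2$ and imaginary part $b\sqrt{3}/2$ with $a,b\in\mathbb{Z}$. Matching imaginary parts gives $\cos(\alpha-\pi/6)=b\sqrt{3}/2$; since $|\cos|\le 1$, this forces $b\in\{-1,0,1\}$, i.e.\ $\cos(\alpha-\pi/6)\in\{0,\pm\sqrt{3}/2\}$. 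The real part condition is then automatically satisfied with $a=0,1,-1$ respectively, yielding $\tr(g)\in\{0,\,\tfrac{3}{2}+\tfrac{\sqrt{-3}}{2},\,-\tfrac{3}{2}-\tfrac{\sqrt{-3}}{2}\}$.

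For each admissible value of $\cos(\alpha-\pi/6)$ I would resolve $\alpha\in(0,2\pi)$ and read the eigenvalues. The case $\cos(\alpha-\pi/6)=0$ gives $\alpha=2\pi/3$ or $5\pi/3$, leading to the unordered pair $\{e^{2\pi i/3},e^{-\pi i/3}\}$ of orders $3$ and $6$; the case $\cos(\alpha-\pi/6)=\sqrt{3}/2$ gives $\alpha=0$ or $\pi/3$, yielding $\{1,e^{\pi i/3}\}$ of orders $1$ and $6$; and the case $\cos(\alpha-\pi/6)=-\sqrt{3}/2$ gives $\alpha=\pi$ or $4\pi/3$, yielding $\{-1,e^{-2\pi i/3}\}$ of orders $2$ and $3$. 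In every case $r=\mathrm{LCM}(r_1,r_2)=6$, which proves simultaneously the assertion $r=6$ and the three eigenvalue descriptions.

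The only remaining point is to show each case is actually attained by some $g\in GL(2,\mathcal{O}_{-3})$, since the proposition is an equivalence statement. I would exhibit diagonal or companion-matrix realizations in the spirit of the $g(\sqrt{-2})$ and $g_i(0)$ examples of the preceding propositions: for instance $\mathrm{diag}(1,e^{\pi i/3})\in GL(2,\mathcal{O}_{-3})$ covers case (ii), $\mathrm{diag}(-1,e^{-2\pi i/3})$ covers case (iii), and a companion matrix with trace $0$ and determinant $e^{\pi i/3}$, such as $\left(\begin{smallmatrix} 0 & -e^{\pi i/3}\\ 1 & 0\end{smallmatrix}\right)$, covers case (i). The main technical nuisance is purely bookkeeping: pinning down $\mathcal{O}_{-3}\cap\mathbb{R}$ and the imaginary lattice cleanly enough to extract $b\in\{-1,0,1\}$ without losing any case, exactly as in the $R_{-d,f}\cap i\mathbb{R}$ computation of Proposition \ref{ListS2}.
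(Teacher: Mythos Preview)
Your proof is correct and follows essentially the same route as the paper's: parameterize $\lambda_1=e^{i\alpha}$, $\lambda_2=e^{i(\pi/3-\alpha)}$, express the trace as a real multiple of $\sqrt{3}+i$, and use the lattice structure of $\mathcal{O}_{-3}$ to pin down three possible values before solving for the angle. The only cosmetic difference is that the paper writes the real factor as $\sin(\alpha+\pi/3)$ rather than your $\cos(\alpha-\pi/6)$ (these are identical), and it exhibits diagonal matrices for all three cases rather than a companion matrix for case (i); your verification of the $\mathcal{O}_{-3}$-membership condition via real and imaginary parts is in fact slightly more explicit than the paper's one-line assertion.
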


\begin{proof}

If $\lambda _1 = e^{ \frac{ 2 \pi si}{r_1}}$  then $\lambda _2 = e^{ \frac{\pi i}{3}} e^{ -  \frac{ 2 \pi si}{r_1}}$ and the trace
\[
\tr(g) = \lambda _1 + \lambda _2 = ( \sqrt{3} +i) \sin \left(  \frac{ 2 \pi s}{r_1} + \frac{\pi}{3} \right)
\]
belongs to $\mathcal{O}_{-3} = {\mathbb Z} + \frac{1 + \sqrt{-3}}{2} {\mathbb Z}$ if and only if $\sin \left(  \frac{ 2 \pi s}{r_1} + \frac{\pi}{3} \right) \in \frac{\sqrt{3}}{2} {\mathbb Z}$. Combining with $\sin \left(  \frac{ 2 \pi si}{r_1} + \frac{\pi}{3} \right) \in [-1,1]$, one gets $\sin \left(  \frac{ 2 \pi s}{r_1} + \frac{\pi}{3} \right) \in \frac{\sqrt{3}}{2} {\mathbb Z} \cap [-1,1] = \left \{ 0 , \pm \frac{\sqrt{3}}{2} \right \}$  and, respectively, $\tr(g) \in \left\{ 0, \pm \left( \frac{3}{2} + \frac{\sqrt{-3}}{2} \right) \right \}$.

If $\sin \left(  \frac{ 2 \pi s}{r_1} + \frac{\pi}{3} \right) =0$ then $\frac{ 2 \pi s}{r_1} + \frac{\pi}{3} = \pi$ or $\frac{ 2 \pi s}{r_1} + \frac{\pi}{3} = 2 \pi$. For $\frac{2s}{r_1} = \frac{2}{3}$ there follows $s=1$, $r_1=3$ and $\lambda _1 = e^{ \frac{ 2 \pi i}{3}} = - \frac{1}{2} + \frac{\sqrt{-3}}{2}$,
$\lambda _2 = e^{\frac{ \pi i}{3}} e^{ - \frac{ 2 \pi i}{3}} = e^{ - \frac{\pi i}{3}} = \frac{1}{2} - \frac{\sqrt{-3}}{2} $.
The automorphisms $g \in GL(2, \mathcal{O}_{-3})$ with such eigenvalues are of order $r = LCM(3,6)=6$. For instance,
\[
\left( \begin{array}{rr}
e^{ \frac{ 2 \pi i}{3}}  &  0  \\
0  &  e^{ - \frac{ \pi i}{3}}
\end{array}  \right) \in GL(2, \mathcal{O}_{-3})
\]
attains the aforementioned possibility.

In the case of $\frac{2s}{r_1} = \frac{5}{3}$ one has $s=5$, $r_1=6$ and
$\lambda _1 = e^{ - \frac{ \pi i}{3}}$,
$\lambda _2 = e^{ \frac{\pi i}{3}}  e^{ \frac{\pi i}{3}}  = e^{ \frac{ 2 \pi i}{3}}$,
which was already obtained.

Note that $\sin \left( \frac{ 2 \pi s}{r_1}  + \frac{\pi}{3} \right) = \frac{\sqrt{3}}{2}$ for $\frac{ 2 \pi s}{r_1} + \frac{\pi}{3} \in \left( \frac{\pi}{3}, \frac{ 7 \pi}{3} \right)$ implies $\frac{ 2 \pi s}{r_1}  + \frac{\pi}{3} = \frac{ 2 \pi}{3}$, whereas $6s=r_1$ and $s=1$, $r_1=6$. The corresponding eigenvalues are
$\lambda _1 = e^{\frac{ \pi i}{3}} = \frac{1}{2} + \frac{\sqrt{3}}{2} i$,
$\lambda _2 = e^{\frac{\pi i}{3}} e^{ - \frac{ \pi i}{3}} =1$ and $g$ is of order $r=LCM(6,1)=6$. Note that
\[
\left( \begin{array}{rr}
e^{ \frac{ \pi i}{3}}  &   0  \\
0  &   1
\end{array}  \right) \in GL(2, \mathcal{O}_{-3})
\]realizes this possibility.

The equality $\sin \left( \frac{ 2 \pi s}{r_1}  + \frac{\pi}{3} \right) = - \frac{\sqrt{3}}{2}$ holds for $\frac{ 2 \pi s}{r_1}  + \frac{\pi}{3} = \frac{ 4 \pi }{3}$ or $\frac{ 2 \pi s}{r_1}  + \frac{\pi}{3} = \frac{ 5 \pi}{3}$. If $2s=r_1$ then $s=1$, $r_1=2$ and
$\lambda _1 = e^{ \pi i} = -1$,
$\lambda _2 = e^{ \frac{ \pi i}{3}} e^{ - \pi i} = e^{ - \frac{ 2 \pi i}{3}} = - \frac{1}{2} - \frac{\sqrt{3}}{2} i$. The automorphism $g$ is of order $r = LCM(2,3)=6$. Note that
\[
\left( \begin{array}{rr}
e^{ - \frac{ 2 \pi i}{3}} &  0  \\
0  & -1
\end{array} \right) \in GL(2, \mathcal{O}_{-3})
\]
attains this possibility and concludes the proof of the proposition.

\end{proof}

\begin{proposition}  \label{ListS6-}
If $g \in GL(2, \mathcal{O}_{-3})$ is of finite order $r$ and $\det(g) = e^{ - \frac{\pi i}{3}}$ then
\[
r=6 \ \ \mbox{ and    } \ \  \tr(g) \in \left \{ 0, \pm \left( \frac{3}{2} - \frac{\sqrt{-3}}{2} \right) \right \}.
\]
More precisely,

(i) $\tr(g) =0$ exactly when $\lambda _1 = e^{ \frac{ \pi i}{3}}$, $\lambda _2 = e^{ - \frac{ 2 \pi i}{3}}$;

(ii) $\tr(g) = \frac{3}{2} - \frac{\sqrt{3}}{2} i$ exactly when $\lambda _1 = e^{  - \frac{ \pi i}{3}}$, $\lambda _2 =1$;

(iii) $\tr(g) = - \frac{3}{2} + \frac{\sqrt{3}}{2} i$ exactly when $\lambda _1 = к ^{ \frac{ 2 \pi i}{3}}$, $\lambda _2 =-1$.
\end{proposition}

\begin{proof}

If $\lambda _1 = e^{ \frac{ 2 \pi si}{r_1}}$ then $\lambda _2 = e^{ - \frac{ \pi i}{3}} e^{ - \frac{ 2 \pi si}{r_1} }$ and the trace
\[
\tr(g) = \lambda _1 + \lambda _2 = ( - \sqrt{3} + i) \sin \left( \frac{ 2 \pi s}{r_1} - \frac{\pi}{3} \right)
\]
belongs to $\mathcal{O}_{-3} = {\mathbb Z} + \frac{1 + \sqrt{3}i}{2} {\mathbb Z}$ if and only if
$\sin \left( \frac{ 2 \pi s}{r_1} - \frac{\pi}{3} \right)  \in \frac{\sqrt{3}}{2} {\mathbb Z}$.
As a result, $\sin \left( \frac{ 2 \pi s}{r_1} = \frac{\pi}{3} \right) \in \frac{\sqrt{3}}{2} {\mathbb Z} \cap [ -1,1] =
\left \{ 0, \pm \frac{\sqrt{3}}{2} \right \}$ and $\tr(g) \in \left \{ 0, \pm \left( \frac{3}{2} - \frac{\sqrt{3}}{2}i \right) \right \}$.

The equation $\sin \left( \frac{ 2 \pi s}{r_1} - \frac{\pi}{3} \right) =0$ for
$\frac{ 2 \pi s}{r_1} - \frac{\pi}{3} \in \left( - \frac{\pi}{3}, \frac{5 \pi}{3} \right)$ has solutions
$\frac{ 2 \pi s}{r_1} - \frac{\pi}{3} =0$ and $\frac{ 2 \pi s}{r_1} - \frac{\pi}{3} = \pi$.

If $6s=r_1$ then $s=1$, $r_1=6$ and
$\lambda _1 = e^{ \frac{ \pi i}{3}} = \frac{1}{2} + \frac{\sqrt{3}}{2} i$,
$\lambda _2 = e^{ - \frac{ \pi i}{3}} e^{ - \frac{\pi i}{3}} = -\frac{1}{2} - \frac{\sqrt{3}}{2} i$. The automorphisms $g \in GL(2, \mathcal{O}_{-3})$
with such eigenvalues are of order $r = LCM (6,3)=6$. For instance,
\[
\left( \begin{array}{rr}
e^{ \frac{ \pi i}{3}}  &  0  \\
0  &  e^{ - \frac{ 2 \pi i}{3}}
\end{array}  \right) \in GL(2, \mathcal{O}_{-3})
\]
attains this case.

If $\sin \left( \frac{ 2 \pi s}{r_1} - \frac{\pi}{3} \right) = \frac{\sqrt{3}}{2}$ then $ \frac{ 2 \pi s}{r_1} - \frac{\pi}{3} = \frac{\pi}{3}$ or
$\frac{ 2 \pi s}{r_1} - \frac{\pi}{3} = \frac{ 2 \pi}{3}$. For $3s=r_1$ one has $s=1$, $r_1=3$ and
$\lambda _1 = e^{ \frac{ 2 \pi i}{3}} = - \frac{1}{2} + \frac{\sqrt{3}}{2} i$,
$\lambda _2 = e^{ - \frac{ \pi i}{3}} e^{ - \frac{ 2 \pi i}{3}} = e^{ - \pi i} =-1$, attained by
\[
\left( \begin{array}{rr}
e^{ \frac{ 2 \pi i}{3}}  &  0  \\
0  &  -1
\end{array}  \right) \in GL(2, \mathcal{O}_{-3}).
\]
All  $g \in GL(2, \mathcal{O}_{-3})$ with such eigenvalues are of order $r = LCM (3,2)=6$.

 In the case of $2s=r_1$ there follows $s=1$, $r_1 =2$ and
 $\lambda _1 = e^{ \pi i} = -1$,
 $\lambda _2 = e^{ - \frac{ \pi i}{3}} e^{ - \frac{ \pi i}{3}} = e^{ - \frac{ 2 \pi i}{3}}$,
 which is already discussed.

 The equation $\sin \left( \frac{ 2 \pi s}{r_1} - \frac{\pi}{3} \right) = - \frac{\sqrt{3}}{2}$ for $\frac{ 2 \pi s}{r_1} - \frac{\pi}{3} \in \left( - \frac{\pi}{3}, \frac{5 \pi}{3} \right)$ has solution $\frac{ 2 \pi s}{r_1} - \frac{\pi}{3} = \frac{ 5 \pi}{3}$. Therefore $6s=5r_1$ and $s=5$, $r_1=6$, As a result,
 $\lambda _1 = e^{ \frac{ 5 \pi i}{3}} = \frac{1}{2} - \frac{\sqrt{3}}{2} i$,
 $\lambda _2 = e^{ - \frac{\pi i}{3}} e^{ \frac{ \pi i}{3}} =1$
 and $g$ is of order $r = LCM( 6,1)=6$. Note that
 \[
 \left( \begin{array}{rr}
 e^{ - \frac{ \pi i}{3}}  &  0  \\
 0  &  1
 \end{array}  \right) \in GL(2, \mathcal{O}_{-3})
 \]
 attains this possibility and concludes the proof of the proposition.

\end{proof}

\begin{proposition}  \label{ListS3+}
If $g \in GL(2, \mathcal{O}_{-3})$ is of finite order $r$ and $\det(g) = e^{ \frac{ 2 \pi i}{3}}$ then
\[
\tr(g) \in \left \{ 0, \pm \frac{( 1 + \sqrt{-3})}{2}, \pm (1 + \sqrt{-3}) \right \}, \quad r \in \{ 3, 6, 12 \}.
\]
More precisely,

(i) $\tr(g) =0$ or $\lambda _1 = e^{ \frac{ 5 \pi i}{6}}$,
$\lambda _2 = e^{ - \frac{\pi i}{6}}$ if and only if $g$ is of order $12$;

(ii) if $\tr(g) = \frac{1 + \sqrt{3}i}{2}$ or
$\lambda _1 = e^{ \frac{2 \pi i}{3}}$, $\lambda _2 =1$ then $g$ is of order $3$;

(iii) if $\tr(g) = -1 - \sqrt{3}i$ or $g = e^{ - \frac{ 2 \pi i}{3}} I_2$ then $g$ is of order $3$;

(iv) if $\tr(g) = \frac{-1 - \sqrt{3}i}{2}$ or
$\lambda _1 = e^{ - \frac{ \pi i}{3}}$, $\lambda _2 =-1$ then $g$ is of order $6$;

(v) if $\tr(g) = 1 + \sqrt{3}i$ or $g = e^{\frac{ \pi i}{3}} I_2$ then $g$ is of order $6$.
\end{proposition}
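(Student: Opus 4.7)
The plan is to imitate closely the approach used in Propositions \ref{ListS6+} and \ref{ListS6-}. First I would parameterize $\lambda_1 = e^{2\pi is/r_1}$ for some natural number $1 \le s < r_1$ with $\gcd(s,r_1)=1$, and then use $\det(g) = \lambda_1\lambda_2 = e^{2\pi i/3}$ to write $\lambda_2 = e^{2\pi i/3}e^{-2\pi is/r_1}$. Factoring out $e^{i\pi/3}$ from the sum $\lambda_1 + \lambda_2$ (by the symmetry trick $e^{i\alpha}+e^{i(\beta-\alpha)} = 2e^{i\beta/2}\cos(\alpha-\beta/2)$) would give the clean formula
\[
\tr(g) \;=\; (1+\sqrt{-3})\cos\!\left(\frac{2\pi s}{r_1} - \frac{\pi}{3}\right).
\]

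Next I would impose $\tr(g)\in \mathcal{O}_{-3} = \mathbb{Z} + \frac{1+\sqrt{-3}}{2}\mathbb{Z}$. Writing $c := \cos(2\pi s/r_1 - \pi/3)$, the equation $c(1+\sqrt{-3}) = a + b\cdot\frac{1+\sqrt{-3}}{2}$ with $a,b\in\mathbb{Z}$ forces $a=0$ and $c = b/2$, so $c \in \frac{1}{2}\mathbb{Z}\cap [-1,1] = \{0,\pm\frac{1}{2},\pm 1\}$. This reduces to the five trace values listed in the proposition.

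Then I would run through the five cases, exactly as in the earlier propositions. In each case, the constraint on the angle $\theta := 2\pi s/r_1$ yields one or two candidates $\theta \in [0,2\pi)$; the relation $r_1 s^* = r_1^* s$ combined with $\gcd(s,r_1)=1$ pins down $(s,r_1)$ uniquely; from this one reads off $\lambda_1$, computes $\lambda_2 = e^{2\pi i/3}\lambda_1^{-1}$, and then the order of $g$ as $r = \mathrm{lcm}(\mathrm{ord}\,\lambda_1,\mathrm{ord}\,\lambda_2)$. Concretely, $c=0$ forces $\{\lambda_1,\lambda_2\} = \{e^{5\pi i/6},e^{-\pi i/6}\}$ and $r=12$; $c=1/2$ forces $\{e^{2\pi i/3},1\}$ and $r=3$; $c=-1/2$ forces $\{e^{-\pi i/3},-1\}$ and $r=6$; $c=1$ forces $\lambda_1=\lambda_2=e^{\pi i/3}$, giving $g = e^{\pi i/3}I_2$ of order $6$; $c=-1$ forces $\lambda_1=\lambda_2 = e^{-2\pi i/3}$, giving $g = e^{-2\pi i/3}I_2$ of order $3$. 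For the two scalar cases it is also necessary to note that $g$ is uniquely determined because the only matrix conjugate to a scalar matrix is that scalar matrix itself, and that $e^{\pm\pi i/3}I_2, e^{\pm 2\pi i/3}I_2 \in GL(2,\mathcal{O}_{-3})$.

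The main obstacle I anticipate is a bookkeeping one rather than a conceptual one: keeping the two candidate solutions $\theta \in [0,2\pi)$ of each cosine equation distinct from the already-listed unordered pairs, so as to avoid double counting the eigenvalue multisets. As in Propositions \ref{ListS6+}--\ref{ListS6-}, the second candidate in each case will swap $\lambda_1$ and $\lambda_2$ and reproduce a pair already obtained. Finally, realizability of each listed case is immediate from the diagonal matrices $\operatorname{diag}(e^{2\pi i/3},1)$, $\operatorname{diag}(e^{-\pi i/3},-1)$, $e^{\pi i/3}I_2$, $e^{-2\pi i/3}I_2 \in GL(2,\mathcal{O}_{-3})$ together with a suitable non-diagonal example (analogous to the $g(\sqrt{-3})$-type matrix used earlier) for the $\tr(g)=0$ case.
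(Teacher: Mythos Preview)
Your approach is essentially identical to the paper's: the paper writes $\tr(g) = (1+\sqrt{3}i)\sin\!\left(\tfrac{2\pi s}{r_1}+\tfrac{\pi}{6}\right)$, which is literally the same as your $(1+\sqrt{-3})\cos\!\left(\tfrac{2\pi s}{r_1}-\tfrac{\pi}{3}\right)$, and the subsequent case analysis proceeds exactly as you outline. One small point in your favor: your instinct to use a non-diagonal example for the $\tr(g)=0$ case is actually necessary, since the diagonal matrix $\operatorname{diag}(e^{5\pi i/6},e^{-\pi i/6})$ that the paper offers does \emph{not} lie in $GL(2,\mathcal{O}_{-3})$ (primitive twelfth roots of unity are not in $\mathcal{O}_{-3}$); a matrix such as $\left(\begin{smallmatrix}0&1\\-e^{2\pi i/3}&0\end{smallmatrix}\right)\in GL(2,\mathcal{O}_{-3})$ does the job.
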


\begin{proof}

If $\lambda _1 = e^{ \frac{ 2 \pi si}{r_1} }$ then $\lambda _2 = e^{ \frac{ 2 \pi i}{3}} e^{ - \frac{ 2 \pi si}{r_1}}$ and the trace
\[
\tr(g) = \lambda _1 + \lambda _2 = (1 + \sqrt{3}i) \sin \left( \frac{2 \pi s}{r_1} + \frac{\pi}{6} \right)
\]
belongs to $\mathcal{O}_{-3} = {\mathbb Z} + \frac{1 + \sqrt{3}i}{2} {\mathbb Z}$ if and only if $2 \sin \left( \frac{2 \pi s}{r_1} + \frac{\pi}{6} \right) \in {\mathbb Z}$. Combining with $\sin \left( \frac{2 \pi s}{r_1} + \frac{\pi}{6} \right) \in [-1,1]$, one obtains
$
2 \sin \left( \frac{2 \pi s}{r_1} + \frac{\pi}{6} \right) \in {\mathbb Z} \cap [-2,2] = \left \{ 0, \pm1, \pm2 \right \}
$
and, respectively,
\[
\tr(g) \in \left \{  0, \pm \frac{(1 + \sqrt{3}i)}{2}, \pm (1 + \sqrt{3}i) \right \}.
\]

If $\sin \left( \frac{2 \pi s}{r_1} + \frac{\pi}{6} \right) =0$ for $\frac{2 \pi s}{r_1} + \frac{\pi}{6} \in \left( \frac{\pi}{6}, \frac{13\pi}{6} \right)$ then
$\frac{2 \pi s}{r_1} + \frac{\pi}{6} = \pi$ or $\frac{2 \pi s}{r_1} + \frac{\pi}{6} = 2 \pi$.

For $12s=5r_1$ one has $s=5$, $r_1=12$ and
$\lambda _1 = e^{ \frac{ 5 \pi i}{6}} = - \frac{\sqrt{3}}{2} + \frac{1}{2}i$,
$\lambda _2 = e^{ \frac{ 2 \pi i}{3}} e^{ - \frac{ 5 \pi i}{6}} = e^{ - \frac{\pi i}{6}} = \frac{\sqrt{3}}{2} - \frac{1}{2}i$. Therefore $g$ is of order $r = LCM( 12,12)=12$. Note that
\[
\left( \begin{array}{rr}
e^{ \frac{ 5 \pi i}{6}}  &  0  \\
0  &  e^{ - \frac{ \pi i}{6}}
\end{array}  \right) \in GL(2, \mathcal{O}_{-3})
\]
attains this possibility.

In the case of $12s=11r_1$ there follows $s=11$, $r_1=12$. As a result,
$\lambda _1 = e^{ \frac{11 \pi i}{6}} = \frac{\sqrt{3}}{2} - \frac{1}{2} i$,
$\lambda _2 =  e^{ \frac{ 2 \pi i}{3}} e^{ \frac{ \pi i}{6}} = e^{ \frac{ 5 \pi i}{6}} = - \frac{\sqrt{3}}{2} + \frac{1}{2}i$, which was already obtained.

If $\sin \left( \frac{2 \pi s}{r_1} + \frac{\pi}{6} \right) = \frac{1}{2}$ for $\frac{2 \pi s}{r_1} + \frac{\pi}{6} \in \left(  \frac{\pi}{6}, \frac{13 \pi}{6} \right)$ then $\frac{2 \pi s}{r_1} + \frac{\pi}{6} = \frac{ 5 \pi}{6}$ and $3s=r_1$. Therefore $s=1$, $r_1=3$ and $\lambda _1 = e^{ \frac{ 2 \pi i}{3}} = - \frac{1}{2} + \frac{\sqrt{3}}{2} i$, $\lambda _2 = e^{ \frac{ 2 \pi i}{3}} e^{ - \frac{ 2 \pi i}{3}} =1$. The order of $g$ is $r = LCM( 3,1)=3$. This possibility is attained by
\[
\left( \begin{array}{rr}
e^{ \frac{ 2 \pi i}{3}}  &  0  \\
0  & 1
\end{array}  \right) \in GL(2, \mathcal{O}_{-3}).
\]

The equation $\sin \left( \frac{2 \pi s}{r_1} + \frac{\pi}{6} \right) = - \frac{1}{2}$ has solutions $\frac{2 \pi s}{r_1} + \frac{\pi}{6} = \frac{ 7 \pi}{6}$
and $\frac{2 \pi s}{r_1} + \frac{\pi}{6} = \frac{11 \pi}{6}$.

If $2s=r_1$ then $s=1$, $r_1=2$, $\lambda _1 = e^{ \pi i} = -1$,
$\lambda _2 = e^{ \frac{ 2 \pi i}{3}} e^{ - \pi i} = e^{ - \frac{ \pi i}{3}} = \frac{1}{2} - \frac{\sqrt{3}}{2} i$ and $g$ is of order $r = LCM( 2,6) =6$. For instance,
\[
\left( \begin{array}{rr}
e^{ - \frac{ \pi i}{3}}  &  0  \\
0  &  -1
\end{array}  \right) \in GL(2, \mathcal{O}_{-3})
\]
attains these eigenvalues.

For $6s=5r_1$ one has $s=5$, $r_1=6$
$\lambda _1 = e^{ \frac{ 5 \pi i}{3}} = e^{ - \frac{ \pi i}{3}} = \frac{1}{2} - \frac{\sqrt{3}}{2} i$,
$\lambda _2 = e^{\frac{ 2 \pi i}{3}} e^{ \frac{ \pi i}{3}} = e^{ \pi i} = -1$,
which is already obtained.

Note that $\sin \left( \frac{2 \pi s}{r_1} + \frac{\pi}{6} \right) =1$ is equivalent to $\frac{2 \pi s}{r_1} + \frac{\pi}{6} = \frac{\pi}{2}$, whereas $6s=r_1$ and $s=1$, $r_1=6$. The eigenvalues
$\lambda _1 = e^{ \frac{ \pi i}{3}} = \frac{1}{2} + \frac{\sqrt{3}}{2} i$,
$\lambda _2 = e^{ \frac{ 2 \pi i}{3}} e^{ - \frac{ \pi i}{3}} = e^{ \frac{ \pi i}{3}} = \frac{1}{2} + \frac{sqrt{3}}2 i$ are equal, so that
$g = e^{ \frac{ \pi i}{3}} I_2$ and $r = LCM(6,6)=6$.

If $\sin \left( \frac{2 \pi s}{r_1} + \frac{\pi}{6} \right) = -1$ then $\frac{2 \pi s}{r_1} + \frac{\pi}{6} = \frac{ 3 \pi}{2}$ and $3s = 2r_1$, $s=2$, $r_1 =3$. Then $\lambda _1 = e^{ \frac{ 4 \pi i}{3}} = e^{ - \frac{ 2 \pi i}{3}} = - \frac{1}{2} - \frac{\sqrt{3}}{2} i$,
$\lambda _2 = e^{ \frac{ 2 \pi i}{3}} e^{ \frac{ 2 \pi i}{3}} = e^{ - \frac{ 2 \pi i}{3}}$ determine uniquely $g = e^{ - \frac{ 2 \pi i}{3}} I_2$ of order $r = LCM( 3,3)=3$. That concludes the description of $g \in GL(2, \mathcal{O}_{-3})$ of finite order and $\det(g) = e^{ \frac{ 2 \pi i}{3}}$.

\end{proof}

\begin{proposition}    \label{ListS3-}
If $g \in GL(2, \mathcal{O}_{-3})$ is of finite order $r$ and $\det(g) = e^{ - \frac{ 2 \pi i}{3}}$ then
\[
\tr(g) \in \left \{ 0, \pm \frac{(1 - \sqrt{-3})}{2}, \ \ \pm (1 - \sqrt{-3}) \right \}, \quad r \in \{ 3, 6, 12 \}.
\]
More precisely,

(i) $\tr(g) =0$ or $\lambda _1 = e^{ \frac{ \pi i}{6}}$, $\lambda _2 = e^{ - 5 \frac{\pi i}{6}}$ if and only if $g$ is of order $12$;

(ii) if $\tr(g) = \frac{1 - \sqrt{3}i}{2}$ or $\lambda _1 = e^{   \frac{ 4 \pi i}{3}}$, $\lambda _2=1$ then $g$ is of order $3$;

(iii) if $\tr(g) = -1 + \sqrt{3} i$ or $g = e^{ \frac{ 2 \pi i}{3}} I_2$ then $g$ is of order $3$;

(iv) if $\tr(g) = \frac{-1 + \sqrt{3}i}{2}$ or $\lambda _1 = e^{ \frac{ \pi i}{3}}$, $\lambda _2 =-1$ then $g$ is of order $6$;

(v) if $\tr(g) = 1 - \sqrt{3} i$ or $g = e^{ - \frac{ \pi i}{3}} I_2$ then $g$ is of order $6$.
\end{proposition}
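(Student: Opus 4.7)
The plan is to mirror the scheme of Proposition \ref{ListS3+}. First I would parametrize $\lambda _1 = e^{2 \pi s i / r_1}$ for the order $r_1$ of $\lambda _1 \in {\mathbb C}^*$ and a natural number $1 \leq s \leq r_1 - 1$ with $GCD(s, r_1) = 1$; the equation $\lambda _1 \lambda _2 = \det(g) = e^{-2\pi i/3}$ then determines $\lambda _2 = e^{-2\pi i/3} e^{-2 \pi s i / r_1}$.

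Next, the factoring identity $e^{i a} + e^{i b} = 2 e^{i (a+b)/2} \cos((a-b)/2)$ collapses the trace to the closed form
\[
\tr(g) \;=\; \lambda _1 + \lambda _2 \;=\; 2 e^{-\pi i / 3} \cos\!\Bigl( \tfrac{2 \pi s}{r_1} + \tfrac{\pi}{3} \Bigr) \;=\; (1 - \sqrt{-3}) \cos\!\Bigl( \tfrac{2 \pi s}{r_1} + \tfrac{\pi}{3} \Bigr).
\]
Solving $(1-\sqrt{-3}) c = a + b \frac{1+\sqrt{-3}}{2}$ in integers $a, b$ shows that $\tr(g) \in \mathcal{O}_{-3}$ is equivalent to $2 \cos\bigl( \tfrac{2 \pi s}{r_1} + \tfrac{\pi}{3} \bigr) \in {\mathbb Z} \cap [-2, 2] = \{0, \pm 1, \pm 2\}$, which furnishes exactly the five candidate trace values $\{ 0, \pm (1-\sqrt{-3})/2, \pm (1-\sqrt{-3})\}$ recorded in the statement.

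For each of the five candidates I would solve the corresponding cosine equation for $\frac{2 \pi s}{r_1} + \frac{\pi}{3}$ in the interval $(\frac{\pi}{3}, \frac{7 \pi}{3})$, extract the coprime pairs $(s, r_1)$ by the elementary divisibility argument already used in Propositions \ref{ListS6+}, \ref{ListS6-} and \ref{ListS3+}, read off the eigenvalues $\lambda _1, \lambda _2$, and compute the order of $g$ as the least common multiple of the orders of the two eigenvalues. Realizability is then witnessed by displaying the appropriate diagonal matrix in $GL(2, \mathcal{O}_{-3})$, exactly as in the preceding propositions; the scalar cases $g = e^{2 \pi i / 3} I_2$ and $g = e^{-\pi i / 3} I_2$ occur where $\cos(\cdot) = \pm 1$ forces $\lambda _1 = \lambda _2$, making $g$ the unique scalar matrix with that common eigenvalue.

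I do not expect any substantive obstacle: the computation is entirely parallel to that of Proposition \ref{ListS3+}, and in fact one may obtain the present statement from \ref{ListS3+} by complex conjugation, since the involution $g \mapsto \overline{g}$ preserves $GL(2, \mathcal{O}_{-3})$ and sends $e^{2 \pi i /3}$ to $e^{-2 \pi i / 3}$. The only bookkeeping care needed is to enumerate the coprime pairs $(s, r_1)$ without duplication and to observe that the swap $\lambda _1 \leftrightarrow \lambda _2$ produces the same unordered pair, so that each of the five trace values gives precisely the eigenvalue configuration listed in items (i)--(v).
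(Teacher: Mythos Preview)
Your proposal is correct and essentially identical to the paper's proof: the paper parametrizes the eigenvalues the same way and writes the trace as $(-1+\sqrt{3}i)\sin\bigl(\tfrac{2\pi s}{r_1}-\tfrac{\pi}{6}\bigr)$, which is the same expression as your $(1-\sqrt{-3})\cos\bigl(\tfrac{2\pi s}{r_1}+\tfrac{\pi}{3}\bigr)$ under the identity $\cos(x+\tfrac{\pi}{2})=-\sin x$, and then runs the identical case analysis on $2\sin(\cdot)\in\{0,\pm1,\pm2\}$. Your remark that the entire statement follows from Proposition~\ref{ListS3+} by complex conjugation is a valid shortcut the paper does not mention.
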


\begin{proof}

If $\lambda _1 = e^{ \frac{ 2 \pi si}{r_1} }$ then $\lambda _2 = e^{ - \frac{ 2 \pi i}{3}} e^{ - \frac{ 2 \pi si}{r_1}}$ and the trace
\[
\tr(g) = \lambda _1 + \lambda _2 = ( -1 + \sqrt{3}i) \sin \left( \frac{ 2 \pi s}{r_1}  - \frac{\pi}{6} \right)
\]
belongs to $\mathcal{O}_{-3} = {\mathbb Z} + \frac{1 + \sqrt{3}i}{2} {\mathbb Z}$ if and only if $2 \sin \left( \frac{ 2 \pi s}{r_1}  - \frac{\pi}{6} \right) \in {\mathbb Z}$. Combining with $2 \sin \left( \frac{ 2 \pi s}{r_1}  - \frac{\pi}{6} \right) \in [-2,2]$, one concludes that
$ \sin \left( \frac{ 2 \pi s}{r_1}  - \frac{\pi}{6} \right) \in \left \{ 0, \pm \frac{1}{2}, \pm 1 \right \}$ and $\tr(g) \in \left \{ 0, \pm \frac{(1 - \sqrt{3}i)}{2}, \pm ( 1 - \sqrt{3}i) \right \}$.

If $\sin \left( \frac{ 2 \pi s}{r_1}  - \frac{\pi}{6} \right) =0$ with $\frac{ 2 \pi s}{r_1}  - \frac{\pi}{6}  \in \left( - \frac{\pi}{6}, \frac{11 \pi}{6} \right)$ then $\frac{ 2 \pi s}{r_1}  - \frac{\pi}{6} =0$ or $\frac{ 2 \pi s}{r_1}  - \frac{\pi}{6} = \pi$.

For $12s=r_1$ one has $s=1$, $r_1=12$,
$\lambda _1 = e^{ \frac{ \pi i}{6}} = \frac{\sqrt{3}}{2} + \frac{1}{2}i$,
$\lambda _2 = e^{ - \frac{ 2 \pi i}{3}} e^{ - \frac{ \pi i}{6}} = e^{ - \frac{ 5 \pi i}{6}} = - \frac{\sqrt{3}}{2} - \frac{1}{2} i$, so that $g$ is of order $r = LCM(12,12)=12$. For instance,
\[
\left( \begin{array}{rr}
e^{ \frac{ \pi i}{6}}  &  0  \\
0  &  e^{ - \frac{ 5 \pi i}{6}}
\end{array}  \right) \in GL(2, \mathcal{O}_{-3})
\]
attains this case.

For $12s = 7r_1$ there follows $s=7$, $r_1=12$,
$\lambda _1 = e^{ \frac{ 7 \pi i}{6}} = e^{ - \frac{ 5 \pi i}{6}}$,
$\lambda _2 = e^{ - \frac{ 2 \pi i}{3}} e^{ \frac{ 5 \pi i}{6}} = e^{ \frac{ \pi i}{6}}$, which is already discussed.

In the case of $\sin \left( \frac{ 2 \pi s}{r_1}  - \frac{\pi}{6} \right) = \frac{1}{2}$ note that $\frac{ 2 \pi s}{r_1}  - \frac{\pi}{6} = \frac{\pi}{6}$ or
$\frac{ 2 \pi s}{r_1}  - \frac{\pi}{6} = \frac{ 5 \pi}{6}$.

If $6s=r_1$ then $s=1$, $r_1=6$,
$\lambda _1 = e^{ \frac{ \pi i}{3}} = \frac{1}{2} + \frac{\sqrt{3}}{2} i$,
$\lambda _2 = e^{ - \frac{ 2 \pi i}{3}} e^{ - \frac{ \pi i}{3}} = e^{ - \pi i} = -1$ and $g$ is of order $r = LCM( 6,2) =6$. Note that
\[
\left( \begin{array}{rr}
e^{ \frac{ \pi i}{3}}  &  0  \\
0  &  -1
\end{array}  \right) \in GL(2, \mathcal{O}_{-3})
\]
attains this case.

For $2s=r_1$ there follows $s=1$, $r_1 =2$,
$\lambda _1 = e^{ \pi i} = -1$,
$\lambda _2 = e^{ - \frac{ 2 \pi i}{3}} e^{ - \frac{ \pi i}{3}} = e^{ - \frac{ 5 \pi i}{3}} = e^{ \frac{ \pi i}{3}}$, which is already obtained.

Note that $\sin \left( \frac{ 2 \pi s}{r_1}  - \frac{\pi}{6} \right) = - \frac{1}{2}$ for $\frac{ 2 \pi s}{r_1}  - \frac{\pi}{6} \in \left( - \frac{\pi}{6}, \frac{ 11 \pi}{6} \right)$ implies $\frac{ 2 \pi s}{r_1}  - \frac{\pi}{6} = \frac{ 7 \pi}{6}$, whereas $3s=2r_1$, $s=2$ and $r_1 =3$. Then
 $\lambda _1 = e^{ \frac{ 4 \pi i}{3}} = - \frac{1}{2} - \frac{\sqrt{3}}{2}i$,
$\lambda _2 = e^{ - \frac{ 2 \pi i}{3}} e^{ - \frac{ 4 \pi i}{3}} = e^{ - 2 \pi i} =1$ and $g$ is of order $r = LCM(3,1)=3$, attained by
\[
\left( \begin{array}{rr}
e^{ \frac{ 4 \pi i}{3}}  &  0  \\
0  &  1
\end{array}  \right) \in GL(2, \mathcal{O}_{-3}).
\]

If $\sin \left( \frac{ 2 \pi s}{r_1}  - \frac{\pi}{6} \right) =1$ then $\frac{ 2 \pi s}{r_1} - \frac{\pi}{6} = \frac{\pi}{2}$ or $3s=r_1$. As a result, $s=1$, $r_1=3$, $\lambda _1 = e^{ \frac{ 2 \pi i}{3}} = - \frac{1}{2} + \frac{\sqrt{3}}{2} i$,
$\lambda _2 = e^{ - \frac{2 \pi i}{3}} e^{ - \frac{ 2 \pi i}{3}} = e^{ \frac{ 2 \pi i}{3}}$, whereas $g = e^{ \frac{ 2 \pi i}{3}} I_2 \in GL(2, \mathcal{O}_{-3})$ is a scalar matrix of order $3$.

Finally, $\sin \left( \frac{ 2 \pi s}{r_1} - \frac{\pi}{6} \right) = -1$ holds for $\frac{ 2 \pi s}{r_1} - \frac{\pi}{6} = \frac{ 3 \pi}{2}$, i.e., $6s=5r_1$ and $s=5$, $r_1=6$. Now
$\lambda _1 = e^{ - \frac{ \pi i}{3}} = \frac{1}{2} - \frac{\sqrt{3}}{2} i$,
$\lambda _2 = e^{ - \frac{ 2 \pi i}{3}} e^{ \frac{ \pi i}{3}} = e^{ - \frac{ \pi i}{3}}$, so that $g = e^{ - \frac{ \pi i}{3}} I_2 \in GL(2, \mathcal{O}_{-3})$ is a scalar matrix of order $6$. That concludes the proof of the proposition.

\end{proof}

%%%%%%%%%%%%%%%%%%%%%%%%%%%%%%%%%%%%%%%%%%%%%%%%%%%%%%%%%%%%%%%%%%%%%%%%%%%%%%%%%%%%%%%%%%%%%%%%%%%%%%%%%%%%%%%%%%%%%%%%%%%%%%%%%%%%%%%%%%%%%%%%%%%%

\section{Finite linear automorphism groups of $E \times E$}

The classification of the finite subgroups $K$ of $SL(2,R)$ for an endomorphism ring $R$ of an elliptic curve $E$ starts with a classification of the Sylow subgroups $H_{p^k}$ of $K$.

\begin{proposition}     \label{SylowSubgroupsK}
If $K$ is a finite subgroup of $SL(2,R)$ then $K$ is of order $|K| = 2^a 3^b$ for some integers $0 \leq a \leq 3$, $0 \leq b \leq 1$.

 If $K$ is of even order then the Sylow $2$-subgroup $H_{2^a}$ of $K$ is isomorphic to ${\mathbb C}_2$, ${\mathbb C}_4$ or the quaternion group
\[
{\mathbb Q} _8 = \langle g_1, g_2 \ \ \vert \ \  g_1^2 = g_2 ^2 = - I_2, \ \  g_2 g_1 = - g_1 g_2 \rangle
\]
of order $8$.

If the order of $K$ is divisible by $3$ then the Sylow $3$-subgroup $H_3$ of $K$ is isomorphic to the cyclic group ${\mathbb C}_3$ of the third roots of unity.
\end{proposition}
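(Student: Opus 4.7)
The plan is to combine the classification of finite orders in $SL(2,R)$ furnished by Proposition \ref{ListK} with two standard facts about finite $p$-subgroups of $GL(2,{\mathbb C})$.

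Since every element $g \in K$ has finite order, Proposition \ref{ListK} tells us that this order lies in $\{1,2,3,4,6\}$. By Cauchy's theorem the only primes dividing $|K|$ are therefore $2$ and $3$, so $|K| = 2^a 3^b$ for some non-negative integers $a, b$.

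For the Sylow $3$-subgroup $H_3 \leq K$, every non-identity element has $3$-power order, hence order exactly $3$ by Proposition \ref{ListK}; in particular $H_3$ has exponent $3$. Pick a central element $h \in Z(H_3) \setminus \{ I_2 \}$, which exists since non-trivial $p$-groups have non-trivial centre. By Proposition \ref{ListK}, $h$ is diagonalisable with distinct eigenvalues $\omega = e^{2 \pi i/3}$ and $\omega^{-1}$. Conjugating so that $h = {\rm diag}(\omega, \omega^{-1})$, the whole of $H_3$ lies in the centraliser of $h$ in $GL(2,{\mathbb C})$, namely the diagonal torus. The diagonal matrices in $SL(2,{\mathbb C})$ of order dividing $3$ form the cyclic group $\langle {\rm diag}(\omega, \omega^{-1}) \rangle \simeq {\mathbb C}_3$, whence $H_3 \simeq {\mathbb C}_3$ and $b \leq 1$.

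For the Sylow $2$-subgroup $H_{2^a} \leq K$, every non-identity element has order $2$ or $4$ by Proposition \ref{ListK}. An order-$2$ matrix in $SL(2,{\mathbb C})$ must have both eigenvalues equal to $-1$ (the unordered pair $(1,-1)$ is ruled out by $\det = 1$), hence coincides with the scalar $-I_2$; so $H_{2^a}$ contains a unique involution. By the classical Burnside theorem, a finite $2$-group with a unique involution is either cyclic ${\mathbb C}_{2^a}$ or generalised quaternion ${\mathbb Q}_{2^n}$ with $n \geq 3$. Cyclic groups of order $\geq 8$ and generalised quaternion groups ${\mathbb Q}_{2^n}$ with $n \geq 4$ each contain an element of order $\geq 8$, which is forbidden by Proposition \ref{ListK}; the surviving options are exactly $\{ I_2 \}$, ${\mathbb C}_2$, ${\mathbb C}_4$ and ${\mathbb Q}_8$, forcing $a \leq 3$.

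The main subtle point is the appeal to the Burnside classification of $2$-groups with a unique involution, which I would quote without proof; the remainder of the argument is a mechanical reading of the eigenvalue tables in Proposition \ref{ListK}, supplemented by the standard observation that a central element with distinct eigenvalues has diagonal centraliser.
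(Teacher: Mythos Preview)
Your proof is correct and takes a genuinely different route from the paper's. For the Sylow $3$-subgroup, the paper fixes $h_1\in H_{3^b}\setminus\{I_2\}$, assumes some $h_2\notin\langle h_1\rangle$, and then does an explicit trace computation (diagonalising $h_1$ and writing $h_2$ in coordinates) to show that $\tr(h_1)=\tr(h_2)=\tr(h_1h_2)=-1$ forces $\tr(h_1^2h_2)=2$, i.e.\ $h_1^2h_2=I_2$, a contradiction. You instead pick a \emph{central} element $h$, diagonalise it, and use that the centraliser of a diagonal matrix with distinct eigenvalues is the torus; this is cleaner and avoids the coordinate calculation entirely. For the Sylow $2$-subgroup, the paper argues by hand: it shows $-I_2$ is the only involution, then for $a\ge 3$ takes $g_1,g_2$ of order~$4$ and proves directly (again by diagonalising one and computing traces) that $g_2g_1=-g_1g_2$, giving ${\mathbb Q}_8$; it then rules out $a\ge 4$ by showing any further $g_3$ would be forced to equal $\pm g_1g_2$. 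You replace all of this with the single citation of the cyclic-or-generalised-quaternion theorem for $2$-groups with a unique involution, together with the exponent bound from Proposition~\ref{ListK}. Your argument is shorter and more structural, at the cost of importing a non-trivial (though standard) classification result; the paper's argument is fully self-contained but considerably longer.
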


\begin{proof}

 According to the First Sylow Theorem, if $|K| = p_1^{m_1} \ldots p_s ^{m_s}$ for some rational primes $p_j \in {\mathbb N}$ and some $m_j \in {\mathbb N}$, then for any $1 \leq i \leq k$ there is a subgroup $H_{p_j^i} \leq K$ of order $| H_{p_j^i} | = p_j ^{i}$. In particular, any $H_{p_j} = \langle g_{p_j} \rangle \simeq {\mathbb C} _{p_j}$ of prime order $p_j$, dividing $|K|$ is cyclic and there is an element $g_{p_j} \in K$ of order $p_j
$. By Proposition \ref{ListK}, the order of an element $g \in SL(2,R)$ is $1, 2, 3, 4, 6$ or $\infty$. As a result, if $g \in SL(2,R)$ is of prime order $p$ then $p=2$ or $3$. In other words, $K$ is of order $|K| = 2^a 3^b$ for some non-negative integers $a,b$.

Suppose that $b \geq 1$ and consider the Sylow subgroup $H_{3^b} \leq K$ of order $3^b$. Then any $h \in H_{3^b} \setminus \{ I_2 \}$ is of order $3$ since there is no $g \in SL(2,R)$, whose order is divisible by $9$. We claim that $H_{3^b} = \langle h_1 \rangle \simeq {\mathbb C}_3$ is a cyclic group of order $3$. Otherwise, $b \geq 2$ and there exists $h_2 \in H_{3^b} \setminus \langle h_1 \rangle$. Note that $h_1^j h_2 \in H_{3^b}$ with $1 \leq j \leq 2$ are of order $3$, as far as $h_1 ^j h_2 = I_2$ implies $h_2 = h_1 ^{-j} \in \langle h_1 \rangle$, contrary to the choice of $h_2$. We are going to show that if $h_1, h_2, h_1h_2 \in SL(2,R)$ are of order $3$ then $h_1 ^2 h_2 = I_2$, so that there is no $h_2 \in H_{3^b} \setminus \langle h_1 \rangle
$ and $H_{3^b} = \langle h_1 \rangle \simeq {\mathbb C}_3$.  According to Proposition \ref{ListK}, $g \in SL(2,R)$ is of order $3$ if and only if ${\rm tr} (g) = -1$ and $g$ is conjugate to
\[
D_g = \left( \begin{array}{cc}
e^{\frac{2 \pi i}{3}}   &  0   \\
0   &  e^{- \frac{2 \pi i}{3}}
\end{array}  \right).
\]
Similarly, $g \in SL(2,R)$ coincides with the identity matrix $I_2$ exactly when ${\rm tr} (g) =2$. Thus, we have to check that if $h_1, h_2 \in SL(2,R)$ satisfy ${\rm tr} (h_1) = {\rm tr} (h_2) = {\rm tr} (h_1 h_2) = -1$ then ${\rm tr} ( h_1 ^2 h_2) = 2$. Let
\[
D_1 = S^{-1} h_1 S = \left( \begin{array}{cc}
e^{\frac{2 \pi i}{3}}  &  0  \\
0  &  e^{- \frac{2 \pi i}{3}}
\end{array}   \right)
\]
be a diagonal form of $h_1$ for some $S \in GL(2, {\mathbb C})$ and
\[
D_2 = S^{-1} h_2 S = \left(  \begin{array}{cc}
a  &  b  \\
c  &  d
\end{array}    \right) \in GL(2, {\mathbb C}).
\]
(More precisely, if $Q(R) = {\mathbb Q}$ or ${\mathbb Q}( \sqrt{-d})$ is the fraction field of $R$ then the eigenvectors of $h_1$ have entries from $Q(R) ( \sqrt{-3})$, so that $S, D_2 \in Q(R)( \sqrt{-3}) _{2 \times 2}$ have entries from $Q(R) ( \sqrt{-3}) = {\mathbb Q}( \sqrt{-3})$ or ${\mathbb Q} ( \sqrt{-d}, \sqrt{-3})$.) Since the determinant and the trace of a matrix are invariant under conjugation, the statement is equivalent to the fact that if $\det (D_2) =1$ and ${\rm tr} (D_2) = {\rm tr} (D_1 D_2) = -1$ then ${\rm tr} ( D_1^2 D_2) = 2$. Indeed, if $d = -a -1$ and ${\rm tr} (D_1 D_2) = e^{\frac{2 \pi i}{3}} a - e^{ - \frac{2 \pi i}{3}} (a+1) = -1$ then $a = e^{ \frac{2 \pi i}{3}}$, $d = e ^{ - \frac{ 2 \pi i}{3}}$, whereas ${\rm tr} ( D_1^2 D_2) =2$. That proves the non-existence of $h_2 \in H_{3^b} \setminus \langle h_1 \rangle$ and $H_{3^b} = H_3 = \langle h_1 \rangle \simeq {\mathbb C}_3$.

Suppose that $K$ is of even order and denote by $H_{2^a}$ the Sylow $2$-subgroup of $K < SL(2,R)$ of order $2^a \geq 2$. Then any $g \in H_{2^a} \setminus \{ I_2 \}$ is of order
\[
r \in \{ 2^i  \ \ \vert \ \  i \in {\mathbb N} \} \cap \{ 1, 2, 3, 4, 6 \} = \{ 2, 4 \}.
\]
 Recall from Proposition \ref{ListK} that there is a unique element $ - I_2$ of $SL(2,R)$ of order $2$ and $g \in SL(2,R)$ is of order $4$ if and only if the trace ${\rm tr} (g) =0$. For $a=1$ the Sylow subgroup $H_2  = \langle - I_2 \rangle \simeq {\mathbb C}_2$ is cyclic of order $2$. If $a=2$ then $H_4 = \langle g \rangle \simeq {\mathbb C}_4$ is cyclic of order $4$, since $SL(2,R)$ has a unique element $- I_2$ of order $2$. From now on, let us assume that $a \geq 3$ and fix an element $g_1 \in H_{2^a}$ of order $4$. Due to $g_1 ^2 = - I_2 \in \langle g_1 \rangle$, any $g_2 \in H_{2^a} \setminus \langle g_1 \rangle$ is of order $4$ and $g_2 ^2 = - I_2$. Moreover, $g_1 g_2 \in H_{2^a}$ is of order $4$, as far as $g_1 g_2 = \pm I_2$ requires $g_2 = \mp g_1 \in \langle g_1 \rangle$, contrary to the choice of $g_2$. We claim that if $g_1, g_2 \in SL(2,R)$ of order $4$ have product $g_1g_2$ of order $4$ then they generate a quaternion group
\[
\langle g_1, g_2 \ \ \vert \ \  g_1^2 = g_2 ^2 = - I_2, \ \ g_2g_1 = - g_1 g_2 \rangle \simeq {\mathbb Q}_8
\]
of order $8$. In other words, if $g_1, g_2 \in R_{2 \times 2}$ have $\det (g_1) = \det(g_2) =1$ and ${\rm tr} (g_1) = {\rm tr} (g_2) = {\rm tr}(g_1g_2) = 0$ then $g_2 g_1 = - g_1 g_2$. In particular, if $g_1, g_2 \in SL(2,R)$ of order $4$ have product $g_1g_2$ of order $4$ then $g_2 \not \in \langle g_1 \rangle = \{ \pm I_2, \ \ \pm g_1 \}$. To this end, let
\[
D_1 = S^{-1} g_1 S = \left( \begin{array}{cc}
i  &  0  \\
0  &  -i
\end{array}   \right)
\]
be the diagonal form of $g_1$ and
\[
D_2 = S^{-1} g_2 S = \left(  \begin{array}{cc}
a  &   b  \\
c  &  d
\end{array}   \right)
\]
for appropriate matrices $S$ and $D_2$ with entries from $Q(R)(\sqrt{-1}) = {\mathbb Q} (\sqrt{-1})$ or ${\mathbb Q}( \sqrt{d}, \sqrt{-1})$. The determinant and the trace are invariant under conjugation, so that suffices to show that if $\det(D_2) =1$ and ${\rm tr} (D_2) = {\rm tr} (D_1 D_2) = 0$ then $D_2 D_1 = - D_1 D_2$, whereas
\[
g_2 g_1 = (S D_2 S^{-1}) (S D_1 S^{-1}) = S (D_2 D_1) S^{-1} =
\]
\[
 = S ( - D_1 D_2) S^{-1} = - (S D_1 S^{-1}) (S D_2 S^{-1}) = - g_1 g_2.
\]
Indeed, ${\rm tr} (D_2) = a + d = 0$ and ${\rm tr} (D_1 D_2) = i (a - d) = 0$ require $a = d = 0$. Now, $\det(D_2) = - bc =1$ determines $c = - \frac{1}{b}$ for some $b \in {\mathbb Q} ( \sqrt{d}, \sqrt{-1})$ and
\[
D_2 D_1 = \left(  \begin{array}{cc}
0  &  - i b  \\
- \frac{i}{b}  &  0
\end{array}   \right) = - D_1 D_2.
\]
Thus, if $a=3$ then the Sylow $2$-subgroup of $K$ is isomorphic to the quaternion group ${\mathbb Q}_8$ of order $8$,
\[
H_8 = \langle g_1, g_2 \ \ \vert \ \  g_1^2 = g_2^2 = - I_2, \ \ g_2g_1 = - g_1 g_2 \rangle \simeq {\mathbb Q}_8.
\]
There remains to be rejected the case of $a \geq 4$. The assumption $a \geq 4$ implies the existence of $g_3 \in H_{2^a} \setminus \langle g_1, g_2 \rangle$. Any such $g_3$ is of order $4$, together with the products $g_1g_3 \in H_{2^a}$ for $1 \leq j \leq 2$, since $g_j g_3 = \pm I_2$ amounts to $g_3 = \pm g_j ^3 \in \langle g_j \rangle$ and contradicts the choice of $g_3$. Thus, the subgroups
\[
\langle g_1, g_3 \ \ \vert \ \  g_1^2 = g_3 ^2 = - I_2, \ \ g_3 g_1 = - g_1 g_3 \rangle \simeq
\]
\[
\langle g_2, g_3, \ \ \vert \ \  g_2^2 = g_3 ^2 = - I_2, \ \ g_3g_2 = - g_2 g_3 \rangle \simeq {\mathbb Q}_8
\]
are also isomorphic to ${\mathbb Q}_8$. In particular,
\[
D_3 = S^{-1} g_3 S = \left(  \begin{array}{cc}
0   &  b_3  \\
- \frac{1}{b_3}   &   0
\end{array}   \right)
\]
with $b_3 \in {\mathbb Q} ( \sqrt{d}, \sqrt{-1}) ^*$ is subject to
\[
D_3 D_2 = \left(  \begin{array}{cc}
- \frac{b_3}{b}   &  0  \\
0  &  - \frac{b}{b_3}
\end{array}  \right) =
\left( \begin{array}{cc}
 \frac{b}{b_3}  &  0  \\
0  &  \frac{b_3}{b}
\end{array}  \right) = - D_2 D_3,
\]
whereas $b_3^2  = - b^2$ or $b_3 = \pm i b$. As a result, $D_3 = D_1D_2$ and $g_3 = g_1g_2$, contrary to the choice of $g_3 \not \in \langle g_1, g_2 \rangle$. Therefore $a < 4$ and the Sylow $2$-subgroup of a finite group $K < SL(2,R)$ is $H_2 \simeq {\mathbb C}_2$, $H_4 \simeq {\mathbb C}_4$ or $H_8 \simeq {\mathbb Q}_8$.

\end{proof}

\begin{proposition}    \label{K}
Any finite subgroup $K$ of $SL(2,R)$ is isomorphic to one of the following:
\[
K_1 = \{ I_2 \},
\]
\[
K_2 = \langle - I_2 \rangle \simeq {\mathbb C}_2,
\]
\[
K_3 = \langle g_1 \rangle \simeq {\mathbb C} _4 \ \ \mbox{\rm  for some } \ \ g_1 \in SL(2,R) \ \ \mbox{\rm  with } \ \ {\rm tr} (g_1) = 0,
\]
\[
K_4 = \langle g_1, g_2 \ \ \vert \ \  g_1 ^2 = g_2 ^2 = - I_2,  \ \ g_2 g_1 g_2 = g_1 \rangle \simeq {\mathbb Q}_8,
\]
\[
K_5 = \langle g_3 \rangle \simeq {\mathbb C}_3 \ \ \mbox{ \rm for some } \ \ g_3 \in SL(2,R) \ \ \mbox{\rm  with } \ \ {\rm tr} (g_3) = -1,
\]
\[
K_6 = \langle g_4 \rangle \simeq {\mathbb C}_6 \ \ \mbox{ \rm for some } \ \ g_4 \in SL(2,R) \ \ \mbox{\rm  with } \ \ {\rm tr} (g_4) = 1,
\]
\[
K_7 = \langle g_1, g_4 \ \ \vert \ \ g_1^2 = g_4^3 = - I_2, \ \  g_4 g_1 g_4 = g_1 \rangle \simeq {\mathbb Q}_{12}
\]
 for some $g_1, g_4 \in SL(2,R)$  with ${\rm tr}(g_1) = 0$, ${\rm tr} (g_4) =1$,
\[
K_8 = \langle g_1, g_2, g_3 \ \ \vert \ \  g_1 ^2 = g_2 ^2 = - I_2, \ \ g_3 ^3 = I_2, \ \  g_2 g_1 = - g_1 g_2, \ \
\]
\[
g_3 g_1 g_3^{-1} = g_2, \ \ g_3 g_2 g_3^{-1} = g_1g_2 \rangle \simeq SL(2, {\mathbb F}_3)
\]
for some $g_1, g_2, g_3 \in SL(2,R)$, ${\rm tr} (g_1) = {\rm tr}(g_2) = 0$, ${\rm tr} (g_3) = -1$, where ${\mathbb Q}_8$ denotes the quaternion group of order $8$, ${\mathbb Q}_{12}$ stands for the dicyclic group of order $12$ and $SL(2, {\mathbb F}_3)$ is the special linear group over the field ${\mathbb F}_3$ with three elements.
\end{proposition}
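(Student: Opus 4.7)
The plan is to enumerate finite subgroups $K \leq SL(2,R)$ by their order and Sylow structure, as dictated by Proposition \ref{SylowSubgroupsK}: $|K| = 2^a 3^b$ with $0 \leq a \leq 3$, $0 \leq b \leq 1$; the Sylow $2$-subgroup is ${\mathbb C}_2$, ${\mathbb C}_4$ or ${\mathbb Q}_8$, and the Sylow $3$-subgroup (when present) is ${\mathbb C}_3$. Two consequences of Proposition \ref{ListK} will be used throughout: (a) $-I_2$ is the unique element of $SL(2,R)$ of order $2$, hence $Z = \langle -I_2 \rangle$ is central in any such $K$; (b) the possible orders of elements of $SL(2,R)$ are confined to $\{1,2,3,4,6\}$, so in particular no element has order $12$.

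When $b=0$, $K$ coincides with its Sylow $2$-subgroup and one reads off $K_1, K_2, K_3, K_4$ directly from Proposition \ref{SylowSubgroupsK}, the trace conditions being those of Proposition \ref{ListK}. When $a=0$, $b=1$, $K = \langle g_3 \rangle \simeq {\mathbb C}_3 = K_5$, with $g_3$ characterized by ${\rm tr}(g_3) = -1$. For $|K|=6$, Sylow's theorems (together with (a), which forces a unique involution) make both Sylow subgroups normal, so $K = \langle -I_2 \rangle \times \langle g_3 \rangle \simeq {\mathbb C}_6$; the element $g_4 := -g_3$ has order $6$ and trace $1$, yielding $K_6$. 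For $|K|=12$, with Sylow subgroups $\langle g_1 \rangle \simeq {\mathbb C}_4$ and $\langle g_3 \rangle \simeq {\mathbb C}_3$, conjugation by $g_1$ acts on $\langle g_3 \rangle$ through $Aut({\mathbb C}_3) = {\mathbb C}_2$; a trivial action would give $\langle g_1 \rangle \times \langle g_3 \rangle \simeq {\mathbb C}_{12}$, contradicting (b). Hence $g_1 g_3 g_1^{-1} = g_3^{-1}$; setting $g_4 = -g_3$, the equivalent identity $g_4 g_1 g_4 = g_1$ emerges, which is the dicyclic presentation of $K_7 \simeq {\mathbb Q}_{12}$.

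The remaining case $|K|=24$ is the principal obstacle. Considering $K/Z$ of order $12$, conditions (a) and (b) force every non-identity element of $K/Z$ to have order $2$ or $3$. Among the five isomorphism types of groups of order $12$, only $A_4$ has this property, so $K/Z \simeq A_4$. In $A_4$ the Sylow $2$-subgroup is normal (it is the Klein four group), so its preimage $\langle g_1, g_2 \rangle \simeq {\mathbb Q}_8$ is normal in $K$ and $K = {\mathbb Q}_8 \rtimes \langle g_3 \rangle$. A trivial action of $g_3$ would reintroduce an element of order $12$, so the action is non-trivial; since $Aut({\mathbb Q}_8) \simeq S_4$ has a unique $Aut({\mathbb Q}_8)$-conjugacy class of order-$3$ automorphisms, which cyclically permutes the three pairs $\{\pm g_1\}, \{\pm g_2\}, \{\pm g_1 g_2\}$, the group $K$ is determined up to isomorphism as the binary tetrahedral group $SL(2,{\mathbb F}_3) = K_8$. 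The delicate part of the argument, which I expect to be the main bookkeeping obstacle, is then to renormalize the generators (replacing $g_1, g_2$ by suitable $\pm g_1, \pm g_2$) so that conjugation by $g_3$ realizes the specific cycle $g_1 \mapsto g_2 \mapsto g_1 g_2$ recorded in the statement of $K_8$, while verifying that the trace conditions from Proposition \ref{ListK} remain compatible with this choice.
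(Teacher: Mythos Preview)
Your proposal is correct and reaches the same classification, but the route in the $|K|=24$ case is genuinely different from the paper's. The paper shows directly, by Sylow counting, that the assumption $n_2=3$ forces $n_3=4$, then counts elements of orders $1,2,3,4,6$ to reach a contradiction; with $H_8\simeq{\mathbb Q}_8$ normal, it analyzes the conjugation action of $g_3$ on $\{\pm g_1,\pm g_2,\pm g_1g_2\}$ by explicit matrix computation, ruling out $g_3\langle g_1\rangle g_3^{-1}=\langle g_1\rangle$ via the no-order-$12$ constraint, and finally shows the two residual sign choices $g_3g_2g_3^{-1}\in\{g_1g_2,\,g_2g_1\}$ are isomorphic by the substitution $g_3\mapsto g_3^{2}$, $(g_1,g_2)\mapsto(g_1,g_2g_1)$. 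Your argument via $K/\langle -I_2\rangle\simeq A_4$ is cleaner: it identifies the quotient by element-order constraints alone, pulls back the normal Klein four-group to get ${\mathbb Q}_8\lhd K$, and then invokes the uniqueness (up to conjugacy) of order-$3$ elements in $\mathrm{Aut}({\mathbb Q}_8)\simeq S_4$. What your approach buys is conceptual economy; what the paper's buys is an explicit matrix realization of $K_8$ inside $SL(2,{\mathbb Q}(\sqrt{-d},\sqrt{-3}))$, which it later uses.

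One small gap to patch in your $|K|=12$ paragraph: you write ``conjugation by $g_1$ acts on $\langle g_3\rangle$'', which presupposes that the Sylow $3$-subgroup is normal. The paper avoids this by working instead with the index-$2$ subgroup $\langle g_4\rangle\simeq{\mathbb C}_6$, which is automatically normal. In your framework the normality of $\langle g_3\rangle$ follows in one line: since $-I_2\in K$, each element of order $3$ pairs with an element of order $6$, so $n_3=4$ would force at least $8+8+2=18>12$ elements; hence $n_3=1$.
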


\begin{proof}

By Proposition \ref{SylowSubgroupsK}, $K$ is of order $1, 2, 3, 6, 12$ or $24$. The only subgroup $K < SL(2,R)$ of order $1$ is $K = K_1 = \{ I_2 \}$. Since $- I_2$ is the only element of $SL(2,R)$ of order $2$, the group $K = K_2 = \langle - I_2 \rangle \simeq {\mathbb C}_2$ is the only cyclic subgroup of $SL(2,R)$ of order $2$. Any subgroup $K < SL(2,R)$ of order $4$ is cyclic or $K = K_3 = \langle g_1 \rangle$ for some $g_1 \in SL(2,R)$ with ${\rm tr} (g_1) = 0$, because $SL(2,R)$ has a unique element $ - I_2$ of order $2$. Proposition \ref{ListK} has established the existence of elements $g_1 \in SL(2, {\mathbb Z}) \leq SL(2,R)$ of order $4$.

If $K < SL(2,R)$ is a subgroup of order $8$ then it coincides with its Sylow $2$-subgroup
\[
K = H_8 = \langle g_1, g_2 \ \ \vert \ \  g_1^2 = g_2 ^2 = - I_2, \ \  g_2g_1 = - g_1g_2 \rangle = K_4 \simeq {\mathbb Q}_8,
\]
isomorphic to the quaternion group ${\mathbb Q}_8$ of order $8$. Note that there is a realization
\[
{\mathbb Q}_8 \simeq \langle D_1 = \left(  \begin{array}{cc}
i  &  0  \\
0  &  -i
\end{array}   \right), \ \
D_2 = \left( \begin{array}{cc}
0  &  1  \\
-1  &  0
\end{array} \right) \rangle < SL(2, {\mathbb Z} [i])
\]
as a subgroup of $SL(2, {\mathbb Z}[i])$. In general,
\[
D_j = \left( \begin{array}{cc}
a_j  &  b_j  \\
c_j  &  - a_j
\end{array}   \right) \in SL(2,R)
\]
amount to $a_j^2 + b_jc_j = -1$. The anti-commuting  relation $g_2g_1 = - g_1g_2$ is equivalent to $2 a_1a_2 + b_1c_2 + b_2 c_1 = 0$. Therefore $K_4 = \langle g_1, g_2 \rangle < SL(2,R)$ is a realization of ${\mathbb Q}_8$ if and only if $a_j, b_j, c_j \in R$ are subject to
\begin{equation}   \label{Q8System}
\left|  \begin{array}{c}
a_1^2 + b_1 c_1 = -1  \\
a_2^2 + b_2 c_2 = -1 \\
2 a_1 a_2 + b_1 c_2 + b_2 c_1 = 0
\end{array}  \right. .
\end{equation}
The existence of a solution of (\ref{Q8System}) in an arbitrary $R = R_{-d,f} = {\mathbb Z} + f \mathcal{O}_{-d} = {\mathbb Z} + f \omega _{-d} {\mathbb Z}$ is an open problem.

If $|K|=3$ then $K = K_5 = \langle g_3 \rangle \simeq {\mathbb C}_3$ for some $g_3 \in SL(2,R)$ with ${\rm tr} (g_3) = -1$.

From now on, let us assume that $K$ is of order $|K| = 2^a . 3$ for some $1 \leq a \leq 3$ and consider some Sylow subgroups $H_{2в}, H_3 = \langle g_4 \rangle \simeq {\mathbb C}_3 $ of $K$. We claim that the product
\[
H_{2^a} H_3 = \{ g g_4 ^i \ \ \vert \ \  g \in H_{2^a}, \ \  0 \leq i \leq 2 \}
\]
depletes $K$. More precisely, $H_{2^a} \cap H_3 = \{ I_2 \}$, because $2^a$ and $3$ are relatively prime. Therefore
\[
H_{2^a} H_3 / H_{2^a} =  H_{2^a} \cup  H_{2^a}g_4 \cup  H_{2^a} g_4^2
\]
is a right coset decomposition of the subset $H_{2^a} H_3 \subseteq K$ modulo $H_{2^a}$. Due to the disjointness of this decomposition, one has $|H_{2^a} H_3| = 3 |H_{2^a}| = 3.2^a = |K|$. Therefore, the subset $H_{2^a} H_3$ of $K$ coincides with $K$ and $K = H_{2^a} H_3$ is a product of its Sylow subgroups.

If $K = H_2H_3 = \langle - I_2 \rangle \langle g_3 \rangle$ for some $g_3 \in SL(2,R)$ with ${\rm tr} (g_3) = -1$ then $ \pm I_2$ commute with $g_3^j$ for all $0 \leq j \leq 2$ and the group $K$ is abelian. Thus, $K = \langle - g_3 \rangle \simeq {\mathbb C}_6$ is a cyclic group of order $6$, generated by $ - g_3 \in SL(2,R)$ with ${\rm tr} ( - g_3) =1$.

For $K = H_4 H_3 = \langle g_1 \rangle \langle g_3 \rangle$ with $g_1, g_3 \in SL(2,R)$ of ${\rm tr} (g_1) =0$, ${\rm tr} (g_3) = -1$, note that $g_4 = -g_3 \in SL(2,R)$ is of order $6$. Then $g_4 ^3 = -I_2 = g_1^2$, because $-I_2 \in SL(2,R)$ is the only element of order $2$. We claim that $g_1, g_4 \in SL(2,R)$ are subject to $g_4 g_1 g_4 = g_1$. To this end, let $S \in Q(R)  ( \sqrt{-3}))) _{2 \times 2} \subseteq {\mathbb Q} ( \sqrt{-d}, \sqrt{-3})_{2 \times 2}$ be a matrix, whose columns are eigenvectors of $g_1$. Then
\[
D_4 = S^{-1} g_4 S = \left(  \begin{array}{cc}
e^{ \frac{\pi i}{3}}   &   0  \\
0  &  e^{ - \frac{\pi i}{3}}
\end{array}  \right) \ \ \mbox{   and  }
\]
\[
D_1 = S^{-1} g_1 S = \left(  \begin{array}{cc}
a_1  &  b_1  \\
c_1  &  - a_1
\end{array}   \right) \ \ \mbox{ with } \ \  a_1^2 + b_1 c_1 = -1
\]
generate the subgroup $K^o = S^{-1} K S \simeq K$. It suffices to check that $D_4D_1D_4 = D_1$, because then $g_4g_1g_4 =  (SD_4S^{-1})(SD_1S^{-1})(SD_4S^{-1}) = S(D_4D_1D_4)S^{-1} = SD_1S^{-1} = g_1$ and
\[
K = \langle g_1, g_3 \rangle = \langle g_1, g_4 = -g_3 \ \ \vert \ \  g_1^2 = g_4 ^3 = - I_2, \ \  g_4 g_1 g_4 = g_1 \rangle \simeq {\mathbb Q}_{12}
\]
is isomorphic to the dicyclic group ${\mathbb Q}_{12}$ of order $12$. The group $K^o = \langle D_1, D_4 \rangle \simeq K$ of order $12$ has a cyclic subgroup $\langle D_4 \rangle \simeq {\mathbb C}_6$ of order $6$. The index $[ K^o : \langle D_4 \rangle ] = 2$, so that $\langle D_4 \rangle$ is a normal subgroup of $K^o$ and $D_1 D_4 D_1^{-1} \in \langle D_4 \rangle$ is an element of order $6$. More precisely, $D_1 D_4 D_1^{-1} = D_4$ or $ D_1 D_4 D_1 ^{-1} = D_4 ^{-1} = D_4 ^5 = - D_4^2$. If $D_1 D_4 = D_4 D_1$ then $ D_1 D_4 \in K^o$ is of order $12$, as far as $(D_1 D_4) ^{12} = (D_1^{4}) ^3 (D_4^{6}) ^2 = I_2 ^{3} I_2 ^{2} = I_1$, $( D_1 D_4) ^{6} = D_1 ^{2} = - I_2 \neq I_2$, $( D_1 D_4) ^{4} = D_4 ^4 = - D_4 \neq I_2$, whereas $D_1D_4, (D_1D_4) ^{2}, (D_1D_4)^{3} \not \in \{ I_2 \}$. Consequently, $D_1 D_4 = - D_4^2 D_1$, so that $D_4 D_1 D_4 = - D_4^3 D_1 = D_1$ and $K \simeq K^o \simeq {\mathbb Q}_{12}$. For instance, the subgroup
\[
\langle D_1 = \left( \begin{array}{cc}
0  &  1  \\
-1  &  0
\end{array}   \right), \ \
D_4 = \left( \begin{array}{cc}
e^{ \frac{\pi i}{3}}  &  0  \\
0  &  e^{ - \frac{\pi i}{3}}
\end{array}   \right) \ \ \Bigg \vert \ \  D_1 ^2 = D_4 ^3 = - I_2, \ \  D_1 D_4 D_1 ^{-1} = D_4 ^{-1} \rangle
\]
of $SL(2, \mathcal{O}_{-3})$ realizes ${\mathbb Q}_{12}$ as a subgroup of $SL(2, \mathcal{O}_{-3})$. The existence of ${\mathbb Q}_{12} \simeq K < SL(2, R)$ for an arbitrary $R$ is an open problem.

There remains to be shown that any subgroup $K = H_8 H_3 = \langle g_1, g_2, g_3 \rangle \simeq {\mathbb Q}_8 {\mathbb C}_3$ of $SL(2, R)$ of order $24$ is isomorphic to the special linear group $K_8 \simeq SL(2, {\mathbb F}_3)$ over ${\mathbb F}_3$.  In other words, any $K < SL(2,R)$ of order $|K| = 24$ can be generated by such $g_1, g_2, g_3 \in SL(2,R)$ that the subgroup $\langle g_1, g_2 \ \ \vert  \ \ g_1 ^2 = g_2 ^2 = - I_2, \ \  g_2 g_1 = - g_1 g_2 \rangle \simeq {\mathbb Q}_8$ is isomorphic to the quaternion group ${\mathbb Q}_8$ of order $8$, $g_3$ is of order $3$ and $g_3 g_1 g_3^{-1} = g_2$, $g_3 g_2 g_3 ^{-1} = g_1 g_2 $.

First of all,  the Sylow $2$-subgroup $H_8 \simeq {\mathbb Q}_8$ of $K$ is normal. More precisely, by the Third Sylow Theorem, the number $n_2 \in {\mathbb N}$ of the Sylow $2$-subgroups of $K$ (i.e., the number $n_2$ of the subgroups of $K$ of order $8$) divides $|K| = 24$ and $n_2 \equiv 1 ({\rm mod} \ \  2)$. Therefore $n_2 = 1$ or $n_2 = 3$. By Second Sylow Theorem, all Sylow $2$-subgroups are conjugate to each other, so that $n_2 =1$ exactly when $H_8 = \langle g_1, g_2 \rangle \simeq {\mathbb Q}_8$ is a normal subgroup of $K$. Let us assume that $n_2 =3$ and denote by $\nu _s$ the number of the elements $g \in K$ of order $s$. Due to $- I_2 \in H_8 = \langle g_1, g_2 \rangle < K$, one has $\nu _1 =1$, $\nu _2 =1$. Note that $g \in K$ is of order $3$ if and only if $ - g \in K$ is of order $6$, so that $\nu _6 = \nu _3$. By the Third Sylow Theorem, the number $n_3 \in {\mathbb N} $ of the Sylow $3$-subgroups of $K$ divides $|K| =24$  and $n_3 \equiv 1 ({\rm mod} \ \ 3)$. Therefore $n_3 =1$ or $n_3=4$.

If $n_3=1$ and there is a unique normal subgroup $H_3 = \langle g_3 \rangle \simeq {\mathbb C}_3$ of $K$ of order $3$, then $g_j g_3 g_j ^{-1} \in \{ g_3, g_3^2 \} \subset \langle g_3 \rangle$ for $j=1$ and $j=2$. If $g_j g_3 g_j ^{-1} = g_3$ then $g_j g_3 = g_3 g_j$ for $g_j$ of order $4$ and $g_3$ of order $3$, so that $g_j g_3 \in K$ is of order $12$, contrary to the non-existence of an element of $SL(2,R)$ of order $12$. Therefore $g_1 g_3 g_1^{-1} = g_3^2$, $g_2 g_3 g_2^{-1} = g_3 ^2$, whereas
\[
(g_1g_2) g_3 (g_1g_2) ^{-1} = g_1 ( g_2 g_3 g_2 ^{-1}) g_1 ^{-1} = g_1 g_3 ^2 g_1 ^{-1} = ( g_1 g_3 g_1 ^{-1}) ^2 = (g_3 ^2) ^2 = g_3
\]
and $g_1g_2$ of order $4$ commutes with $g_3$ of order $3$. Thus, $(g_1g_2)g_3 \in K$ is of order $12$, which is an absurd. That rejects the assumption $n_3 =1$ and proves that $n_3=4$.

Let $H_{3,j} = \langle g_{3,j} \rangle \simeq {\mathbb C}_3$, $1 \leq j \leq 4$ be the four subgroups of $K$ of order $3$. Then $H_{3,i} \cap H_{3,j} = \{ I_2 \}$ for all $1 \leq i < j \leq 4$, as far as any $g \in H_{3,i} \setminus \{ I_2 \}$ generates $H_{3,i}$. As a result, $\cup _{i=1} ^4 H_{3,i}$ and $K$ contain $8$ different elements $g_{3,i}, g_{3,i} ^2$, $1 \leq i \leq 4$ of order $3$ and $\nu _6 = \nu _3 =8$. Thus,
\[
24 = |K| = \nu _1 + \nu _2 + \nu _3 + \nu _4 + \nu _6 = 18 + \nu _4,
\]
so that $K$ has $\nu _4=6$ elements of order $4$. Since any Sylow $2$-subgroup
\[
H_8 = \langle g_1, g_2 \ \ \vert \ \  g_1 ^2 = g_2 ^2 = - I_2, \ \ g_2 g_1 = - g_1 g_2 \rangle = \{ \pm I_2, \pm g_1, \pm g_2, \pm g_1g_2 \} \simeq {\mathbb Q}_8
\]
of $K$ contains six elements $\pm g_1, \pm g_2, \pm g_1g_2$ of order $4$, there cannot be more than one $H_8$. In other words, $n_2=1$ and $H_8$ is a normal subgroup of $K$.

The above considerations show that
\[
K = H_8 \rtimes H_3 = \langle g_1, g_2 \ \ \vert \ \  g_1^2 = g_2 ^2 = - I_2, \ \  g_2g_1 = - g_1g_2 \rangle \rtimes \langle g_3 \ \ \vert \ \  g_3^3 = I_2 \rangle \simeq {\mathbb Q}_8 \rtimes {\mathbb C}_3
\]
is a semi-direct product of ${\mathbb Q}_8$ and ${\mathbb C}_3$. Up to an isomorphism, $K$ is uniquely de\-ter\-mined by the group homomorphism
\[
\varphi _K : H_3 \longrightarrow Aut (H_8),
\]
\[
\varphi _K ( g_3 ^j ) ( \pm g_1 ^k g_2 ^l ) = g_3 ^j ( \pm g_1 ^k g_2 ^l ) g_3 ^{-j} \ \ \mbox{ for } \ \
\forall \pm g_1^k g_2 ^l \in H_8, \ \ 0 \leq k,l \leq 1.
\]
Since $H_3 = \langle g_3 \rangle \simeq {\mathbb C}_3$ is cyclic, $\varphi _K$ is uniquely determined by $\varphi _K (g_3) \in Aut (H_8)$. On the  other hand, $H_8$ is generated by $g_1,g_2$, so that suffices to specify $\varphi _K(g_3) (g_j) = g_3 g_j g_3^{-1} \in H_8$ for $1 \leq j \leq 2$, in order to determine $\varphi _K$. If the cyclic group $\langle g_1 \rangle \simeq {\mathbb C}_4$ is normalized by $g_3$ then $g_3 g_1 g_3^{-1} \in \{ \pm g_1 \}$, as an element of order $4$. In the case of $g_3g_1 g_3^{-1} = g_1$, the element $g_1 \in K$ of order $4$ commutes with the element $g_3 \in K$ of order $3$ and their product $g_1g_3 \in K$ is of order $12$. The lack of $g \in SL(2,R)$ of order $12$ requires $g_3g_1g_3^{-1} = - g_1$. Now,
\[
g_3^2 g_1 g_3 ^{-2} = g_3 ( g_3 g_1 g_3^{-1}) g_3^{-1} = g_3 ( -g_1) g_3^{-1} = g_1
\]
is equivalent to $g_3^2 g_1 = g_1 g_3^2$ and the product $g_1g_3^2 \in K$ of $g_1 \in K$ of order $4$ with $g_3^2 \in K$ of order $3$ is an element of order $12$. The absurd justifies that neither of the cyclic subgroups $\langle g_1 \rangle \simeq \langle g_2 \rangle \simeq \langle g_1g_2 \rangle \simeq {\mathbb C}_4$ of order $4$ of $H_8$ is normalized by $g_3$. Thus, an arbitrary $g_1 \in H_8 \simeq {\mathbb Q}_8$ of order $4$ is completed by $g_2 := g_3 g_1 g_3 ^{-1} \in H_8 \setminus \langle g_1 \rangle$ of order $4$ to a generating set of $H_8 \simeq {\mathbb Q}_8$. Then
\[
g_3^2 g_1 g_3^{-2} = g_3 (g_3 g_1 g_3^{-1}) g_3^{-1} = g_3 g_2 g_3^{-1} \in H_8 \setminus ( \langle g_1 \rangle \cup \langle g_2 \rangle ) = \{ g_1g_2, g_2g_1 \}
\]
specifies that either $g_3g_2 g_3^{-1} = g_1g_2$ or $g_3g_2g_3^{-1} = g_2g_1$. If $g_3g_2g_3^{-1} = g_2g_1$, we replace the generator $g_3$ of $K$ by $h_3 = g_3^2$ and note that $h_3 g_1 h_3^{-1} = g_2g_1$. Now, $h_1 := g_1$ and $h_2 := g_2g_1$ generate $H_8 = \langle h_1, h_2 \ \ \vert \ \  h_1^2 = h_2 ^2 = - I_2, \ \ h_2h_1 = - h_1 h_2 \rangle$ and satisfy $h_3 h_1 h_3^{-1} = h_2$,
\[
h_3h_2h_3^{-1} = g_3 [ (g_3g_2g_3^{-1})(g_3g_1g_3^{-1}) ] g_3^{-1} = g_3 (g_2g_1g_2) g_3^{-1} = g_3g_1g_3^{-1} =
\]
\[
=  g_2 = - (g_2g_1) g_1 = - h_2h_1 = h_1h_2.
\]
Thus, the group
\[
K'= \langle g_1, g_2, g_3 \, \vert \,
 g_1^2 = g_2 ^2 = - I_2, \, g_2g_1 = - g_1g_2, \, g_3^3 = I_2, \,  g_3g_1g_3^{-1} = g_2, \, g_3g_2g_3^{-1} = g_2g_1 \rangle
\]
is isomorphic ro the group
\[
K = \langle g_1, g_2, g_3 \, \vert \,
  g_1^2 = g_2 ^2 = - I_2, \,  g_2g_1 = - g_1g_2, \, g_3^3 = I_2, \,  g_3g_1g_3^{-1} = g_2, \,  g_3g_2g_3^{-1} = g_1g_2 \rangle.
\]
We shall realize $SL(2, {\mathbb F}_3)$ as a subgroup $K_8 ^o = \langle D_1, D_2, D_3 \rangle$ of $SL(2, {\mathbb Q}( \sqrt{-d}, \sqrt{-3}))$. The existence of subgroups $SL(2, {\mathbb F}_3) \simeq K_8 < SL(2,R)$ is an open problem. Towards the construction of $K_8 ^o$, let us choose
\[
D_j = \left( \begin{array}{cc}
a_j  &  b_j  \\
c_j  &  - a_j
\end{array} \right) \ \ \mbox{ \rm with } \ \ a_j^2 + b_jc_j = -1 \ \ \mbox{ \rm for } \ \ 1 \leq j \leq 2 \ \ \mbox{ \rm and}
\]
\[
D_3 = \left( \begin{array}{cc}
e^{\frac{2 \pi i}{3}}  &  0  \\
0  &  e^{ - \frac{2 \pi i}{3}}
\end{array}  \right)
\]
from $SL(2, {\mathbb Q}( \sqrt{-d}, \sqrt{-3}))$. After computing
\[
D_3D_j D_3^{-1} = \left( \begin{array}{cc}
a_j  &  e^{ - \frac{2 \pi i}{3}}  b_j  \\
\mbox{   }   & \mbox{  } \\
e^{\frac{2 \pi i}{3}} c_j  &  - a_j
\end{array}  \right) \ \ \mbox{\rm for} \ \ 1 \leq j \leq 2,
\]
observe that $D_3D_1D_3^{-1} = D_2$ reduces to
\[
\left|  \begin{array}{c}
a_2 = a_1  \\
b_2 = e^{ - \frac{ 2 \pi i}{3}} b_1  \\
c_2 = e^{\frac{2 \pi i}{3}} c_1
\end{array}  \right. .
\]
The relation $D_2D_1 = - D_1D_2 $ is equivalent to $2 a_1a_2 + b_1c_2 + b_2c_1 = 0$ and implies that $2 a_1^2 = b_1c_1$. Now,
\[
D_3D_2D_3^{-1} = \left( \begin{array}{cc}
a_1  &  e^{\frac{2 \pi i}{3}} b_1 \\
e^{ - \frac{2 \pi i}{3}} c_1  &  - a_1
\end{array}  \right) =
\left( \begin{array}{cc}
\sqrt{-3} a_1^2  &  \sqrt{-3} e^{\frac{2 \pi i}{3}} a_1b_1 \\
\sqrt{-3}  e^{ - \frac{2 \pi i}{3}} a_1c_1  &  - \sqrt{-3}  a_1^2
\end{array}  \right) = D_1D_2
\]
is tantamount to
\[
\left| \begin{array}{c}
a_1 (1 - \sqrt{-3}a_1) = 0 \\
b_1 (1 - \sqrt{-3}a_1) = 0 \\
c_1 (1 - \sqrt{-3}a_1) = 0
\end{array}  \right.
\]
and specifies that $a_1 = \frac{\sqrt{-3}}{3}$. Namely, the assumption $a_1 \neq - \frac{\sqrt{-3}}{3}$ forces $a_1 = b_1 = c_1 =0$, whereas $\det (D_1) = 0$, contrary to the choice of $D_1 \in SL(2, {\mathbb Q}( \sqrt{-d}, \sqrt{-3}))$. As a result, $b_1 \neq 0$, $c_1 = - \frac{2}{3 b_1}$ and
\[
D_1 = \left( \begin{array}{cc}
- \frac{\sqrt{-3}}{3}   &  b_1  \\
\mbox{   }   &  \mbox{   } \\
- \frac{2}{3 b_1}  &  \frac{\sqrt{-3}}{3}
\end{array}  \right), \ \
D_2 = \left( \begin{array}{cc}
- \frac{\sqrt{-3}}{3}  &  e^{ - \frac{ 2 \pi i}{3}} b_1  \\
\mbox{   }   &  \mbox{   } \\
e^{ \frac{2 \pi i}{3}} c_1  &  \frac{\sqrt{-3}}{3}
\end{array}  \right), \ \
D_3 = \left(  \begin{array}{cc}
e^{\frac{2 \pi i}{3}}  &  0  \\
\mbox{   }   &  \mbox{   } \\
0  &  e^{ - \frac{2 \pi i}{3}}
\end{array}  \right)
\]
generate a subgroup $SL(2, {\mathbb F}_3) \simeq K_8 ^o < SL(2, {\mathbb Q} ( \sqrt{-d}, \sqrt{-3}))$.

\end{proof}

\begin{corollary}    \label{SubgroupSL23ButQ12}
If the finite subgroup $K$ of $SL(2,R)$ is not isomorphic to the dicyclic group
\[
K_7 = \langle g_1, g_4 \ \ \vert   \ \  g_1 ^2 = g_4^3 = -I_2,  \ \ g_4 g_1 g_4 = g_1 \rangle =
\]
\[
= \langle g_1, g_3 = - g_4 \ \ \vert \ \  g_1 ^2 = - I_2,  \ \ g_3^3 = I_2, \ \  g_3 g_1 g_3^{-1} = g_3 g_1 \rangle \simeq {\mathbb Q}_{12}
\]
of order $12$ then $K$ is isomorphic to a subgroup of the special linear group
\[
K_8 = \langle g_1, g_2, g_3 \,\vert \, g_1^2 = g_2 ^2 = - I_2, \, g_3 ^3 = I_2,  \, g_2 g_1 = - g_1 g_2, \, g_3 g_1 g_3^{-1} = g_2,
 g_3 g_2 g_3^{-1} = g_1 g_2 \rangle
 \]
 \[
\simeq  SL(2, {\mathbb F}_3)
\]
over the field ${\mathbb F}_3$ with three elements.
\end{corollary}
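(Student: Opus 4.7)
The plan is to appeal directly to the classification established in Proposition \ref{K}: up to isomorphism, any finite $K \leq SL(2,R)$ is one of $K_1,\dots,K_8$, so once $K_7 \simeq {\mathbb Q}_{12}$ is excluded it suffices to exhibit an embedding of each of $K_1,K_2,K_3,K_4,K_5,K_6$ (and trivially $K_8$ itself) into the explicit presentation
\[
K_8 = \langle g_1,g_2,g_3 \,\vert\, g_1^2=g_2^2=-I_2,\ g_3^3=I_2,\ g_2g_1=-g_1g_2,\ g_3g_1g_3^{-1}=g_2,\ g_3g_2g_3^{-1}=g_1g_2\rangle.
\]
Thus the only work is to identify inside $K_8$ a copy of each model group from Proposition \ref{K} and verify, using the given relations, that it has the correct order and structure.

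Concretely, I would take $K_1 \hookrightarrow \{I_2\} \leq K_8$; $K_2 = \langle -I_2 \rangle = \langle g_1^2 \rangle \leq K_8$; $K_3 \simeq {\mathbb C}_4$ as $\langle g_1\rangle$, using $g_1^2=-I_2$ to see it has order $4$; $K_4 \simeq {\mathbb Q}_8$ as $\langle g_1,g_2\rangle$, which is exactly the Sylow $2$-subgroup built from the first two defining relations; $K_5 \simeq {\mathbb C}_3$ as $\langle g_3\rangle$; and $K_8$ itself in the last case. The only slightly non-tautological identification is $K_6 \simeq {\mathbb C}_6$: the element $-g_3 = g_1^2 g_3$ has order $6$, since $g_1^2 = -I_2$ is central (hence commutes with $g_3$) and the orders $2$ and $3$ are coprime, so $\langle -g_3\rangle \simeq {\mathbb C}_2 \times {\mathbb C}_3 \simeq {\mathbb C}_6$ sits inside $K_8$.

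There is no genuine obstacle here because the existence of $K_8$ inside $SL(2,R)$ is assumed as an isomorphism type from Proposition \ref{K}; the embeddings above are then purely internal to $K_8$ and follow from its presentation. The only conceptual point worth remarking is why $K_7$ must be excluded: any embedding ${\mathbb Q}_{12}\hookrightarrow K_8$ would force its central involution $g_1^2=g_4^3=-I_2$ to map to $-I_2 \in K_8$, yielding an index-$2$ subgroup ${\mathbb Q}_{12}/\{\pm I_2\} \simeq {\mathbb C}_6$ of $K_8/\{\pm I_2\} \simeq PSL(2,{\mathbb F}_3) \simeq A_4$; but $A_4$ has no subgroup of order $6$, confirming that the hypothesis $K \not\simeq K_7$ is necessary and completing the corollary.
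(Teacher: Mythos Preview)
Your proof is correct and follows essentially the same approach as the paper: invoke Proposition~\ref{K} and then exhibit the same explicit embeddings $K_1,\dots,K_6\hookrightarrow K_8$ (identity, $\langle -I_2\rangle$, $\langle g_1\rangle$, $\langle g_1,g_2\rangle$, $\langle g_3\rangle$, $\langle -g_3\rangle$). Your closing remark that $K_7\simeq\mathbb{Q}_{12}$ genuinely fails to embed in $K_8$ because $A_4\simeq PSL(2,\mathbb{F}_3)$ has no subgroup of order $6$ is a correct and useful addition that the paper does not include, though it is not strictly needed for the corollary as stated.
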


\begin{proof}

According to Proposition \ref{K}, any finite subgroup $K < SL(2,R)$ is isomorphic to some of the groups $K_1, \ldots , K_8$. Thus, it suffices to establish that any $K_j$, $1 \leq  j \leq 6$ is isomorphic to a subgroup of $K_8$. Note that $K_1 = \{ I_2 \} \subset K_8$ and $K_2 = \langle - I_2 \rangle \subset K_8$ are subgroups of $K_8$. The generator $g_1$ of $K_8$ is of order $4$, so that any subgroup $K_3 \simeq {\mathbb C}_4$ of $SL(2,R)$ is isomorphic to the subgroup $\langle g_1 \rangle$ of $K_8$. In the proof of Proposition \ref{K} we have seen that $K_8$ has a normal Sylow $2$-subgroup
\[
H_8 = \langle g_1, g_2 \ \ \vert \ \ g_1^2 = g_2^2 = -I_2, \ \  g_2g_1 = - g_1 g_2 \rangle \simeq {\mathbb Q}_8,
\]
isomorphic to the quaternion group ${\mathbb Q}_8 \simeq K_4$ of order $8$. The generator $g_3$ of $K_8$ provides a subgroup $\langle g_3 \rangle \simeq {\mathbb C}_3 \simeq K_5$ of $K_8$. The product $(-I_2) g_3$ of the commuting elements $-I_2 \in K_8$ or order $2$ and $g_3 \in K_8$ of order $3$ is an element $-g_3 \in K_8$ of order $6$, so that $K_6 \simeq {\mathbb C}_6$ is isomorphic to the subgroup $\langle - g_3 \rangle$ of $K_8$.

\end{proof}

%%%%%%%%%%%%%%%%%%%%%%%%%%%%%%%%%%%%%%%%%%%%%%%%%%%%%%%%%%%%%%%%%%%%%%%%%%%%%%%%%%%%%%%%%%%%%%%%%%%%%%%%%%%%%%%%%%%%

Towards the classification of the finite subgroups of $GL(2,R)$, we proceed with the following:

\begin{lemma}   \label{HKh}
Let $H$ be a finite subgroup of $GL(2,R)$. Then

(i) $\det(H)$ is a cyclic subgroup of $R^*$;

(ii) $H$ is a product $H = [ H \cap SL(2,R)] \langle h_o \rangle$ of its normal subgroup $H \cap SL(2,R)$ and any ${\mathbb C}_r \simeq \langle h_o \rangle \subseteq H$ with $\det (H) = \langle \det (h_o) \rangle \simeq {\mathbb C}_s$;

(iii) the order $s$ of $\det(H) = \langle \det (h_o) \rangle$ divides the order $r$ of $h_o \in H$ and
\[
[ H \cap SL(2,R)] \cap \langle h_o \rangle = \langle h_o^s \rangle \simeq {\mathbb C} _{\frac{r}{s}};
\]

(iv)  $H$ is of order $s|H \cap SL(2,R)|$;

(v) $s=r$ if and only if $H = [ H \cap SL(2,R)] \leftthreetimes \langle h_o \rangle$ is a semi-direct product.
\end{lemma}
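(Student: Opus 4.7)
The plan is to deduce all five items from the single group homomorphism $\det : H \to R^*$ and the first isomorphism theorem, together with the standard product formula for subgroups.

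For (i), observe that $\det (H)$ is a subgroup of $R^*$. By Lemma \ref{UnitsGroupR}, $R^* \simeq {\mathbb C}_2$, ${\mathbb C}_4$, or ${\mathbb C}_6$ is cyclic, so any subgroup, including $\det (H)$, is cyclic. For (ii), note that $\ker ( \det \vert _H) = H \cap SL(2,R)$, so the first homomorphism theorem provides an isomorphism $H / [ H \cap SL(2,R)] \simeq \det (H) = \langle \det (h_o) \rangle$ of order $s$. Consequently the cosets of $H \cap SL(2,R)$ in $H$ are exhausted by $[H \cap SL(2,R)] h_o ^i$, $0 \leq i \leq s-1$, which yields $H = [H \cap SL(2,R)] \langle h_o \rangle$.

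For (iii), first observe that $\det ( h_o ^r ) = \det ( h_o) ^r = 1$ forces the order $s$ of $\det (h_o)$ to divide $r$. Next, an element $h_o ^i \in \langle h_o \rangle$ lies in $H \cap SL(2,R)$ exactly when $\det ( h_o) ^i =1$, i.e. exactly when $s$ divides $i$; this gives $[H \cap SL(2,R)] \cap \langle h_o \rangle = \langle h_o ^s \rangle$, whose order is $r/\gcd(r,s) = r/s$, hence isomorphic to ${\mathbb C}_{r/s}$.

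Item (iv) is immediate from the standard subgroup product formula applied to $A = H \cap SL(2,R)$ and $B = \langle h_o \rangle$ inside $H$:
\[
|H| = |AB| = \frac{ |A| \cdot |B|}{|A \cap B|} = \frac{ |H \cap SL(2,R)| \cdot r}{r/s} = s \, |H \cap SL(2,R)|.
\]
Finally, (v) follows because a semi-direct product decomposition $H = [H \cap SL(2,R)] \leftthreetimes \langle h_o \rangle$ is equivalent, given the normality of $H \cap SL(2,R)$ (kernel of $\det \vert _H$) and the product description from (ii), to the triviality of the intersection $[ H \cap SL(2,R)] \cap \langle h_o \rangle$; by (iii) this holds if and only if $r/s =1$, i.e.\ $s=r$. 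There is no real obstacle in this lemma: the only nontrivial input is the cyclicity of $R^*$ from Lemma \ref{UnitsGroupR}, and everything else is a direct application of elementary finite group theory.
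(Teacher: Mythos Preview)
Your proof is correct and follows essentially the same approach as the paper: both use the homomorphism $\det\vert_H$ with kernel $H\cap SL(2,R)$, the cyclicity of $R^*$, and the resulting coset/product structure. The only cosmetic difference is that for (iv) you invoke the product formula $|AB|=|A||B|/|A\cap B|$, whereas the paper writes out the explicit coset decomposition $H=\bigcup_{j=0}^{s-1}[H\cap SL(2,R)]h_o^j$ and checks disjointness directly; these are equivalent one-line arguments.
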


\begin{proof}

(i) The image $\det(H)$ of the group homomorphism $\det : H \rightarrow R^*$ is a subgroup of $R^*$. As far as the units group $R^*$ of the endomorphism ring $R$ of $E$ is cyclic, its subgroup $\det(H)$ is cyclic, as well.

(ii) If $\det(h_o)$ is a generator of the cyclic subgroup $\det(H) < R^*$ then one can represent $H = [ H \cap SL(2,R)] \langle h_o \rangle$. The inclusion $[ H \cap SL(2,R)] \langle h_o \rangle \subseteq H$ is clear by the choice of $h_o \in H$. For the opposite inclusion, note that any $h \in H$ with $\det (h) = \det (h_o)^m$ for some $m \in {\mathbb Z}$ is associated with $h h_o^{-m} \in H \cap SL(2,R)$, so that $h = (h h_o^{-m})h_o^m \in [ H \cap SL(2,R)] \langle h_o \rangle$ and $H \subseteq [ H \cap SL(2,R)] \langle$.

(iii) If $h_o \in H$ is of order $r$ then $h_o^r =I_2$ and $\det(h_o)^r=1$. Therefore the order $s$ of $\det (h_o) \in R^*$ divides $s$. Note that $h_o^s \in [ H \cap SL(2,R)] \cap \langle h_o \rangle$, as far as $\det(h_o^s) = \det(h_o) ^s =1$. Therefore $\langle h_o^s \rangle$ is a subgroup of $[ H \cap SL(2,R)] \cap \langle h_o \rangle$. Conversely, any $h_o^x \in [H \cap SL(2,R)] \cap \langle h_o \rangle$ has $\det(h_o^x) = \det(h_o) ^x =1$, so that $s$ divides $x$ and $h_o^x \in \langle h_o ^s \rangle$. That justifies $[ H \cap SL(2,R)] \cap \langle h_o \rangle \subseteq \langle h_o^s \rangle$ and $[ H \cap SL(2,R)] \cap \langle h_o \rangle = \langle h_o^s \rangle$. The order of $\langle h_o^s \rangle$ and $h_o^s$ is $\frac{r}{s}$, since $s$ divides $r$.

(iv) It suffices to show that
\[
H = \cup _{i=0} ^{s-1} [ H \cap SL(2,R)] h_o^j
\]
is the coset decomposition of $H$ with respect to its normal subgroup $H \cap SL(2,R)$, in order to conclude that the order $|H|$ of $H$ is $s$ times the order  $|H \cap SL(2,R)|$ of $H \cap SL(2,R)$.  The inclusion $H \supseteq \cup _{i=0} ^{s-1} [ H \cap SL(2,R)] h_o^j$ is clear by the choice of $h_o \in H$. According to $H = [ H \cap SL(2,R)] \langle h_o \rangle$, any element of $H$ is of the form $h = gh_o^m$ for some $g \in H \cap SL(2,R)$ and $m \in {\mathbb Z}$. If $m = sq + r_o$ is the division of $m$ by $s$ with residue $0 \leq r_o \leq s-1$ then $h = [ g (h_o^s) ^q] h_o^{r_o} \in [ H \cap SL(2,R)] h_o^{r_o}$, due to $h_o^s \in H \cap SL(2,R)$. Therefore $H \subseteq \cup _{j=0} ^{s-1} [ H \cap SL(2,R)] h_o^j$ and $H = \cup _{j=0} ^{s-1} [ H \cap SL(2,R)] h_o^j$. The cosets $[H \cap SL(2,R)]h_o^{i} $ and $[H \cap SL(2,R)] h_o^j$ are mutually disjoint for any $0 \leq i < j \leq s-1$, because the assumption $g_1 h_{i} = g_2h_o^j$ for $g_1, g_2 \in H \cap SL(2,R)$ implies that $h_o^{j-i} = g_2^{-1} g_1 \in [ H \cap SL(2,R)] \cap \langle h_o \rangle = \langle h_o^s \rangle$. As a result, $s$ divides $0 < j-i < s$, which is an absurd.

(v) According to (iii), the order $s$ of $\det(h_o)$ divides the order $r$ of $h_o$. On the  other hand, $h_o^s = I_2$ exactly when $r$ divides $s$, so that $h_o^s = I_2$ is equivalent to $r=s$. Thus, $r=s$ exactly when
\[
[ H \cap SL(2,R)] \cap \langle h_o \rangle = \{ I_2 \}.
\]
As far as the product of the normal subgroup $H \cap SL(2,R)$ and the subgroup $\langle h_o \rangle$ is the entire $H$, one has a semi-direct product $H = [ H \cap SL(2,R) ] \rtimes \langle h_o \rangle$ if and only if $r=s$.

\end{proof}

\begin{lemma}  \label{StructureH}
Let $H = [ H \cap SL(2,R)] \langle h_o \rangle$ be a finite subgroup of $GL(2,R)$ for $h_o  \in H$ of order $r$ with $\det(H) = \langle \det(h_o) \rangle \simeq {\mathbb C}_s$ and $H \cap SL(2,R)$ be generated by $g_0 = h_o^s, g_1, \ldots , g_t$. Then $H \cap SL(2,R)$, $r$ and
\[
h_o g_i h_o^{-1} \in H \cap SL(2,R) \ \ \mbox{  for all  } \ \ 1 \leq i \leq t
\]
 determine $H$ up to an isomorphism.
\end{lemma}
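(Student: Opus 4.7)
The plan is to present every element of $H$ in a canonical normal form and show that the resulting multiplication is completely encoded by the given data. By Lemma \ref{HKh}, the subgroup $K := H \cap SL(2,R)$ is normal in $H$ with disjoint coset decomposition $H = \bigsqcup_{j=0}^{s-1} Kh_o^{j}$, so every element of $H$ is uniquely of the form $gh_o^{j}$ with $g \in K$ and $0 \le j \le s-1$. The integer $s$ itself is recovered from the data: since $h_o^{s} = g_0 \in K$ and $h_o^{r} = I_2$, one has $s = r / |\langle g_0 \rangle|$, and the order of $g_0$ in $K$ is determined by $K$ together with the distinguished generator $g_0$.

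Next, multiplication in $H$ is determined by the conjugation automorphism $\varphi(k) := h_o k h_o^{-1}$ of $K$ together with the relation $h_o^{s} = g_0$. Explicitly,
$$
(g_1 h_o^{j_1})(g_2 h_o^{j_2}) \;=\; g_1 \, \varphi^{j_1}(g_2) \, g_0^{q} \, h_o^{j},
$$
where $j_1 + j_2 = qs + j$ with $0 \le j < s$ (using that $g_0 = h_o^{s}$ commutes with $h_o$). The automorphism $\varphi$ is determined by its values on the generators of $K$: one has $\varphi(g_0) = h_o \cdot h_o^{s} \cdot h_o^{-1} = g_0$ automatically, while $\varphi(g_i) = h_o g_i h_o^{-1}$ for $1 \le i \le t$ are given by hypothesis. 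Hence the full multiplication table of $H$ can be reconstructed from $K$, $r$, and the prescribed conjugates.

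To conclude, let $H'$ be any finite group built from the same data, i.e., containing $K$ as a normal subgroup with a distinguished element $h'_o$ of order $r$ satisfying $(h'_o)^{s} = g_0$ and inducing on $K$ the same automorphism $\varphi$. The map $g h_o^{j} \mapsto g (h'_o)^{j}$, $0 \le j \le s-1$, is then a bijection respecting the multiplication law displayed above, hence a group isomorphism $H \simeq H'$, which is the desired statement.

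The main technical obstacle is checking that the prescribed multiplication on the set of normal forms really defines a group, i.e., that it is associative and consistent with the wrap-around $h_o^{s} = g_0$. This reduces to the identity $\varphi^{s} = {\rm conj}_{g_0}$ on $K$, which is automatic because $\varphi^{s}(k) = h_o^{s} k h_o^{-s} = g_0 k g_0^{-1}$. In cohomological language, $H$ is a cyclic extension $1 \to K \to H \to {\mathbb C}_s \to 1$ whose factor set $(g_0, \varphi)$ is prescribed by the data, and the required $2$-cocycle condition is automatic since the data arises from an actual subgroup of $GL(2,R)$.
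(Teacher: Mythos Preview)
Your proof is correct and follows essentially the same approach as the paper: both use the coset decomposition $H = \bigcup_{j=0}^{s-1} K h_o^{j}$ from Lemma~\ref{HKh} and argue that the multiplication of two normal forms is determined by the conjugation action of $h_o$ on the generators of $K$ together with $h_o^{s} = g_0$. Your version is somewhat more explicit than the paper's --- you write out the multiplication formula, verify that $s$ is recoverable from $r$ and $|g_0|$, and check the consistency condition $\varphi^{s} = \mathrm{conj}_{g_0}$ --- but the underlying idea is identical.
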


\begin{proof}

By the proof of Lemma \ref{HKh} (iv), $H$ has a coset decomposition
\[
H = \cup _{j=0} ^{s-1} [ H \cap SL(2,R)] h_o^j
 \]
 with respect to its normal subgroup $H \cap SL(2,R)$. Therefore, the group structures of $H \cap SL(2,R)$ and $\langle h_o \rangle \simeq {\mathbb C}_r$, together with the multiplication rule for $h_1 h_o^{i} , h_2 h_o^j \in H$ with $h_1,h_2 \in H \cap SL(2,R)$ and $ 0 \leq i,j \leq s-1$ determine the group $H$ up to an isomorphism. Let us represent $h_1 = g_{i_1} ^{a_1} g_{i_2} ^{a_2} \ldots g _{i_k} ^{a_k}$ and $h_2 = g_{j_1} ^{b_1} g_{j_2} ^{b_2} \ldots g_{j_l} ^{b_l}$ as words in the alphabet $g_0 = h_o^s, g_1, \ldots , g_t$ with some integral exponents $a_p, b_q \in {\mathbb Z}$. (The group $H$ is finite, so that any $g_i$ is of finite order $r_i$ and  one can reduce the exponent of $g_i$ to a residue  modulo $r_i$.) In order to determine the product $( h_1 h_o^{i}) (h_2 h_o^j)$ as an element of $H = \cup _{j=0} ^{s-1} \langle g_0,g_1, \ldots , g_t \rangle h_o^j$, it suffices to specify $g'_i \in H \cap SL(2,R) = \langle g_0, g_1, \ldots , g_t \rangle$ with $h_o g_i = g'_i h_o$ for all $0 \leq i \leq t$. That allows to move gradually $h_o^{i}$ to the end of $(h_1h_o{i})( h_2 h_o^{j})$, producing $h_1 h'_2 h_o ^{i+j} \in [ H \cap SL(2,R)] h_o ^{(i+j) ({\rm mod} s) }$ for an appropriate $h'_2 \in H \cap SL(2,R)$. In other words, the group structures of $H \cap SL(2,R)$ and $\langle h_o \rangle \simeq {\mathbb C}_r$, together with the conjugates $g'_i = h_o g_i h_o^{-1}$ of $g_i$ determine the group multiplication in $H$. Note that $h_o g_{0} h_o ^{-1} = g_{0}$, since $g_{0} = h_o ^s$ commutes with $h_o$. The conjugates $g'_i = h_o g_i h_o ^{-1}$ with $1 \leq i \leq t$ belong to  the normal subgroup $H \cap SL(2,R) \ni g_i$ of $H$ and have the same orders $r_i$ as $g_i$.

\end{proof}

Any finite subgroup $H = [ H \cap SL(2,R)] \langle h_o \rangle$ of $GL(2,R)$ with determinant \\ $\det(H) = \langle \det(h_o) \rangle \simeq {\mathbb C}_s$ has a conjugate
\[
S^{-1}HS = \{ S^{-1} [ H \cap SL(2,R)] S \} \langle S^{-1} h_o S \rangle = [ S^{-1}HS \cap SL(2, {\mathbb C})] \langle S^{-1}h_o S \rangle
\]
with a diagonal matrix $S^{-1}h_oS$. Mote precisely, if $R$ is a subring of the integers ring $\mathcal{O}_{-d}$ of an imaginary quadratic number field ${\mathbb Q} ( \sqrt{-d})$ and $\lambda _1 = \lambda _1 (h_o)$, $\lambda _2 = \lambda _2 ( h_o)$ are the eigenvalues of $h_o$, then there exists a basis
\[
v_1 = \left( \begin{array}{c}
s_{11}  \\
s_{21}
\end{array}  \right), \ \
v_2 = \left( \begin{array}{c}
s_{12}  \\
s_{22}
\end{array}  \right) \ \ \mbox{ of } \ \ {\mathbb C}^2,
\]
consisting of eigenvectors $v_j$ of $h_o$, associated with the eigenvalues $\lambda _j = \lambda _j (h_o)$.  This is due to the finite order of $h_o$, because the Jordan block
\[
J = \left( \begin{array}{cc}
\lambda _1  &  1  \\
0  & \lambda _1
\end{array}  \right) \ \ \mbox{  with  } \ \ \lambda _1 \in {\mathbb C}^*
\]
is of infinite order in $GL(2,{\mathbb C})$. The matrix $S = (s_{ij}) _{i,j=1}^2$ with columns $v_1,v_2$ is non-singular and its entries belong to the extension ${\mathbb Q} ( \sqrt{-d}, \lambda (h_o)) = {\mathbb Q} ( \sqrt{-d}, \lambda _2(h_o))$ of ${\mathbb Q} ( \sqrt{-d})$ by some of the eigenvalues of $h_o$. Making use of the classification of $h_o \in GL(2,R)$ of finite order $r$ and $\det(h_o) \in R^*$ of order $s$, done in section 2, one determines explicitly the field $F_{-d} ^{(s,r)} = {\mathbb Q} ( \sqrt{-d}, \lambda _1 (h_o))$, obtained from ${\mathbb Q} ( \sqrt{-d})$ by adjoining an eigenvalue $\lambda _1 (h_o)$ of $h_o \in H$. The group
\[
S^{-1}HS = [ S^{-1}HS \cap SL(2, {\mathbb C})] \langle S^{-1} h_o S \rangle
 \]
 has a diagonal generator $D_o = S^{-1} h_o S$ and the conjugates
 \[
 (S^{-1}h_oS)(S^{-1}g_iS)(S^{-1}h_oS)^{-1} = S^{-1} ( h_o g_i h_o ^{-1} )S
 \]
  are easier to be computed.

   The next lemma collects the fields $F_{-d} ^{(s,r)}$.

\begin{lemma}    \label{DiagonalizationField}
Let $H = [ H \cap SL(2,R)] \langle h_o \rangle$ be a finite subgroup of $GL(2,R)$ with $h_o \in H$ of order $r$, $\det(h_o) \in R^*$ of order $s$ and $F_{-d} ^{(s,r)}$ be the number field
\[
F_{-d} ^{(s,r)} = \begin{cases}
{\mathbb Q} ( \sqrt{-d}) &  \text{ for $s=r=2$,}  \\
{\mathbb Q} (i)  & \text{ for $s \in \{ 2, 4 \}$, $r=4$,}  \\
{\mathbb Q} ( \sqrt{-3})  & \text{ for $(s,r) = (2,6)$  or $s \in \{ 3, 6 \}$,}     \\
{\mathbb Q} ( \sqrt{2},i) & \text{ for $s \in \{ 2, 4 \}$, $r=8$,}   \\
{\mathbb Q} ( \sqrt{3},i)  & \text{ for $s=2$, $r=12$.}
\end{cases}.
\]
Then there exists a matrix $S \in GL(2, F_{-d} ^{(s,r)} )$ such that
\[
D_o = S^{-1} h_o S = \left( \begin{array}{cc}
\lambda _1 (h_o)  &  0  \\
0  &  \lambda _2 (h_o)
\end{array}  \right)
\]
is diagonal and
\[
H^{o} = S^{-1}H S = [ S^{-1} H S \cap SL(2, F_{-d} ^{(s,r)} )] \langle D_o \rangle
\]
is a subgroup of $GL(2, F_{-d}^{(s,r)} )$, isomorphic to $H$.
\end{lemma}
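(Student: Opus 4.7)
The plan is to build the matrix $S$ from a basis of eigenvectors of $h_o$ and then to verify, case by case, that such a basis can be chosen with coordinates in the field $F_{-d}^{(s,r)}$ listed in the table. By Proposition~\ref{EigenvaluesFiniteOrder}, the element $h_o \in GL(2,R)$ of finite order $r$ is diagonalizable over ${\mathbb C}$, so there exists a basis $v_1, v_2 \in {\mathbb C}^2$ of eigenvectors with $h_o v_j = \lambda _j (h_o) v_j$. Since $h_o$ has entries in $R \subseteq \mathcal{O}_{-d} \subseteq {\mathbb Q}(\sqrt{-d})$ and $v_j$ is a nontrivial element of $\ker (h_o - \lambda _j(h_o) I_2)$, Gaussian elimination over the field ${\mathbb Q}(\sqrt{-d}, \lambda _j(h_o))$ shows that $v_j$ can be taken with coordinates there.

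Next I would reduce the two potentially different fields ${\mathbb Q}(\sqrt{-d}, \lambda _j(h_o))$ to a single one. The product relation $\lambda _1(h_o) \lambda _2(h_o) = \det(h_o) \in R^* \subseteq {\mathbb Q}(\sqrt{-d})$ gives $\lambda _2(h_o) = \det(h_o) \lambda _1(h_o)^{-1}$, so $\lambda _2(h_o) \in L := {\mathbb Q}(\sqrt{-d}, \lambda _1(h_o))$; consequently both eigenvectors are defined over $L$ and the matrix $S$ with columns $v_1, v_2$ lies in $GL(2,L)$ and diagonalizes $h_o$. Conjugation then automatically produces $H^{o} = S^{-1}HS \subseteq GL(2, L)$ and a diagonal generator $D_o = S^{-1} h_o S$.

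The remaining task is to identify $L$ with $F_{-d}^{(s,r)}$ for each admissible pair $(s,r)$. This is a scan of Propositions~\ref{ListK}, \ref{ListS2}, \ref{ListS4+}, \ref{ListS4-}, \ref{ListS6+}, \ref{ListS6-}, \ref{ListS3+}, \ref{ListS3-}, which determine both the eigenvalue $\lambda _1(h_o)$ and, via the existence of an element $h_o$ with the prescribed $\det(h_o)$, the admissible rings $R$ (hence the available values of $\sqrt{-d}$). For instance, if $s = r = 2$ then $\lambda _1(h_o) = -1$ by Proposition~\ref{ListS2}(i) and $L = {\mathbb Q}(\sqrt{-d})$; if $s = 2$, $r = 4$, then Proposition~\ref{ListS2}(iv) forces $R = {\mathbb Z}[i]$ and $\lambda _1(h_o) \in \{ \pm i \}$, so $L = {\mathbb Q}(i)$; if $s \in \{2,4\}$ and $r=8$, the eigenvalue is a primitive eighth root of unity $e^{\pm \pi i/4}$ and $R \in \{ \mathcal{O}_{-2}, {\mathbb Z}[i] \}$, giving $L = {\mathbb Q}(\sqrt{-d}, \sqrt{2}, i) = {\mathbb Q}(\sqrt{2}, i)$ in both cases; if $s = 2$ and $r = 12$, then $R = {\mathbb Z}[i]$ and $\lambda _1(h_o)$ is a primitive twelfth root of unity, yielding ${\mathbb Q}(i, \sqrt{3})$. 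The $s \in \{3,6\}$ rows force $R = \mathcal{O}_{-3}$ and the eigenvalues are sixth roots of unity, all lying in ${\mathbb Q}(\sqrt{-3})$.

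The main obstacle is precisely this case analysis: although each computation is elementary, one must match every $(s,r)$-row of the table against the corresponding proposition from Section~2, keep track of which rings $R = R_{-d,f}$ admit a generator $h_o$ with $\det(h_o)$ of order $s$, and verify in the mixed cases (e.g.\ $r=8$ with the two admissible rings $\mathcal{O}_{-2}$ and ${\mathbb Z}[i]$, or $r=6$ with $R \in \{\mathcal{O}_{-3}, R_{-3,2}\}$) that the resulting fields coincide. Once this bookkeeping is done, the conclusion $H^{o} = [S^{-1}HS \cap SL(2, F_{-d}^{(s,r)})]\langle D_o \rangle$ is immediate from Lemma~\ref{HKh}(ii) applied to $H^{o}$ together with the fact that conjugation by $S$ is a group isomorphism.
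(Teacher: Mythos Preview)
Your approach is exactly the one the paper indicates: the paragraph preceding the lemma explains that the diagonalizing matrix $S$ has columns the eigenvectors of $h_o$, hence entries in ${\mathbb Q}(\sqrt{-d},\lambda_1(h_o))$, and then states that ``the next lemma collects the fields $F_{-d}^{(s,r)}$'' by running through the classification of Section~2. Your write-up simply makes this bookkeeping explicit, so method and structure coincide with the paper.

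There is, however, a slip in your case analysis that you should be aware of. You assert that for $s\in\{3,6\}$ ``the eigenvalues are sixth roots of unity, all lying in ${\mathbb Q}(\sqrt{-3})$.'' This is false for $(s,r)=(3,12)$: by Proposition~\ref{ListS3+}(i) the eigenvalues are $e^{5\pi i/6}$ and $e^{-\pi i/6}$, primitive twelfth roots of unity, and these do \emph{not} lie in ${\mathbb Q}(\sqrt{-3})$ but only in ${\mathbb Q}(\sqrt{3},i)$. Since $D_o=S^{-1}h_oS$ is supposed to have these numbers on its diagonal while $S$ and $h_o$ have entries in $F_{-d}^{(s,r)}$, the listed field ${\mathbb Q}(\sqrt{-3})$ is too small in this case. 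In other words, your verification breaks down here, and in fact the table in the lemma appears to omit or mishandle the pair $(s,r)=(3,12)$; the correct field for that pair is ${\mathbb Q}(\sqrt{3},i)$, as in the $s=2$, $r=12$ row. The rest of your case-by-case identification is accurate.
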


Summarizing the results of section 2, one obtains also the following

\begin{corollary}   \label{RatioonGLEigenvalues}
If $h_o \in GL(2,R) \setminus SL(2,R)$ is of order $r$ with $\det(h_o) \in R^*$ of order $s$ and eigenvalues $\lambda _1 (h_o), \lambda _2 (h_o)$, then
\[
\frac{\lambda _1 (h_o)}{\lambda _2(h_o)} \in \left \{ \pm 1, \ \  \pm i, \ \   e^{ \pm \frac{ 2 \pi i}{3}}, \ \  e^{ \pm \frac{ \pi i}{3}} \right \}.
\]
More precisely,

\[
\mbox{ (i)  } \ \  \frac{\lambda _1 (h_o)}{\lambda _2 (h_o)} =1 \ \ \mbox{   exactly when  } \ \
h_o \in \left \{ \pm i I_2, \ \  e^{ \pm \frac{2 \pi i}{3}} I_2, \ \  e^{ \pm \frac{ \pi i}{3}} I_2 \right \}
\]
is a scalar matrix;

\[
\mbox{ (ii)  } \ \ \frac{ \lambda _1 (h_o)}{\lambda _2 (h_o)} = -1 \ \ \mbox{  for}
\]
\[
\mbox{ (a)  } \ \  \lambda _1 (h_o) = 1, \ \ \lambda _2 (h_o) = -1 \ \ \mbox{  and an arbitrary } \ \  R = R_{-d,f};
\]
\[
\mbox{ (b) } \ \  \lambda _1 (h_o) = e^{ \pm \frac{ 3 \pi i}{4}}, \ \ \lambda _2 (h_o) = e^{ \mp \frac{ \pi i}{4}}, \ \ R = {\mathbb Z}[i], \ \ s=4;
\]
\[
\mbox{  (c)  }  \ \ \lambda _1 (h_o) = e^{ \pm \frac{ 5 \pi i}{6}}, \ \  \lambda _2 (h_o) = e^{ \mp \frac{ \pi i}{6}}, \ \ R = \mathcal{O}_{-3}, \ \
s=3 \
\]
\[
\mbox{  (d)  } \ \  \lambda _1 (h_o) = e^{ \pm \frac{ 2 \pi i}{3}}, \ \ \lambda _2 (h_o) = e^{ \mp \frac{ \pi i}{3}}, R = \mathcal{O}_{-3}, \ \ s=6.
\]

\[
\mbox{  (iii) }   \ \ \frac{\lambda _1 (h_o)}{\lambda _2(h_o)} = \pm i \ \  \mbox{ for  }
\]
\[
\mbox{ (a)  } \ \  \lambda _1 (h_o) = e^{ \pm \frac{ 3 \pi i}{4}}, \ \ \lambda _2 (h_o) = e^{ \pm \frac{ \pi i}{4}}, \ \ R = \mathcal{O}_{-2}, \ \  s=2;
\]
\[
\mbox{  (b)  }  \ \ \{ \lambda _1 ( h_o ), \lambda _2 (h_o) \} = \{ \pm i, \pm 1 \} \ \ \mbox{  or } \{ \pm i, \mp 1 \}  \ \ \mbox{ with   } \ \
 R = {\mathbb Z}[i], \ \  s=4.
\]

\[
\mbox{  (iv)    } \ \ \frac{ \lambda _1 (h_o)} {\lambda _2 (h_o)} = e^{ \pm \frac{ 2 \pi i}{3}} \ \ \mbox{  for  }
\]
\[
\mbox{  (a)  } \ \ \lambda _1 (h_o) = e^{ \pm \frac{ 5 \pi i}{6}}, \ \ \lambda _2 (h_o) = e^{ \pm \frac{ \pi i}{6}}, \ \  R = {\mathbb Z}[i], \ \
s=2;
\]
\[
\mbox{  (b)  } \ \ \lambda _1 (h_o) = e^{ \pm \frac{2 \pi i}{3}}, \ \  \lambda _2 (h_o) = 1, \ \ R = \mathcal{O}_{-3}, \ \  s=3;
\]
\[
\mbox{  (c)  } \ \ \lambda _1 (h_o) = e^{ \pm \frac{\pi i}{3}}, \ \ \lambda _2 (h_o) =-1, \ \ R = \mathcal{O}_{-3}, \ \ s=3.
\]

\[
\mbox{  (v)   } \ \ \frac{ \lambda _1 (h_o)}{\lambda _2 (h_o)} = e^{ \pm \frac{ \pi i}{3}} \ \ \mbox { for  }
\]
\[
\mbox{  (a)  } \ \ \lambda _1(h_o) = e^{ \pm \frac{2 \pi i}{3}}, \lambda _2 (h_o) = e^{ \pm \frac{ \pi i}{3}}, R=\mathcal{O}_{-3}, \ \ s=2;
\]
\[
\mbox{  (b) } \ \ \lambda _1 (h_o) = \varepsilon e^{ \eta \frac{ \pi i}{3}}, \ \ \lambda _2 (h_o) = \varepsilon, \ \ R = \mathcal{O}_{-3}, \ \
s =6, \ \ \varepsilon, \eta \in \{ \pm 1 \}.
\]
\end{corollary}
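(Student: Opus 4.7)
The plan is to recognize that this corollary is a reorganization of the eigenvalue data compiled in Propositions \ref{ListS2} through \ref{ListS3-}, now sorted by the value of the ratio $\rho := \lambda_1(h_o)/\lambda_2(h_o)$ rather than by $\det(h_o)$. The key initial remark is that $\rho = \lambda_1(h_o)^2 \det(h_o)^{-1}$ is a product of two roots of unity, hence itself a root of unity. Combining the bound on the order of $\lambda_1(h_o)$ from Proposition \ref{EigenvaluesFiniteOrder} with the order of $\det(h_o) \in R^*$ from Lemma \ref{UnitsGroupR}, one sees that $\rho$ has order at most $12$; inspection of the pairs listed in Section 2 then restricts $\rho$ to the seven values $\{1,\,-1,\,\pm i,\,e^{\pm 2\pi i/3},\,e^{\pm \pi i/3}\}$, which are precisely the five branches (i)--(v) of the statement.

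I would next dispose of branch (i) conceptually. The identity $\rho = 1$ is equivalent to $\lambda_1(h_o) = \lambda_2(h_o)$; since Proposition \ref{EigenvaluesFiniteOrder} ensures that $h_o$ is diagonalizable, coincidence of eigenvalues forces $h_o = \lambda I_2$ to be scalar. The hypothesis $\det(h_o) = \lambda^2 \neq 1$ then excludes $\lambda = \pm 1$, leaving exactly $\lambda \in \{\pm i,\,e^{\pm 2\pi i/3},\,e^{\pm \pi i/3}\}$, which are the six scalar matrices listed in branch (i).

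For branches (ii)--(v) I would traverse Propositions \ref{ListS2}, \ref{ListS4+}, \ref{ListS4-}, \ref{ListS6+}, \ref{ListS6-}, \ref{ListS3+}, \ref{ListS3-} in order, and for each listed non-scalar eigenvalue pair $(\lambda_1(h_o),\lambda_2(h_o))$ I compute $\rho$ via the elementary identity $e^{i\alpha}/e^{i\beta} = e^{i(\alpha-\beta)}$. Regrouping by the value of $\rho$ reconstructs branches (ii)--(v). For instance, the universal pair $(\lambda_1,\lambda_2)=(-1,1)$ of Proposition \ref{ListS2}(i) feeds (ii)(a); the $\mathcal{O}_{-2}$-pairs of Proposition \ref{ListS2}(iii) feed (iii)(a); the Gaussian pairs of Proposition \ref{ListS2}(iv) feed (iv)(a); the Eisenstein pairs of Proposition \ref{ListS2}(v) feed (v)(a); and similarly for the pairs arising at the other determinant values $\pm i$ and $e^{\pm 2\pi i/3}$, $e^{\pm \pi i/3}$.

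The main obstacle is purely bookkeeping, not mathematical content: one must check that every non-scalar eigenvalue pair arising in Propositions \ref{ListS2}--\ref{ListS3-} is captured by exactly one subcase of the corollary, and conversely that each subcase (a),(b),(c),(d) of branches (ii)--(v) is realized by an explicit $h_o$. The latter is already furnished by the representatives $g(\pm\sqrt{-d})$, $g_{\pm i}(\cdot)$ and the diagonal matrices constructed in the proofs of those propositions, so the verification reduces to a finite and straightforward check of about two dozen exponential identities.
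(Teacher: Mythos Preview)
Your proposal is correct and matches the paper's approach exactly: the paper presents this corollary with the single phrase ``Summarizing the results of section 2, one obtains also the following'' and gives no further argument, treating it as a pure reorganization of the eigenvalue pairs from Propositions \ref{ListS2}--\ref{ListS3-} according to the ratio $\lambda_1/\lambda_2$. Your write-up in fact supplies more detail than the paper does, including the conceptual treatment of the scalar case (i) and the explicit bookkeeping plan for (ii)--(v).
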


\begin{proposition}   \label{HC1}
Let $H$ be a finite subgroup of $GL(2,R)$,
\[
H \cap SL(2,R) = \{ I_2 \}
\]
and $h_o \in H$ be an element of order $r$ with $\det(H) = \langle \det( h_o) \rangle \simeq {\mathbb C}_s$ and eigenvalues $\lambda _1 (h_o)$,
 $\lambda _2(h_o)$. Then $r=s$ and $H$ is isomorphic to $H_{C1} (j) \simeq {\mathbb C}_{s_j}$ for some $1 \leq j \leq 4$, where
\[
H_{C1} (1) = \langle h_o \rangle \simeq {\mathbb C}_2 \ \ \mbox{   with  } \ \ \lambda _1(h_o) =1, \ \ \lambda _2 (h_o) =-1,
\]
\[
H_{C1} (2) = \langle h_o \rangle \simeq {\mathbb C}_3 \ \ \mbox{  with } \ \  R = \mathcal{O}_{-3}, \ \ h_0 = e^{ - \frac{2 \pi i}{3}} I_2 \ \ \mbox{  or  } \ \ \lambda _1 (h_o) = e^{\frac{ 2 \pi i}{3}}, \ \ \lambda _2 (h_o) =1,
\]
\[
H_{C1} (3) = \langle h_o \rangle \simeq {\mathbb C}_4 \ \ \mbox{ with } \ \ R = {\mathbb Z}[i], \ \  \{ \lambda _1 (h_o), \lambda _2 (h_o) \} = \{ i, 1 \} \ \ \mbox{  or  } \ \ \{ -i, -1 \},
\]
\[
H_{C1} (4) = \langle h_o \rangle \simeq {\mathbb C}_6 \ \ \mbox{   with } \ \ R = \mathcal{O}_{-3},
\]
\[
\{ \lambda _1 (h_o), \lambda _2 (h_o) \} =  \left \{ e^{\frac{ \pi i}{3}}, 1 \right \}, \ \ \left \{ e^{ - \frac{ 2 \pi i}{3}}, -1 \right \} \ \ \mbox{  or  }
\ \ \left \{ e^{\frac{ 2 \pi i}{3}}, e^{ - \frac{ \pi i}{3}} \right \}.
\]
\end{proposition}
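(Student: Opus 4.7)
The plan is to first reduce the group-theoretic data to a single cyclic generator, then exhaust the possibilities for that generator by invoking the trace/eigenvalue classifications of Section~2.

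First I would apply Lemma~\ref{HKh}. By part~(ii), $H = [H \cap SL(2,R)]\langle h_o\rangle = \{I_2\}\langle h_o\rangle = \langle h_o\rangle$ is cyclic of order $r$. By part~(iii), $[H \cap SL(2,R)] \cap \langle h_o\rangle = \langle h_o^s\rangle$; since the left-hand side is trivial by hypothesis, $h_o^s = I_2$, so $r \mid s$. Combined with $s \mid r$ (Lemma~\ref{HKh}(iii)), this gives $r = s$. Equivalently, $\det : \langle h_o\rangle \to R^*$ is injective, so $\det(h_o)$ is a primitive $r$-th root of unity in $R^*$.

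Next I would combine this with Lemma~\ref{UnitsGroupR}. Since $\det(h_o) \in R^*$ has order $r$, and since $H \neq \{I_2\}$ (else the claim is vacuous), the possible values of $r$ are:
\begin{itemize}
\item $r = 2$ if $R$ is arbitrary (as $-1 \in R^*$ for every $R$);
\item $r \in \{2,4\}$ if $R = \mathbb{Z}[i]$;
\item $r \in \{2,3,6\}$ if $R = \mathcal{O}_{-3}$.
\end{itemize}
No other values of $r$ occur. Within each of these subcases, I would invoke the corresponding proposition of Section~2 (Proposition~\ref{ListS2} for $\det(h_o) = -1$; Propositions~\ref{ListS4+}, \ref{ListS4-} for $\det(h_o) = \pm i$; Propositions~\ref{ListS3+}, \ref{ListS3-}, \ref{ListS6+}, \ref{ListS6-} for $R = \mathcal{O}_{-3}$), and discard every eigenvalue pair whose associated order exceeds~$s$. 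For instance, in Proposition~\ref{ListS2} only part~(i) survives, giving $H_{C1}(1)$; in Propositions~\ref{ListS4\pm} only parts~(ii) and (iii) survive (order $8$ being rejected), giving $H_{C1}(3)$; in Propositions~\ref{ListS3\pm} only the order-$3$ rows give $H_{C1}(2)$, and the order-$6$ rows of Propositions~\ref{ListS6\pm} give $H_{C1}(4)$.

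The main obstacle is merely book-keeping: one must verify that the union of the eigenvalue pairs retained in each case coincides, up to a swap $\lambda_1 \leftrightarrow \lambda_2$ and to replacing the generator $h_o$ by another generator $h_o^k$ of $\langle h_o\rangle$ with $\gcd(k,r) = 1$, with the lists appearing in the statement of $H_{C1}(j)$. This is a finite check: a surviving pair $\{\lambda_1,\lambda_2\}$ with $\det = \lambda_1\lambda_2$ of order $r$ arising from a determinant $\zeta_{-d}^l$ (for $l$ coprime to $r$) is conjugated into the canonical normal form via $h_o \mapsto h_o^{l^{-1} \bmod r}$, which transports $\{\lambda_1,\lambda_2\}$ to one of the explicit representative pairs listed in $H_{C1}(j)$. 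Since cyclic groups of the same order are isomorphic, this identifies $H$ with exactly one of $H_{C1}(1), \ldots, H_{C1}(4)$.
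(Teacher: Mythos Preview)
Your proposal is correct and follows essentially the same approach as the paper: both use Lemma~\ref{HKh}(ii)--(iii) to conclude that $H=\langle h_o\rangle$ is cyclic with $r=s$, then appeal to the Section~2 classification of finite-order elements with a given determinant to read off the eigenvalue pairs. The paper's version is terser---it simply normalizes $\det(h_o)=e^{2\pi i/s}$ by replacing $h_o$ with another generator of $\langle h_o\rangle$, which is exactly your $h_o\mapsto h_o^{l^{-1}\bmod r}$ step---but the logical skeleton is identical.
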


\begin{proof}

By Lemma \ref{HKh} (ii), the group $H = \langle h_o \rangle \simeq {\mathbb C}_r$ is cyclic and generated by any $h_o \in H$, whose determinant $\det(h_o)$ generates $\det(H) = \langle \det(h_o) \rangle$. Moreover, Lemma \ref{HKh} (iii) specifies that $ \{ I_2 \} = [ H \cap SL(2,R)] \cap \langle h_o \rangle = \langle h_o^s \rangle$ or the order $r$ of $h_o$ coincides with the order $s$ of $\det(h_o)$.  For $s \in \{ 3,4,6 \}$ one can assume that $\det(h_o) = e^{ \frac{ 2 \pi i}{3}}$, since the  generators of $\det(H) = \langle \det(h_o) \rangle \simeq {\mathbb C}_s$ are $e^{ \frac{ 2 \pi i}{s}}$ and $e^{ - \frac{ 2 \pi i}{s}}$. Making use of the classification of the elements $h_o \in GL(2,R)$ of order $s$ with $\det(h_o ) = e^{ \frac{ 2 \pi i}{s}}$, done in section 2, one concludes that $H \simeq H_{C1} (j)$ for some $1 \leq j \leq 4$.

\end{proof}

\begin{proposition}  \label{HC2}
Let $H$ be a finite subgroup of $GL(2,R)$,
\[
H \cap SL(2,R) = \langle - I_2 \rangle \simeq {\mathbb C}_2
\]
and $h_o \in H$ be an element of order $r$ with $\det(H) = \langle \det(h_o) \rangle \simeq {\mathbb C}_s$ and eigenvalues  $\lambda _1(h_o)$, $\lambda _2 (h_o)$. Then $H$ is isomorphic to $H_{C2}(i)$ for some $1 \leq i \leq 6$, where
\[
H_{C2}(1) = \langle i I_2 \rangle \simeq {\mathbb C}_4 \ \ \mbox{  with } \ \ R = {\mathbb Z}[i],
\]
\[
H_{C_2}(2) = \langle - I_2 \rangle \times \langle h_o \rangle \simeq {\mathbb C}_2 \times {\mathbb C}_2 \ \ \mbox{ with  }
\ \   \lambda _1 (h_o) = 1, \ \ \lambda _2 (h_o) = -1,
\]
\[
H_{C2}(3) = \langle h_o \rangle \simeq {\mathbb C}_6 \ \ \mbox{ with } \ \ R = \mathcal{O}_{-3}, \ \  h_o = e^{ \frac{ \pi i}{3}} I_2 \ \ \mbox{ or } \ \
\lambda _1 (h_o) = e^{ - \frac{ \pi i}{3}}, \ \ \lambda _2 (h_o) =-1,
\]
\[
H_{C2}(4) = \langle h_o \rangle \simeq {\mathbb C}_8 \ \ \mbox{  with } \ \  R = {\mathbb Z}[i], \ \ \lambda _1(h_o) = e^{ \frac{ 3 \pi i}{4}}, \ \
\lambda _2 (h_o) = e^{ - \frac{ \pi i}{4}},
\]
\[
H_{C2}(5) = \langle -I_2 \rangle \times \langle h_o \rangle \simeq {\mathbb C}_2 \times {\mathbb C}_4 \ \ \mbox{  with } \ \
R = {\mathbb Z}[i], \ \  \lambda _1 (h_o) =i, \ \ \lambda _2 (h_o) =1,
\]
\[
H_{C2}(6) = \langle h_o \rangle \simeq {\mathbb C}_8 \ \ \mbox{  with  } \ \ R = {\mathbb Z}[i], \ \ \lambda _1 (h_o) = e^{\frac{ 3 \pi i}{4}}, \ \
\lambda _2 (h_o) = e^{ - \frac{ \pi i}{4}},
\]
\[
H_{C2} (7) = \langle - I_2 \rangle \times \langle h_o \rangle \simeq {\mathbb C}_2 \times {\mathbb C}_6 \ \ \mbox{ with } \ \
R = \mathcal{O}_{-3},
\]
\begin{equation}    \label{EigenvaluesListHC26}
\left \{ \lambda _1 (h_o),  \lambda _2 (h_o) \right \} = \left \{ e^{ \frac{ 2 \pi i}{3}}, e^{ - \frac{ \pi i}{3}} \right \}, \ \
\left \{ e^{ \frac{ \pi i}{3}}, 1 \right \} \ \  \mbox{ or } \ \ \left \{ e^{ - \frac{ 2 \pi i}{3}}, -1 \right \}.
\end{equation}

\end{proposition}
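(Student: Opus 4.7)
The plan is to apply Lemma \ref{HKh} to write $H=\langle -I_2\rangle\langle h_o\rangle$, to observe that there are only two possible fusion patterns between $\langle -I_2\rangle$ and $\langle h_o\rangle$, and then to run through the classifications of finite-order $h_o\in GL(2,R)$ with fixed $\det(h_o)$ provided by Propositions \ref{ListS2}--\ref{ListS3-} in order to read off each $H_{C2}(i)$.

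First, Lemma \ref{HKh}(iii) gives $\langle -I_2\rangle\cap\langle h_o\rangle=\langle h_o^s\rangle$, a subgroup of $\langle -I_2\rangle\simeq {\mathbb C}_2$. So either $r=s$ (i.e.\ $h_o^s=I_2$, equivalently $-I_2\not\in\langle h_o\rangle$), in which case $H=\langle -I_2\rangle\times\langle h_o\rangle\simeq{\mathbb C}_2\times{\mathbb C}_s$ since $-I_2$ is central, or $r=2s$ (i.e.\ $h_o^s=-I_2$), in which case $\langle -I_2\rangle\subset\langle h_o\rangle$ and hence $H=\langle h_o\rangle\simeq{\mathbb C}_{2s}$. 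Moreover, since $H\not\subset SL(2,R)$ (otherwise $\det(h_o)=1$ and $H\cap SL(2,R)=H$ would contain $\langle h_o\rangle\supseteq\{I_2,-I_2\}$, which is compatible but forces $h_o=-I_2$, yielding the trivial case with $s=1$ excluded by the hypothesis $h_o\notin H\cap SL(2,R)$ in the enumeration), we have $s\ge 2$, and by Lemma \ref{UnitsGroupR} the admissible values are $s\in\{2\}$ generically, $s\in\{2,4\}$ for $R={\mathbb Z}[i]$, and $s\in\{2,3,6\}$ for $R=\mathcal{O}_{-3}$.

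Next, for each admissible $s$ I will substitute the list of finite-order $h_o\in GL(2,R)$ with $\det(h_o)$ a primitive $s$-th root of unity, and compute $h_o^s$ from the eigenvalues $\lambda_j(h_o)$ to decide between $H\simeq{\mathbb C}_{2s}$ and $H\simeq{\mathbb C}_2\times{\mathbb C}_s$. Concretely: Proposition \ref{ListS2} for $s=2$ produces $h_o$ of order $2,6,8$ or $12$; the order-$2$ case with eigenvalues $\{1,-1\}$ gives $H_{C2}(2)$ ($r=s=2$, $h_o^2=I_2$), while $h_o=iI_2$ (from the analysis over ${\mathbb Z}[i]$) has $h_o^2=-I_2$ and gives $H_{C2}(1)\simeq{\mathbb C}_4$. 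Proposition \ref{ListS3+}--\ref{ListS3-} for $s=3$ produces $h_o$ of order $3,6,12$; each order-$6$ case satisfies $h_o^3=-I_2$, yielding $H_{C2}(3)\simeq{\mathbb C}_6$. Propositions \ref{ListS4+}--\ref{ListS4-} for $s=4$ yield $r=4$ or $r=8$; order-$8$ candidates with $h_o^4=-I_2$ produce $H_{C2}(4)$ and $H_{C2}(6)$ (the two $\det=\pm i$ branches), while order-$4$ candidates with eigenvalues $\{\pm i,\pm 1\}$ (excluding the scalar $\pm iI_2$ already in $H_{C2}(1)$) satisfy $h_o^4=I_2$ and produce $H_{C2}(5)$. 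Propositions \ref{ListS6+}--\ref{ListS6-} for $s=6$ force $r=6=s$, and a direct check on the three eigenvalue pairs of Propositions \ref{ListS6+}(i)--(iii) (and the $\det=e^{-\pi i/3}$ mirror) shows $h_o^3\ne -I_2$ in each case, so these assemble to $H_{C2}(7)\simeq{\mathbb C}_2\times{\mathbb C}_6$ as stated in (\ref{EigenvaluesListHC26}).

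The only real obstacle is bookkeeping: verifying case-by-case that $h_o^s=\pm I_2$ in accordance with the listed isomorphism type, and checking that different choices of generator $h_o$ of a given $\det(H)$ lead to conjugate (hence isomorphic) subgroups $H$, so that no spurious extra cases appear. The arithmetic is mechanical once the eigenvalue tables of Section~2 are in hand: each $h_o^s$ is obtained by raising the eigenvalues to the $s$-th power, and the two dichotomies ``$h_o^s=I_2$'' versus ``$h_o^s=-I_2$'' partition the list into precisely the seven items $H_{C2}(1),\dots,H_{C2}(7)$.
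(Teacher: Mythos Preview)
Your plan is correct and follows essentially the same route as the paper's own proof: invoke Lemma~\ref{HKh}(iii) to force $h_o^s\in\langle -I_2\rangle$, split into the two cases $h_o^s=I_2$ (giving $H\simeq{\mathbb C}_2\times{\mathbb C}_s$) versus $h_o^s=-I_2$ (giving $H=\langle h_o\rangle\simeq{\mathbb C}_{2s}$), and then run through $s\in\{2,3,4,6\}$ using the eigenvalue tables of Propositions~\ref{ListS2}--\ref{ListS3-}. A couple of small points to tighten when you write it out: for $s=2$ your list of orders from Proposition~\ref{ListS2} should also include $r=4$ (the scalar $\pm iI_2$), and the orders $6,8,12$ are excluded automatically by $r\in\{s,2s\}=\{2,4\}$, not by inspection; for $s=3$ you should also record that the order-$3$ possibilities give $H\simeq{\mathbb C}_2\times{\mathbb C}_3\simeq{\mathbb C}_6$, which the paper absorbs into $H_{C2}(3)$ by passing to the generator $-h_o$; and for $s=6$ the check ``$h_o^3\ne -I_2$'' is vacuous since $\det(h_o^3)=e^{\pi i}=-1\ne 1=\det(-I_2)$, so $r=s=6$ already forces the ${\mathbb C}_2\times{\mathbb C}_6$ outcome.
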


\begin{proof}

By Lemma \ref{HKh} (iii), one has $h_o^s \in H \cap SL(2,R) = \langle - I_2 \rangle$ for some $s \in \{ 2, 3, 4, 6 \}$.

If $h_o^s =I_2$ then $s=r$ and
\[
H = \langle -I_2 \rangle \times \langle h_o \rangle \simeq {\mathbb C}_2 \times {\mathbb C}_s
  \]
 is a direct product, as far as the scalar matrix $ - I_2$ commutes with $h_o$. When  $h_o$ is of odd order $s=3$, its opposite matrix $- h_o \in H$ is of order $6$ and $H = \langle -h_o \rangle \simeq {\mathbb C}_6$. Without loss of generality, $h_1 := - h_o$ has $\det(h_1) = e^{\frac{ 2 \pi i}{3}}$ and Proposition \ref{ListS3+} specifies that either $h_1 = e^{\frac{ \pi i}{3}} I_2$ or $\lambda _1 (h_1) = e^{ - \frac{\pi i}{3}}$, $\lambda _2(h_o) = -1$.
 For $s=2$ the group $H = \langle - I_2 \rangle \times \langle h_o \rangle = H_{C2} (2) \simeq {\mathbb C}_2 \times {\mathbb C}_2$, where $h_o \in H$ has eigenvalues $\lambda _1 (h_o) =1$, $\lambda _2 (h_o) = -1$. The case $s=4$ occurs only for $R = {\mathbb Z}[i]$. Assuming $\det(h_o) =i$, one gets $\lambda _1(h_o) = \varepsilon i$, $\lambda _2 (h_o) = \varepsilon$ for some $\varepsilon \in \{ \pm 1 \}$ by Proposition \ref{ListS4+}. Since $- I_2 \in H$, one can replace $h_o$ by $-h_o$ and reduce to the case of $\varepsilon =1$. If $s=6$, then Proposition \ref{ListS6+} provides (\ref{EigenvaluesListHC26}).

In the case of $h_o^s = - I_2$, the intersection $\langle h_o \rangle SL(2,R) = \langle - I_2 \rangle = H \cap SL(2,R)$ and the group
\[
H = \langle h_o \rangle \simeq {\mathbb C}_{2s}
\]
 is cyclic.  More precisely, for $s=2$ Proposition \ref{ListS2} implies that $h_o = \pm i I_2$ and $H \simeq H_{C2} (1)$. If $s=3$ and $\det(h_o) = e^{\frac{ 2 \pi i}{3}}$ then $H \simeq H_{C2} (3)$ by Proposition \ref{ListS3+}. For $s=4$ and $\det(h_o) =i$ one has $H \simeq H_{C2}(6)$, according to Proposition \ref{ListS4+}. Making use of Proposition \ref{ListS6+}, one observes that there are no $h_o \in GL(2,R)$ of order $12$ with $\det(h_o) = e^{\frac{ \pi i}{3}}$ and concludes the proof of the proposition.

\end{proof}

Towards the description of the finite subgroups $H = [ H \cap SL(2,R)] \langle h_o \rangle$ of $GL(2,R)$ with $H \cap SL(2,R) \simeq {\mathbb C}_t$ for some $t \in \{ 3,4, 6 \}$, one needs the following

\begin{lemma}   \label{CommuteSquares}
If $g \in GL(2, {\mathbb C})$ has different eigenvalues $\lambda _1 \neq \lambda _2$ then any $h \in GL(2, {\mathbb C})$ with $hg \neq gh$ and $h^2g = gh^2$ has vanishing  trace $\tr(h) =0$.
\end{lemma}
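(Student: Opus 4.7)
The plan is to exploit the fact that a matrix with distinct eigenvalues has a particularly simple centralizer, so commutation with $g$ imposes strong constraints on the off-diagonal entries of $h$, while commutation with $h^2$ then forces a vanishing trace condition.

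First I would diagonalize $g$. Since $\lambda_1 \neq \lambda_2$, there exists $S \in GL(2,\mathbb{C})$ with $S^{-1} g S = \mathrm{diag}(\lambda_1, \lambda_2)$. Replacing $g$ by $S^{-1}gS$ and $h$ by $S^{-1}hS$ (which preserves both the trace of $h$ and the commutation/non-commutation relations), I may assume $g$ is already diagonal. Writing $h = \begin{pmatrix} a & b \\ c & d \end{pmatrix}$, a direct computation gives
\[
hg - gh = (\lambda_2 - \lambda_1) \begin{pmatrix} 0 & b \\ -c & 0 \end{pmatrix}.
\]
The hypothesis $hg \neq gh$ combined with $\lambda_1 \neq \lambda_2$ therefore forces $(b,c) \neq (0,0)$.

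Next I would compute $h^2$ and its commutator with $g$. One has
\[
h^2 = \begin{pmatrix} a^2 + bc & (a+d)b \\ (a+d)c & d^2 + bc \end{pmatrix},
\]
and the same kind of computation as above yields
\[
h^2 g - g h^2 = (\lambda_2 - \lambda_1)(a+d) \begin{pmatrix} 0 & b \\ -c & 0 \end{pmatrix}.
\]
The hypothesis $h^2 g = g h^2$ now reads $(\lambda_2-\lambda_1)(a+d)b = 0$ and $(\lambda_2-\lambda_1)(a+d)c = 0$. Since $\lambda_1 \neq \lambda_2$ and at least one of $b, c$ is nonzero, the only possibility is $a+d = 0$, i.e.\ $\tr(h) = 0$, as required.

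No step here looks genuinely hard: the whole argument is a two-by-two matrix computation after diagonalizing $g$. The only thing to be careful about is invariance of the hypotheses under simultaneous conjugation by $S$ (which is immediate), and to ensure that the common factor $(a+d)$ can indeed be canceled only after extracting information from the non-commutation of $h$ with $g$.
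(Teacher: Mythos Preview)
Your proof is correct and follows essentially the same approach as the paper: diagonalize $g$, write out $h$ in coordinates, and use the commutator equations $(\lambda_1-\lambda_2)(a+d)b=0$, $(\lambda_1-\lambda_2)(a+d)c=0$ together with $hg\neq gh$ to force $a+d=0$. The only cosmetic difference is that the paper argues by contradiction (assuming $a+d\neq 0$ forces $h$ diagonal, hence commuting with $g$), whereas you first extract $(b,c)\neq(0,0)$ and then cancel directly.
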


\begin{proof}

The trace is invariant under conjugation, so that
\[
g = \left( \begin{array}{cc}
\lambda _1  & 0  \\
0  &  \lambda _2
\end{array}  \right)
\]
can be assumed to be diagonal. If
\[
h = \left( \begin{array}{cc}
a  &  b  \\
c  &  d
\end{array}  \right) \in GL(2, {\mathbb C}),
\]
then $h^2g=gh^2$ is equivalent to
\[
\left| \begin{array}{c}
(\lambda _1 - \lambda _2) b (a+d) = 0 \\
(\lambda _1 - \lambda _2) c (a+d) = 0
\end{array}  \right. .
\]
Due to $\lambda _1 \neq \lambda_2$, there follow $b(a+d)=0$ and $c(a+d)=0$. The assumption $\tr(h) = a + d \neq 0$ leads to $b=c=0$. As a result,
\[
h = \left(  \begin{array}{cc}
a  &  0  \\
0  &  d
\end{array} \right)
\]
is a diagonal matrix and  commutes with $g$. The contradiction justifies that $\tr(h) =0$.

\end{proof}

\begin{lemma}     \label{HK346Necessary}
Let $H = [ H \cap SL(2,R)] \langle h_o \rangle$ be a finite subgroup of $GL(2,R)$ with
\[
H \cap SL(2,R) = \langle g \rangle \simeq {\mathbb C}_t \ \ \mbox{ for some  } \ \   t \in \{ 3, 4, 6 \} \ \ \mbox{ and }
\]
\[
\det(H) = \langle \det(h_o) \rangle = \langle e^{ \frac{ 2 \pi i}{s}} \rangle \simeq {\mathbb C}_s, \ \ s>1
\]
for some $h_o \in H$ of order $r$. Then:

\[
\mbox{ (i)}  \ \ \frac{r}{s} = \begin{cases}
1, 2, 3, 4 \mbox{ or  }   6   & \text{ for $s=2$,}  \\
1, 2 \mbox{ or  }  4  &  \text{ for $s=3$,}  \\
1 \mbox{ or } 2  &  \text{ for $s=4$}   \\
1  & \text{ for $s=6$}
\end{cases}
\]
divides $t$;

(ii) $\frac{r}{s} =t$ if and only if $H = \langle h_o \rangle \simeq {\mathbb C}_r$ is cyclic and $H \cap SL(2,R) = \langle h_o^s \rangle$;

(iii) if $\frac{r}{s} < t$ then $H$ is isomorphic to the non-cyclic abelian group
\[
H'= \langle g, h_o \ \ \vert \ \  g^t = h_o^r = I_2, \ \ h_og = gh_o \rangle
\]
or to the non-abelian group
\[
H'' = \langle g, h_o \ \ \vert \ \  g^t = h_o^r = I_2, \ \ h_og h_o^{-1} = g^{-1} \rangle;
\]

(iv) if $\frac{r}{s} < t$ and $H \simeq H''$ is non-abelian then $h_o$ has eigenvalues $\lambda _1 (h_o) = i e^{ \frac{ \pi i}{s}}$, $\lambda _2 ( h_o) = -i e^{\frac{ \pi i}{s}}$ and
\[
(r,s) \in \{ (2,2), \ \ (6,6) \} \ \ \mbox{  for  } \ \ t=3,
\]
\[
(r,s) \in \{ (2,2), \ \  (8,4),  \ \ (6,6) \} \ \ \mbox{  for  } \ \ t=4,
\]
\[
(r,s) \in \{ (2,2), \ \  (8,4), \ \ (6,6) \} \ \ \mbox{  for  } \ \ t=6.
\]
\end{lemma}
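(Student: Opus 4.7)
The plan is to combine the structural facts from Lemma \ref{HKh} with the eigenvalue classifications of Section 2 and the commutator criterion of Lemma \ref{CommuteSquares}, treating the four parts in order. For (i), Lemma \ref{HKh}(iii) identifies the intersection $[H\cap SL(2,R)]\cap \langle h_o\rangle$ with $\langle h_o^s\rangle \simeq \mathbb{C}_{r/s}$, which sits inside $\langle g\rangle\simeq \mathbb{C}_t$; this immediately gives $r/s\mid t$. The permitted values of $r$ for a fixed determinant order $s$ are read off Propositions \ref{ListS2}, \ref{ListS4+}, \ref{ListS4-}, \ref{ListS6+}, \ref{ListS6-}, \ref{ListS3+}, \ref{ListS3-}, which jointly enumerate every $h_o\in GL(2,R)$ whose determinant has prescribed order. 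Intersecting the resulting list of quotients with the divisors of $t\in\{3,4,6\}$ produces exactly the tabulated values.

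Part (ii) drops out at once: if $r/s=t$ then $\langle h_o^s\rangle$ and $\langle g\rangle$ are subgroups of $H\cap SL(2,R)$ of the same order $t$, hence coincide, so $\langle g\rangle\subseteq \langle h_o\rangle$ and $H=\langle h_o\rangle$ is cyclic of order $r$; the converse uses $|H|=st$ from Lemma \ref{HKh}(iv). For (iii), the normality of $\langle g\rangle$ in $H$ makes conjugation by $h_o$ an automorphism of $\mathbb{C}_t$. For $t\in\{3,4,6\}$ the automorphism group is $(\mathbb{Z}/t\mathbb{Z})^*\simeq \mathbb{C}_2$, so only two options arise: $h_o g h_o^{-1}=g$ (producing the abelian $H'$) or $h_o g h_o^{-1}=g^{-1}$ (producing the non-abelian $H''$).

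For (iv), suppose $h_o g h_o^{-1}=g^{-1}\neq g$. Then $h_o^2$ commutes with $g$ while $h_o$ does not, and since $g\in SL(2,R)$ of order $t\in\{3,4,6\}$ has the distinct primitive $t$-th roots of unity as eigenvalues (Proposition \ref{ListK}), Lemma \ref{CommuteSquares} forces $\tr(h_o)=0$. Combined with $\det(h_o)=e^{2\pi i/s}$, this yields the eigenvalue pair $\{ie^{\pi i/s},-ie^{\pi i/s}\}$ as claimed. Cross-referencing trace zero with Propositions \ref{ListS2}(i), \ref{ListS3+}(i), \ref{ListS4+}(i), and \ref{ListS6+}(i) leaves the candidate pairs $(r,s)\in\{(2,2),(12,3),(8,4),(6,6)\}$. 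The pair $(12,3)$ has $r/s=4$, which meets $r/s\mid t$ only for $t=4$; but then $r/s=t$ and (ii) places $H$ in the cyclic bucket, contradicting non-abelianness. Discarding $(12,3)$ and imposing $r/s\mid t$ on the remaining three pairs recovers exactly the lists in the statement. The main obstacle is this final case analysis: one must coordinate several tables from Section 2 (each endomorphism ring $R$ is handled separately), keep the joint constraints $r/s\mid t$ and $r/s<t$ simultaneously active, and verify that the eigenvalues $\pm ie^{\pi i/s}$ are genuinely realized by some $h_o\in GL(2,R)$ in each surviving case; the borderline collapse at $r/s=t$ is the most delicate bookkeeping point.
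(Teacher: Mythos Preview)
Your proposal is correct and follows essentially the same route as the paper's proof: both use Lemma~\ref{HKh}(iii) for (i), the order comparison for (ii), the fact that $\mathbb{Z}_t^*=\{\pm 1\}$ for $t\in\{3,4,6\}$ for (iii), and Lemma~\ref{CommuteSquares} to force $\tr(h_o)=0$ in (iv), followed by the same case analysis eliminating $(r,s)=(12,3)$ via the cyclic collapse. The only cosmetic difference is that the paper, after obtaining $\tr(h_o)=0$, additionally diagonalizes $g$ and uses the relation $h_o g=g^{-1}h_o$ to show the diagonal entries of $D_o$ vanish in that basis---but since the eigenvalues of $h_o$ are already determined by $\tr(h_o)=0$ and $\det(h_o)=e^{2\pi i/s}$ via the characteristic polynomial, your shortcut is equally valid; also, the realizability check you flag at the end is deferred in the paper to the companion Lemma~\ref{HK346Attained}, so it is not needed here.
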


\begin{proof}

(i) Note that if $\det(h_o) \in R^*$ is of order $s$ then $\det( h_o^s) = \det(h_o)^s =1$ and $h_o^s \in H \cap SL(2,R) = \langle g \rangle$ is an element of order $\frac{r}{s}$. Since $\langle g \rangle \simeq {\mathbb C}_t$ is of order $t$, the ratio $\frac{r}{s} \in {\mathbb N}$ divides $t$. Proposition \ref{ListS2} provides the list of $\frac{r}{s} = \frac{r}{2}$ for $s=2$. If $s=3$ then the values of $\frac{r}{s} = \frac{r}{3}$ are taken from Propositions \ref{ListS3+} and \ref{ListS3-}. Propositions \ref{ListS4+} and \ref{ListS4-} supply the range of $\frac{r}{s} = \frac{r}{4}$ for $s=4$, while Propositions \ref{ListS6+} and \ref{ListS6-} give account for $\frac{r}{s} = \frac{r}{6}$ in the case of $s=6$.

(ii)  Note that $h_o^s \in \langle g \rangle$ is of order $\frac{r}{s} = t$ exactly when $\langle g \rangle = \langle h_o ^s \rangle$ and $H = \langle h_o \rangle \simeq {\mathbb C}_r$ is a cyclic group.

(iii) According to Lemma \ref{StructureH}, the group $H = [ H \cap SL(2,R)] \langle h_o \rangle =
\langle g \rangle \langle h_o \rangle$ is completely determined by the order $t$ of $g$, the order $r$ of $h_o$ and the conjugate $x = h_o g h_o^{-1} \in H \cap SL(2,R) = \langle g \rangle$ of $g$ by $h_o$. The order $t$ of $g$ is invariant under conjugation, so that $x=g^m$ for some $m \in {\mathbb Z}_t ^*$. The Euler function $\varphi (t) =2$ for $t \in \{ 3, 4, 6 \}$ and ${\mathbb Z}_t^* = \{ \pm 1({\rm mod} t) \}$. Therefore $x = h_o g h_o^{-1} = g$ or $x = h_o g h_o^{-1} = g^{-1}$.

(iv)  If $H \simeq H''$ is a non-abelian group then
\[
h_o^2 g h_o^{-2} = h_o ( h_o g h_o^{-1}) h_o^{-1} = h_o g^{-1} h_o^{-1} = ( h_o g h_o^{-1} ) ^{-1} = ( g^{-1}) ^{-1} =g,
\]
so that $g$ commutes with $h_o^2$, but does not commute with $h_o$. By Lemma \ref{CommuteSquares} there follows $\tr(h_o) =0$. There exists a matrix  $S \in GL \left( 2, {\mathbb Q} \left( \sqrt{-d}, e^{\frac{ 2 \pi i}{t}} \right) \right)$, such that
\[
D = S^{-1} g S = \left( \begin{array}{cc}
e^{\frac{ 2 \pi i}{t}}  &  0  \\
0  &  e^{ - \frac{2 \pi i}{t}}
\end{array}  \right) \in SL \left( 2, {\mathbb Q} \left(  \sqrt{-d}, e^{ \frac{ 2 \pi i}{t}} \right)  \right)
\]
is diagonal.   Since the trace is invariant under conjugation,
\[
D_o := S^{-1} h_o S = \left( \begin{array}{cc}
a  &  b  \\
c  &  -a
\end{array}  \right) \in GL \left( 2, {\mathbb Q} \left( \sqrt{-d}, e^{\frac{2 \pi i}{t}} \right) \right).
\]
The relation $h_og = g^{-1} h_o$ implies  the vanishing  of $a$. As a result, the characteristic polynomial
\[
\mathcal{X}_{h_o} (\lambda ) = \lambda ^2 + \det(h_o) = \lambda ^2 + e^{ \frac{ 2 \pi i}{s}} =0
\]
has roots $\lambda _1 (h_o) = i e^{ \frac{ \pi i}{s}}$, $\lambda _2 (h_o) = -i e^{ \frac{ \pi i}{s}}$. More precisely, for $s=2$ one has $\lambda _1 (h_o) = -1$, $\lambda _2 (h_o) =1$, so that $h_o$ and $D_o$ are of order $r=2$. The ratio $\frac{r}{s}=1$ divides any $t \in \{ 3, 4, 6 \}$. If $s=3$ then
$\lambda _1 (h_o) = e^{ \frac{ 5 \pi i}{6}}$, $\lambda _2 (h_o) = e^{ - \frac{\pi i}{6}}$, so that $h_o$ and $D_o$ are of order $r=12$. The quotient $\frac{r}{s}=4$ divides only $t=4$. Therefore $\frac{r}{s} =t$ and $H = \langle h_o \rangle \simeq {\mathbb C}_{12}$, according to (ii). In the case of $s=4$, one has $\lambda _1 (h_o) = e^{\frac{ 3 \pi i}{4}}$, $\lambda _2 (h_o) = e^{ - \frac{ \pi i}{4}}$, whereas $h_o$ and $D_o$ are of order $r=8$. The quotient $\frac{r}{s} = 2$ divides only $t \in \{ 4, 6 \}$. Finally, for $s=6$ the automorphism $h_o$ has eigenvalues $\lambda _1 (h_o) = e^{ \frac{ 2 \pi i}{3}}$,
 $\lambda _2 ( h_o) = e^{ - \frac{\pi i}{3}}$. Consequently, $h_o$ and $D_o$ are of order $r=6$ and $\frac{r}{s} =1$ divides all $t \in \{ 3, 4, 6 \}$.

\end{proof}

\begin{lemma}  \label{HK346Attained}
(i) For arbitrary $d \in {\mathbb N}$ and $t \in \{ 3, 4, 6 \}$ there is a dihedral subgroup
\[
\mathcal{D}_t = \langle g, h_o \ \ \vert \ \  g^t = h_o ^2 = I_2, \ \ h_o g h_o ^{-1} = g^{-1} \rangle < GL(2, {\mathbb Q} (\sqrt{-d}))
\]
of order $2t$ with $\mathcal{D}_t \cap SL(2, {\mathbb Q}(\sqrt{-d})) = \langle g \rangle \simeq {\mathbb C}_t$,
$\det( \mathcal{D}_t) = \langle \det( h_o) \rangle = \langle -1 \rangle \simeq {\mathbb C}_2$
and eigenvalues $\lambda _1 (h_o) = -1$, $\lambda _2 (h_o) =1$ of $h_o$.

(ii) For an arbitrary $t \in \{ 3, 4, 6 \}$ there is a subgroup
\[
\mathcal{H}_t = \langle g, h_o \ \ \vert \ \  g^t = h_o ^6 = I_2, \ \ h_o g h_o ^{-1} = g^{-1} \rangle < GL(2, {\mathbb Q} ( \sqrt{-3}))
\]
of order $6t$ with $\mathcal{H}_t \cap SL(2, {\mathbb Q}( \sqrt{-3})) = \langle g \rangle \simeq {\mathbb C}_t$,
$\det( \mathcal{H}_t) = \langle \det( h_o) \rangle = \langle e^{ \frac{ \pi i}{3}} \rangle \simeq {\mathbb C}_6$
and eigenvalues $\lambda _1 (h_o) = e^{\frac{ 2 \pi i}{3}}$, $\lambda _2 (h_o) = e^{ - \frac{ \pi i}{3}}$ of $h_o$.

(iii)  For an arbitrary $t \in \{ 4 , 6 \}$ there is a subgroup
\[
\mathcal{H}'_t = \langle g, h_o \ \ \vert \ \  g^{\frac{t}{2}} = h_o^4 = -I_2, \ \ h_o g h_o ^{-1} = g^{-1} \rangle < GL(2, {\mathbb Q} ( \sqrt{2}, i))
\]
of order $4t$ with
$\mathcal{H}'_t \cap SL(2, {\mathbb Q}( \sqrt{2},i)) = \langle g \rangle \simeq {\mathbb C}_t$,
$\det( \mathcal{H}'_t) = \langle \det(h_o) \rangle = \langle i \rangle \simeq {\mathbb C}_4$
and eigenvalues $\lambda _1(h_o) = e^{\frac{ 3 \pi i}{4}}$, $\lambda _2 (h_o) = e^{ - \frac{ \pi i}{4}}$ of $h_o$.
\end{lemma}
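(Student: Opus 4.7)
The strategy is to exhibit explicit generators uniformly in all three parts, verify the defining relations by a single matrix computation, then read off the group order from Lemma \ref{HKh}. For the normal cyclic subgroup $\langle g \rangle$ of order $t \in \{3,4,6\}$ I would take integer matrices
\[
g_3 = \begin{pmatrix} 0 & -1 \\ 1 & -1 \end{pmatrix}, \quad g_4 = \begin{pmatrix} 0 & -1 \\ 1 & 0 \end{pmatrix}, \quad g_6 = \begin{pmatrix} 0 & -1 \\ 1 & 1 \end{pmatrix} \in SL(2,{\mathbb Z}),
\]
whose traces $-1,0,1$ force the respective orders by Proposition \ref{ListK} and which sit inside $SL(2, F)$ for every number field $F \supset {\mathbb Q}$. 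For $h_o$ I would use the uniform ansatz $h_o = c J$ with $J = \begin{pmatrix} 0 & 1 \\ 1 & 0 \end{pmatrix}$ and a scalar $c \in F^*$ to be chosen. A direct check shows $J g_t J = g_t^{-1}$ for each $t$: all three $g_t$ share the common shape $\begin{pmatrix} 0 & -1 \\ 1 & \tau \end{pmatrix}$ with $\tau \in \{-1,0,1\}$, and conjugation by $J$ sends it to $\begin{pmatrix} \tau & 1 \\ -1 & 0 \end{pmatrix}$, which is precisely the inverse. Rescaling $J$ by $c$ does not affect this relation, so $h_o g_t h_o^{-1} = g_t^{-1}$ is built in for any choice of $c$.

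The scalar $c$ is then chosen to realize the prescribed determinant, eigenvalues, and order of $h_o$ in each case. For (i) take $c=1$: then $h_o = J$ lies in $GL(2,{\mathbb Z}) \subset GL(2, {\mathbb Q}(\sqrt{-d}))$ for every $d$, has $h_o^2 = I_2$, $\det(h_o) = -1$, and eigenvalues $\pm 1$. For (ii) take $c = e^{2\pi i/3} \in {\mathbb Q}(\sqrt{-3})$: then $\det(h_o) = -c^2 = e^{\pi i/3}$, $h_o^2 = c^2 I_2 = e^{-2\pi i/3} I_2$ has order $3$, hence $h_o^6 = I_2$ with $h_o$ of order $6$, and the eigenvalues $\pm c = e^{2 \pi i/3}, e^{-\pi i/3}$ match the claim. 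For (iii) take $c = e^{3\pi i/4} = (-1+i)/\sqrt{2} \in {\mathbb Q}(\sqrt{2},i)$: then $\det(h_o) = -c^2 = i$, $h_o^2 = -i\, I_2$, so $h_o^4 = -I_2$ and $h_o$ is of order $8$, with eigenvalues $\pm c = e^{3 \pi i/4}, e^{-\pi i/4}$.

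To read off $|H|$, observe that $h_o g h_o^{-1} = g^{-1}$ already makes $\langle g \rangle$ normal, so $H = \langle g \rangle \cdot \langle h_o \rangle$ and every element has the shape $g^i h_o^j$. Applying the determinant homomorphism gives $\det(g^i h_o^j) = \det(h_o)^j$, and since $\det(h_o)$ has order $s \in \{2,6,4\}$ in the three cases, the only elements of $H$ with determinant $1$ are those with $s \mid j$; these lie in $\langle g \rangle$ (using $h_o^4 = -I_2 = g^{t/2}$ to close up in part (iii)). Hence $H \cap SL = \langle g \rangle$ is of order $t$, and Lemma \ref{HKh}(iv) yields $|H| = s t$, matching the claimed $2t$, $6t$, $4t$. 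The main subtlety is precisely this non-collapse step: one must exclude that the abstract presentation identifies distinct cosets $\langle g \rangle h_o^j$, $0 \leq j < s$, and the determinant-separation above is what guarantees this.
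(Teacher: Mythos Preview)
Your proof is correct and complete. The uniform ansatz $h_o = cJ$ with the swap matrix $J$ is a nice trick: the single identity $J\begin{pmatrix}0&-1\\1&\tau\end{pmatrix}J = \begin{pmatrix}\tau&1\\-1&0\end{pmatrix}$ does all the conjugation work at once, and your determinant-separation argument cleanly establishes $H\cap SL=\langle g\rangle$ and hence $|H|=st$ via Lemma~\ref{HKh}(iv). The closure step $h_o^4=-I_2=g^{t/2}$ in part (iii) is exactly the point that needs care, and you handle it.

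The paper proceeds differently: it first diagonalizes $h_o$ (to $\mathrm{diag}(-1,1)$, $\mathrm{diag}(e^{2\pi i/3},e^{-\pi i/3})$, $\mathrm{diag}(e^{3\pi i/4},e^{-\pi i/4})$ respectively) and then looks for $D=S^{-1}gS=\begin{pmatrix}a&b\\c&d\end{pmatrix}\in SL_2$ with $D_oDD_o^{-1}=D^{-1}$, which in all three cases reduces to the condition $a=d$; it then writes down explicit matrices $D_3,D_4,D_6$ with $a=d\in\{-\tfrac12,0,\tfrac12\}$ realizing the required traces. So the paper fixes $h_o$ in diagonal form and solves for $g$, whereas you fix $g$ in $SL(2,\mathbb Z)$ (companion form) and solve for $h_o$ by scaling a fixed involution. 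Your route is a bit more economical---one conjugation computation instead of three, and the generators $g_t$ live over $\mathbb Z$ rather than over $\mathbb Q$---while the paper's diagonalize-$h_o$ viewpoint makes transparent why the same $D_3,D_4,D_6$ work in all three parts (the relation $D_oDD_o^{-1}=D^{-1}$ collapses to $a=d$ whenever $\lambda_1(h_o)/\lambda_2(h_o)=-1$).
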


\begin{proof}

(i) Let us choose a diagonalizing matrix $S \in GL(2, {\mathbb Q}( \sqrt{-d}))$ of $h_o$, so that
\[
D_o = S^{-1} h_o S = \left( \begin{array}{rr}
-1  &  0  \\
0  &  1
\end{array}  \right).
\]
Taking into account Proposition \ref{ListK}, one has  to show the existence of
\[
D = S^{-1`}gS = \left( \begin{array}{cc}
a  &  b  \\
c  &  d
\end{array}  \right) \in SL(2, {\mathbb Q}( \sqrt{-d})
\]
with
\[
D_oDD_o^{-1} = \left( \begin{array}{rr}
a  & -b  \\
-c  &  d
\end{array}  \right) =
\left( \begin{array}{rr}
d  &  -b  \\
-c  &  a
\end{array} \right) = D^{-1}
\]
for any trace $\tr(g) = \tr(D) = a + d \in \{ 0, \pm 1 \}$. More precisely, for $a=d=0$, $b\neq 0$ and $c= - b^{-1}$, then the matrix
\[
D = D_4 = \left( \begin{array}{rr}
0  &  b  \\
\mbox{  }  & \mbox{  }  \\
- b^{-1}  &  0
\end{array}  \right)
\]
of order $4$ and the matrix $D_o$ of order $2$ generate a dihedral group $\mathcal{D}_4$ of order $8$. If $a = d = - \frac{1}{2}$, $b \neq 0$ and $c = - \frac{3}{4} b^{-1}$ then
\[
D = D_3 = \left( \begin{array}{rr}
- \frac{1}{2}  &  b  \\
\mbox{  }  & \mbox{  }  \\
- \frac{3}{4} b^{-1}  &  - \frac{1}{2}
\end{array}  \right)
\]
of order $3$ and $D_o$ of order $2$ generate a symmetric group $\mathcal{D}_3 \simeq S(3)$ of degree $3$. In the case of $a = d = \frac{1}{2}$, $b \neq 0$ and $c = - \frac{3}{4} b^{-1}$, the matrix
\[
D = D_6 = \left( \begin{array}{rr}
\frac{1}{2}  &  b  \\
\mbox{  }  & \mbox{  }  \\
- \frac{3}{4} b^{-1}  &  \frac{1}{2}
\end{array}  \right)
\]
of order $6$ and the matrix $D_o$ of order $2$ generate a dihedral group $\mathcal{D}_6$ of order $12$.

(ii) By Proposition \ref{ListS6+}, if $h_o \in GL(2,R)$ has eigenvalues $\lambda _1 (h_o) = e^{\frac{ 2 \pi i}{3}}$, $\lambda _2 (h_o) = e^{ - \frac{ \pi i}{3}}$ then $R = \mathcal{O}_{-3}$. Let us consider
\[
D_o = S^{-1}h_o S = \left( \begin{array}{rr}
e^{\frac{ 2 \pi i}{3}}  &  0  \\
 0  &  e^{ - \frac{\pi i}{3}}
 \end{array}  \right) \in GL(2, {\mathbb Q} ( \sqrt{-3}))
 \]
 for some $S \in GL(2, {\mathbb Q}( \sqrt{-3}))$ and
 \[
 D = S^{-1} g S = \left( \begin{array}{cc}
 a  &  b  \\
 c  &  d
 \end{array}   \right) \in SL(2, {\mathbb Q} ( \sqrt{-3}))
 \]
 with trace $\tr(g) = \tr(D) = a + d \in \{ 0, \pm 1 \}$. Then
 \[
 D_o D D_o^{-1} =
 \left( \begin{array}{rr}
 a  &  -b  \\
 -c  &  d
 \end{array}  \right) =
 \left( \begin{array}{rr}
 d  &  -b  \\
 -c  &  a
 \end{array} \right) = D^{-1}
 \]
 is equivalent to $a=d$. Consequently, $D_3, D_4, D_6$ from the proof of (i) satisfy the required conditions.

 (iii) Note that
 \[
 D_o = S^{-1} h_o S = \left( \begin{array}{cc}
 e^{\frac{ 3 \pi i}{4}}  &  0  \\
 0  &  e^{ - \frac{ \pi i}{4}}
 \end{array}  \right) \in GL(2, {\mathbb Q}( \sqrt{2}, i))
 \]
 for some $S \in GL(2, {\mathbb Q} ( \sqrt{2}, i))$ and
 \[
 D = S^{-1} g S = \left( \begin{array}{cc}
 a  &  b  \\
 c  &  d
 \end{array}  \right) \in SL(2, {\mathbb Q}( \sqrt{2},i))
 \]
 with trace $\tr(g) = \tr(D) = a + d \in \{ 0, 1 \}$ satisfy
 \[
 D_o D D_o^{-1} = \left( \begin{array}{rr}
 a  &  -b  \\
 c  &  d
 \end{array}  \right) =
 \left( \begin{array}{rr}
 d  &  -b  \\
 -c  &  a
 \end{array}  \right) = D^{-1}
 \]
 exactly when $a=d$. In the notations from the proof of (i), one has $\langle D_4, D_o \rangle \simeq \mathcal{H}'_4$ and $\langle D_6, D_o \rangle \simeq \mathcal{H}'_6$.

\end{proof}

\begin{corollary}   \label{HC3}
Let $H$ be a finite subgroup of $GL(2,R)$,
\[
H \cap SL(2,R) = \langle g \rangle \simeq {\mathbb C}_3
\]
and $h_o \in H$ be an element of order $r$ with $\det (H) = \langle \det(h_o) \rangle \simeq {\mathbb C}_s$ and eigenvalues $\lambda _1(h_o)$, $\lambda _2 (h_o)$. Then $H$ is isomorphic to some $H_{C3}(i)$, $1 \leq i \leq 5$, where
\[
H_{C3}(1) = \langle h_o \rangle \simeq {\mathbb C}_6
\]
with $R = R_{-3,f}$, $\lambda _1 (h_o) = e^{\frac{ \pi i}{3}}$, $\lambda _2 (h_o) = e^{\frac{ 2 \pi i}{3}}$,
\[
H_{C3}(2) = \langle g, h_o \ \ \vert \ \  g^3 = h_o^2 = I_2, \ \ h_ogh_o^{-1} = g^{-1} \rangle \simeq S_3
\]
is the symmetric group of degree $3$, $\lambda _1 (h_o) = -1$, $\lambda _2 (h_o) = 1$,
\[
H_{C3}(3) = \langle g \rangle \times \langle e^{\frac{ 2 \pi i}{3}} I_2 \rangle \simeq {\mathbb C}_3 \times {\mathbb C}_3
\]
with $R = \mathcal{O}_{-3}$ and any $g \in SL(2, \mathcal{O}_{-3})$ of trace $\tr(g) = -1$,
\[
H_{C3}(4) = \langle g \rangle \times \langle h_o \rangle \simeq {\mathbb C}_3 \times {\mathbb C}_6
\]
with $R = \mathcal{O}_{-3}$, $\lambda _1 (h_o) = e^{\frac{ \pi i}{3}}$, $\lambda _2 (h_o) = e^{ - \frac{ 2 \pi i}{3}}$,
\[
H_{C3} (5) = \langle g, h_o \ \ \vert \ \  g^3 = h_o ^6 = I_2, \ \  h_ogh_o^{-1} = g^{-1} \rangle
\]
of order $18$ with $R = \mathcal{O}_{-3}$, $\lambda _1 (h_o) = E^{\frac{2 \pi i}{3}}$, $\lambda _2 (h_o) = e^{ - \frac{ \pi i}{3}}$.

There exist subgroups
\[
H_{C3}(1), H_{C3}(3), H_{C3}(4) < GL(2, \mathcal{O}_{-3}),
\]
 as well as subgroups
\[
H^{o}_{C3}(2) < GL(2, {\mathbb Q} ( \sqrt{-d})), \ \
H^{o}_{C3}(5) < GL(2, {\mathbb Q}( \sqrt{-3}))
\]
with $H^{o}_{C3}(j) \simeq H_{C3}(j)$ for $j \in \{ 2, 5 \}$.
\end{corollary}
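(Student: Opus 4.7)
The plan is to apply Lemma \ref{HK346Necessary} with $t = 3$ to restrict the pair $(s, r)$, use parts (iii) and (iv) of the same lemma to split each case into abelian and non-abelian sub-cases, and read off the eigenvalues of $h_o$ either from the formula in Lemma \ref{HK346Necessary}(iv) or from the Section 2 classifications (Propositions \ref{ListS2}, \ref{ListS3+}, \ref{ListS6-}). Existence will be handled by explicit diagonal realizations together with Lemma \ref{HK346Attained}.

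First I would apply Lemma \ref{HK346Necessary}(i) to conclude that $r/s$ divides $t = 3$; combined with the allowed ranges of $r/s$ for each $s \in \{2, 3, 4, 6\}$, only the pairs $(s, r) \in \{(2,2), (2,6), (3,3), (4,4), (6,6)\}$ can arise. Lemma \ref{HK346Necessary}(ii) isolates $(2, 6)$ as the unique cyclic case, giving $H = \langle h_o \rangle \simeq {\mathbb C}_6 = H_{C3}(1)$ with the eigenvalues of $h_o$ supplied by Proposition \ref{ListS2}(v). For the four remaining pairs one has $r/s = 1 < 3$, so Lemma \ref{HK346Necessary}(iii) forces $H \simeq H'$ (abelian) or $H''$ (dihedral), and part (iv) confines the non-abelian option to $(s, r) \in \{(2,2), (6,6)\}$. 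I would then identify: abelian $(2, 2)$ with $H_{C3}(1)$ (as the abstract group ${\mathbb C}_6$); non-abelian $(2, 2)$ with $H_{C3}(2) \simeq S_3$; abelian $(3, 3)$ with $H_{C3}(3) \simeq {\mathbb C}_3 \times {\mathbb C}_3$; abelian $(6, 6)$ with $H_{C3}(4) \simeq {\mathbb C}_3 \times {\mathbb C}_6$; and non-abelian $(6, 6)$ with $H_{C3}(5)$ of order $18$. The two dihedral cases come with the eigenvalues $\lambda _1 (h_o) = i e^{\pi i/s}$, $\lambda _2 (h_o) = -i e^{\pi i/s}$ dictated by Lemma \ref{HK346Necessary}(iv), while the remaining eigenvalue data follow from the Section 2 propositions.

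The hard part will be ruling out $(s, r) = (4, 4)$, the only allowed pair not matched to any $H_{C3}(j)$. Since $(4, 4)$ is absent from the non-abelian list of Lemma \ref{HK346Necessary}(iv), the elements $h_o$ and $g$ must commute, and $R$ is forced to be ${\mathbb Z}[i]$. By Proposition \ref{ListS4+}, $h_o$ has distinct eigenvalues in $\{\pm i, \pm 1\}$, so one may diagonalise $h_o$ by some $T \in GL(2, {\mathbb Q}(i))$; commutativity then diagonalises $g$ in the same basis, necessarily with entries $e^{\pm 2 \pi i/3}$. A short computation of $T \, {\rm diag}(e^{2 \pi i/3}, e^{-2 \pi i/3}) T^{-1}$ shows its off-diagonal entries contain the factor $e^{2 \pi i/3} - e^{-2 \pi i/3} = i \sqrt{3} \notin {\mathbb Q}(i)$; forcing these to vanish collapses $g$ to a diagonal matrix whose entries $e^{\pm 2 \pi i/3}$ still lie outside ${\mathbb Z}[i]$, contradicting $g \in SL(2, {\mathbb Z}[i])$. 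Finally, for existence, $H_{C3}(1)$, $H_{C3}(3)$, $H_{C3}(4)$ are exhibited inside $GL(2, \mathcal{O}_{-3})$ by pairs of commuting diagonal matrices whose diagonal entries are chosen sixth roots of unity, while $H_{C3}(2)$ and $H_{C3}(5)$ are produced directly by Lemma \ref{HK346Attained}(i) and (ii) with $t = 3$.
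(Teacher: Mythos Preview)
Your proposal is correct and follows essentially the same route as the paper: both pivot on Lemma~\ref{HK346Necessary} with $t=3$ to reduce to the pairs $(s,r)\in\{(2,2),(2,6),(3,3),(4,4),(6,6)\}$, then split via parts (iii)--(iv) into the abelian and dihedral branches, and invoke Lemma~\ref{HK346Attained} for the existence of $H_{C3}(2)$ and $H_{C3}(5)$.

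The one substantive difference is the exclusion of $(s,r)=(4,4)$. The paper observes that in the abelian case $gh_o$ has order $12$ (as $\gcd(3,4)=1$), and then appeals to the Section~2 tables to note that no element of order $12$ in $GL(2,R)$ has determinant $\pm i$, contradicting $\det(gh_o)=\det(h_o)\in\{\pm i\}$. Your argument instead simultaneously diagonalises $g$ and $h_o$ over ${\mathbb Q}(i)$ and derives a contradiction from $e^{\pm 2\pi i/3}\notin{\mathbb Q}(i)$. Both are valid; yours is a touch more self-contained, while the paper's keeps everything at the level of trace/determinant invariants already tabulated. A minor point: the paper also takes the extra step of showing that within the abelian $(3,3)$ and $(6,6)$ cases every choice of $h_o$ can be normalised (by multiplying by a power of $g$) to the specific eigenvalue pair listed in the statement---you cover this only implicitly via ``the remaining eigenvalue data follow from the Section~2 propositions,'' but since the corollary asks for isomorphism this is not a gap.
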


\begin{proof}

By Lemma  \ref{HK346Necessary} (i), the quotient $\frac{r}{s}$ is a divisor of $t=3$, so that either $r=s$ or $r=3s=6$.

For $s=2$, $r=6$ one has a cyclic group $H = \langle h_o \rangle \simeq {\mathbb C}_6$ with $\det(h_o) = -1$. Up to an inversion $h_o \mapsto h_o^{-1}$ of the generator, Proposition \ref{ListS2} specifies that $\lambda _1 (h_o) = e^{\frac{ \pi i}{3}}$, $\lambda _2 (h_o) = e^{\frac{ 2 \pi i}{3}}$ and justifies the realization of $H_{C3}(1) = \langle h_o \rangle$ over $\mathcal{O}_{-3}$.

Form now on, let $r=s \in \{ 2, ,3 ,4 6 \}$. According to Lemma \ref{HK346Necessary}(iii) and (iv), the group $H = \langle g, h_o \rangle$ is either abelian or isomorphic to some $H_{C3}(j)$  for $j \in \{ 2, 5 \}$.

If $H = \langle g, h_o \ \ \vert \ \  g^3 = h_o^r = I_2, \ \ gh_o = h_og \rangle$ is an abelian group of order $3r$, then $H = \langle g \rangle \times \langle h_o \rangle \simeq {\mathbb C}_3 \times {\mathbb C} _r$ is a direct product by Lemma \ref{HKh} (iv). (Here we use that the semi-direct product $H = [ H \cap SL(2,R)] \rtimes \langle h_o \rangle = \langle g \rangle \rtimes \langle h_o \rangle$ is a direct product if and only if $gh_o = h_og$.)

The order $r=s=2$ of $h_o$ is relatively prime to the order $3$ of $g$, so that $gh_o$ is an element of order $6$ and $\langle g, h_o \rangle = \langle gh_o \rangle \simeq {\mathbb C}_6 \simeq H_{C3}(1)$.

The order $r=s=4$ of $h_o$ is relatively prime to the order $3$ of $g$ and $gh_o$ is of order $12$. By the classification of $x \in GL(2,R)$ of finite order, done in section 2, one has $\det(gh_o) = -1$. Therefore $\det(h_o) = -1$ and $s=2$, contrary to the assumption $s=4$.

For $r=s=3$ one can assume $\det(h_o) = e^{ - \frac{ 2 \pi i}{3}}$, after an eventual inversion $h_o \mapsto h_o^{-1}$. Then by Proposition \ref{ListS3-} one has $h_o = e^{\frac{ 2 \pi i}{3}} I_2$ or $\lambda _1 (h_o) = e^{\frac{ 4 \pi i}{3}}$, $\lambda _2 (h_o) =1$. Assume that $\lambda _1 (h_o) = e^{\frac{4 \pi i}{3}}$, $\lambda _2 (h_o) =1$ and note that the commuting $g$ and $h_o$ can be simultaneously diagonalized by an appropriate $S \in GL(2, {\mathbb C})$. Consequently,
\[
D = S^{-1} g S = \left( \begin{array}{cc}
e^{\frac{ 2 \pi i}{3}}   &  0  \\
0  &  e^{ - \frac{2 \pi i}{3}}
\end{array}  \right) \ \mbox{ and   } \ \
D_o = S^{-1} h_o S = \left( \begin{array}{cc}
e^{ \frac{ 4 \pi i}{3}}   & 0  \\
0  &  1
\end{array}  \right)
\]
are subject to $D^2 D_o = e^{\frac{ 2 \pi i}{3}} I_2$. As a result,
\[
g^2h_o = (SDS^{-1})^{-1} (SD_oS^{-1}) = S (D^2D_o) S^{-1} = e^{\frac{ 2 \pi i}{3}} I_2
\]
and $H = \langle g, h_o \rangle = \langle g, g^2h_o \rangle \simeq H_{C3} (3)$.

Finally, for $r=s=6$, let us assume that $\det(h_o) = e^{ - \frac{ \pi i}{3}}$. Then
\[
\left \{ \lambda _1 (h_o), \lambda _2 (h_o) \right \} = \left \{ e^{\frac{ \pi i}{3}}, e^{ - \frac{ 2 \pi i}{3}} \right \}, \ \
 \left \{ e^{ - \frac{ \pi i}{3}}, 1 \right \} \ \ \mbox{ or  } \ \ \left \{ e^{ \frac{ 2 \pi i}{3}}, -1 \right \}.
 \]
 Similarly to the case of $r=s=3$, the commuting $g$ and $h_o$ admit a simultaneous diagonalization
 \[
 D = S^{-1} g S = \left( \begin{array}{cc}
 e^{ \frac{ 2 \pi i}{3}}  &  0  \\
 0  &  e^{ - \frac{ 2 \pi i}{3}}
 \end{array}  \right), \quad
 D_o = S^{-1} h_o S = \left( \begin{array}{cc}
 \lambda _1 (h_o)   &  0  \\
 0  &  \lambda _2 (h_o)
 \end{array}  \right).
 \]
If $\lambda _1 (h_o) = e^{ - \frac{ \pi i}{3}}$, $\lambda _2 (h_o) =1$ then
\[
D D_o = \left( \begin{array}{cc}
e^{ \frac{ \pi i}{3}}  &  0  \\
0  & e^{ - \frac{ 2 \pi i}{3}}
\end{array}  \right) \ \ \mbox{ and  } \ \ H \simeq \langle D, D_o \rangle = \langle D, DD_o \rangle \simeq H_{C3}(4).
\]
For $\lambda _1 (h_o) = e^{\frac{ 2 \pi i}{3}}$ and $\lambda _2 (h_o) = -1$ note that
\[
DD_o = \left( \begin{array}{cc}
e^{- \frac{ 2 \pi i}{3}}  &  0  \\
0  &  e^{\frac{ \pi i}{3}}
\end{array}  \right), \ \ \mbox{  so that again } \ \
H \simeq \langle D, D_o \rangle = \langle D, DD_o \rangle \simeq H_{C3} (4).
\]
Note that
\[
g = \left( \begin{array}{cc}
e^{\frac{ 2 \pi i}{3}}  &  0  \\
0  &  e^{ - \frac{ 2 \pi i}{3}}
\end{array}  \right), \quad
h_o = \left( \begin{array}{cc}
e^{\frac{ \pi i}{3}}  &  0  \\
0  &  e^{ - \frac{ 2 \pi i}{3}}
\end{array} \right) \in GL(2, \mathcal{O}_{-3})
\]
generate a group, isomorphic to $H_{C3}(4)$.

\end{proof}

\begin{corollary}   \label{HC4}
Let $H$ be a finite subgroup of $GL(2, R)$,
\[
H \cap SL(2, R) = \langle g \rangle \simeq {\mathbb C}_4
\]
and $h_o \in H$ be an element of order $r$ with $\det(H) = \langle \det(h_o) \rangle \simeq {\mathbb C}_s$ and eigenvalues $\lambda _1 (h_o), \lambda _2 (h_o)$. Then $H$ is isomorphic to some $H_{C4} (i)$, $1 \leq i \leq 9$, where
\[
H_{C4} (1) = \langle h_o \rangle \simeq {\mathbb C}_8
\]
with $R = \mathcal{O}_{-2}$, $\lambda _1 (h_o) = e^{\frac{ \pi i}{4}}$, $\lambda _2 (h_o) = e^{\frac{ 3 \pi i}{3}}$,
\[
H_{C4} (2) = \langle g \rangle \times \langle h_o \rangle \simeq {\mathbb C}_4 \times {\mathbb C}_2
\]
with $R = R_{-1,f}$, $\lambda _1(h_o) = -1$, $\lambda _2 (h_o) =1$,
\[
H_{C4}(3) = \langle g, h_o \ \ \vert \ \  g^2 = -I_2, \ \ h_o^2 = I_2, \ \ h_ogh_o^{-1} = g^{-1} \rangle \simeq \mathcal{D}_4
\]
is the dihedral group of order $8$ with $\lambda _1 (h_o) = -1$, $\lambda _2 (h_o) = 1$,
\[
H_{C4}(4) = \langle h_o \rangle \simeq {\mathbb C} _{12}
\]
with $R = \mathcal{O}_{-3}$, $\lambda _1 (h_o) = e^{ \frac{ 5 \pi i}{6}}$, $\lambda _2 (h_o) = e^{ - \frac{ \pi i}{6}}$,
\[
H_{C4}(5) = \langle g \rangle \times \langle e^{\frac{ 2 \pi i}{3}} I_2 \rangle \simeq {\mathbb C}_4 \times {\mathbb C}_3
\]
for  $R = \mathcal{O}_{-3}$ and $\forall g \in SL (2, \mathcal{O}_{-3})$  with $\tr(g) =0$,
\[
H_{C4}(6) = \langle g \rangle \times \langle h_o \rangle \simeq {\mathbb C}_4 \times {\mathbb C}_4
\]
with $R = {\mathbb Z}[i]$, $\lambda _1 (h_o) =i$, $\lambda _2 (h_o) =1$,
\[
H_{C4}(7) = \langle i g \rangle \times \langle h_o \rangle \simeq {\mathbb C}_2 \times {\mathbb C}_8
\]
with $R = {\mathbb Z}[i]$, $\lambda _1 (h_o) = e^{\frac{ 3 \pi i}{4}}$, $\lambda _2 (h_o) = e^{ - \frac{ \pi i}{4}}$,
\[
H_{C4} (8) = \langle g, h_o \ \ \vert \ \  g^2 = h_o^4 = -I_2, \ \ h_ogh_o^{-1} = g^{-1} \rangle
\]
of order $16$ with $R = {\mathbb Z}[i]$, $\lambda _1 (h_o) = e^{\frac{ 3 \pi i}{4}}$, $\lambda _2 (h_o) = e^{ - \frac{ \pi i}{4}}$,
\[
H_{C4} (9) = \langle g, h_o \ \ \vert \ \  g^2 = -I_2, \ \ h_o^6=I_2, \ \ h_ogh_o^{-1} = g^{-1} \rangle
\]
of order $24$ with $R = \mathcal{O}_{-3}$, $\lambda _1 (h_o) = e^{\frac{ 2 \pi i}{3}}$, $\lambda _2 (h_o) = e^{ - \frac{ \pi i}{3}}$.

There exist subgroups
\[
H_{C4}(1) < GL(2, \mathcal{O}_{-2}),  \ \
H_{C4}(4), H_{C4}(5) < GL(2, \mathcal{O}_{-3}),
\]
\[
H_{C4}(2), H_{C4}(6) < GL (2, {\mathbb Z}[i]),
\]
 as well as subgroups
\[
H^{o} _{C4}(7), H^{o} _{C4}(8) < GL(2, {\mathbb Q} ( \sqrt{2}, i)), \ \
H^{o} _{C4}(3) < GL(2, {\mathbb Q} ( \sqrt{-d})),
\]
\[
H^{o} _{C4}(9) < GL(2, {\mathbb Q}(\sqrt{-3})),
\]
with $H^{o} _{C4}(j) \simeq H_{C4} (j)$ for $j \in \{ 3, 7, 8, 9 \}$.
\end{corollary}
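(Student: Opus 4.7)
The plan is to specialize the general framework of Lemmas \ref{HKh}, \ref{HK346Necessary}, \ref{HK346Attained} and \ref{StructureH} to $t = 4$. By Lemma \ref{HK346Necessary}(i) applied with $t = 4$ and the standing hypothesis $s \geq 2$, the ratio $r/s \in \{1, 2, 4\}$ and only finitely many pairs $(r,s)$ need to be examined, their admissible values being further constrained by Propositions \ref{ListS2}, \ref{ListS3+}, \ref{ListS3-}, \ref{ListS4+}, \ref{ListS4-}, \ref{ListS6+}, \ref{ListS6-}, which determine the eigenvalues of an order-$r$ element $h_o \in GL(2,R)$ with $\det(h_o)$ of order $s$ and dictate the endomorphism ring $R$ in which $h_o$ is realizable. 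Lemma \ref{StructureH} reduces the group-theoretic structure of $H$ to the choice of the conjugate $h_og h_o^{-1} \in \langle g\rangle$.

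For $r/s = 4$, Lemma \ref{HK346Necessary}(ii) forces $H = \langle h_o\rangle$ cyclic with $\langle h_o^s\rangle = \langle g\rangle$; the additional requirement that $h_o^s$ be an element of $SL(2,R)$ of order $4$ (hence with eigenvalues $\pm i$ by Proposition \ref{ListK}) leaves only $(r,s) = (8,2)$ over $R = \mathcal{O}_{-2}$ and $(r,s) = (12,3)$ over $R = \mathcal{O}_{-3}$, producing $H_{C4}(1)$ and $H_{C4}(4)$. For $r/s < 4$, Lemma \ref{HK346Necessary}(iii) splits $H$ into an abelian case ($h_og = gh_o$) and a non-abelian case ($h_og h_o^{-1} = g^{-1}$). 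In the abelian case, the commuting pair $g, h_o$ is simultaneously diagonalized over the field $F_{-d}^{(s,r)}$ of Lemma \ref{DiagonalizationField}; running through the admissible $(r,s)$ yields $H_{C4}(2)$ (from $(r,s) = (2,2)$), $H_{C4}(5)$ (from $(3,3)$, where $h_o = e^{2\pi i/3}I_2$ is forced scalar), $H_{C4}(6)$ (from $(4,4)$ over ${\mathbb Z}[i]$), and $H_{C4}(7)$ (from $(8,4)$ over ${\mathbb Z}[i]$, where the relation $h_o^4 = -I_2 = g^2$ gives $|H| = 16$ and the choice of the order-$2$ element $ig$ exhibits $H \simeq {\mathbb C}_2 \times {\mathbb C}_8$). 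The non-abelian case is handled by Lemma \ref{HK346Necessary}(iv): the eigenvalues $\lambda_j(h_o) = \pm i e^{\pi i/s}$ are determined, and the admissible pairs $(2,2),(8,4),(6,6)$ correspond to $H_{C4}(3), H_{C4}(8), H_{C4}(9)$ respectively.

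Realization over the claimed rings is then obtained from Lemma \ref{HK346Attained}: part (i) for $H_{C4}^{o}(3) \simeq \mathcal{D}_4$ over ${\mathbb Q}(\sqrt{-d})$, part (iii) for $H_{C4}^{o}(7), H_{C4}^{o}(8) \simeq \mathcal{H}'_4$ over ${\mathbb Q}(\sqrt{2},i)$, and part (ii) for $H_{C4}^{o}(9) \simeq \mathcal{H}_4$ over ${\mathbb Q}(\sqrt{-3})$, while $H_{C4}(1), H_{C4}(2), H_{C4}(4), H_{C4}(5), H_{C4}(6)$ admit explicit diagonal models in the relevant subrings of ${\mathbb Q}(i)$ or ${\mathbb Q}(\sqrt{-3})$. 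The main obstacle I expect lies in Step 4, the abelian bookkeeping: several distinct $(r,s)$-branches (for instance $(4,2)$ with $h_o = iI_2$ scalar versus $(2,2)$ with $h_o$ a reflection) can yield isomorphic groups, and one must check that no spurious new group appears and that no genuine group is missed. The arithmetic constraints (namely $iI_2 \in GL(2,R) \Leftrightarrow R = {\mathbb Z}[i]$ and $e^{\pi i/3}I_2 \in GL(2,R) \Leftrightarrow R = \mathcal{O}_{-3}$) keep these branches disjoint and pin down the ring $R$ in each item of the list.
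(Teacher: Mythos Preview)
Your approach is essentially the paper's: split by the value of $r/s$ via Lemma~\ref{HK346Necessary}, dispose of the cyclic case $r/s=4$, and for $r/s\in\{1,2\}$ separate abelian from non-abelian via Lemma~\ref{HK346Necessary}(iii)--(iv), using simultaneous diagonalization in the abelian branch. Two points need sharpening.

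First, in the abelian branch your ``running through the admissible $(r,s)$'' omits $(6,3)$ and $(6,6)$. These must be treated: for $(6,3)$ the scalar option $h_o=e^{\pi i/3}I_2$ collapses to $H_{C4}(5)$ via $-I_2=g^2$, while every non-scalar $h_o$ with $s\in\{3,6\}$ is excluded by the mechanism you correctly identify elsewhere---commutation forces $g$ diagonal in the $h_o$-eigenbasis with entries $\pm i$, but $F_{-3}^{(s,r)}={\mathbb Q}(\sqrt{-3})$ does not contain $i$. The same obstruction kills $(6,6)$ entirely. The paper carries out this check case by case; your sketch names the mechanism but skips these two instances. Likewise, for $(2,2)$ abelian the same diagonalization is what forces $R=R_{-1,f}$ (since here $F_{-d}^{(2,2)}={\mathbb Q}(\sqrt{-d})$ must contain $i$); you should say so, since it is part of the statement.

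Second, your realization paragraph misattributes $H^o_{C4}(7)$. Lemma~\ref{HK346Attained}(iii) constructs the \emph{non-abelian} group $\mathcal{H}'_4$, which is $H_{C4}(8)$; the abelian $H_{C4}(7)\simeq{\mathbb C}_2\times{\mathbb C}_8$ is not isomorphic to $\mathcal{H}'_4$ and is not covered by that lemma. The paper instead realizes $H^o_{C4}(7)$ directly as the diagonal subgroup $\langle {\rm diag}(i,-i),\ {\rm diag}(e^{3\pi i/4},e^{-\pi i/4})\rangle<GL(2,{\mathbb Q}(\sqrt 2,i))$, after verifying $\langle ig\rangle\cap\langle h_o\rangle=\{I_2\}$.
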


\begin{proof}

If $\frac{r}{s} =4$ then either $(s,r) = (2,8)$ and $H \simeq H_{C4} (1)$ or $(s,r) = (3,12)$ and $H \simeq H_{C4}(4)$. By Proposition \ref{ListS2} there exists an element $h_o \in GL(2, \mathcal{O}_{-2})$ of order $8$ with $\det (h_o) = -1$. Proposition \ref{ListS3+} provides an example of $h_o \in GL(2, \mathcal{O}_{-3})$ of order $12$ with $\det( h_o) = e^{\frac{ 2 \pi i}{3}}$. There remain to be considered the cases with $\frac{r}{s} \in \{ 1, 2 \}$. According to Lemma \ref{HK346Necessary}, the non-abelian $H$ under consideration are isomorphic to $H_{C4} (3)$, $H_{C4}(8)$ or $H_{C4}(9)$. By Lemma \ref{HK346Attained} (i) there is a subgroup $H^{o} _{C4} (3) < GL(2, {\mathbb Q} ( \sqrt{-d}))$, conjugate to $H_{C4}(3)$. Lemma \ref{HK346Attained} (iii) provides an example of $S^{-1} H_{C4} (8) S = H^{o} _{C4} (8) < GL(2, {\mathbb Q}( \sqrt{2},i))$, while Lemma \ref{HK346Attained}(ii) justifies the existence of $S^{-1} H_{C4} (9) S = H^{o} _{C4} (9) < GL(2, {\mathbb Q} (\sqrt{-3}))$.

There remain to be classified the non-cyclic abelian groups $H = [ H \cap SL(2,R)] \langle h_o \rangle$ with $H \cap SL(2,R) \simeq {\mathbb C}_4$, $\langle h_o \rangle \simeq {\mathbb C} _r$, $\det(h_o) = e^{\frac{ 2 \pi i}{3}}$ for $s \in \{ 2, 3, 4, 6 \}$, $r \in \{ s, 2s \}$.

If $r=s=2$ then by Proposition \ref{ListS2}, the eigenvalues of $h_o$ are $\lambda _1 (h_o) = -1$ and $\lambda _2 (h_o) =1$. There exists a matrix $S \in GL(2, {\mathbb Q} ( \sqrt{-d})$, such that
\[
D_o = S^{-1} h_o S = \left(  \begin{array}{rr}
-1  &  0  \\
0  &  1
\end{array}  \right).
\]
Proposition \ref{ListK} establishes that $g \in SL(2,R)$ is of order $4$ exactly when $\tr(g) =0$. The trace and the determinant are invariant under conjugation, so that
\[
D = S^{-1} g S = \left( \begin{array}{rr}
a  &  b  \\
c  &  -a
\end{array}  \right) \in SL( 2, {\mathbb Q} (\sqrt{-d})).
\]
The commutation
\[
D D_o = \left( \begin{array}{rr}
-a  &  b  \\
-c  &  -a
\end{array} \right) =
\left( \begin{array}{rr}
-a  &  -b  \\
c  &  -a
\end{array}  \right) = D_oD
\]
holds only when $b=c=0$ and
\[
D = \pm  \left( \begin{array}{rr}
i  &  0  \\
0  &  -i
\end{array} \right).
\]
Bearing in mind that $D \in SL(2, {\mathbb Q} ( \sqrt{-d}))$, one concludes that $i \in {\mathbb Q}(\sqrt{-d})$, whereas $d=1$ and $R = R_{-1,f}$. The matirces
\[
g = \left( \begin{array}{rr}
i  &  0  \\
0  &   -i
\end{array}  \right), \ \
h_o = \left( \begin{array}{rr}
-1  &  0  \\
0  &  1
\end{array} \right) \in GL(2, {\mathbb Z}[i])
\]
generate a subgroup of $GL(2, {\mathbb Z}[i])$, isomorphic to $H_{C4}(2)$.

For $s=2$  and $r=4$ one has $R = {\mathbb Z}[i]$ and $h_o = \pm I_2$. Bearing in mind that $g \in SL(2, R)$ is of order $4$ if and only if $\tr(g) =0$, let
\[
g = \left( \begin{array}{cc}
a  &  b  \\
c  &  d
\end{array}  \right) \in SL(2, {\mathbb Z}[o]).
\]
Then
\[gh_o = \pm \left( \begin{array}{rr}
ai  &  bi  \\
ci  &  -ai
\end{array}  \right) \in {\mathbb Z}[i]_{2 \times 2}
\]
has determinant $\det(gh_o) = \det(g) \det(h_o) = \det(h_o) =-1$ and trace $\tr(gh_o) =0$. By Proposition \ref{ListS2}, $gh_o$ has eigenvalues $\lambda _1 (gh_o) = -1$, $\lambda _2 (gh_o) =1$ and $H \simeq H_{C4}(2)$.

If $s=r=3$ then $R = \mathcal{O}_{-3}$ and either $h_o = e^{  - \frac{ 2 \pi i}{3}}$ or $\lambda _1 (h_o) = e^{\frac{ 2 \pi i}{3}}$,
$\lambda _2 ( h_o) =1$. Replacing $e^{ - \frac{2 \pi i}{3}} I_2$ by its inverse, one observes that $H_{C4}(5) = \langle g, e^{ - \frac{ 2 \pi i}{3}} I_2 \rangle < GL(2, \mathcal{O}_{-3})$. If $\lambda _1 (h_o) = e^{\frac{ 2 \pi i}{3}}$, $\lambda _2 (h_o) =1$, then there exists $S \in GL(2, {\mathbb Q} ( \sqrt{-3}))$, such that
\[
D_o = S^{-1}h_o S = \left( \begin{array}{cc}
e^{\frac{ 2 \pi i}{3}}  &  0  \\
0  &  1
\end{array}  \right).
\]
The determinant and the trace are invariant under conjugation, so that
\[
D = S^{-1} g S = \left( \begin{array}{rr}
a  &  b  \\
c  & -a
\end{array} \right) \in SL(2, {\mathbb Q}( \sqrt{-3})).
\]
Note that
\[
D D_o = \left( \begin{array}{rr}
e^{\frac{ 2 \pi i}{3}} a   &  b  \\
e^{\frac{2 \pi i}{3}} c  &  -a
\end{array} \right) =
\left( \begin{array}{rr}
e^{\frac{ 2 \pi i}{3}} a  &  e^{\frac{ 2 \pi i}{3}} b  \\
c  &  -a
\end{array}  \right) = D_o D
\]
is equivalent to $b=c=0$ and $1 = \det(g) = \det(D) = - a^2$ specifies that
\[
D = \pm \left( \begin{array}{rr}
i  &  0  \\
0  &  -i
\end{array}  \right).
\]That contradicts $F \in SL(2, {\mathbb Q}( \sqrt{-3}))$ and justifies the non-existence of $H$ with $s=r=3$.

Let $s=3$, $r=6$. According to Proposition \ref{ListS3+}, there follows $R = \mathcal{O}_{-3}$ with $h_o = e^{\frac{ \pi i}{3}} I_2$ or
$\lambda _1 (h_o) = e^{ - \frac{ \pi i}{3}}$, $\lambda _2 (h_o) =1$. If $h_o = e^{\frac{ \pi i}{3}}$ then $H = \langle g, h_o \rangle = \langle g, g^2h_o = - h_o = e^{ - \frac{ 2 \pi i}{3}} I_2 \rangle \simeq H_{C4}(5)$. In the case of $\lambda _1 (h_o) = e^{ - \frac{ \pi i}{3}}$, $\lambda _2 (h_o)=1$ let us choose
 $S \in GL(2, {\mathbb Q} ( \sqrt{-3}))$ with
\[
D_o = S^{-1} h_o S = \left( \begin{array}{cc}
e^{ - \frac{ \pi i}{3}}  &  0    \\
0  &  1
\end{array}  \right) \in GL(2, {\mathbb Q} ( \sqrt{-3})) \ \ \mbox{  and  }
\]
\[
D = S^{-1} gS = \left( \begin{array}{rr}
a  &  b  \\
c  & -a
\end{array}  \right) \in SL(2, {\mathbb Q}( \sqrt{-3})).
\]
Then
\[
D D_o = \left( \begin{array}{rr}
e^{ - \frac{ \pi i}{3}} a  &  b  \\
e^{ - \frac{ \pi i}{3}} c  & -a
\end{array}  \right) =
\left( \begin{array}{rr}
e^{ - \frac{ \pi i}{3}} a   &  e^{ - \frac{\pi i}{3}} b  \\
c  & -a
\end{array} \right)  = D_o D
\]
if and only if
\[
D = \pm \left( \begin{array}{rr}
i  &  0  \\
0  &  -i
\end{array}  \right) \in SL(2, {\mathbb Q} ( \sqrt{-3})),
\]
which is an absurd.

Let us suppose that $s=r=4$. The Proposition \ref{ListS4+} specifies that $R = {\mathbb Z}[i]$ and $\lambda _1 (h_o) = \varepsilon i$, $\lambda _2 (h_o) = \varepsilon$ for some $\varepsilon \in \{ \pm 1 \}$. As far as $g^2 = -I_2 \in H$, there is no loss of generality in assuming that $\lambda _1 (h_o) =i$, $\lambda _2 (h_o) =1$ and $H \simeq H_{C4}(6)$. Note that
\[
g = \left( \begin{array}{rr}
i  &  0  \\
0  & -i
\end{array}  \right), \ \ h_o \in \left( \begin{array}{rr}
i  &   0  \\
0  &  1
\end{array}  \right) \in GL(2, {\mathbb Z}[i])
\]
generate a subgroup, isomorphic to $H_{C4}(6)$.

For $s=4$, $r=8$, Proposition \ref{ListS4+} implies that $R = {\mathbb Z}[i]$ and $\lambda _1 (h_o) = e^{ \frac{ 3 \pi i}{4}}$, $\lambda _2 (h_o) = e^{ - \frac{ \pi i}{4}}$. Note that $(ig)^2 = -g^2 = I_2$, so that $ig \in H = \langle g, h_o \rangle$ is of order $2$ and $h_o^6 = i I_2$, according to $\lambda _1 (h_o^6) = \lambda _1 (h_o)^6 = i$, $\lambda _2 (h_o^6) = \lambda _2 (h_o)^6 = i$. Consequently,
\[
H = \langle g, h_o \rangle = \langle h_o^6g = ig, h_o \rangle = \langle ig \rangle \times \langle h_o \rangle \simeq {\mathbb C}_2 \times {\mathbb C}_8,
\]
as far as $\langle ig \rangle \cap \langle h_o \rangle = \{ I_2 \}$. More precisely, if $ig = h_o^m$, then the second eigenvalue
\[
1 = -i^2 = \lambda _2(ig) = \lambda _2 (h_o^m) = e^{ - \frac{ \pi i m}{4}},
\]
whereas $m \in 8 {\mathbb Z}$ and the first eigenvalue
\[
-1 = \lambda _1 (ig) = \lambda _1(h_o^m) = e^{ \frac{ 3 \pi i m}{4}} = 1,
\]
which is an absurd. Thus, $H \simeq H_{C4}(7)$ and there exists a subgroup
\[
H^{o} _{C4}(7) = \langle \left( \begin{array}{rr}
i  &  0  \\
0  &  -i
\end{array}  \right), \ \ \left( \begin{array}{rr}
e^{\frac{ 3 \pi i}{4}}   &  0  \\
0  &  e^{ - \frac{ \pi i}{4}}
\end{array} \right) \rangle < GL(2, {\mathbb Q} ( \sqrt{2},i)),
\]
conjugate to $H_{C4} (7)$.

Let us assume that $s=r=6$. Then Proposition \ref{ListS6+} applies to provide $R = \mathcal{O}_{-3}$ and
\[
\{ \lambda _1 (h_o), \lambda _2 (h_o) \} = \left \{ e^{\frac{ 2 \pi i}{3}}, e^{ - \frac{ \pi i}{3}} \right \}, \ \
\left \{ e^{\frac{ \pi i}{3}}, 1 \right \}, \ \
\left \{ e^{ - \frac{ 2 \pi i}{3}}, -1 \right \}.
\]
Choose a matrix $S \in GL(2, {\mathbb Q}( \sqrt{-3}))$ with
\[
D_o = S^{-1} h_o S = \left( \begin{array}{cc}
\lambda _1 (h_o)  &  0  \\
0  &  \lambda _2 (h_o)
\end{array}  \right) \in GL(2, {\mathbb Q} ( \sqrt{-3})),
\]
\[
D = S^{-1} g S = \left( \begin{array}{rr}
a  &  b  \\
c  &  -a
\end{array}  \right) \in SL(2, {\mathbb Q} ( \sqrt{-3})).
\]If $\lambda _1 (h_o) \neq \lambda _2 (h_o)$ then
\[
D D_o = \left( \begin{array}{rr}
\lambda _1 (h_o) a    &  \lambda _2 (h_o) b  \\
\lambda _1 (h_o) c  &  - \lambda _2 (h_o) a
\end{array}  \right) =
\left( \begin{array}{rr}
\lambda _1 (h_o)  a  &  \lambda _1(h_o) b  \\
\lambda _2 (h_o) c  &  - \lambda _2 (h_o) a
\end{array}  \right) = D_o D
\]
is tantamount to $b=c=0$, $a = \pm i$ and
\[
D = \pm \left( \begin{array}{rr}
i  &  0  \\
0  & -i
\end{array}  \right) \in SL(2, {\mathbb Q}( \sqrt{-3})
\]
is an absurd.

Similarly, in the case of $s=6$, $r=12$, Proposition \ref{ListS6+} derives that $R = \mathcal{O}_{-3}$ and
\[
\{ \lambda _1 (h_o), \lambda _2 (h_o) \} = \left \{ e^{ \frac{ 2 \pi i}{3}}, e^{ - \frac{ \pi i}{3}} \right \}, \ \
\left \{ e^{\frac{ \pi i}{3}}, 1 \right \}, \ \
\left \{ e^{ - \frac{ 2 \pi i}{3}}, -1 \right \}.
\]
Note that $\lambda _1 (h_o) \neq \lambda _2 (h_o)$ for all the possibilities and apply the considerations for $s=r=6$, in order to exclude the case $s=6$, $r=12$.

\end{proof}

\begin{corollary}   \label{HC6}
Let $H$ be a finite subgroup of $GL(2, R)$,
\[
H \cap SL(2,R) = \langle g \rangle \simeq {\mathbb C}_6
\]
and $h_o \in H$ be an element of order $r$ with $\det(H) = \langle \det(h_o) \rangle \simeq {\mathbb C}_s$ and eigenvalues $\lambda _1 (h_o)$, $\lambda _2 (h_o)$.  Then $H$ is isomorphic to some $H_{C6}(i)$, $1 \leq i \leq 7$, where
\[
H_{C6}(1) = \langle h_o \rangle \simeq {\mathbb C}_{12}
\]
with $R = {\mathbb Z}[i]$, $\lambda _1 (h_o) = e^{\frac{ \pi i}{6}}$, $\lambda _2 (h_o) = e^{\frac{ 5 \pi i}{6}}$,
\[
H_{C6}(2) = \langle g \rangle \times \langle h_o \rangle \simeq {\mathbb C}_6 \times {\mathbb C}_{12}
\]
with $R = \mathcal{O}_{-3}$ or $R  = R_{-3,2}$, $\lambda _1 (h_o) = -1$, $\lambda _2 (h_o) = 1$,
\[
H_{C6}(3) = \langle g, h_o \ \ \vert \ \  g^3 = -I_2, \ \ h_o^2 = I_2, \ \ h_ogh_o^{-1} = g^{-1} \rangle \simeq \mathcal{D}_6
\]
is the dihedral group of order $12$, $\lambda _1 (h_o) = -1$, $\lambda _2 ( h_o) =1$,
\[
H_{C6} (4) = \langle g \rangle \times \langle e^{\frac{ 2 \pi i}{3}} I_2 \rangle \simeq {\mathbb C}_6 \times {\mathbb C}_3
\]
with $R = \mathcal{O}_{-3}$ and $\forall g \in SL(2, \mathcal{O}_{-3})$ of $\tr(g) =1$,
\[
H_{C6}(5) = \langle g, h_o \ \ \vert \ \  g^3 = h_o^4 = -I_2, \ \ h_og h_o^{-1} = g^{-1} \rangle
\]
of order $24$ with $R = {\mathbb Z}[i]$, $\lambda _1 (h_o) = e^{\frac{ 3 \pi i}{4}}$, $\lambda _2 (h_o) = e^{ - \frac{ \pi i}{4}}$,
\[
H_{C6}(6) = \langle g, h_o \ \ \vert \ \ g^3 = -I_2, \ \ h_o ^6 = I_2, \ \ h_o g h_o^{-1} = g^{-1} \rangle
\]of order $36$ with $R = \mathcal{O}_{-3}$, $\lambda _1 (h_o) = e^{\frac{ 2 \pi i}{3}}$, $\lambda _2 (h_o) = e^{ - \frac{ \pi i}{3}}$,
\[
H_{C6}(7) = \langle g \rangle \times \langle h_o \rangle \simeq {\mathbb C}_6 \times {\mathbb C}_6
\]
of order $36$ with $R = \mathcal{O}_{-3}$, $\lambda _1 (h_o) = e^{\frac{ 2 \pi i}{3}}$, $\lambda _2 (h_o) = e^{ - \frac{ \pi i}{3}}$.

There exist subgroups
\[
H_{C6}(1) < GL(2, {\mathbb Z}[i]), \ \ H_{C6}(2), H_{C6}(4), H_{C6}(7) < GL(2, \mathcal{O}_{-3}),
\]
as well as subgroups
\[
H^{o} _{C6} (3) < GL(2, {\mathbb Q} ( \sqrt{-d})), \ \ H^{o}_{C6}(5) < GL(2, {\mathbb Q} ( \sqrt{2},i)),
\]
\[
H^{o}_{C6}(6) < GL(2, {\mathbb Q} ( \sqrt{-3}))
\]
with $H^{o}_{C6} (j) \simeq H_{C6}(j)$ for $j \in \{ 3, 5, 6 \}$.
\end{corollary}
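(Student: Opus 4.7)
The strategy parallels the proofs of Corollaries \ref{HC3} and \ref{HC4}. I would first invoke Lemma \ref{HK346Necessary}(i) with $t=6$: since $\frac{r}{s}$ must divide $6$, the admissible pairs $(s,r)$ are those with $\frac{r}{s}\in\{1,2,3,6\}$, subject to the constraints $s=2\Rightarrow \frac{r}{s}\in\{1,2,3,6\}$, $s=3\Rightarrow \frac{r}{s}\in\{1,2\}$, $s=4\Rightarrow \frac{r}{s}\in\{1,2\}$ and $s=6\Rightarrow \frac{r}{s}=1$. By Lemma \ref{HK346Necessary}(ii), the single case $\frac{r}{s}=6$ occurs only for $(s,r)=(2,12)$, is realized by $H=\langle h_o\rangle\simeq\mathbb{C}_{12}$, and Proposition \ref{ListS2} forces $R={\mathbb Z}[i]$ with the eigenvalues listed for $H_{C6}(1)$. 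The realization $H_{C6}(1)<GL(2,{\mathbb Z}[i])$ was already exhibited in the proof of Proposition \ref{ListS2}.

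Next I would dispose of the non-abelian cases. By Lemma \ref{HK346Necessary}(iv) with $t=6$, the non-abelian possibilities are exactly $(r,s)\in\{(2,2),(8,4),(6,6)\}$, each forcing the eigenvalues $\lambda_1(h_o)=ie^{\pi i/s}$, $\lambda_2(h_o)=-ie^{\pi i/s}$. These yield, respectively, the dihedral group $H_{C6}(3)\simeq\mathcal{D}_6$, the dicyclic-type group $H_{C6}(5)$ of order $24$, and the group $H_{C6}(6)$ of order $36$. Existence of conjugate realizations $H^o_{C6}(3)<GL(2,{\mathbb Q}(\sqrt{-d}))$, $H^o_{C6}(5)<GL(2,{\mathbb Q}(\sqrt{2},i))$ and $H^o_{C6}(6)<GL(2,{\mathbb Q}(\sqrt{-3}))$ is provided by the three parts of Lemma \ref{HK346Attained} applied with $t=6$, once one checks (as in the proofs of Corollaries \ref{HC3}, \ref{HC4}) that the generator $g$ of $\mathbb{C}_6$ can be chosen with the required trace and appropriate entries in the ambient field.

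It remains to classify the abelian non-cyclic $H=\langle g\rangle\langle h_o\rangle$ with $\frac{r}{s}<6$. In each such case $g$ and $h_o$ commute, hence are simultaneously diagonalizable by some $S\in GL(2,F_{-d}^{(s,r)})$ (Lemma \ref{DiagonalizationField}), and the analysis reduces to recognizing whether the product $\langle g\rangle\langle h_o\rangle$ is cyclic or a direct product, and whether the ring $R$ accommodates the required eigenvalues. Running through the residual pairs $(s,r)\in\{(2,2),(2,4),(2,6),(3,3),(3,6),(4,4),(4,8),(6,6)\}$: the cases $(2,2)$ produce $H_{C6}(2)\simeq\mathbb{C}_6\times\mathbb{C}_2$ (with the eigenvalues of Proposition \ref{ListS2} on $R=\mathcal{O}_{-3}$ or $R_{-3,2}$); the case $(3,3)$ produces $H_{C6}(4)\simeq\mathbb{C}_6\times\mathbb{C}_3$ via the scalar generator $e^{2\pi i/3}I_2$ and Proposition \ref{ListS3+}; and the case $(6,6)$ with the eigenvalue pair $\{e^{2\pi i/3},e^{-\pi i/3}\}$ from Proposition \ref{ListS6+} gives $H_{C6}(7)\simeq\mathbb{C}_6\times\mathbb{C}_6$. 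All other residual $(s,r)$ pairs I would rule out exactly as in Corollary \ref{HC4}: after simultaneous diagonalization the commutation equations force $g$ to be a scalar multiple of $\mathrm{diag}(i,-i)$, which lies in $SL(2,R)$ only when the relevant base ring contains $i$, producing a contradiction with the hypothesis on $\det(H)$, on $R$, or on the eigenvalue list given by the appropriate proposition in Section 2.

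The main obstacle will be this last bookkeeping step: carefully eliminating the abelian pairs $(s,r)\in\{(2,4),(2,6),(3,6),(4,4),(4,8)\}$ by matching the simultaneous-diagonalization constraint $\mathrm{tr}(g)=1$ (required for $g\in SL(2,R)$ to have order $6$ by Proposition \ref{ListK}) against the explicit eigenvalue lists in Propositions \ref{ListS2}--\ref{ListS6-}. Each case requires one to verify that the resulting diagonal $D=S^{-1}gS$ cannot have both the prescribed trace and entries that lie in $F_{-d}^{(s,r)}\cap Q(R)(\sqrt{-3})$; the hardest subcase is $(4,8)$, where the eigenvalues of $h_o$ are primitive eighth roots and one must show that demanding $[g,h_o]=I_2$ together with $\mathrm{tr}(g)=1$ is inconsistent with $g\in SL(2,{\mathbb Z}[i])$.
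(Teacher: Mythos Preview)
Your overall architecture matches the paper's: split into the cyclic case $r/s=6$, the non-abelian cases via Lemma~\ref{HK346Necessary}(iv), and the abelian non-cyclic remainder. The first two parts are fine. The gap is in your treatment of the abelian remainder.

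First, a mechanical error: for $t=6$ the element $g$ has $\tr(g)=1$ (Proposition~\ref{ListK}), so simultaneous diagonalization forces $D=S^{-1}gS=\mathrm{diag}(e^{\pi i/3},e^{-\pi i/3})$, \emph{not} $\mathrm{diag}(i,-i)$. The contradiction you invoke is therefore ``$e^{\pi i/3}\notin F_{-d}^{(s,r)}$'', which only fires when the ambient field avoids $\sqrt{-3}$.

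This matters because three of the pairs you propose to ``rule out'' do \emph{not} lead to contradictions; they collapse to groups already on the list. For $(s,r)=(2,4)$ one has $h_o=\pm iI_2$ (Proposition~\ref{ListS2}), a scalar, so there is no diagonalization obstruction at all; the paper observes that $-ig$ has order $12$, whence $H=\langle g,iI_2\rangle=\langle -ig\rangle\simeq\mathbb{C}_{12}\simeq H_{C6}(1)$. For $(s,r)=(2,6)$ the eigenvalues $e^{\pi i/3},e^{2\pi i/3}$ of $h_o$ force $R\in\{\mathcal{O}_{-3},R_{-3,2}\}$, so $e^{\pi i/3}\in F_{-d}^{(2,6)}$ and the diagonal $D$ exists; one then checks $D^2D_o=\mathrm{diag}(-1,1)$, so $H=\langle g,g^2h_o\rangle\simeq H_{C6}(2)$. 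For $(s,r)=(3,6)$ the scalar possibility $h_o=e^{\pi i/3}I_2$ again gives no obstruction, and $g^3(e^{-\pi i/3}I_2)=e^{2\pi i/3}I_2$ shows $H\simeq H_{C6}(4)$; the non-scalar possibility similarly collapses via $g^2h_o=e^{\pi i/3}I_2$. Only $(4,4)$ and $(4,8)$ are genuinely eliminated, precisely because $e^{\pi i/3}\notin\mathbb{Q}(i)$ and $e^{\pi i/3}\notin\mathbb{Q}(\sqrt{2},i)$. Your sketch would need to separate these ``collapsing'' cases from the ``contradictory'' ones rather than treating them uniformly.
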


\begin{proof}

According to Lemma \ref{HK346Necessary}(i), the ratio $\frac{r}{s} \in \{ 1, 2, 3, 6 \}$ is a divisor of $t=6$. If $r = 6s$ then $s=2$ and $H = \langle h_o \rangle \simeq {\mathbb C}_{12} \simeq H_{C6}(1)$ by  Lemma \ref{HK346Necessary} (i), (ii). According to Proposition \ref{ListS2}, the existence of $h_o \in GL(2,R)$ of order $12$ with $\det(h_o) =-1$ requires   $R = {\mathbb Z}[i]$ and there exist $h_o \in GL(2, {\mathbb Z}[i])$ of order $12$ with $\det(h_o) =-1$.

For $r=3s$ Lemma \ref{HK346Necessary}(i) specifies that $s=2$. Combining with Lemma \ref{HK346Necessary}(iv), one concludes that
\[
H= \langle g, h_o \ \ \vert \ \  g^3 = -I_2, \ \ h_o^6 = I_2, \ \ h_og = gh_o \rangle
\]
is a non-cyclic abelian group of order $st=12$. By Proposition \ref{ListS2}, $R = \mathcal{O}_{-3}$ or $R = R_{-3,2}$ and $h_o$ has eigenvalues $\lambda _1 (h_o) = e^{ \frac{ \varepsilon \pi i}{3}}$, $\lambda _2 (h_o) = e^{ \frac{ \varepsilon 2 \pi i}{3}}$ for some $\varepsilon \in \{ \pm 1 \}$.
Due to $\langle g, h_o \rangle = \langle g, h_o^{-1} = h_o^5 \rangle$ by $h_o = (h_o^5)^5$, one can assume that $\lambda _1 (h_o) = e^{\frac{ \pi i}{3}}$, $\lambda _2 (h_o) = e^{ \frac{ 2 \pi i}{3}}$. The commuting matrices $g$ and $h_o$ admit a simultaneous diagonalization
\[
D = S^{-1} g S = \left( \begin{array}{cc}
e^{\frac{ \pi i}{3}}  &  0  \\
0  &  e^{ - \frac{ \pi i}{3}}
\end{array}  \right), \ \
D_o = S^{-1} h_o S = \left( \begin{array}{cc}
e^{\frac{ \pi i}{3}}  &  0  \\
0  & e^{\frac{ 2 \pi i}{3}}
\end{array}  \right)
\]
by an appropriate $S \in GL(2, {\mathbb Q} ( \sqrt{-3}))$. Then
\[
D^2 D_o = \left( \begin{array}{rr}
-1   &  0  \\
0  &  1
\end{array} \right)
\]
implies that $\lambda _1 (g^2h_o) = -1$, $\lambda _2(g^2h_o) = 1$. As a result, $H = \langle g, h_o \rangle = \langle g, g^2h_o \rangle$ is a subgroup of $GL(2, \mathcal{O}_{-3})$, isomorphic to $H_{C6}(2)$.

Form now on, $\frac{r}{s} \in \{ 1, 2 \}$. In particular, $\frac{r}{s} < t=6$ and the non-abelian
\[
H = \langle g, h_o \ \ \vert \ \  g^6 = h_o^r = I_2, \ \ h_og h_o^{-1} = g^{-1} \rangle
\]
occurs for $(r,s) \in \{ (2,2), (8,4), (6,6) \}$, according to Lemma \ref{HK346Necessary}(iv). Namely, for $r=s=2$ one has a dihedral group $H \simeq \mathcal{D}_6 \simeq H_{C6}(3)$ of order $12$, which is realized as a subgroup of $GL(2, {\mathbb Q} ( \sqrt{-d}))$ by Lemma \ref{HK346Attained}(i). In the case of $s=4$ and $r=8$ the group $H \simeq H_{C6} (5)$ of order $24$ is embedded in $GL(2, {\mathbb Q}( \sqrt{2},i))$ by Lemma \ref{HK346Attained}(iii). In the case of $r=s=6$ one has $H \simeq H_{C6}(6)$ of order $36$, realized as a subgroup of $GL(2, {\mathbb Q} ( \sqrt{-3}))$ by Lemma \ref{HK346Attained}(ii).

There remain to be considered the non-cyclic abelian $H$ with $r=2s$, $s \in \{ 2, 3, 4 \}$ or $r=s \in \{ 2, 3, 4, 6 \}$. If $s=2$, $r=4$ then Proposition \ref{ListS2} requires $R = {\mathbb Z}[i]$ and $h_o = \pm iI_2$. Up to an inversion of $h_o$, one can assume that $h_o = iI_2$. Then $H = \langle g, iI_2 \rangle = \langle -g = (iI_2)^2g, iI_2 \rangle$ is generated by the element $-g$ of order $3$ and the scalar matrix $iI_2 \in H$ of order $4$, so that $-ig = (iI_2) (-g) \in H$ of order $12$ generates $H$, $H \simeq H_{C6}(1) \simeq {\mathbb C}_{12}$. (Note that for $g \in SL(2, {\mathbb Z}[i])$ of order $6$ one has $g^3= -I_2$, whereas $(-g)^3 = -g^3 = I_2$. The assumptions $-g = I_2$ and $(-g)^2 = g^2 = I_2$ lead to an absurd. )

Let us assume that $s=3$ and $r=6$. Then Proposition \ref{ListS3+} implies that $R = \mathcal{O}_{-3}$ with $h_o = E^{\frac{ \pi i}{3}} I_2$ or $\lambda _1 (h_o) = e^{ - \frac{ \pi i}{3}}$, $\lambda _2 (h_o) = -1$. Note that $H = \langle g, e^{\frac{ \pi i}{3}} I_2 \rangle = \langle g, e^{ - \frac{\pi i}{3}} I_2 \rangle$ by
$e^{ - \frac{ \pi i}{3}} = \left( e^{ \frac{ \pi i}{3}} \right) ^5$,
 $e^{\frac{\pi i}{3}} = \left( e^{ - \frac{\pi i}{3}} \right) ^5$.
 Further,
 \[
 g^3 \left( e^{ - \frac{ \pi i}{3}} I_2 \right) = \left( e^{ \pi i} I_2 \right) \left( e^{ - \frac{\pi i}{3}} I_2 \right) = e^{\frac{ 2 \pi i}{3}} I_2
  \]
  implies that
  \[
 H = \langle g, e^{ - \frac{ \pi i}{3}} I_2 \rangle = \langle g, g^3 \left( e^{ - \frac{ \pi i}{3}} I_2 \right)  = e^{\frac{ 2 \pi i}{3}} I_2 \rangle = \langle g \rangle \times \langle e^{ \frac{ 2 \pi i}{3}} \rangle \simeq {\mathbb C}_6 \times {\mathbb C}_3 \simeq H_{C6}(4).
 \]
For any $g \in SL(2, \mathcal{O}_{-3})$ of order $6$, there is a subgroup $H_{C6}(4) = \langle g, e^{\frac{2 \pi i}{3}} I_2 \rangle < GL(2, \mathcal{O}_{-3})$.

For $s=4$, $r=8$ there follow $R = {\mathbb Z}[i]$ and $\lambda _1 (h_o) = e^{\frac{ 3 \pi i}{4}}$, $\lambda _2 (h_o) = e^{ - \frac{ \pi i}{4}}$, according to Proposition \ref{ListS4+}. Suppose that $S \in GL(2, {\mathbb Q}( \sqrt{2},i ))$ diagonalizes $h_o$,
\[
D_o = S^{-1} h_oS = \left(  \begin{array}{cc}
e^{\frac{ 3 \pi i}{4}}   &  0  \\
0  &  e^{ - \frac{ \pi i}{4}}
\end{array}  \right) \in GL(2, {\mathbb Q} ( \sqrt{2},i)).
\]
By Proposition \ref{ListK}, $g \in SL(2, {\mathbb Z}[i])$ is of order $6$ exactly when $\tr(g) =1$. Since the determinant and the trace are invariant under conjugation, one has
\[
D = S^{-1} g S  = \left(  \begin{array}{cc}
a  &  b  \\
c  &  1-a
\end{array}  \right) \in SL(2, {\mathbb Q}( \sqrt{2},i).
\]
However,
\[
D D_o = \left( \begin{array}{cc}
e^{\frac{ 3 \pi i}{4}} a   &  e^{ - \frac{ \pi i}{4}} b  \\
e^{\frac{ 3  \pi i}{4}} c   &  e^{ - \frac{ \pi i}{4}} (1-a)
\end{array}  \right) =
\left( \begin{array}{cc}
e^{\frac{ 3 \pi i}{4}}  a  &  e^{\frac{3 \pi i}{4}} b  \\
e^{ - \frac{ \pi i}{4}} c  &  e^{ - \frac{ \pi i}{4}}  (1-a)
\end{array} \right) = D_oD
\]
if and only if $b=c=0$ and $a = e^{\frac{ \varepsilon \pi i}{3}}$ for some $\varepsilon \in \{ \pm 1 \}$. Now,
\[
D = \left( \begin{array}{cc}
e^{\frac{ \varepsilon \pi i}{3}}  &  0  \\
0  &  1 - e^{ \frac{ \varepsilon \pi i}{3}}
\end{array}  \right) \in SL(2, {\mathbb Q} ( \sqrt{2},i))
\]
is an absurd, justifying the non-existence of $H$ with $s=4$ and $r=8$.

In the case of $r=s=2$ Proposition \ref{ListS2} implies that $\lambda _1 (h_o) = -1$, $\lambda _2 (h_o) =1$, so that $H \simeq H_{C6} (2) \simeq {\mathbb C}_6 \times {\mathbb C}_2$.

For $r=s=3$ Proposition \ref{ListS3+} reveals that $R = \mathcal{O}_{-3}$ with $h_o = e^{ - \frac{ 2 \pi i}{3}} I_2$ or $\lambda _1 (h_o) = e^{\frac{2 \pi i}{3}}$, $\lambda _2 (h_o) =1$. It is clear that
\[
H = \langle g, e^{ - \frac{ 2 \pi i}{3}} I_2 = \left( e^{\frac{ 2 \pi i}{3}} I_2 \right) ^2 \rangle =
\langle g, e^{\frac{ 2 \pi i}{3}} I_2 = \left( e^{ - \frac{ 2 \pi i}{3}} I_2 \right) ^2 \rangle \simeq H_{C6}(4) \simeq {\mathbb C}_3 \times {\mathbb C}_3.
\]
If $\lambda _1 (h_o) = e^{\frac{ 2 \pi i}{3}}$, $\lambda _2 (h_o) =1$ then the commuting matrices $g$ and $h_o$ admit a simultaneous diagonalization
\[
D = S^{-1} g S = \left( \begin{array}{cc}
e^{\frac{ \pi i}{3}} &  0  \\
0  &  e^{ - \frac{ \pi i}{3}}
\end{array} \right), \ \
D_o = S^{-1} h_o S = \left( \begin{array}{cc}
e^{\frac{ 2 \pi i}{3}}   &  0  \\
0  &  1
\end{array}  \right) \in GL(2, {\mathbb Q} ( \sqrt{-3}))
\]
by an appropriate $S \in GL(2, {\mathbb Q} ( \sqrt{-3}))$. Then $D^2 D_o = e^{ - \frac{ 2 \pi i}{3}} I_2$, whereas $g^2h_o = S \left( e^{ - \frac{ 2 \pi i}{3}} I_2 \right) S^{-1} = e^{ - \frac{ 2 \pi i}{3}} I_2 $ and
\[
H = \langle g, h_o \rangle = \langle g, g^2h_o = e^{ - \frac{ 2 \pi i}{3}} I_2 \rangle \simeq H_{C6} (4) \simeq {\mathbb C}_6 \times {\mathbb C}_3.
\]

The assumption $r=s=4$ implies that $R = {\mathbb Z}[i]$ and $\lambda _1 (h_o) = \varepsilon i$, $\lambda _2 (h_o) = \varepsilon$ for some $\varepsilon \in \{ \pm 1 \}$, according to Proposition \ref{ListS4+}. Due to $g^3 = -I_2$, one has $\langle g, h_o \rangle = \langle g, -h_o = g^3h_o \rangle$, so that there is no loss of generality in assuming $\varepsilon =1$. If $S \in GL(2, {\mathbb Q}(i))$ conjugates $h_o$ to its diagonal form
\[
D_o = S^{-1} h_o S = \left( \begin{array}{cc}
i  &  0  \\
0  &  1
\end{array}  \right) \in GL(2, {\mathbb Q}9i)),
\]
then
\[
D = S^{-1} g S = \left( \begin{array}{cc}
a  &  b  \\
c  &  1-a
\end{array}  \right) \in SL(2, {\mathbb Q}(i)).
\]
The relation
\[
D D_o = \left( \begin{array}{cc}
ia  &  b  \\
ic  &  1-a
\end{array}  \right) =
\left( \begin{array}{cc}
ia  &  ib  \\
c  & 1-a
\end{array}  \right) = D_o D
\]
implies that
\[
D = \left( \begin{array}{cc}
e^{\frac{ \varepsilon \pi i}{3}}  &  0  \\
0  &  e^{ - \frac{ \varepsilon \pi i}{3}}
\end{array}  \right) \in SL(2, {\mathbb Q}(i)) \ \ \mbox{ for some  } \ \ \varepsilon \in \{ \pm \}.
\]
The contradiction proves the non-existence of $H$ with $r=s=4$.

Finally, for $r=s=6$ Proposition \ref{ListS6+} specifies that $R = \mathcal{O}_{-3}$ and
\[
\{ \lambda _1 (h_o), \lambda _2 (h_o) \} = \left \{ e^{\frac{ 2 \pi i}{3}}, e^{ - \frac{ \pi i}{3}} \right \}, \ \
\left \{ 1, e^{ \frac{ \pi i}{3}} \right \} \ \ \mbox{  or  } \ \
\left \{ e^{ - \frac{ 2 \pi i}{3}}, -1 \right \}.
\]
The commuting matrices $g$ and $h_o$ admit simultaneous diagonalization
\[
D = S^{-1} g S = \left( \begin{array}{cc}
e^{\frac{ \pi i}{3}}   &  0  \\
0  &  e^{ - \frac{\pi i}{3}}
\end{array}  \right),
\]
\[
D_o = S^{-1} h_o S =  \left(  \begin{array}{cc}
\lambda _1(h_o)  &  0  \\
0  &  \lambda _2 (h_o)
\end{array}  \right) \in GL(2, {\mathbb Q}( \sqrt{-3})
\]
by an appropriate $S \in GL(2,{\mathbb Q} ( \sqrt{-3}))$. Let us denote
\[
D_o := \left( \begin{array}{cc}
e^{\frac{ 2 \pi i}{3}}  &  0  \\
0  &  e^{ - \frac{ \pi i}{3}}
\end{array}  \right), \ \
D'_o := \left( \begin{array}{cc}
1  &  0  \\
0  &  e^{\frac{ \pi i}{3}}
\end{array}  \right), \ \
D''_o := \left( \begin{array}{cc}
e^{ - \frac{2 \pi i}{3}} &  0  \\
0  &  -1
\end{array}  \right) \in GL(2, \mathcal{O}_{-3})
\]
and observe that
\[
D^2 D_o = D''_o, \ \ D62 D''_o = D'_o.
\]
By its very definition,
\[
H = \langle D, D_o \rangle < GL(2, \mathcal{O}_{-3})
\]
is isomorphic to $H_{C6}(7)$. The equalities $\langle D, D'_o = D^2 D''_o \rangle = \langle D, D''_o \rangle$ and
$\langle D, D''_o = D^2 D_o \rangle = \langle D, D_o \rangle$ conclude the proof of the proposition.

\end{proof}

%%%%%%%%%%%%%%%%%%%%%%%%%%%%%%%%%%%%%%%%%%%%%%%%%%%%%%%%%%%%%%%%%%%%%%%%%%%%%%%%%%%%%%%%%%%%%%%%%%%%%%%%%%%%%%%%%%%%%%%%%%%%%%%%%%%%%%%%%%%%%%%%%%%%%%%%%%%

\begin{proposition}      \label{HQ8}
Let $H$ be a finite subgroup of $GL(2,R)$,
\[
H \cap SL(2,R) = \langle g_1, g_2 \ \ \vert \ \  g_1^2 = g_2 ^2 = -I_2,  \ \  g_2g_1 = - g_1g_2 \rangle \simeq {\mathbb Q}_8,
\]
and $h_o \in H$ be an element of order $r$ with $\det(H) = \langle \det (h_o) \rangle \simeq {\mathbb C}_s$ and eigenvalues $\lambda _1 (h_o), \lambda _2(h_o)$. Then $H$ is isomorphic to some $H_{{\mathbb Q}8} (i)$, $1 \leq i \leq 9$, where
\[
H_{{\mathbb Q}8} (1) = \langle g_1, g_2, i I_2 \ \ \vert \ \  g_1^2 = g_2 ^2 = -I_2, \ \  g_2g_1 = -g_1g_2 \rangle
\]
is of order $16$ with  $R = {\mathbb Z}[i]$,
\[
H_{{\mathbb Q}8}(2) = \langle g_1, g_2, h_o \ \ \vert \ \  g_1^2 = g_2^2 = -I_2, \ \  h_o ^2 = I_2, \ \  g_2g_1 = - g_1g_2,
\]
\[
  h_o g_1 h_o^{-1} = -g_1, \ \ h_o g_2 h_o^{-1} = -g_2 \rangle
\]
is of order $16$ with  $R = {\mathbb Z}[i]$, $\lambda _1(h_o) = -1$, $\lambda _2(h_o) =1$,
\[
H_{{\mathbb Q}8}(3) = \langle g_1, g_2, h_o \ \ \vert \ \  g_1^2 = g_2^2 = h_o^4 = -I_2, \ \  g_2g_1 = - g_1g_2,
\]
\[
  h_o g_1 h_o^{-1} = g_2, \ \
h_o g_2 h_o^{-1} = -g_1 \rangle
\]
is of order $16$ with  $R = \mathcal{O}_{-2}$, $\lambda _1(h_o) = e^{\frac{\pi i}{4}}$, $\lambda _2(h_o) = e^{\frac{3 \pi i}{4}}$, $h_o ^2 = \pm g_1g_2$,
\[
H_{{\mathbb Q}8}(4) = \langle g_1, g_2, h_o \ \ \vert \ \  g_1^2 = g_2^2 = -I_2, \ \  h_o^2 = I_2, \ \ g_2g_1 = - g_1g_2,
  \]
  \[
  h_o g_1 h_o ^{-1} = g_2, \ \  h_o g_2 h_o^{-1} = g_1 \rangle
\]
is of order $16$ with  $R = R_{-2,f}$, $\lambda _1(h_o) = -1$, $\lambda _2(h_o) =1$,
\[
H_{{\mathbb Q}8}(5) = \langle g_1, g_2 \rangle \times \langle e^{ \frac{ 2 \pi i}{3}} \rangle \simeq {\mathbb Q}_8 \times {\mathbb C}_3
\]
is of order $24$ with  $R = \mathcal{O}_3$,
\[
H_{{\mathbb Q}8}(6) = \langle g_1, g_2, h_o \ \ \vert \ \  g_1^2 = g_2^2 = - I_2, \ \  h_o^3 =I_2, \ \ g_2g_1 = -g_1g_2,
\]
\[
 h_og_1h_o^{-1} = g_2, \ \ h_o g_2 h_o^{-1} = g_1g_2 \rangle
 \]
 is of order $24$ with  $R = \mathcal{O}_{-3}$, $\lambda _1(h_o) = e^{\frac{2 \pi i}{3}}$, $\lambda _2 (h_o) =1$,
 \[
 H_{{\mathbb Q}8}(7) = \langle g_1, g_2, h_o \ \ \vert \ \ g_1^2 = g_2^2 = h_o ^4 = -I_2, \ \  g_2g_1 = - g_1g_2,
 \]
 \[
  h_o g_1 h_o ^{-1} = - g_1, \ \
 h_o g_2 h_o^{-1} = - g_2 \rangle
 \]
 is of order $32$ with  $R = {\mathbb Z}[i]$, $\lambda _1 (h_o) = e^{\frac{3 \pi i}{4}}$, $\lambda _2(h_o) = e^{ - \frac{\pi i}{4}}$,
\[
H_{{\mathbb Q}8}(8) = \langle g_1, g_2, h_o \ \ \vert \ \  g_1^2 = g_2^2 = h_o^4  = - I_2, \ \  g_2g_1 = - g_1g_2,
\]
\[
  h_o g_1 h_o^{-1} =  g_2, \ \
h_o g_2 h_o^{-1} =  g_1 \rangle
\]
is of order $32$ with  $R = {\mathbb Z} [i]$, $\lambda _1 (h_o) = e^{\frac{3 \pi i}{4}}$, $\lambda _2 (h_o) = e^{ - \frac{pi i}{4}}$,
\[
H_{{\mathbb Q}8}(9) = \langle g_1, g_2, h_o \ \ \vert \ \  g_1^2 = g_2 ^2 = - I_2, \ \  h_o ^4 = I_2, \ \  g_2g_1 = - g_1g_2,
\]
\[
 h_o g_1 h_o ^{-1} = g_2, \ \
h_o g_1 h_0 ^{-1} = g_2 \rangle
\]
is of order $32$ with  $R = {\mathbb Z}[i]$, $\lambda _1 (h_o) =i$, $\lambda _2 (h_o) =1$.

There exist subgroups
\[
H_{{\mathbb Q}8}(1), \ \  H_{{\mathbb Q}8}(2), \ \  H_{{\mathbb Q}8}(9)  < GL(2, {\mathbb Z}[i]), \ \
_{{\mathbb Q}8}(5) < GL(2, \mathcal{O}_{-3}),
 \]
 as well as  subgroups
 \[
 H_{{\mathbb Q}8}^{o}(4) < GL(2, {\mathbb Q}(\sqrt{-2})), \ \ H_{{\mathbb Q}8}^{o}(6) < GL(2, {\mathbb Q}( \sqrt{-3})),
 \]
 \[
 H_{{\mathbb Q}8}^{o}(3), \ \ H_{{\mathbb Q}8}^{o}(7), \ \ H_{{\mathbb Q}8}^{o}(8) < GL(2, {\mathbb Q}( \sqrt{2}, i)),
  \]
  such that  $H_{{\mathbb Q}8}^{o}(j) \simeq H_{{\mathbb Q}8}(j)$ for $j \in \{ 3, 4, 6, 7, 8 \}$.
\end{proposition}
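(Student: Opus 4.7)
The plan is to follow the same template that produced Corollaries \ref{HC3}, \ref{HC4}, \ref{HC6}, but with the non-abelian normal subgroup $\mathbb{Q}_8$ in place of a cyclic $\mathbb{C}_t$. By Lemma \ref{HKh}(iv) the order $|H|=8s$, and by Lemma \ref{HKh}(iii) the element $h_o^s$ sits in $\mathbb{Q}_8$, so it has order $1$, $2$ or $4$; thus $r/s\in\{1,2,4\}$. By Lemma \ref{StructureH} the isomorphism type of $H$ is determined by $r$, by $h_o^s\in\mathbb{Q}_8$, and by the conjugation action
\[
\varphi_o:\langle h_o\rangle\longrightarrow\operatorname{Aut}(\mathbb{Q}_8),\qquad \varphi_o(h_o)(x)=h_ox h_o^{-1}.
\]
So the proof reduces to a finite enumeration in three parameters: $s\in\{2,3,4,6\}$, $r/s\in\{1,2,4\}$, and the conjugacy class of $\varphi_o(h_o)$ inside $\operatorname{Aut}(\mathbb{Q}_8)\simeq S_4$.

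First I would classify the admissible $\varphi_o(h_o)$. Since $\operatorname{Inn}(\mathbb{Q}_8)\simeq (\mathbb{Z}/2)^2$ acts on the three cyclic subgroups $\langle g_1\rangle,\langle g_2\rangle,\langle g_1g_2\rangle$ by the Klein four action on pairs $\{\pm g_i\}$, and the outer part $\operatorname{Out}(\mathbb{Q}_8)\simeq S_3$ permutes these three cyclic subgroups, every automorphism is labelled by an element of $S_3$ (a permutation of $\{\langle g_1\rangle,\langle g_2\rangle,\langle g_1g_2\rangle\}$) together with a sign choice. This produces precisely the four conjugacy patterns appearing in the statement: trivial action ($H_{\mathbb{Q}8}(1)$, $H_{\mathbb{Q}8}(5)$); the inner involution $g_i\mapsto -g_i$ ($H_{\mathbb{Q}8}(2)$, $H_{\mathbb{Q}8}(7)$); the transposition $g_1\leftrightarrow g_2$ ($H_{\mathbb{Q}8}(4)$, $H_{\mathbb{Q}8}(8)$, $H_{\mathbb{Q}8}(9)$) or its signed variant $g_1\mapsto g_2,\ g_2\mapsto -g_1$ ($H_{\mathbb{Q}8}(3)$); and the $3$-cycle $g_1\mapsto g_2,\ g_2\mapsto g_1g_2$ ($H_{\mathbb{Q}8}(6)$). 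The order of $\varphi_o(h_o)$ must divide $r$, which restricts $r\in\{2,4,8,12\}$ and $\det(h_o)$ simultaneously, via the classification of section 2.

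Next I would impose the trace/eigenvalue restrictions coming from section 2. If $\varphi_o(h_o)\neq\operatorname{Id}$, pick $g_i$ with $h_og_i\neq g_ih_o$. Since $\varphi_o(h_o)^2$ always lies in $\operatorname{Inn}(\mathbb{Q}_8)$, a short check shows that in every non-trivial pattern $h_o^2$ commutes with some $g_i$ having two distinct eigenvalues; Lemma \ref{CommuteSquares} then forces $\operatorname{tr}(h_o)=0$ (this is the argument already used in Lemma \ref{HK346Necessary}(iv)). Combining $\operatorname{tr}(h_o)=0$ with Propositions \ref{ListS2}--\ref{ListS3-} gives the precise eigenvalue pairs written in the statement, selects the endomorphism ring $R$, and pins down the central square $h_o^s\in\{\pm I_2,\pm g_1g_2\}$ in the remaining cases where $r/s=2$. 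The abelian cases ($\varphi_o(h_o)=\operatorname{Id}$) are a direct product $\mathbb{Q}_8\times\langle h_o\rangle$, further reduced via $-I_2\in\mathbb{Q}_8$ exactly as in the proofs of Corollaries \ref{HC3}--\ref{HC6}.

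Finally, for each surviving triple $(s,r,\varphi_o(h_o))$ I would exhibit an explicit realization. When $\varphi_o(h_o)$ is inner, trivial or given by the $3$-cycle (which is already realized by $g_3\in K_8$ of Proposition \ref{K}), one obtains subgroups of $GL(2,\mathbb{Z}[i])$ or $GL(2,\mathcal{O}_{-3})$ directly from the models of $\mathbb{Q}_8$ and $SL(2,\mathbb{F}_3)$ constructed in Proposition \ref{K}. For the transposition patterns, I would diagonalize $h_o$ over the field $F_{-d}^{(s,r)}$ of Lemma \ref{DiagonalizationField} and take $g_1,g_2$ of the Klein--Hamilton form $\bigl(\begin{smallmatrix}a&b\\c&-a\end{smallmatrix}\bigr)$, solving the anti-commutation system (\ref{Q8System}) together with the swap relation $D_oD_1D_o^{-1}=D_2$; this yields $H_{\mathbb{Q}8}^o(3),H_{\mathbb{Q}8}^o(4),H_{\mathbb{Q}8}^o(7),H_{\mathbb{Q}8}^o(8)$ inside $GL(2,\mathbb{Q}(\sqrt{2},i))$ or $GL(2,\mathbb{Q}(\sqrt{-2}))$. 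The main obstacle is precisely this last step: keeping track, in each of the nine cases, of whether the explicit solution of the anti-commutation and swap relations actually lies in $R_{2\times2}$ or only in a larger extension $F_{-d}^{(s,r)}$; as in Proposition \ref{K}, for several of the dihedral/semi-dihedral cases one has to content oneself with realizations over $\mathbb{Q}(\sqrt{2},i)$, and leave the question of realization over a given $R$ open.
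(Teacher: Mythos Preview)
Your overall strategy matches the paper's: split on the conjugation action of $h_o$ on $\mathbb{Q}_8$ (the paper's Cases A, B, C are exactly your identity/inner, transposition/4-cycle, and 3-cycle patterns in $\operatorname{Out}(\mathbb{Q}_8)\simeq S_3$), then pin down the eigenvalues of $h_o$ via the lists in section~2, and finally exhibit explicit matrix models. The use of Lemma~\ref{StructureH} and Lemma~\ref{HKh} is also the same.

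There is, however, a genuine gap in your trace argument. You assert that ``$\varphi_o(h_o)^2$ always lies in $\operatorname{Inn}(\mathbb{Q}_8)$'' and hence Lemma~\ref{CommuteSquares} forces $\operatorname{tr}(h_o)=0$ in every non-trivial pattern. This is false for the 3-cycle pattern $g_1\mapsto g_2\mapsto g_1g_2\mapsto g_1$: here $\varphi_o(h_o)$ has order~3 in $\operatorname{Out}(\mathbb{Q}_8)$, so $\varphi_o(h_o)^2$ is the inverse 3-cycle, not inner, and $h_o^2$ commutes with no $g_i$. Indeed in $H_{\mathbb{Q}8}(6)$ one has $\lambda_1(h_o)=e^{2\pi i/3}$, $\lambda_2(h_o)=1$, so $\operatorname{tr}(h_o)=1+e^{2\pi i/3}\neq 0$. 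The paper handles this case (its Case~C) by a completely different route: it first observes that the 3-cycle forces $3\mid s$, hence $s\in\{3,6\}$ and $R=\mathcal{O}_{-3}$; it then restricts to the three eigenvalue pairs allowed by Proposition~\ref{ListS3+}, diagonalizes $h_o$, and solves the swap relation $D_oD_1D_o^{-1}=D_2$ together with the $\mathbb{Q}_8$ relations directly, obtaining $a_1=\pm\frac{\sqrt{-3}}{3}$ and the explicit $D_1,D_2$ that realize $H^o_{\mathbb{Q}8}(6)$ over $\mathbb{Q}(\sqrt{-3})$. Your sketch would go through once you replace the blanket ``$\operatorname{tr}(h_o)=0$'' claim with this separate treatment of the order-3 outer action; for the remaining patterns (inner involution, transposition, 4-cycle) your Lemma~\ref{CommuteSquares} argument is valid and agrees with what the paper does via Lemma~\ref{HK346Necessary}(iv).
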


\begin{proof}

According to Lemmas \ref{HKh} and \ref{StructureH}, the group $H = \langle g_1, g_2 \rangle \langle h_o \rangle$ with $\det(H) = \langle \det (h_o) \rangle \simeq {\mathbb C}_s$ is completely determined by the order $r$ of $h_o$ and the elements $x_j = h_o g_j h_o ^{-1} \in \langle g_1, g_2 \rangle$, $1 \leq j \leq 2$ of order $4$. Bearing in mind that $\langle g_1, g_2 \rangle ^{(4)} = \{ \pm g_1, \pm g_2, \pm g_1g_2 \}$, let us split the considerations into Case A with $x_j \in \{ \pm g_j \}$ for $1 \leq j \leq 2$, Case B with $h_o g_1 h_o^{-1} = g_2$, $h_o g_2 h_o ^{-1} = \varepsilon g_1$ for some $\varepsilon = \pm 1$ and Case C with $h_o g_1 h_o^{-1} = g_2$, $h_o g_2 h_o ^{-1} = \varepsilon g_1 g_2$ for some $\varepsilon = \pm 1$.

In the case A, let us distinguish between Case A1 with $x_j = h_o g_j h_o^{-1} = g_j$ for $\forall 1 \leq j \leq 2$ and  Case A2 with $x_k = h_o g_k h_o^{-1} = - g_k$ for some $k \in \{ 1, 2 \}$. Note that if $h_o g_j = g_j h_o$ for $\forall 1 \leq j \leq 2$ then $h_o \in H$ is a scalar matrix. Indeed, if $h_o$ has diagonal form
\[
D_o = S^{-1} h_o S =
\left( \begin{array}{cc}
\lambda _1  &  0  \\
0  &  \lambda _2
\end{array}  \right)
\]
for some $S \in GL(2, {\mathbb Q} ( \sqrt{-d}, \lambda _1))$ and
\[
D_j = S^{-1} g_j S =
 \left( \begin{array}{rr}
a_j  &  b_j  \\
c_j  & -a_j
\end{array}  \right) \in SL(2, {\mathbb Q} ( \sqrt{-d}, \lambda _1 )) \ \ \mbox{ for  } \ \ 1 \leq j \leq 2 \ \ \mbox{ then }
\]
\begin{equation}    \label{ConjugacyFormula}
D_o D_j D_o^{-1} = \left( \begin{array}{rr}
a_j  &  \frac{\lambda _1}{\lambda _2} b_j  \\
\mbox{   }  &  \mbox{  } \\
\frac{\lambda _2}{\lambda _1} c_j  &  -a_j
\end{array}  \right)
\end{equation}
coincides with $D_j$ if and only if
\[
\left| \begin{array}{c}
\left( \frac{\lambda _1}{\lambda _2} - 1 \right) b_j =0  \\
\left( \frac{\lambda _2}{\lambda _1} - 1 \right) c_j =0
\end{array}  \right. .
\]
The assumption $\lambda _1 (h_o) = \lambda _1 \neq \lambda _2 = \lambda _2 (h_o)$ implies $b_j=c_j=0$ for $\forall 1 \leq j \leq 2$, so that
\[
D_1 = \pm i \left(  \begin{array}{rr}
1  &  0  \\
0  &  -1
\end{array}  \right) \ \ \mbox{  and  } \ \ D_2 = \left(  \begin{array}{rr}
1  &  0  \\
0  &  -1
\end{array}  \right)
\]
are diagonal. In particular, $D_1$ commutes with $D_2$, contrary to $D_2D_1 = - D_1D_2$. Thus, in the Case A1 with $h_og_j = g_jh_o$ for $\forall 1 \leq j \leq 2$ the matrix $h_o \in H$ is to be scalar. By Propositions \ref{ListS2}, \ref{ListS4+}, \ref{ListS4-}, \ref{ListS6+}, \ref{ListS6-}, \ref{ListS3+}, \ref{ListS3-}, the scalar matrices $h_o \in GL(2,R) \setminus SL(2,R)$ are
\[
h_o = i I_2 \in GL(2, {\mathbb Z}[i]) \ \ \mbox{  of order } \ \ 4,
\]
\[
h_o = e^{ \pm \frac{2 \pi i}{3}}  I_2 \in GL(2, {\mathbb Z}[i]) \ \ \mbox{  of order } \ \ 3 \ \ \mbox{  and  }
\]
\[
h_o = e^{ \pm \frac{ \pi i}{3}} I_2 \in GL(2, {\mathbb Z}[i]) \ \ \mbox{  of order } \ \ 6.
\]
For any subgroup
\[
{\mathbb Q}_8 \simeq \langle g_1, g_2 \ \ \vert \ \ g_1^2 = g_2^2 = -I_2,  \ \ g_2g_1 = - g_1g_2 \rangle < SL(2, {\mathbb Z}[i])
\]
one obtains a group
\[
H_{Q8}(1) = \langle g_1, g_2, i I_2 \ \ \vert \ \  g_1^2 = g_2^2 = - I_2, \ \ g_2g_1 = - g_1 g_2 \rangle < GL(2, {\mathbb Z}[i])
\]
of order $16$. As far as $-I_2 \in H \cap SL(2, R)$, the group $H$ contains $e^{\frac{2 \pi i}{3}} I_2$ if and only if it contains
 $- e^{\frac{2 \pi i}{3}} I_2 = e^{ - \frac{ \pi i}{3}} I_2$.
 Since $\langle g_1, g_2 \rangle \cap \langle e^{\frac{2 \pi i}{3}} I_2 \rangle = \{ I_2 \}$, any finite group $H$ with $e^{\frac{ 2 \pi i}{3}} I_2 \in H$ is a subgroup of $GL(, \mathcal{O}_{-3})$ of the form
\[
H_{Q8}(5) = \langle g_1,g_2 \rangle \times \langle e^{\frac{ 2 \pi i}{2}} I_2 \rangle \simeq {\mathbb Q}_8 \times  {\mathbb C}_3.
\]
These deplete $H = [ H \cap SL(2, R)] \langle h_o \rangle = \langle g_1, g_2 \rangle \langle h_o \rangle \simeq {\mathbb Q}_8{\mathbb C}_s$ of Case A1.

In the Case A2, one can assume that $h_og_1h_o^{-1} = - g_1$. If $h_og_2h_o=g_2$ then $h_o (g_1g_2)h_o^{-1} = - g_1g_2$, so that there is no loss of generality in supposing $h_o g_2 h_o^{-1} = -g_2$. By Lemma \ref{HK346Necessary}(iv), $h_og_1h_o^{-1} = - g_1$ requires $\lambda _1 (h_o) = i e^{\frac{  \pi i}{s}}$, $\lambda ( h_o) = -i e^{\frac{ \pi i}{s}}$, whereas $\frac{\lambda _1(h_o)}{\lambda _2 (h_o)} +  1 = \frac{\lambda _2(h_o)}{\lambda _1(h_o)} + 1 = 0$.  If
\[
D_o = S^{-1} h_o S = \left(  \begin{array}{rr}
i e^{\frac{ \pi i}{s}}   &  0  \\
0  &  -i e^{\frac{ \pi i}{s}}
\end{array}  \right) \in GL(2, {\mathbb Q} ( \sqrt{-d}, i e^{\frac{\pi i}{s}} ))
\]
is a diagonal form of $h_o \in H$ and
\[
D_j = S^{-1} g_j S = \left(  \begin{array}{rr}
a_j  &  b_j  \\
c_j  & -a_j
\end{array}  \right) \in GL(2, {\mathbb Q}( \sqrt{-d}, i e^{\frac{\pi i}{s}})) \ \ \mbox{  for } \ \ 1 \leq j \leq 2,
\]
then $D_o D_jD_o^{-1} = - D_j$ for $1 \leq j \leq 2$ is equivalent to $a_1=a_2=0$. As a result, $b_j \neq 0$ and $c_j = - \frac{1}{b_j}$. Straightforwardly, $D_2D_1 = - D_1D_2$ amounts to $2a_1a_2 + b_1 c_2 + b_2 c_1 = 0$, whereas $\frac{b_2}{b_1} + \frac{b_1}{b_2} = 0$. Denoting $\beta := \frac{b_2}{b_1} \in {\mathbb Q} ( \sqrt{-d}, i e^{\frac{ \pi i}{s}} )$, one computes that $\beta = \pm i \in {\mathbb Q} ( \sqrt{-d}, i e^{\frac{ \pi i}{s}} )$. Then by Lemma \ref{DiagonalizationField} there follows $s=2$ with $d=1$ or $s=4$. For $s=2$ one has $\lambda _1(h_o) = -1$, $\lambda _2(h_o)=1$, so that $h_o \in H$ is of order $2$ and
\[
H = H_{Q8}(2) = \langle g_1, g_2, h_o \ \ \vert \ \  g_1^2 = g_2^2 = -I_2, \ \ h_o^2 = I_2, \ \
\]
\[
 g_2g_1 = - g_1 g_2, \ \ h_og_1h_o^{-1} = - g_1, \ \
h_og_2h_o^{-1} = - g_2 \rangle
\]
is a subgroup of $GL(2, R_{-1,f})$ of order $16$. Note that
\[
h_o = \left( \begin{array}{rr}
-1  &  0  \\
0  &  1
\end{array}  \right), \ \
g_1 = \left( \begin{array}{cc}
0  &  1  \\
-1  &  0
\end{array}  \right) \ \ \mbox{  and  } \ \
g_2 = \left( \begin{array}{rr}
0  &  i  \\
i  &  0
\end{array}  \right)
\]
generate a subgroup of $GL(2, {\mathbb Z}[i])$, isomorphic to $H_{Q8}(2)$.  In the case of $s=4$, the element $h_o \in H$ with eigenvalues $\lambda _1 (h_o) = e^{\frac{ 3 \pi i}{4}}$, $\lambda _2 (h_o) = e^{ - \frac{ \pi i}{4}}$ is of order $8$ and
\[
H = H_{Q8}(7) = \langle g_1, g_2, h_o \ \ \vert \ \ g_1^2 = g_2^2 = h_o^4 = - I_2, \ \ g_2g_1 = - g_1g_2
\]
\[
h_og_1h_o^{-1} = -g_1, \ \ h_og_2h_o^{-1} = - g_2 \rangle
\]
is a subgroup of $GL(2, {\mathbb Z}[]i)$ of order $32$. The matrices
\[
D_o = \left( \begin{array}{rr}
e^{\frac{ 3 \pi i}{4}}  & 0  \\
0  &  e^{ - \frac{ \pi i}{4}}
\end{array}  \right), \ \
D_1 = \left(  \begin{array}{rr}
0  &  1  \\
-1  &  0
\end{array}  \right), \ \
D_2 = \left( \begin{array}{rr}
0  &  i  \\
i  & 0
\end{array}  \right)
\]
generate a subgroup $H^{o} _{Q8}(7)$ of $GL(2, {\mathbb Q}( \sqrt{2},i))$, isomorphic to $H_{Q8}(7)$. That concludes the Case A.

In the Case B, let us observe that $h_og_1h_o^{-1} =g_2$ and $h_og_2h_o^{-1} = \varepsilon g_1$ imply $h_o^2 g_1 h_o^{-2} = \varepsilon g_1$ and $h_o^2 g_2h_o^{-2} = \varepsilon g_2$. If $h_o^2 \in H \cap SL(2, R)$ then $\det(h_o) = \lambda _1(h_o) \lambda _2 (h_o) = -1$. The matrices
\[
D_o = S^{-1} h_o S = \left( \begin{array}{cc}
\lambda _1 (h_o)   &  0  \\
0  &  \lambda _2 (h_o)
\end{array}  \right) \ \ \mbox{  and  } \ \
D_j = S^{-1} g_j S = \left( \begin{array}{rr}
a_j  &  b_j  \\
c_j  &  - a_j
\end{array}  \right)
\]
with $a_j^2 + b_jc_j = -1$, $2a_1a_2 + b_1c_2 + b_2c_1 =0$ satisfy $D_oD_1D_o^{-1} = D_2$ if and only if
\[
D_2 = \left( \begin{array}{rr}
a_1  &  - \lambda _1^2 (h_o) b_1  \\
- \frac{c_1}{\lambda _1^2(h_o)}  &  - a_1
\end{array}  \right).
\]
Then $D_o D_2 D_o^{-1} = \varepsilon D_1$ is equivalent to
\[
\left| \begin{array}{c}
( \varepsilon -1) a_1 = 0 \\
( \varepsilon - \lambda _1 ^4 (h_o)) b_1 = 0 \\
\left( \varepsilon - \frac{1}{\lambda _1^4(h_o)} \right) c_1 = 0
\end{array}  \right. .
\]
According to $\det(D_1) = 1 \neq 0$, there follows $( \varepsilon -1) ( \varepsilon - \lambda _1^4 (h_o)) =0$. In the case of $-1 = \varepsilon = \lambda _1^4(h_o)$, Proposition \ref{ListS2} implies that $R = \mathcal{O}_{-2}$, $h_o$ is of order $8$ and
\[
D_o = S^{-1} h_o S = \left( \begin{array}{cc}

e^{\frac{ \pi i}{4}}   &  0  \\
0  &  e^{\frac{ 3 \pi i}{4}}
\end{array}  \right) \in GL(2, {\mathbb Q} ( \sqrt{2}, i)).
\]
Moreover,
\[
D_1 = \left( \begin{array}{rr}
0  &  b_1  \\
- \frac{1}{b_1}  &  0
\end{array}  \right), \ \
D_2 = \left( \begin{array}{rr}
0  &  - i b_1  \\
- \frac{i}{b_1}   &  0
\end{array}  \right),
\]
so that the subgroup
\[
H_{Q8} (3) = \langle g_1, g_2, h_o \ \ \vert \ \ g_1^2 = g_2^2 = h_o^4 = - I_2, \ \ g_2g_1 = -g_1 g_2,
\]
\[
h_og_1h_o^{-1} = g_2, \ \ h_og_2h_o^{-1} = - g_1 \rangle < GL(2, \mathcal{O}_{-2})
\]
of order $16$ is conjugate to the subgroup
\[
H^{o}_{Q8}(3) = \langle D_o = \left( \begin{array}{rr}
e^{\frac{ \pi i}{4}}  &  0  \\
0  &  e^{\frac{ 3 \pi i}{4}}
\end{array}  \right), \ \
D_1 = \left( \begin{array}{rr}
0  &  1  \\
-1  &  0
\end{array}  \right),
\]
\[
D_2 = \left(   \begin{array}{rr}
0  &  -i  \\
-i  &  0
\end{array} \right) \rangle < GL(2, {\mathbb Q} ( \sqrt{2},i)).
\]
For $\varepsilon =1$ and $\lambda _1 ^4 (h_o) \neq 1$ there follows
\[
D_2 = D_1 = \pm \left( \begin{array}{rr}
i  &  0  \\
0  &  -i
\end{array}  \right),
\]
which contradicts $D_2D_1 = - D_1D_2$. Therefore $\varepsilon =1$ implies $\lambda _1^4 (h_o) =1$ and
\[
D_o = \left( \begin{array}{rr}
1  &  0  \\
0  &  -1
\end{array}  \right)
\]
is of order $2$, since all $h_o \in H$ of order $4$ with $\det(h_o) =-1$ are scalar matrices and commute with $g_1,g_2$. In such a way, one obtains the group
\[
H_{Q8}(4) = \langle g_1, g_2, h_o \ \ \vert \ \ g_1^2 = g_2^2 = - I_2, \ \ h_o^2 = I_2, \ \ g_2g_1 = - g_1g_2,
\]
\[
h_og_1h_o^{-1} = g_2, \ \ h_og_2h_o^{-1} = g_1  \rangle
\]
of order $16$. The matrices
\[
D_1 = \left( \begin{array}{rr}
a_1  &  b_1  \\
c_1  &  -a_1
\end{array}  \right) \ \ \mbox{  and  } \ \
D_2 = \left(  \begin{array}{rr}
a_1  &  -b_1  \\
-c_1  &  -a_1
\end{array} \right)
\]
generate a subgroup of $GL(2, {\mathbb Q}( \sqrt{-d}))$, isomorphic to ${\mathbb Q}_8$ exactly when $a_1 = \pm \frac{\sqrt{-2}}{2} \in {\mathbb Q}( \sqrt{-d})$ and $c_1 = - \frac{1}{b_1}$ for some $b_1 \in {\mathbb Q}( \sqrt{-d})^*$. Therefore $H_{Q8}(4)$ occurs only as a subgroup of $GL(2, R_{-2,f})$ and
\[
D_o = \left( \begin{array}{rr}
1  &  0  \\
\mbox{  }  &  \mbox{  }  \\
0  &  -1
\end{array}  \right), \ \
D_1 = \left(  \begin{array}{rr}
\frac{\sqrt{-2}}{2}   &   1  \\
\mbox{  }  &  \mbox{  }  \\
- \frac{1}{2}  &  - \frac{\sqrt{-2}}{2}
\end{array}  \right), \ \
D_2 = \left(  \begin{array}{rr}
\frac{\sqrt{-2}}{2}  &  -1  \\
\mbox{  }  &  \mbox{  }  \\
\frac{1}{2}  &  - \frac{\sqrt{-2}}{2}
\end{array}  \right)
\]
generate a subgroup $H^{o}_{Q8}(4)$ of $GL(2, {\mathbb Q}( \sqrt{-2}))$, isomorphic to $H_{Q8}(4)$. That concludes the Case B with $h_o^2 \in H \cap SL(2, R)$.

Let us suppose that $h_og_1h_o^{-1} = g_2$, $h_og_2h_o^{-1} = \varepsilon g_1$ with $\det(h_o) \in R^*$ of order $s>2$. Note that $h_o^s \in H \cap SL(2,R) = \langle g_1, g_2 \rangle$ implies $h_o^s g_j h_o^{-s} \in \{ \pm g_j \}$ for $\forall 1 \leq j \leq 2$, so that $s \in \{ 4, 6 \}$ has to be an even natural number. The group
\[
H'= \langle g_1, g_2, h_o^2 \ \ \vert \ \  g_1^2 = g_2^2 = - I_2, \ \ h_o^r = I_2, g_2g_1 = - g_1 g_2,
\]
\[
h_o^2 g_1h_o^{-2} = \varepsilon g_1, \ \ h_o^2 g_2 h_o^{-2} = \varepsilon g_2 \rangle
\]
with $h_o^2 \in GL(2,R) \setminus SL(2,R)$, $H'\cap SL(2,R) = \langle g_1, g_2 \rangle \simeq {\mathbb Q}_8$ is of order $8 \frac{s}{2} \in \{ 16, 24 \}$ and satisfies the assumptions of Case A. Thus, for $\varepsilon =1$ one has $h_o^2 = i I_2$ or $h_o^2 = e^{\frac{ 2 \pi i}{3}} I_2$. If $h_o^2 = iI_2$ then $h_o \in H$ is of order $8$ with $\det(h_o) = \pm i$. Therefore $R = {\mathbb Z}[i]$ and $h_o$ has eigenvalues $\lambda _1 (h_o) = e^{\frac{ 3 \pi i}{4}}$, $\lambda _2 (h_o) = e^{ - \frac{ \pi i}{4}}$ with $\frac{ \lambda _1 (h_o)}{\lambda _2 (h_o)} = \frac{ \lambda _2 (h_o)}{\lambda _1 (h_o)} = -1$. The relations $D_oD_1D_o^{-1} = D_2$, $D_oD_2D_o^{-1} = D_1$ on  the diagonal form $D_o$ of $h_o$ hold for
\[
D_1 = \left(  \begin{array}{rr}
a_1  &  b_1  \\
c_1  & -a_1
\end{array}  \right), \ \
D_2 = \left( \begin{array}{rr}
a_1  &  -b_1  \\
-c_1  &  -a_1
\end{array}  \right) \in SL(2, {\mathbb Q} ( \sqrt{2},i)).
\]
The group $\langle D_1, D_2 \rangle$ is isomorphic to ${\mathbb Q}_8$ if and only if $a_1 = \pm \frac{\sqrt{-2}}{2}$ and $c_1 = - \frac{1}{b_1}$ for some $b_1 \in {\mathbb Q}( \sqrt{2},i)$. In such a way, one obtains the group
\[
H_{Q8}(8) = \langle g_1, g_2, h_o \ \ \vert g_1^2 = g_2^2 = h_o^4 = - I_2, \ \ g_2g_1 = - g_1g_2,
\]
\[
h_og_1h_o^{-1} = g_2, \ \ h_og_2h_o^{-1} = g_1 \rangle
\]
for $R = {\mathbb Z}[i]$. Note that $H_{Q8}(8)$ is of order $32$ and has a conjugate $H^{o}_{Q8}(8) = \langle D_1, D_2, D_o \rangle < GL(2, {\mathbb Q}(\sqrt{2},i))$. If $h_o^2 = e^{\frac{ 2 \pi i}{3}} I_2$ then $R = \mathcal{O}_{-3}$ and $h_o \in H$ is of order $6$ with $\det(h_o) = e^{ \pm \frac{ 2 \pi i}{3}}$. According to $h_og_1h_o^{-1} = g_2 \neq g_1$, $h_o$ is not a scalar matrix, so that $\lambda _1 (h_o) = e^{ - \frac{ \pi i}{3}}$, $\lambda _2 (h_o) =-1$ for $\det(h_o) = e^{ \frac{2 \pi i}{3}}$. Now, $D_oD_1D_o^{-1} =D_2$ is tantamount to
\[
D_2 = \left(   \begin{array}{cc}
a_1  &  e^{\frac{ 2 \pi i}{3}} b_1  \\
\mbox{  }  &  \mbox{  }  \\
e^{ - \frac{ 2 \pi i}{3}} c_1  & -a_1
\end{array}   \right)
\]
and $D_oD_2D_o^{-1} = D_1$ reduces to
\[
\left| \begin{array}{c}
\left( 1 - e^{ - \frac{ 2 \pi i}{3}} \right) b_1 = 0 \\
\left( 1 - e^{\frac{ 2 \pi i}{3}} \right) c_1 =0
\end{array}  \right. .
\]
As a result, $b_1=c_1$ and
\[
D_1 = D_2 = \pm \left( \begin{array}{rr}
i  &  0  \\
0  &  -i
\end{array}  \right)
\]
commute with each other. Thus, there is no group $H$ of Case B with $h_o^2 = e^{\frac{2 \pi i}{3}} I_2$. If $h_o g_1 h_o^{-1} = g_2$, $ h_o g_2 h_o^{-1} = - g_1$ and $h_o^2 \not \in \langle g_1, g_2 \rangle$ then
\[
H'= \langle g_1, g_2, h_o^2 \ \ \vert \ \ g_1^2 = g_2^2 = -I_2, \ \ h_o^r = I_2, \ \ g_2g_1 = - g_1g_2,
\]
\[
h_o^2 g_1 h_o^{-2} = -g_1, \ \ h_o^2 g_2 h_o^{-2} = - g_2 \rangle
\]
is isomorphic to $H_{Q8}(2)$ or $H_{Q8}(7)$, according to the considerations for Case A. More precisely,  if $H'\simeq H_{Q8}(2)$ then $h_o$ of order $4$ has $\det(h_o) = \pm i$ and $R = {\mathbb Z}[i]$. Due to $-I_2 \in \langle g_1, g_2 \rangle$, one can assume that
\[
D_o = \left( \begin{array}{rr}
i   &  0  \\
0  &  1
\end{array} \right).
\]
Then $D_oD_1D_o^{-1} =D_2$ requires
\[
D_2 = \left( \begin{array}{rr}
a_1  &  ib_1  \\
- ic_1  &  -a_1
\end{array}  \right),
\]
so that $D_oD_2D_o^{-1} = - D_1$ results in $a_1=0$. Bearing in mind that $\det(D_1) = \det(D_2) =1$, one concludes that
\[
D_1 = \left( \begin{array}{rr}
0  &  b_1  \\
- \frac{1}{b_1}  &  0
\end{array}  \right), \ \
D_2 = \left( \begin{array}{rr}
0  &  i b_1  \\
\frac{i}{b_1}  &  0
\end{array}  \right).
\]
For $b_1=1$, one obtains a subgroup $\langle D_1, D_2, D_o \rangle$ of $GL(2, {\mathbb Z}[i])$, isomorphic to
\[
H_{Q8}(9) = \langle g_1, g_2, h_o \ \ \vert \ \ g_1^2 = g_2^2 = - I_2, \ \ h_o^4 = I_2, \ \ g_2g_1 = - g_1g_2,
\]
\[ h_o g_1 h_o^{-1} = g_2, \ \ h_o g_2 h_o^{-1} = - g_1 \rangle < GL(2, {\mathbb Z}[i]).
\]
Since $\det(h_o) =i$ is of order $s=4$, the group $H_{Q8}(9)$ is of order $32$.  If $H'= \langle g_1, g_2, h_o^2 \rangle \simeq H_{Q8}(7)$ then $h_o \in H$ is to be of order $16$, since $h_o^2$ is of order $8$. The lack of $h_o \in GL(2, R)$ of order $16$ reveals that the groups $H_{Q8}(3)$, $H_{Q8}(4)$, $H_{Q8}(8)$, $H_{Q8}(9)$ deplete Case B.

There remains to be considered Case C with $h_og_1 h_o^{-1} =g_2$, $h_o g_2 h_o^{-1} = \varepsilon g_1g_2$, $h_o ( g_1g_2) h_o^{-1} = \varepsilon g_1$ for some $\varepsilon = \pm 1$. Note that $h_o^2 g_1 h_o^{-2} = \varepsilon g_1 g_2$, $h_o^2 g_2 h_o^{-2} = g_1$, $h_o^3 g_1 h_o^{-3} = g_1$, $h_o^3 g_2 h_o^{-3} = g_2$ require the divisibility of $s$ by $3$, as far as $\langle g_j \rangle$ are normal subgroups of $\langle g_1, g_2 \rangle$ and $h_o^s \in \langle g_1, g_2 \rangle$. In other words, $s \in \{ 3, 6 \}$ and $R = \mathcal{O}_{-3}$. The non-scalar matrices $h_o \in GL(2, \mathcal{O}_{-3})$ with $\det(h_o) = e^{\frac{ 2 \pi i}{3}}$ have eigenvalues $\{ \lambda _1 (h_o), \lambda _2 (h_o) \} = \left \{ e^{\frac{ 2 \pi i}{3}}, 1 \right \}$, $\left \{ e^{ - \frac{\pi i}{3}}, -1 \right \}$ or $\left \{ e^{\frac{ 5 \pi i}{6}}, e^{ - \frac{ \pi i}{6}} \right \}$. If $h_o$ is of order $3$ or $6$ then $\frac{\lambda _1(h_o)}{\lambda _2(h_o)} = e^{\frac{ 2 \pi i}{3}}$ and $D_o D_1 D_o^{-1} = D_2$ specifies that
\[
D_2 = \left( \begin{array}{rr}
a_1   &  e^{\frac{ 2 \pi i}{3}}  b_1   \\
e^{- \frac{2 \pi i}{3}}  c_1  &  -a_1
\end{array}  \right).
\]
Now, $2a_1a_2 + b_1c_2 + b_2c_1 =0$ reduces to $2a_1^2 = b_1c_1$ and $a_1^2 + b_1c_1 = -1$ requires $a_1 = \pm \frac{ -3}{3}$, $c_1 = - \frac{2}{3b_1}$ for some $b_1 \in {\mathbb Q} (\sqrt{-3})^*$. Replacing, eventually, $D_j$ by $D_j^3$, one has
\[
D_1 = \left( \begin{array}{rr}
\frac{\sqrt{-3}}{3}   &  b_1  \\
\mbox{  }   &  \mbox{  } \\
- \frac{2}{3b_1}   &  - \frac{\sqrt{-3}}{3}
\end{array}  \right), \ \
D_2 = \left( \begin{array}{rr}
\frac{\sqrt{-3}}{3}   &  e^{\frac{ 2 \pi i}{3}} b_1  \\
\mbox{  }   &  \mbox{  } \\
- \frac{2 e^{ - \frac{ 2 \pi i}{3}}}{3 b_1}  &  - \frac{\sqrt{-3}}{3}
\end{array}  \right).
\]
Now,
\[
D_1D_2 = \left( \begin{array}{rrr}
\frac{\sqrt{-3}}{3} &  \mbox{  }  &  e^{ - \frac{2 \pi i}{3}} b_1  \\
\mbox{  } &  \mbox{  }  &  \mbox{  } \\
- \frac{ 2 e^{\frac{ 2 \pi i}{3}}}{3b_1} &  \mbox{  }  &  - \frac{\sqrt{-3}}{3}
\end{array}  \right)
\]
and $D_o D_2 D_o^{-1} = \varepsilon D_1D_2$ holds for $\varepsilon =1$. Thus,
\[
H^{o}_{Q8}(6) = \langle D_1, D_2, D_o \rangle < GL(2, {\mathbb Q} ( \sqrt{-3}))
\]
 is conjugate to
\[
H_{Q8}(6) = \langle g_1, g_2, h_o \ \ \vert \ \ g_1^2 = g_2^2 = - I_2, \ \ h_o^3 = I_2, \ \ g_2g_1 = - g_1g_2
\]
\[
h_o g_1 h_o^{-1} = g_2, \ \ h_o g_2 h_o^{-1} = g_1g_2 \rangle < GL(2, \mathcal{O}_{-3})
\]
of order $24$ with $\lambda _1(h_o) = e^{\frac{2 \pi i}{3}}$, $\lambda _2 (h_o) =1$ or to
\begin{equation}     \label{Repetition}
H = \langle g_1, g_2, h_o \ \ \vert \ \  g_1^2 = g_2^2 = - I_2, \ \ h_o^3 = - I_2,   g_2g_1 = - g_1 g_2,
\end{equation}
\[
 h_o g_1 h_o^{-1} = g_2, \ \ h_o g_2 h_o^{-1} = g_1 g_2 \rangle < GL(2, \mathcal{O}_{-3})
\]
of order $24$ with $\lambda _1 (h_o) = e^{ - \frac{ \pi i}{3}}$, $\lambda _2 ( h_o) = -1$. Due to $ - I_2 \in \langle g_1, g_2 \rangle$, the presence of $h_o \in H$ of order $6$ with $\det(H) = \langle \det(h_o) \rangle \simeq {\mathbb C}_3$ is equivalent to the existence of $- h_o \in H$ of order $3$ with $\det(H) = \langle \det( - h_o) \rangle \simeq {\mathbb C}_3$ and $H$ from (\ref{Repetition}) is isomorphic to $H_{Q8}(6)$. If $h_o$ has diagonal form
\[
D_o = \left( \begin{array}{cc}
e^{\frac{ 5 \pi i}{6}}  &  0  \\
0  &  e^{ - \frac{ \pi i}{6}}
\end{array}  \right) \in GL(2, {\mathbb Q}( \sqrt{-3}))
\]
of order $12$ with $\det(D_o) = e^{\frac{ 2 \pi i}{3}}$, $\frac{\lambda _1 (h_o)}{\lambda _2 (h_o)} = \frac{ \lambda _2 (h_o)}{\lambda _1(h_o)} = -1$, then $D_o D_1 D_o^{-1} = D_2$ implies that
\[
D_2 = \left( \begin{array}{rr}
a_1  & -b_1    \\
-c_1  &  a_1
\end{array} \right)
\]
with $a_1^2 = b_1c_1 = - \frac{1}{2}$. Therefore, $a_1 = \pm \frac{\sqrt{-2}}{2} \in GL(2, {\mathbb Q} ( \sqrt{-3}))$, which is an absurd. If $h_o g_1 h_o^{-1} = g_2$, $h_o g_2 h_o^{-1} = \varepsilon g_1 g_2 $ and $s=6$ then $h_o \in H$ is of order $6$, according to Proposition \ref{ListS6+}. Now $H'' = \langle g_1, g_2, h_o^3 \rangle < GL(2, R)$ with $h_o^3 \not \in \langle g_1, g_2 \rangle$ is subject to Case A with a scalar matrix $h_o \in H$, according to $h_o^3 g_1 h_o^{-3} = g_1$, $h_o^3 g_2 h_o^{-3} = g_2$. If $h_o^3 = iI_2$ then $h_o$ is of order $r=12$. The assumption $h_o^3 = e^{\frac{ 2 \pi i}{3}} I_2$ holds for $h_o$ of order $r=9$. Both contradict to $r=6$ and establish that any subgroup $H < GL(2, R)$ with $H \cap SL(2, R) \simeq {\mathbb Q}_8$ is isomorphic to $H_{Q8} (i)$ for some $1 \leq i \leq 9$.

\end{proof}

\begin{proposition}    \label{HQ12}
Let $H$ be a finite subgroup of $GL(2,R)$,
\[
H \cap SL(2, R) = K_7 = \langle g_1, g_4, \ \ g_1^2 = g_4^3 = - I_2, \ \ g_1 g_4 g_1 ^{-1} = g_4^{-1} \rangle \simeq {\mathbb Q}_{12}
\]
and $h_o \in H$ be an element of order $r$ with $\det(H) = \langle \det( h_o ) \rangle \simeq {\mathbb C}_s$ and eigenvalues $\lambda _1 (h_o)$, $\lambda _2 (h_o)$. Then $H$ is isomorphic to $H_{Q12}(i)$ for some $1 \leq i \leq 10$, where
\[
H_{Q12} (1) = \langle g_1, g_4, h_o = i I_2 \ \ \vert \ \ g_1^2 = g_4^3 = - I_2, \ \ g_1 g_4 g_1^{-1} = g_4^{-1} \rangle
\]
is of order $24$ with $R - {\mathbb Z}[i]$,
\[
H_{Q12}(2) = \langle g_1, g_4, h_o \ \ \vert \ \  g_1^2 = g_4^3 = - I_2, \ \ h_o^6 = I_2, \ \ g_1g_4g_1^{-1} = g_4^{-1}, \ \
\]
\[
h_o g_1 h_o^{-1}  = g_1 g_4, \ \ h_o g_4 h_o^{-1} = g_4 \rangle
\]
of order $24$, with $R = \mathcal{O}_{-3}$, $\lambda _1 (h_o) = e^{\frac{ \pi i}{3}}$, $\lambda _2 (h_o) = e^{\frac{ 2 \pi i}{3}}$,
\[
H_{Q12}(3) = \langle g_1, g_4, h_o \ \ \vert \ \  g_1^2 = g_4^3 = h_o^6 = - I_2, g_1g_4 g_1^{-1} = g_4^{-1},
\]
\[
h_o g_1 h_o^{-1} = g_1 g_4^2, \ \ h_o g_4 h_o^{-1} = g_4 \rangle
\]
is of order $24$ with $R = \mathcal{O}_{-3}$, $\lambda _1 (h_o) = e^{\frac{ \pi i}{6}}$, $\lambda _2 (h_o) = e^{\frac{ 5 \pi i}{6}}$,
\[
H_{Q12} (4) = \langle g_1, g_4, h_o \ \ \vert \ \ g_1^2 = g_4^3 = - I_2, \ \ h_o ^2 = I_2, \ \ g_1 g_4 g_1^{-1} = g_4^{-1},
\]
\[
h_o g_1 h_o^{-1} = - g_1, \ \ h_o g_4 h_o^{-1} = g_4 \rangle
\]
is of order $24$ with $\lambda _1 (h_o) = -1$, $\lambda _2(h_o) =1$,
\[
H_{Q12}(5) = \langle g_1, g_4, h_o = e^{\frac{ 2 \pi i}{3}} I_2 \ \ \vert \ \  g_1^2 = g_4 ^3 = - I_2, \ \ g_1 g_4 g_1^{-1} = g_4^{-1} \rangle
\]
is of order $36$ with $R = \mathcal{O}_{-3}$,
\[
H_{Q12}(6) = \langle g_1, g_4, h_o \ \ \vert \ \ g_1^2 = g_4^3 = - I_2, \ \ h_o^3 = I_2, \ \ g_1 g_4 g_1 ^{-1} = g_4 ^{-1},
\]
\[
h_o g_1 h_o ^{-1} g_1 g_4^2, \ \ h_o g_4 h_o^{-1} = g_4 \rangle
\]
is of order $36$ with $R = \mathcal{O}_{-3}$, $\lambda _1 (h_o) = e^{\frac{ 2 \pi i}{3}}$, $\lambda _2 (h_o) = 1$,
\[
H_{Q12} (7) = \langle g_1, g_4, h_o \ \ \vert \ \  g_1^2 = g_4^3 = h_o^6 = - I_2, \ \ g_1 g_4 g_1^{-1} = g_4 ^{-1},
\]
\[
h_o g_1 h_o^{-1} = - g_1, \ \ h_o g_4 h_o ^{-1} = g_4 \rangle
\]
is of order $36$ with $R = \mathcal{O}_{-3}$, $\lambda _1 (h_o) = e^{ - \frac{ \pi i}{6}}$, $\lambda _2 (h_o) = e^{\frac{ 5 \pi i}{6}}$,
\[
H_{Q12}(8) = \langle g_1, g_4, h_o \ \ \vert \ \ g_1^2 = g_4^3 = h_o^4 = - I_2, \ \ g_1 g_4 g_1^{-1} = g_4 ^{-1},
\]
\[
h_o g_1 h_o^{-1} = - g_1, \ \ h_o g_4 h_o^{-1} = g_4 \rangle
\]
is of order $48$ with $R = {\mathbb Z}[i]$, $\lambda _1 (h_o) = e^{\frac{ 3 pi i}{4}}$, $\lambda _2 (h_o) = e^{ - \frac{ \pi i}{4}}$,
\[
H_{Q12} (9) = \langle g_1, g_4, h_o \ \ \vert \ \ g_1^2 = g_4 ^3 = - I_2, \ \ h_o^6 = I_2, \ \ g_1 g_4 g_1 ^{-1} = g_4^{-1},
\]
\[
h_o g_1 h_o^{-1} =    g_1g_4, \ \ h_o g_4 h_o^{-1} = g_4 \rangle
\]
is of order $72$ with $R = \mathcal{O}_{-3}$, $\lambda _1 (h_o) = 1$, $\lambda _2 ( h_o) = e^{\frac{ \pi i}{3}}$,
\[
H_{Q12} (10) = \langle g_1, g_4, h_o \ \ \vert \ \  g_1^2 = g_4^3 = - I_2, \ \ h_o^6 = I_2, \ \ g_1 g_4 g_1^{-1} = g_4^{-1},
\]
\[
h_o g_1 h_o^{-1} = - g_1, \ \ h_o g_4 h_o^{-1} = g_4  \rangle
\]
is of order $72$ with $R = \mathcal{O}_{-3}$, $\lambda _1 (h_o) = e^{\frac{ 2 \pi i}{3}}$, $\lambda _2 (h_o) = e^{ - \frac{ \pi i}{3}}$.

There exist subgroups
\[
H_{Q12}(2), H_{Q12}(4), H_{Q12}(5), H_{Q12}(6), H_{Q12}(9), H_{Q12}(10)  < GL(2, \mathcal{O}_{-3}),
\]
as well as subgroups
\[
H^{o} _{Q12}(1), H^{o} _{Q12}(3), H^{o} _{Q12}(7) < GL(2, {\mathbb Q}( \sqrt{3},i)), \ \
H^{o} _{Q12}(8) < GL(2, {\mathbb Q} ( \sqrt{2}, \sqrt{3}, i))
\]
with $H^{o} _{Q12}(j) \simeq H_{Q12}(j)$ for $j \in \{ 1, 3, 7, 8 \}$.
\end{proposition}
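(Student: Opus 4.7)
By Lemmas \ref{HKh} and \ref{StructureH}, the group $H = K_7 \langle h_o \rangle$ is determined up to isomorphism by the order $r$ of $h_o$, the order $s$ of $\det(h_o)$ and the conjugates $x_1 := h_o g_1 h_o^{-1}$ and $x_4 := h_o g_4 h_o^{-1}$, subject to preservation of the defining relations of $K_7 \simeq \mathbb{Q}_{12}$. The elements of $K_7$ break up by order as follows: the six elements $\{g_1 g_4^i \,\vert\, 0 \leq i \leq 5\}$ are of order $4$ (since $(g_1 g_4^i)^2 = g_1 (g_4^i g_1) g_4^i = g_1^2 = -I_2$), the pair $\{g_4, g_4^{-1}\}$ is of order $6$, $\{g_4^{\pm 2}\}$ is of order $3$, and $g_4^3 = -I_2$ is the unique element of order $2$. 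Hence $x_4 \in \{g_4, g_4^{-1}\}$ and $x_1 \in \{g_1 g_4^i\}_{i=0}^{5}$.

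The first key reduction is that one may assume $x_4 = g_4$, i.e. that $h_o$ commutes with $g_4$. Indeed, if $h_o g_4 h_o^{-1} = g_4^{-1}$, then $(h_o g_1) g_4 (h_o g_1)^{-1} = h_o (g_1 g_4 g_1^{-1}) h_o^{-1} = h_o g_4^{-1} h_o^{-1} = g_4$, while $\det(h_o g_1) = \det(h_o)$ since $g_1 \in SL(2,R)$, so $\langle K_7, h_o g_1 \rangle = H$ has the same $s$ and $r$ up to replacing the generator. Once $h_o$ commutes with the diagonalizable matrix $g_4$ with distinct eigenvalues $e^{\pm \pi i/3}$, one finds $S \in GL(2, F_{-d}^{(s,r)}(\sqrt{-3}))$ with
\[
S^{-1} g_4 S = \begin{pmatrix} e^{\pi i/3} & 0 \\ 0 & e^{-\pi i/3} \end{pmatrix}, \qquad S^{-1} h_o S = \begin{pmatrix} \lambda_1(h_o) & 0 \\ 0 & \lambda_2(h_o) \end{pmatrix},
\]
and the relations $g_1 g_4 g_1^{-1} = g_4^{-1}$, $g_1^2 = -I_2$ force $S^{-1} g_1 S = \bigl(\begin{smallmatrix} 0 & b \\ -b^{-1} & 0 \end{smallmatrix}\bigr)$ to be antidiagonal.

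A direct computation then gives
\[
S^{-1}(h_o g_1 h_o^{-1})S = \begin{pmatrix} 0 & \tfrac{\lambda_1}{\lambda_2} b \\ - \tfrac{\lambda_2}{\lambda_1} b^{-1} & 0 \end{pmatrix},
\]
and this equals $S^{-1}(g_1 g_4^i) S$ precisely when $\lambda_1(h_o)/\lambda_2(h_o) = e^{-i\pi i/3}$, a sixth root of unity, in agreement with Corollary \ref{RatioonGLEigenvalues}. The plan is now to run through the allowed orders $s \in \{2,3,4,6\}$ of $\det(h_o)$ and, for each, to read off from Propositions \ref{ListS2}--\ref{ListS3-} all admissible pairs $(\lambda_1(h_o), \lambda_2(h_o))$ together with the ring $R$ forced by the characteristic polynomial $\lambda^2 - \tr(h_o)\lambda + \det(h_o) \in R[\lambda]$. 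The ratio $\lambda_1/\lambda_2$ then singles out the exponent $i$ and pins down $x_1$, while the orders $r$ and $s$ fix $\langle h_o \rangle$ and its intersection with $K_7 = H \cap SL(2,R)$. Each surviving case produces exactly one of the ten presentations $H_{Q12}(1), \ldots, H_{Q12}(10)$; the scalar cases $\lambda_1=\lambda_2$ yield $H_{Q12}(1)$ and $H_{Q12}(5)$, while the non-scalar ones yield the remaining eight, with the sub-cases grouped by whether $\langle h_o \rangle \cap K_7$ equals $\{I_2\}$, $\langle -I_2\rangle$, or contains a non-trivial power of $g_4$.

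Finally, for each abstract $H_{Q12}(j)$ I will exhibit a concrete realization. Six of the ten groups have $R = \mathcal{O}_{-3}$, which already contains the entries needed for $K_7$ in diagonal form for $g_4$; the companion matrix $h_o$ is then chosen with prescribed eigenvalues from the list above and belongs to $GL(2,\mathcal{O}_{-3})$ after the obvious conjugation. For $H_{Q12}(1), H_{Q12}(3), H_{Q12}(7)$ the ring $R$ is ${\mathbb Z}[i]$ or $\mathcal{O}_{-3}$, but a realization of $\mathbb{Q}_{12}$ inside the corresponding $SL(2,R)$ is an open problem flagged in Proposition~\ref{K}; for these I will exhibit a conjugate $H^o_{Q12}(j) < GL(2, \mathbb{Q}(\sqrt{3},i))$, respectively $GL(2, \mathbb{Q}(\sqrt{2},\sqrt{3},i))$ for $H_{Q12}(8)$, by adjoining exactly the eigenvalues of $h_o$ required by Lemma \ref{DiagonalizationField}. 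The main obstacle is the bookkeeping: one must verify case by case that each admissible $(r, s, \lambda_1, \lambda_2)$ from Section~2 matches precisely one of the listed groups and is not absorbed into another one via the freedom $h_o \mapsto h_o \cdot \zeta$ with $\zeta \in K_7$ central; and that in the non-scalar sub-cases the antidiagonal form of $g_1$ is compatible with $g_1 \in SL(2, R)$, which forces the conductor and quadratic discriminant of $R$ in exactly the advertised ways.
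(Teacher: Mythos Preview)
Your approach is essentially identical to the paper's: reduce to $h_o g_4 h_o^{-1}=g_4$ by replacing $h_o$ with $g_1 h_o$ (you use $h_o g_1$, which works equally well), simultaneously diagonalize $g_4$ and $h_o$, force $g_1$ to be antidiagonal, and compute that $h_o g_1 h_o^{-1}=g_1 g_4^j$ is governed by the ratio $\lambda_1(h_o)/\lambda_2(h_o)=e^{-j\pi i/3}$. The only organizational difference is that the paper then runs through the possible values of this ratio via Corollary~\ref{RatioonGLEigenvalues}, whereas you propose to run through $s\in\{2,3,4,6\}$ via Propositions~\ref{ListS2}--\ref{ListS3-}; since the corollary is just a repackaging of those propositions, the two enumerations are the same in substance. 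One small caveat: your replacement $h_o\mapsto h_o g_1$ preserves $s$ but need not preserve $r$, so the phrase ``same $s$ and $r$'' is not quite right---but this is harmless, since the paper likewise just takes whatever new $h_o$ results and reads off its order afterward.
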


\begin{proof}

According to Lemma \ref{StructureH}, the groups $H = K_7 \langle h_o \rangle$ with $\det(H) = \langle \det(h_o) \rangle \simeq {\mathbb C}_s$ are determined up to an isomorphism by the order $r$ of $h_o$, the element $h_o g_1 h_o^{-1} \in K_7$ of order $4$ and the element $h_o g_4 h_o^{-1} \in K_7$ of order $6$. Let us denote by $K_7^{(m)}$ the set of the elements of $K_7$ of order $m$. Straightforwardly,
\[
K_7^{(6)} = \{ g_4, g_4^{-1} \}, \ \ K_7^{(4)} = \{ \pm g_1 g_4 о \ \ \vert \ \  0 \leq i \leq 3 \}.
\]
Inverting $g_1 g_4 g_1 ^{-1} = g_4^{-1}$, one obtains $g_1 g_4^{-1} g_1 ^{-1} = g_4$. If $h_o g_4 h_o^{-1} = g_4^{-1}$ then
\[
(g_1h_o) g_4 (g_1 h_o^{-1} = g_1 ( h_o g_4 h_o^{-1} ) g_1^{-1} = g_1 g_4^{-1} g_1 ^{-1} = g_4.
\]
As far as $K_7 = \langle g_1, g_4, h_o \rangle = \langle  g_1, g_4, g_1 h_o \rangle$, there is no loss of generality in assuming $h_o g_4 h_o^{-1} = g_4$.

We start the study of $H$ by a realization of $K_7$ as a subgroup of the special linear group  $SL(2, {\mathbb Q} ( \sqrt{-d}, \sqrt{-3}))$. Let
\[
D_4 = S^{-1} g_4 S = \left( \begin{array}{cc}
e^{\frac{ \pi i}{3}}  &  0  \\
0  &  e^{ - \frac{ \pi i}{3}}
\end{array}  \right)
\]
be a diagonal form of $g_4$ for some $S \in GL(2, {\mathbb Q} ( \sqrt{-d}, \sqrt{-3}))$ and
\[
D_1 = S^{-1} g_1 S = \left( \begin{array}{rr}
a_1  &  b_1   \\
c_1  & -a_1
\end{array}   \right) \ \ \mbox{ with  } \ \ a_1^2 + b_1c_1 = -1.
\]
Then
\[
D_1 D_4 D_1^{-1} = \left( \begin{array}{rr}
- \sqrt{-3} a_1^2 + e^{ - \frac{ \pi i}{3}}  &  - \sqrt{-3} a_1 b_1  \\
\mbox{  }   &  \mbox{  } \\
- \sqrt{-3} a_1c_1   &  \sqrt{-3} a_1^2 + E^{\frac{ \pi i}{3}}
\end{array}  \right) \in SL(2, {\mathbb Q}( \sqrt{-d}, \sqrt{-3}))
\]
coincides with $D_4^{-1}$ if and only if
\[
D_1 = \left(  \begin{array}{rr}
0  &  b_1  \\
- b_1^{-1}  &  0
\end{array}  \right) \ \ \mbox{  for some  } \ \ b_1 \in {\mathbb Q}( \sqrt{-d}, \sqrt{-3})^*.
\]
That allows to compute explicitly
\[
K_7^{(4)} = \left \{ \pm D_1 = \pm \left( \begin{array}{rr}
0  &  b_1  \\
- b_1^{-1}  &  0
\end{array}  \right), \ \ \pm D_1D_4 = \pm \left( \begin{array}{cc}
0  &  e^{- \frac{ \pi i}{3}} b_1  \\
- \left( e^{ - \frac{ \pi i}{3}} b_1 \right) ^{-1}  &  0
\end{array}  \right),  \right.
\]
\[
\left. \pm D_1 D_4^2 = \pm \left(  \begin{array}{cc}
0  &  e^{ - \frac{ 2 \pi i}{3}} b_1  \\
- \left( e^{ - \frac{ 2 \pi i}{3}}  b_1 \right)^{-1}  &  0
\end{array}  \right) \right \},
\]
 \[
K_7^{(4)} = \left \{ D_1D_4^j  = \left( \begin{array}{rr}
0  &  e^{ - \frac{ j \pi i}{3}} b_1  \\
- \left( e^{ - \frac{ j \pi i}{3}} b_1 \right) ^{-1}   & 0
\end{array}  \right) \ \ \Bigg \vert \ \ 0 \leq j \leq 5 \right \}.
\]
Now, $D_o D_4 D_o^{-1} = D_4$ amounts to
\[
D_o = \left( \begin{array}{cc}
\lambda _1 (h_o)   & 0  \\
0  &  \lambda _2 (h_o)
\end{array}  \right) \ \ \mbox{  and   }
\]
\[
D_o D_1D_o^{-1} = \left( \begin{array}{cc}
0  &  \frac{ \lambda _1 (h_o)}{\lambda _2 (h_o)} b_1  \\
\mbox{  }   &  \mbox{  }  \\
- \left[ \frac{ \lambda _1 (h_o)}{\lambda _2 (h_o)} b_1 \right] ^{-1}   &  0
\end{array}  \right) =
\left( \begin{array}{rr}
0  &  e^{ - \frac{ j \pi i}{3}} b_1  \\
- \left( e^{ - \frac{ j \pi i}{3}} b_1 \right) ^{-1}  & 0
\end{array}  \right) = D_1 D_4^j
\]
if and only if $\frac{\lambda _1 (h_o)}{\lambda _2 (h_o)} = e^{ - \frac{ j \pi i}{3}}$. Note that the ratio $\frac{ \lambda _1 (h_o)}{\lambda _2(h_o)}$ of the eigenvalues of $h_o$ is determined up to an inversion and
\[
\left \{ e^{ - \frac{ j \pi i}{3}} \ \ \vert \ \  0 \leq j \leq 5 \right \} = \left \{ 1 = e^0, \ \ e^{ \mp \frac{ j \pi i}{3}}, \ \ -1 = e^{ \pi i} \ \ \vert  \ \ 1 \leq j \leq 2 \right \}.
\]
For any $h_o \in H$ with $\frac{ \lambda _1 (h_o)}{\lambda _2 (h_o)} = e^{ \mp \frac{ j \pi i}{3}}$, $0 \leq j \leq 3$ the group
\[
H = \langle g_1, g_4, h_o \ \ \vert \ \ g_1 ^2 = g_4^3 = - I_2, \ \ h_o^r = I_2, \ \ g_1 g_4 g_1 ^{-1} = g_4^{-1},
\]
\[
h_o g_1 h_o^{-1} = g_1 g_4^j, \ \ h_o g_4 h_o^{-1} = g_4 \rangle.
\]
Note that  $\frac{\lambda _1 (h_o)}{\lambda _2 (h_o)} =1$ exactly when $h_o \in H \setminus SL(2,R)$ is a scalar matrix. According to Propositions \ref{ListS2}, \ref{ListS4+}, \ref{ListS4-}, \ref{ListS6+}, \ref{ListS6-}, \ref{ListS3+}, \ref{ListS3-}, the only scalar matrices $h_o \in GL(2,R) \setminus SL(2, R)$ are $h_o = \pm i I_2$ for $R = {\mathbb Z} [i]$ and $h_o = e^{ \pm \frac{ 2 \pi i}{3}} I_2$ or $e^{ \pm \frac{ \pi i}{3}} I_2$ with $R = \mathcal{O}_{-3}$. Replacing, eventually, $h_o = -iI_2$ by $h_o^{-1} = i I_2$, one obtains the group $H_{Q12} (1) = \langle g_1, g_4, i I_2 \rangle$ with $R = {\mathbb Z}[i]$. Note that $H^{o} _{Q12}(1) = \langle D_1, D_4, h_o = i I_2 \rangle$ is a realization of $H_{Q12} (1)$ as a subgroup of $GL(2, {\mathbb Q} ( \sqrt{3},i))$. Bearing in mind that $ - I_2 \in K_7$, one observes that $e^{ - \frac{ \pi i}{3}} I_2 \in H$ if and only if $ - e^{ - \frac{ \pi i}{3}} I_2 = e^{\frac{ 2 \pi i}{3}} I_2 \in H$. Replacing, eventually, $e^{\frac{ \pi i}{3}} I_2$ and $e^{ - \frac{ 2 \pi i}{3}} I_2$ by their inverse matrices, one observes that $h_o = e^{\frac{ 2 \pi i}{3}} I_2 \in H$ whenever $H$ contains a scalar matrix of order $3$ or $6$. That provides the group $H_{Q12} (5) = \langle g_1, g_4, e^{\frac{ 2 \pi i}{3}} I_2 \rangle$. Note that
\[
\langle D_1 = \left( \begin{array}{rr}
0  &  1 \\
-1  &  0
\end{array}  \right), \ \
D_4 = \left( \begin{array}{cc}
e^{\frac{ \pi i}{3}} & 0  \\
0  &  E^{ - \frac{\pi i}{3}}
\end{array}  \right), \ \
D_o = e^{\frac{ 2 \pi i}{3}} I_2 \rangle < GL(2, \mathcal{O}_{-3})
\]
is a realization of $H_{Q12}(5)$ as a subgroup of $GL(2, \mathcal{O}_{-3})$.

For $\frac{ \lambda _1 (h_o)}{\lambda _2 (h_o)} = e^{ \mp \frac{ \pi i}{3}}$, Corollary  \ref{RatioonGLEigenvalues} specifies that either $R= \mathcal{O}_{-3}$, $s=2$, $r=6$, $\lambda _1 (h_o) = e^{\frac{ \pi i}{3}}$, $\lambda _2 (h_o) = e^{\frac{ 2 \pi i}{3}}$ and $H \simeq H_{Q12}(2)$ or $R = \mathcal{O}_{-3}$, $s=6$, $r=6$, $\lambda _1 (h_o) = \varepsilon e^{\frac{ \eta \pi i}{3}}$, $\lambda _2 (h_o) = \varepsilon$. In the second case, one can restrict to $\varepsilon =1$, due to $-I_2 \in K_7 \subset H$. The corresponding group $H \simeq H_{Q12}(9)$. Both, $H_{Q12}(2)$ and $H_{Q12}(9)$ can be realized as subgroups of $GL(2, \mathcal{O}_{-3})$, setting
\[
g_1 = \left( \begin{array}{rr}
0  &  1  \\
-1  &  0
\end{array}  \right), \ \
g_4 = \left( \begin{array}{cc}
e^{\frac{ \pi i}{3}}  &  0  \\
0  &  e^{ - \frac{ \pi i}{3}}
\end{array}  \right),
\]
\[
h_o = \left( \begin{array}{cc}
e^{\frac{ \pi i}{3}}   & 0  \\
0  &  e^{\frac{ 2 \pi i}{3}}
\end{array}  \right) \ \ \mbox{  or, respectively, } \ \
h_o = \left( \begin{array}{cc}
e^{ - \frac{ \pi i}{3}}   & 0  \\
0  &  1
\end{array}  \right).
\]

If $\frac{ \lambda _1 (h_o)}{\lambda _2 (h_o)} = e^{ \mp \frac{ 2 \pi i}{3}}$ then, eventually, replacing $h_o$ by $h_o^{-1}$, one has $\lambda _1 (h_o) = e^{\frac{ \pi i}{6}}$, $\lambda _2 (h_o) = e^{\frac{ 5 \pi i}{6}}$, $s=2$, $r=12$, $R = {\mathbb Z}[i]$ and $H \simeq H_{Q12}(3)$ or $\lambda _1 (h_o) = \varepsilon$, $\lambda _2 (h_o) = \varepsilon e^{\frac{ 2 \pi i}{3}}$, $s=3$,  $R = \mathcal{O}_{-3}$, by Corollary \ref{RatioonGLEigenvalues}. Note that $-I_2 \in K_7 \subset H$ reduces the second case to $\lambda _1 (h_o) =1$, $\lambda _2 (h_o) = e^{\frac{ 2 \pi i}{3}}$, $s=3$, $r=3$, $R = \mathcal{O}_{-3}$ and $H \simeq H_{Q12}(6)$. Note that
\[
g_1 = \left( \begin{array}{rr}
0  &  1 \\
-1  &  0
\end{array}  \right), \ \
g_4 = \left( \begin{array}{cc}
e^{\frac{ \pi i}{3}}  &  0  \\
0  &  e^{ - \frac{ \pi i}{3}}
\end{array}  \right), \ \
h_o = \left( \begin{array}{cc}
1  &  0  \\
0  &  e^{\frac{2 \pi i}{3}}
\end{array}  \right)
\]
generate a subgroup of $GL(2, \mathcal{O}_{-3})$, isomorphic to $H_{Q12}(6)$.  In the case of $H \simeq H_{Q12}(3)$ the eigenvalues of $h_o$ are primitive twelfth roots of unity, so that
\[
D_1 = \left( \begin{array}{rr}
0  &   b_1  \\
- b_1^{-1}   &  0
\end{array}  \right), \ \
D_4 = \left( \begin{array}{cc}
e^{\frac{ \pi i}{3}}  &  0  \\
0  &  e^{ - \frac{ \pi i}{3}}
\end{array}  \right), \ \
D_o = \left( \begin{array}{cc}
e^{\frac{ \pi i}{6}}  & 0  \\
0  &  e^{\frac{ 5 \pi i}{6}}
\end{array}  \right)
\]
generate a subgroup $H^{o}_{Q12}(3) < GL(2, {\mathbb Q} (\sqrt{3},i))$, isomorphic to $H_{Q12}(3)$.

For $\frac{ \lambda _1 (h_o)}{\lambda _2 (h_o)} = -1$ there are four non-equivalent possibilities for the eigenvalues $\lambda _1 (h_o)$, $\lambda _2 (h_o)$ of $h_o$. The first one is $\lambda _1 (h_o) =1$, $\lambda _2 (h_o) = -1$ with $s=2$, $r=2$ for any $R = R_{-d,f}$ and $H \simeq H_{Q12}(4)$ of order $24$. Note that
\[
D_1 = \left( \begin{array}{rr}
0  &  1  \\
-1  & 0
\end{array}  \right), \ \
D_4 = \left( \begin{array}{cc}
e^{\frac{ \pi i}{3}}   &  0  \\
0  &  e^{ - \frac{ \pi i}{3}}
\end{array}  \right), \ \
h_o = \left( \begin{array}{rr}
1  &  0  \\
0  &  -1
\end{array} \right)
\]
realizes $H_{Q12}(4)$ as a subgroup of $GL(2, \mathcal{O}_{-3})$. The second one is $\lambda _1 (h_o) = e^{\frac{ 3 \pi i}{4}}$, $\lambda _2 (h_o) = E^{ - \frac{ \pi i}{4}}$ with $s=4$, $r=8$, $R = {\mathbb Z}[i]$ and $H \simeq H_{Q12}(8)$ of order $48$. Observe that
\[
D_1 = \left( \begin{array}{rr}
0   &  1  \\
-1  & 0
\end{array}  \right), \ \
D_4 = \left( \begin{array}{cc}
e^{\frac{ \pi i}{3}}  & 0  \\
0  &  e^{ - \frac{ \pi i}{3}}
\end{array}  \right), \ \
D_o = \left( \begin{array}{cc}
e^{\frac{ 3 \pi i}{4}}  &  0  \\
0  &  e^{ - \frac{ \pi i}{4}}
\end{array}  \right)
\]
generate a subgroup of $GL(2, {\mathbb Q} ( \sqrt{2}, \sqrt{3}, i ))$, isomorphic to $H_{Q12}(8)$. In the third case, $\lambda _1 (h_o) = e^{ - \frac{ \pi i}{6}}$, $\lambda _2 (h_o) = e^{ \frac{ 5 \pi i}{6}}$ with $s=3$, $r=12$, $R = \mathcal{O}_{-3}$ and $H \simeq H_{Q12}(7)$ of order $36$, realized by
\[
D_1 = \left( \begin{array}{rr}
0  &  1  \\
-1  &0
\end{array}  \right), \ \
D_4 = \left( \begin{array}{cc}
e^{\frac{ \pi i}{3}}  &  0  \\
0  &  e^{ - \frac{ \pi i}{3}}
\end{array}  \right), \ \
D_o = \left( \begin{array}{cc}
e^{ - \frac{ \pi i}{6}}   &  0  \\
0  &  e^{\frac{ 5 \pi i}{6}}
\end{array}  \right)
\]
as a subgroup of $GL(2, {\mathbb Q} ( \sqrt{3}, i))$. In the fourth case, $\lambda _1 (h_o) = e^{\frac{ 2 \pi i}{3}}$, $\lambda _2 (h_o) = e^{ - \frac{ \pi i}{3}}$ with $s=6$, $r=6$, $R = \mathcal{O}_{-3}$ and $H \simeq H_{Q12}(10)$ of order $72$. The matrices
\[
g_1 = \left( \begin{array}{rr}
0  &  1  \\
-1  & 0
\end{array}  \right), \ \
g_4 = \left( \begin{array}{cc}
e^{\frac{ \pi i}{3}}  & 0  \\
0  &  e^{ - \frac{\pi i}{3}}
\end{array}  \right), \ \
h_o = \left( \begin{array}{cc}
e^{\frac{ 2 \pi i}{3}}   & 0  \\
0  &  e^{ - \frac{ \pi i}{3}}
\end{array}  \right)
\]generate a subgroup of $GL(2, \mathcal{O}_{-3})$, isomorphic to $H_{Q12}(10)$. The groups $H_{Q12}(4)$, $H_{Q12}(7)$, $H_{Q12}(8)$, $H_{Q12}(10)$ with $\frac{ \lambda _1 (h_o)}{\lambda _2 (h_o)} = -1$ are non-isomorphic, as far as they are of different orders.

\end{proof}

\begin{proposition}  \label{HSL23}
Let $H$ be a finite subgroup of $GL(2,R)$,
\[
H \cap SL(2,R) = K_8 = \langle g_1, g_2, g_3 \ \ \vert \ \ g_1^2 = g_2^2 = -I_2, \ \ g_3^3 = I_2, \ \  g_2g_1 = - g_1g_2,
\]
\[
g_3g_1g_3^{-1} =  g_2, \ \ g_3 g_2 g_3^{-1} = g_1 g_2 \rangle \simeq SL(2, {\mathbb F}_3)
\]
and $h_o \in H$ be an element of order $r$ with $\det(H) = \langle \det(h_o) \rangle \simeq {\mathbb C}_s$ and eigenvalues $\lambda _1 (h_o)$, $\lambda _2 (h_o)$. Then $H$ is isomorphic to $H_{SL(2,3)} (i)$ for some $1 \leq i \leq 9$, where
\[
H_{SL(2,3)} (1) = \langle g_1, g_2, g_3, i I_2 \ \ \vert \ \  g_1^2 = g_2^2 = - I_2, \ \ g_3^3 =I_2, \ \ g_2g_1 = - g_1 g_2,
\]
\[
g_3 g_1 g_3^{-1} = g_2, \ \ g_3 g_2 g_3^{-1} = g_1g_2, \rangle
\]
of order $48$ with $R = {\mathbb Z}[i]$,
\[
H_{SL(2,3)}(2) = \langle g_1, g_2, g_3, h_o \ \ \vert \ \  g_1^2 = g_2^2 = - I_2, \ \ g_3^3 = I_2, \ \ h_o^2 = I_2, \ \  g_2g_1 = - g_1 g_2
\]
\[
g_3 g_1 g_3^{-1} = g_2, \ \ g_3 g_2 g_3^{-1} = g_1 g_2, \ \ h_o g_1 h_o ^{-1} = - g_1, \ \ h_o g_2 h_o^{-1} = - g_2, \ \ h_o g_3 h_o^{-1} = - g_2 g_3 \rangle
\]
of order $48$ with $R = {\mathbb Z}[i]$, $\lambda _1 (h_o) = -1$, $\lambda _2 ( h_o) = 1$,
\[
H_{SL(2,3)}(3) = \langle g_1, g_2, g_3, h_o \ \ \vert \ \ g_1^2 = g_2^2 = h_o^4 = - I_2, \ \ g_3^3 = I_2, \ \ g_2 g_1 = - g_1 g_2,
\]
\[
g_3 g_1 g_3^{-1} = g_2, \ \ g_3 g_2 g_3^{-1} = g_1g_2, \ \ h_o g_1 h_o^{-1} = g_2, \ \ h_o g_2 h_o^{-1} = - g_1, \ \
h_o g_3 h_o^{-1} = g_2 g_3^2 \rangle
\]
of order $48$ with $R = \mathcal{O}_{-2}$, $\lambda _1 (h_o) = e^{\frac{ \pi i}{4}}$, $\lambda _2(h_o) = e^{\frac{ 3 \pi i}{4}}$,
\[
H_{SL(2,3)}(4) = \langle g_1, g_2, g_3, h_o \ \ \vert \ \ g_1^2 = g_2^2 = - I_2, \ \ g_3^3 = I_2, \ \ h_o ^2 = I_2, \ \
g_2 g_1 = - g_1 g_2
\]
\[
g_3 g_1 g_3 ^{-1} = g_2, \ \ g_3 g_2 g_3^{-1} = g_1 g_2, \ \ h_o g_1 h_o^{-1} = g_2, \ \ h_o g_2 h_o^{-1} = g_1, \ \ h_o g_3 h_o^{-1} = g_1 g_3^2 \rangle
\]
of order $48$ with $R = R_{-2,f}$, $\lambda _1(h_o) = -1$, $\lambda _2 (h_o) =1$,
\[
H_{SL(2,3)}(5) = K_8 \times \langle e^{\frac{ 2\pi i}{3}} I_2 \rangle \simeq SL(2, {\mathbb F}_3) \times {\mathbb C}_3
\]
of order $72$ with $R = \mathcal{O}_{-3}$,
\[
H_{SL(2,3)}(6) = \langle g_1, g_2, g_3,  h_o \ \ \vert \ \ g_1^2 = g_2^2 = -I_2, \ \ g_3^3 = I_2, \ \ h_o ^3 = I_2, \ \ g_2 g_1 = - g_1 g_2,
\]
\[
g_3 g_1 g_3^{-1} = g_2, \ \ g_3 g_2 g_3^{-1} = g_1 g_2, \ \ h_o g_1 h_o^{-1} = g_2, \ \ h_o g_2 h_o^{-1} = g_1 g_2, \ \ h_o g_3 h_o^{-1} = g_3 \rangle
\]
of order $72$ with $R = \mathcal{O}_{-3}$, $\lambda_1 (h_o) = e^{\frac{ 2 \pi i}{3}}$, $\lambda _2 (h_o) = 1$,
\[
H_{SL(2,3)}(7) = \langle g_1, g_2, g_3, h_o \ \ \vert \ \ g_1^2 = g_2^2 = h_o^4 = - I_2, \ \ g_3^3 = I_2, \ \ g_2 g_1 = - g_1 g_2
\]
\[
g_3 g_1 g_3^{-1} = g_2, \ \ g_3 g_2 g_3^{-1} = g_1 g_2, \ \ h_o g_1 h_o^{-1} = - g_1, \ \
h_o g_2 h_o^{-1} = - g_2, \ \ h_o g_3 h_o^{-1} = - g_2 g_3 \rangle
\]
of order $96$ with $R = {\mathbb Z}[i]$, $\lambda_1(h_o) = e^{\frac{ 3 \pi i}{4}}$, $\lambda _2 (h_o) = e^{ - \frac{ \pi i}{4}}$,
\[
H_{SL(2,3)}(8) = \langle g_1, g_2, g_3, h_o \ \ \vert \ \ g_1^2 = g_2^2 = h_o^4 = -I_2, \ \ g_3^3 = I_2, \ \ g_2 g_1 = - g_1 g_2
'\]
\[
g_3 g_1 g_3^{-1} = g_2, \ \ g_3 g_2 g_3^{-1} = g_1 g_2, \ \ h_o g_1 h_o^{-1} = g_2, \ \ h_o g_2 h_o^{-1} = g_1, \ \ h_o g_3 h_o^{-1} = g_1 g_3^2 \rangle
\]
of order $96$ with $R = {\mathbb Z}[i]$, $\lambda _1(h_o) = e^{\frac{ 3 \pi i}{4}}$, $\lambda _2 (h_o) = e^{ - \frac{ \pi i}{4}}$,
\[
H_{SL(2,3)}(9) = \langle g_1, g_2, g_3, h_o \ \ \vert \ \ g_1^2 = g_2^2 = - I_2, \ \ g_3^3 = I_2, \ \ h_o^4 = I_2, \ \ g_2 g_1 = - g_1 g_2,
\]
\[
g_3 g_1 g_3^{-1} = g_2, \ \ g_3 g_2 g_3^{-1} = g_1 g_2, \ \ h_o g_1 h_o^{-1} = g_2, \ \ h_o g_2 h_o^{-1} = - g_1, \ \ h_o g_3 h_o^{-1} = g_2 g_3^2 \rangle
\]
of order $96$ with $R = {\mathbb Z}[i]$, $\lambda _1 (h_o) = i$, $\lambda _2 (h_o) = 1$.

There exists a subgroup
\[
H_{SL(2,3)}(5) < GL(2, \mathcal{O}_{-3}),
\]
as well as subgroups
\[
H^{o}_{SL(2,3)}(1),  H^{o}_{SL(2,3)}(2),  H^{o}_{SL(2,3)}(9) < GL(2, {\mathbb Q}( \sqrt{3}, i )),
\]
\[
H^{o}_{SL(2,3)}(4) < GL(2, {\mathbb Q} ( \sqrt{-2}, \sqrt{-3})),
\]
\[
H^{o}_{SL(2,3)}(3), H^{o}_{SL(2,3)}(7), H^{o}_{SL(2,3)}(8) < GL(2, {\mathbb Q} ( \sqrt{2}, \sqrt{3}, i))
\]
with $H^{o}_{SL(2,3)}(j) \simeq H_{SL(2,3)}(j)$ for $1'\leq j \leq 4$ or $6 \leq j \leq 9$.
\end{proposition}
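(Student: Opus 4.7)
The plan is to follow the pattern established in the preceding Propositions \ref{HQ8} and \ref{HQ12}. By Lemma \ref{HKh} one writes $H = K_8 \langle h_o \rangle$ with $\det(h_o)$ generating $\det(H) \simeq {\mathbb C}_s$, and by Lemma \ref{StructureH} the group $H$ is determined up to isomorphism by the order $r$ of $h_o$ together with the three conjugates $x_j := h_o g_j h_o^{-1} \in K_8$ for $j=1,2,3$. Since conjugation by $h_o$ preserves orders, $x_1$ and $x_2$ lie in the six-element set $K_8^{(4)} = \{\pm g_1, \pm g_2, \pm g_1 g_2\}$ of elements of order $4$, while $x_3$ lies in the eight-element set $K_8^{(3)}$ of elements of order $3$ (these counts come from the $\nu_s$-bookkeeping in the proof of Proposition \ref{K}). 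In addition, the triple $(x_1, x_2, x_3)$ must still satisfy the defining relations $x_2 x_1 = -x_1 x_2$, $x_3 x_1 x_3^{-1} = x_2$ and $x_3 x_2 x_3^{-1} = x_1 x_2$.

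The next step is to exploit the fact that ${\rm Aut}(K_8) \simeq S_4$ and ${\rm Inn}(K_8) \simeq A_4$, so the conjugation action of $\langle h_o \rangle$ on $K_8$ factors through one of only two cosets. When this action is inner, there is $z \in K_8$ such that $h_o z$ centralises $K_8$; diagonalising $h_o z$ in a basis produced by Lemma \ref{DiagonalizationField} and repeating the argument of Case A1 of the proof of Proposition \ref{HQ8}, one forces $h_o z$ to be a scalar matrix. Corollary \ref{RatioonGLEigenvalues} together with Propositions \ref{ListS2}--\ref{ListS3-} then allows only $h_o z \in \{\pm i I_2,\ e^{\pm 2\pi i/3} I_2,\ e^{\pm \pi i/3} I_2\}$, giving exactly $H_{SL(2,3)}(1)$, $H_{SL(2,3)}(5)$ and $H_{SL(2,3)}(6)$ after absorbing signs coming from $-I_2 \in K_8$.

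When the conjugation action is outer, its restriction to the normal Sylow $2$-subgroup $\langle g_1, g_2 \rangle \simeq {\mathbb Q}_8$ of $K_8$ falls under Case A2, B, or C in the proof of Proposition \ref{HQ8}. For each of these sub-cases, writing $D_o = S^{-1} h_o S$ diagonal and $D_j = S^{-1} g_j S = \left(\begin{smallmatrix} a_j & b_j \\ c_j & -a_j \end{smallmatrix}\right)$, one uses the conjugacy formula (\ref{ConjugacyFormula}) to obtain a small linear system whose solvability is governed by the ratio $\lambda _1(h_o) / \lambda _2(h_o)$. Imposing additionally that $x_3 = h_o g_3 h_o^{-1}$ lies in $K_8^{(3)}$ and is compatible with the already-determined action on $\langle g_1, g_2 \rangle$ cuts the list of admissible ratios down to the ones predicted by Corollary \ref{RatioonGLEigenvalues}; matching these against the classification from Section 2 pins down $R$, the pair $(r,s)$, and the eigenvalues $(\lambda _1(h_o), \lambda _2(h_o))$, producing $H_{SL(2,3)}(2), H_{SL(2,3)}(3), H_{SL(2,3)}(4)$ and $H_{SL(2,3)}(7), H_{SL(2,3)}(8), H_{SL(2,3)}(9)$.

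Explicit realisations follow the same template as before. Starting from the generators $D_1, D_2, D_3$ of $K_8 \simeq SL(2, {\mathbb F}_3) < SL(2, {\mathbb Q}(\sqrt{-d}, \sqrt{-3}))$ constructed at the end of the proof of Proposition \ref{K}, one adjoins a diagonal $D_o$ with the prescribed eigenvalues and verifies that the entries of the resulting conjugated generators lie in the indicated field (${\mathbb Q}(\sqrt{3}, i)$, ${\mathbb Q}(\sqrt{-2}, \sqrt{-3})$ or ${\mathbb Q}(\sqrt{2}, \sqrt{3}, i)$). The main technical obstacle will be the outer-action bookkeeping: one must show that the six admissible quadruples $(r, x_1, x_2, x_3)$ are pairwise non-isomorphic as extensions and that the ${\rm Inn}(K_8) \simeq A_4$-action on $K_8^{(3)}$ does not introduce further coincidences, while also handling the parity subtlety arising from $-I_2 \in K_8 \cap \langle h_o \rangle$ when $r$ is even. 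Once this combinatorial reduction is completed, the proof reduces to the same eigenvalue/diagonalisation calculations already carried out in Propositions \ref{HQ8} and \ref{HQ12}.
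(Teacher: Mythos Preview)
Your overall architecture---use Lemma \ref{StructureH}, then classify the conjugation action of $h_o$ on $K_8$---matches the paper's, but the paper organises the case analysis differently and more efficiently, and your inner/outer split contains a concrete misclassification.

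The paper does \emph{not} separate into inner versus outer automorphisms of $K_8$. Instead it observes that the six order-$4$ elements of $K_8$ all lie in the unique (hence normal) Sylow $2$-subgroup $\mathcal H_8=\langle g_1,g_2\rangle\simeq\mathbb Q_8$, so that $H'=\langle g_1,g_2,h_o\rangle$ is a finite subgroup of $GL(2,R)$ with $H'\cap SL(2,R)\simeq\mathbb Q_8$. Proposition \ref{HQ8} then says $H'\simeq H_{\mathbb Q8}(i)$ for a unique $1\le i\le 9$, which fixes the pair $(x_0,x_1)=(h_og_1h_o^{-1},\,h_og_2h_o^{-1})$. The key technical step---which your proposal alludes to but does not isolate---is that applying $\mathrm{Ad}_{h_o}$ to the defining relations $g_3g_1g_3^{-1}=g_2$, $g_3g_2g_3^{-1}=g_1g_2$ yields $\mathrm{Ad}_{x_3}(x_0)=x_1$ and $\mathrm{Ad}_{x_3}(x_1)=x_0x_1$, and the paper checks by an explicit enumeration of $\mathcal K_8^{(3)}$ that these two constraints determine $x_3\in K_8^{(3)}$ \emph{uniquely}. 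This gives a clean bijection $H_{\mathbb Q8}(i)\leftrightarrow H_{SL(2,3)}(i)$ with no further combinatorics.

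Your route via $\mathrm{Out}(K_8)\simeq S_4/A_4$ is a legitimate alternative, but the assignment of cases is wrong. The automorphism of $\mathbb Q_8$ sending $(g_1,g_2)\mapsto(-g_1,-g_2)$ (your ``Case A2'') is exactly $\mathrm{Ad}_{g_1g_2}$, hence inner in $\mathbb Q_8$ and \emph{a fortiori} inner in $K_8$; likewise Case C with $\varepsilon=+1$ is $\mathrm{Ad}_{g_3}$. Consequently $H_{SL(2,3)}(2)$ and $H_{SL(2,3)}(7)$ lie in your \emph{inner} branch, not the outer one, and the inner branch produces five presentations $(1),(2),(5),(6),(7)$ rather than three. (Indeed, for $(2)$ one finds $h_o(g_1g_2)^{-1}=\pm iI_2$, so $H_{SL(2,3)}(2)=K_8\langle iI_2\rangle$ coincides with $H_{SL(2,3)}(1)$; the same mechanism shows $(6)$ coincides with $(5)$, and for $(7)$ one gets $h_o(g_1g_2)^{-1}=e^{\pm\pi i/4}I_2$, which already obstructs realisation inside $GL(2,R)$.) So your dichotomy, as stated, would mis-route several of the nine target presentations; fixing this requires precisely the uniqueness-of-$x_3$ argument that the paper makes central.
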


\begin{proof}

According to Lemma \ref{StructureH}, the groups $H$ under consideration are uniquely determined up to an isomorphism by the order $r$ of $h_o$ and by the elements $ h_o g_j h_o^{-1} \in K_8^{(4)}$, $1 \leq j \leq 2$, $x_3 := h_o g_3 h_o^{-1} \in K_8^{(3)}$. (Throughout, $G^{(\nu)}$ denotes the set of the elements of order $\nu$  from a  group $G$.) Recall by Proposition \ref{K} the realization of $K_8 \simeq SL(2, {\mathbb F}_3)$ as a subgroup $\mathcal{K}_8$ of $GL(2, {\mathbb Q} ( \sqrt{-d}, \sqrt{-3}))$, generated by the matrices
\[
D_1  = \left( \begin{array}{rr}
- \frac{\sqrt{-3}}{3}   &  b_1  \\
\mbox{   }   &  \mbox{   } \\
- \frac{2}{3b_1}   &  \frac{\sqrt{-3}}{3}
\end{array}  \right), \ \
D_2 = \left( \begin{array}{rr}
- \frac{\sqrt{-3}}{3}   &  e^{ - \frac{ 2 \pi i}{3}}  b_1  \\
\mbox{   }   &  \mbox{   } \\
- \frac{2 e^{\frac{ 2 \pi i}{3}}}{3b_1}  &  \frac{\sqrt{-3}}{3}
\end{array}  \right), \ \
D_3 = \left( \begin{array}{rr}
e^{\frac{ 2 \pi i}{3}}   &  0  \\
0  &  e^{- \frac{ 2 \pi i}{3}}
\end{array}  \right)
\]
with some $b_1 \in {\mathbb Q} ( \sqrt{-d}, \sqrt{-3})^*$. After computing
\[
D_1 D_2 = \left( \begin{array}{rr}
- \frac{\sqrt{-3}}{3}   &  e^{ - \frac{ 4 \pi i}{3}} b_1  \\
\mbox{   }   &  \mbox{   } \\
- \frac{2 e^{\frac{ 4 \pi i}{3}}}{3b_1}   & \frac{\sqrt{-3}}{3}
\end{array}  \right),
\]
one puts
\[
\delta _j := \left( \begin{array}{rr}
- \frac{\sqrt{-3}}{3}   &  e^{ - \frac{ 2 j \pi i}{3}} b_1  \\
\mbox{   }   &  \mbox{   } \\
- \frac{2 e^{\frac{ 2 j \pi i}{3}}}{3b_1}   & \frac{\sqrt{-3}}{3}
\end{array} \right) \ \ \mbox{  for  } \ \  0 \leq j \leq 2
\]
and observes that $\delta _0 = D_1$, $\delta _1 = D_2$, $\delta _2 = D_1D_2$. The elements of $\mathcal{K}_8$ of order $4$ constitute the subset
\[
\mathcal{K}_8 ^{(4)} = \{ \pm \delta _j \ \ \vert \ \ 0 \leq j \leq 2 \}.
\]
In order to list the elements of $\mathcal{K}_8$ of order $3$, let us note that $D_3 D_1 D_3^{-1} = D_2$ and $D_3 D_2 D_3^{-1} = D_1 D_2$ imply $D_3 (D_1D_2) D_3^{-1} = D_1$. Thus, for any even permutation $j, l, m$ of $0,1,2$, one has
\begin{equation}   \label{ConjugationFormulae}
\left|  \begin{array}{c}
D_3 \delta _j D_3^{-1} = \delta _l \\
D_3 \delta _l D_3^{-1} = \delta _m \\
D_3 \delta _m D_3^{-1} = \delta _j
\end{array}  \right. \ \ \mbox{  or, equivalently, } \ \
\left| \begin{array}{c}
D_3 \delta _j = \delta _l D_3 \\
D_3 \delta _l = \delta _m D_3 \\
D_3 \delta _m = \delta _j D_3
\end{array} \right. .
\end{equation}
Making use of (\ref{ConjugationFormulae}, one computes that
\[
( - \delta _j D_3) ^2 = \delta _m D_3^2, \ \ ( - \delta _j D_3) ^3 = ( - \delta _j D_3) ( - \delta _j D_3)^2 = I_2 \ \ \mbox{  for all } \ \  0 \leq j \leq 2,
\]
so that $- \delta _j D_3 \in \mathcal{K}_8^{(3)}$. As a result, $\delta _j D_3^2 = ( - \delta _l D_m)^2 \in \mathcal{K}_8 ^{(3)}$ for all $0 \leq j \leq 2$ and
\[
\mathcal{K}_8 ^{(3)} = \{ D_3, \ \ - \delta _j D_3, \ \ D_3^2, \ \ \delta _j D_3^2 \ \ \vert \ \ 0 \leq j \leq 2 \}.
\]
Proposition \ref{K} has established that $\mathcal{K}_8$ has a unique Sylow $2$-subgroup
\[
\mathcal{H}_8 = \langle  \delta _0, \delta _1 \ \ \vert \ \ \delta _0 ^2 = \delta _1 ^2 = - I_2, \ \ \delta _1 \delta _0 = - \delta _0 \delta _1 \rangle = \{ \pm I_2, \pm \delta _j \ \ \vert \ \ 0 \leq j \leq 2 \},
\]
so that the set $\mathcal{K}_8^{(4)} = \mathcal{H}_8 ^{(4)}$ of the elements of $\mathcal{K}_8$ of order $4$ are contained in $\mathcal{H}_8 \simeq {\mathbb Q}_8$. In other words, $x_j := h_o \delta _j h_o^{-1} \in \mathcal{H}_8$ and $H'= \langle g_1, g_2, h_o \rangle \simeq \mathcal{H}'= \langle \delta _0, \delta _1, D_o \rangle$ is a subgroup of $H$ with $H \cap SL(2,R) \simeq {\mathbb Q}_8$. Proposition \ref{HQ8}  establishes that any such $H'$ is isomorphic to $H_{Q8}(i)$ for some $1 \leq i \leq 9$.

We claim that for any $1 \leq i \leq 9$ there is (at most) a unique finite subgroup $H = \langle g_1, g_2, g_3, h_o \rangle$ of $GL(2,R)$ with $\langle g_1, g_2, h_o \rangle \simeq H_{Q8} (i)$, $H \cap SL(2,R) = \langle g_1, g_2, g_3 \rangle \simeq SL(2, {\mathbb F}_3 )$ and $\det(H) = \langle \det(h_o) \rangle$. To this end, let us consider the adjoint representation
\[
{\rm Ad} : \mathcal{K}_8 \longrightarrow S ( \mathcal{K}_8^{(4)} ) \simeq S_6
\]
\[
{\rm Ad} _x (y) = x y x^{-1} \ \ \mbox{  for  } \ \ \forall x \in \mathcal{K}_8, \ \ \forall y \in \mathcal{K}_8^{(4)}
\]
and its restriction
\[
{\rm Ad} : \mathcal{K}_8 ^{(3)} \longrightarrow  S( \mathcal{K}_8 ^{(4)}) \simeq S_6
\]
to the elements of $\mathcal{K}_8$ of order $3$.  Note that
\[
\langle x_0, x_1 \rangle = h_o \langle \delta _0, \delta _1 \rangle h_o^{-1} = h_o \mathcal{H}_8 h_o^{-1} = \mathcal{H}_8,
\]
as far as $\mathcal{H}_8 \simeq {\mathbb Q}_8$ is  normal subgroup of $\mathcal{H}'= \mathcal{H}_8 \langle h_o \rangle$. The adjoint action
\[
{\rm Ad} _{h_o}  : \mathcal{K}_8 \longrightarrow \mathcal{K}_8
\]
\[
{\rm Ad} _{h_o} (x) = h_o x h_o^{-1} \ \ \mbox{  for  } \ \ \forall x \in \mathcal{K}_8
\]
of $h_o$ is a group homomorphism and transforms the relations $D_3 \delta _s D_3^{-1} = \delta _{s+1}$ for $0 \leq s \leq 1$ into the relations $x_3 x_s x_3^{-1} = x_{s+1}$ for $ 0 \leq s \leq 1$. For any $1 \leq i \leq 9$ the subgroup $\mathcal{H}' \simeq H_{Q8}(i)$ of $\mathcal{H}$ determines uniquely $x_0, x_1 \in \mathcal{H}_8$. We claim that for any such $x_0,x_1$ there is a unique $x_3 \in \mathcal{K}_8^{(3)}$ with
\begin{equation}  \label{DerivedRelations}
{\rm Ad} _{x_3} ( x_0) = x_1, \quad {\rm Ad} _{x_3} (x_1) = x_0x_1.
\end{equation}
Indeed, Proposition \ref{HQ8} specifies the following five possibilities:
\[
\mbox{   Case 1} \ \ x_0 = \delta _0, \ \ x_1 = \delta _1;
\]
\[
\mbox{   Case 2} \ \ x_0 =  - \delta _0, \ \ x_1 =  - \delta _1;
\]
\[
\mbox{   Case 3} \ \ x_0 = \delta _1, \ \ x_1 =  - \delta _0;
\]
\[
\mbox{   Case 4} \ \ x_0 = \delta _1, \ \ x_1 = \delta _0;
\]
\[
\mbox{   Case 5} \ \ x_0 = \delta _1, \ \ x_1 = \delta _2.
\]
 For any $ 0 \leq s \neq t \leq 2$ and $\varepsilon, \eta \in \{ \pm 1 \}$ note that
\[
{\rm Ad} _{\varepsilon \delta _s} ( \eta \delta _s) = \eta \delta _s, \ \
{\rm Ad}_{ \varepsilon \delta _s} ( \eta \delta _t) = - \eta \delta _t.
\]
Combining with (\ref{ConjugacyFormula}), one concludes that
\[
{\rm Ad} _{D_3} ( \langle \delta _j \rangle ) = {\rm Ad} _{ (- \delta _s D_3 )} ( \langle \delta _j \rangle ) = \langle \delta _l \rangle,
\]
\[
{\rm Ad} _{D_3} ( \langle \delta _l \rangle ) = {\rm Ad} _{( - \delta _s D_3)} ( \langle \delta _l \rangle ) = \langle \delta _m \rangle,
\]
\[
{\rm Ad} _{D_3} ( \langle \delta _m \rangle ) = {\rm Ad} _{( - \delta _s D_3)} ( \langle \delta _m \rangle ) = \langle \delta _j \rangle
\]
for any $0 \leq s \leq 2$ and any even permutation $j, l, m$ of $0, 1, 2$. Similarly,
\[
{\rm Ad} _{D_3^2}( \langle \delta _j \rangle ) = {\rm Ad} _{\delta _s D_3^2} ( \langle \delta _j \rangle ) = \langle \delta _m \rangle,
\]
\[
{\rm Ad} _{D_3^2}( \langle \delta _l \rangle ) = {\rm Ad} _{\delta _s D_3^2} ( \langle \delta _l \rangle ) = \langle \delta _j \rangle,
\]
\[
{\rm Ad} _{D_3^2}( \langle \delta _m \rangle ) = {\rm Ad} _{\delta _s D_3^2} ( \langle \delta _m \rangle ) = \langle \delta _l \rangle
\]
for any $0 \leq s \leq 2$ and any even permutation $j,l,m$ of $0,1,2$. In the case 1, (\ref{DerivedRelations}) read as ${\rm Ad}_{x_3} (\delta _0) = \delta _1$, ${\rm Ad} _{x_3} ( \delta _1 ) = \delta _2$ and imply that $x_3=D_3$, according to (\ref{ConjugationFormulae}) and ${\rm Ad} _{( - \delta _s)} \not \equiv Id_{\mathcal{K}_8}$ for all $ 0 \leq s \leq 2$. In the Case 2, ${\rm Ad} _{x_3} ( \delta _0) = \delta _1$ and ${\rm Ad} _{x_3} ( \delta _1) = - \delta _2$ specify that $x_3 = - \delta _1 D_3 = - D_2D_3$. In the next Case 3, the relations ${\rm Ad} _{x_3} (\delta _1 ) = - \delta _0$, ${\rm Ad} _{x_3}( \delta _0 = \delta _2$ hold if and only if $x_3 = \delta _1 D_3^2 = D_2 D_3^2$. Further, ${\rm Ad} _{x_3} ( \delta _1) = \delta _0$, ${\rm Ad} _{x_3} ( \delta _0) = - \delta _2$ in Case 4 are satisfied by $x_3 = \delta _0 D_3^2 = D_1D_3^2$ and ${\rm Ad}_{x_3} ( \delta _1) = \delta _2$, ${\rm Ad} _{x_3} ( \delta _2) = \delta _0$ in Case 5 are valid for $x_3=D_3$.  Given a presentation of $H'\simeq H_{Q8}(i)$ with generators $g_1, g_2, h_o$, one adjoins a  generator $g_3 \in SL(2,R)$ of order $3$ and the relation $h_o g_3 h_o^{-1} = x_3$, in order to obtain a presentation of $H \simeq H_{SL(2,3)} (i)$, $1 \leq i \leq 9$.

\end{proof}

%%%%%%%%%%%%%%%%%%%%%%%%%%%%%%%%%%%%%%%%%%%%%%%%%%%%%%%%%%%%%%%%%%%%%%%%%%%%%%%%%%%%%%%%%%%%%%%%%%%%%%%%%%%%%%%%%%%%%%%%%%%%%%%%%%%%%%%%%%%%%%%%%%%%%%%%%%%%%%%%%%%

\section{Explicit Galois groups for $A/H$ of fixed Kodaira-Enriques type}

In order to classify the finite subgroups $H$ of $Aut(A)$, for which $A/H$ is of a fixed Kodaira-Enriques classification type, one needs to describe the finite subgroups $H$ of $Aut(A)$ for $A = E \times E$. Making use of the classification of the finite subgroups $\mathcal{L}(H)$ of $GL(2,R)$, done in section 3, let $\det \mathcal{L}(H) = \langle \det \mathcal{L}(h_o) = e^{\frac{ 2 \pi i}{s}} \rangle \simeq {\mathbb C}_s$ for some $s \in \{ 1, 2, 3, 4, 6 \}$, $h_o \in H$. (In the case of $s=1$, we choose $h_o = Id_A$.) By Proposition \ref{K} one has $\mathcal{L} (H) \cap SL(2,R) = \langle \mathcal{L}(h_1), \ldots , \mathcal{L}(h_t) \rangle$ for some $0 \leq t \leq 3$. (Assume $\mathcal{L}(H) \cap SL(2,R) = \{ I_2 \}$ for $t=0$.) The linear part
\[
\mathcal{L}(H) = [ \mathcal{L}(h) \cap SL(2,R)] \langle \mathcal{L}(h_o) \rangle =
\langle \mathcal{L}(h_1), \ldots , \mathcal{L}(h_t) \rangle \langle \mathcal{L}(h_o) \rangle
\]
of $H$ is a product of its normal subgroup $\langle \mathcal{L}(h_1), \ldots , \mathcal{L}(h_t) \rangle$ and the cyclic group $\langle \mathcal{L}(h_o) \rangle$. The translation part $\mathcal{T}(H) = \ker( \mathcal{L} \vert _H )$ of $H$ is a finite subgroup of $( \mathcal{T}_A, +) \simeq (A,+)$. The lifting $\left( \widetilde{\mathcal{T}_A}, + \right) < \left( \widetilde{A} = {\mathbb C}^2, + \right)$ of $\mathcal{T}(H)$ is a free ${\mathbb Z}$-module of rank $4$. Therefore $\left( \widetilde{ \mathcal{T}(H)} , + \right)$ has at most four generators and
\[
\mathcal{T}(H) = \langle \tau _{(P_i,Q_i)} \ \ \vert \ \  1 \leq i \leq m \rangle \ \ \mbox{   for some   } \ \  0 \leq m \leq 4.
\]
(In the case of $m=0$ one has $\mathcal{T}(H) = \{ Id_A \}$.) We claim that
\[
H = \mathcal{T}(H) \langle h_1, \ldots , h_t, h_o \rangle =
\langle \tau _{(P_i,Q_i)}, h_j, h_o \ \ \vert \ \ 1 \leq i \leq m, \ \ 1 \leq j \leq t \rangle
\]
for some $0 \leq m \leq 4$, $0 \leq t \leq 3$. The choice of $\tau _{(P_i,Q_i)}, h_j, h_o \in H$ justifies the inclusion $\langle \tau _{(P_i,Q_i)}, h_j, h_o \ \ \vert \ \ 1 \leq i \leq m, 1 \leq j \leq t \rangle \subseteq H$. For the opposite inclusion, an arbitrary element $h \in H$ with  $\mathcal{L}(h) = \mathcal{L}(h_1) ^{k_1} \ldots \mathcal{L}(h_t) ^{k_t} \mathcal{L}(h_o)^{k_o}$ for some $k_j \in {\mathbb Z}$ produces a translation $\tau _{(U,V)} := h h_o ^{ - k_o}  h_t ^{ - k_t} \ldots h_1 ^{ - k_1} \in \ker \left( \mathcal{L} \vert _H \right) = \mathcal{T}(H) =
\langle \tau _{(P_i,Q_i)} \ \ \vert \ \  1 \leq i \leq m \rangle$, so that  $ h = \tau _{(U,V)} h_1^{k_1} \ldots h_t ^{k_t} h_o^{k_o} \in \langle \tau _{(P_i,Q_i)}, h_j, h_o \ \ \vert \ \  1 \leq i \leq m, \ \ 1 \leq j \leq t \rangle$ and $H \subseteq \langle \tau _{(P_i,Q_i)} , h_j, h_o \ \ \vert \ \  1 \leq i \leq m, \ \ 1 \leq j \leq t \rangle$.  In such a way, we have derived the following

\begin{lemma}     \label{GeneratingFiniteSubgroupsAutA}
If $H$ is a finite subgroup of $Aut(A)$, $A = E \times E$ with
 \[
 \det \mathcal{L}(H) = \langle \det \mathcal{L}( h_o)  = e^{\frac{ 2 \pi i}{s}} \rangle  \simeq {\mathbb C}_s \ \ \mbox{   and  }
 \]
 \[
 \mathcal{L}(H) \cap SL(2,R) = \langle \mathcal{L}(h_1), \ldots , \mathcal{L}(h_t) \rangle \ \ \mbox{  for some  } \ \ 0 \leq t \leq 3 \ \ \mbox{  then}
 \]
 \[
 H = \langle \tau _{(P_i,Q_i)}, h_j h_o \ \ \vert \ \  1 \leq i \leq m, \ \ 1 \leq j \leq t \rangle
 \]
 is generated by $0 \leq m \leq 3$ translations and at most four non-translation elements.
\end{lemma}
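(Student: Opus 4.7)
The plan is to exploit the short exact sequence
\[
1 \longrightarrow \mathcal{T}(H) \longrightarrow H \stackrel{\mathcal{L}}{\longrightarrow} \mathcal{L}(H) \longrightarrow 1
\]
induced by the semi-direct decomposition ${\rm Aut}(A) = \mathcal{T}_A \rtimes GL(2,R)$, and to bound separately the number of generators needed for the image and the kernel. For the image, Lemma \ref{HKh}(ii) already gives the factorization $\mathcal{L}(H) = [\mathcal{L}(H) \cap SL(2,R)] \langle \mathcal{L}(h_o) \rangle$, so I would invoke the classification of finite subgroups of $SL(2,R)$ in Proposition \ref{K} to conclude that $\mathcal{L}(H) \cap SL(2,R)$ is isomorphic to one of $K_1,\ldots,K_8$ and therefore admits at most $t = 3$ generators (the extremal case being $K_8 \simeq SL(2,\mathbb{F}_3)$, generated by $g_1,g_2,g_3$). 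Fixing lifts $h_1,\ldots,h_t,h_o \in H$ of these $t+1 \leq 4$ linear generators is the first concrete step.

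For the kernel, $\mathcal{T}(H) = \ker(\mathcal{L}\vert_H)$ is a finite abelian subgroup of the real torus $(A,+)$ of real dimension $4$, hence is contained in some $N$-torsion $A[N] \simeq (\mathbb{Z}/N)^4$. The structure theorem for finite abelian groups then produces translation generators $\tau_{(P_i,Q_i)}$, $1 \leq i \leq m$, with $m$ controlled by the rank, so that $\mathcal{T}(H) = \langle \tau_{(P_i,Q_i)} \mid 1 \leq i \leq m \rangle$.

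The assembly step is then essentially a one-line calculation in the extension: given an arbitrary $h \in H$, write $\mathcal{L}(h) = \mathcal{L}(h_1)^{k_1} \cdots \mathcal{L}(h_t)^{k_t} \mathcal{L}(h_o)^{k_o}$ for suitable integers, so that the element $\tau_{(U,V)} := h \cdot h_o^{-k_o} h_t^{-k_t} \cdots h_1^{-k_1}$ has trivial linear part and therefore lies in $\mathcal{T}(H)$. Solving for $h$ shows it belongs to $\langle \tau_{(P_i,Q_i)}, h_1,\ldots,h_t,h_o \rangle$, which is exactly the claimed generating set.

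I do not anticipate a genuine obstacle here, since the statement is purely a bookkeeping synthesis built on top of the hard work already done in Section 3. The only delicate point is to choose the lifts $h_1,\ldots,h_t,h_o$ inside $H$ (rather than inside the abstract group $\mathcal{L}(H)$) so that the cancellation identity above holds inside $H$ on the nose, not merely modulo $\mathcal{T}(H)$; but this is automatic from the surjectivity of $\mathcal{L}\vert_H$ onto $\mathcal{L}(H)$, which lets us select any preimages we like.
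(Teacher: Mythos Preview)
Your proposal is correct and follows essentially the same route as the paper: the paper also uses the exact sequence $1 \to \mathcal{T}(H) \to H \to \mathcal{L}(H) \to 1$, invokes Proposition~\ref{K} to bound $t \le 3$, bounds the number of translation generators via the rank-$4$ lattice $\widetilde{\mathcal{T}(H)} < \mathbb{C}^2$ (your $A[N] \simeq (\mathbb{Z}/N)^4$ argument is an equivalent variant yielding the same bound $m \le 4$), and performs the identical cancellation $h \, h_o^{-k_o} h_t^{-k_t}\cdots h_1^{-k_1} \in \mathcal{T}(H)$ for the assembly step. Note that both your argument and the paper's text actually yield $m \le 4$, whereas the lemma as stated claims $m \le 3$; this appears to be a slip in the statement rather than something either proof sharpens.
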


Bearing in mind that $A/H$ is birational to a K3 surface exactly when $\mathcal{L}(H)$ is a subgroup of $SL(2,R)$, one obtains the following

\begin{corollary}     \label{K3AH}
The quotient $A/H$ by a finite subgroup $H$ of $Aut(A)$ has a smooth K3 model if and  only if $H$ is isomorphic to some $H^{K3}(j,m)$ with $1 \leq j \leq 8$, $0 \leq m \leq 3$, where
\[
H^{K3}(1.m) = \langle \tau _{(P_i,Q_i)}, \ \ \tau _{(U_1,V_1)} (-I_2) \ \ \vert \ \  1 \leq i \leq m \rangle
\]
\[
H^{K3}(2,m) = \langle \tau _{(P_i,Q_i)},  \ \ h_1 \ \ \vert \ \  1 \leq i \leq m \rangle
\]
for $\mathcal{L}(h_1) \in SL(2,R)$, $\tr\mathcal{L}(h_1) = 0$,
\[
H^{K3}(3,m) = \langle \tau _{(P_i,Q_i)}, \ \  h_1, \ \ h_2 \ \ \vert \ \  1 \leq i \leq m \rangle
\]
for $\mathcal{L}(h_1), \mathcal{L}(h_2) \in SL(2,R)$, $\tr \mathcal{L}(h_1) = \tr \mathcal{L}(h_2) = 0$, $\mathcal{L}(h_2) \mathcal{L}(h_1) = - \mathcal{L}(h_1) \mathcal{L}(h_2)$,
\[
H^{K3}(4,m) = \langle \tau _{(P_i,Q_i)}, \ \ h_3 \ \ \vert \ \  1 \leq i \leq m \rangle
\]
for $\mathcal{L}(h_3) \in SL(2,R)$, $\tr \mathcal{L}(h_3) = -1$,
\[
H^{K3} (5,m) = \langle \tau _{(P_i,Q_i)}, \ \ h_4 \ \ \vert \ \  1 \leq i \leq m \rangle
\]
for $\mathcal{L}(h_4) \in SL(2,R)$, $\tr \mathcal{L}(h_4) =1$,
\[
H^{K3}(6,m) = \langle \tau _{(P_i,Q_i)}, \ \  h_1, \ \ h_4 \ \ \vert \ \  1 \leq i \leq m \rangle
\]
for $\mathcal{L}(h_1), \mathcal{L}(h_4) \in SL(2,R)$, $\tr \mathcal{L}(h_1) =0$, $\tr \mathcal{L}(h_4) =1$,
$\mathcal{L}(h_1) \mathcal{L}(h_4) [\mathcal{L}(h_1)]^{-1}  = [\mathcal{L} (h_4)]^{-1}$,
\[
H^{K3}(7,m) = \langle \tau _{(P_i,Q_i)}, \ \ h_1, h_2, h_3 \ \ \vert \ \ 1 \leq i \leq m \rangle
\]
for $\mathcal{L}(h_1), \mathcal{L}(h_2), \mathcal{L}(h_3) \in SL(2,R)$, $\tr \mathcal{L}(h_1) = \tr  \mathcal{L}(h_2) = 0$,
$\tr \mathcal{L}(h_3) = -1$, \\
\[
 \mathcal{L}(h_2) \mathcal{L}(h_1) = - \mathcal{L}(h_1) \mathcal{L}(h_2),
 \]
 \[
\mathcal{L}(h_3) \mathcal{L}(h_1) [ \mathcal{L}(h_3)] ^{-1} = \mathcal{L}(h_2) \ \
 \mathcal{L}(h_3)  \mathcal{L}(h_2) [ \mathcal{L}(h_3)] ^{-1} = \mathcal{L}(h_1) \mathcal{L}(h_2),
 \]
\[
H^{K3}(8,m) = \langle \tau _{(P_i,Q_i)}, \ \  h_1, h_2, h_3 \ \ \vert \ \ 1 \leq i \leq m \rangle
\]
for $\mathcal{L}(h_1), \mathcal{L}(h_2), \mathcal{L}(h_3) \in SL(2,R)$, $\tr \mathcal{L}(h_1) = \tr \mathcal{L}(h_2) = 0$, $\tr \mathcal{L}(h_3) = -1$, \\
\[
 \mathcal{L}(h_2) \mathcal{L}(h_1) = - \mathcal{L}(h_1) \mathcal{L}(h_2),
 \]
 \[
 \mathcal{L}(h_3) \mathcal{L}(h_1) [ \mathcal{L}(h_3)] ^{-1} = \mathcal{L}(h_2), \ \
\mathcal{L}(h_3) \mathcal{L}(h_2) [ \mathcal{L}(h_3)]^{-1} = \mathcal{L}(h_1) \mathcal{L}(h_2).
\]
\end{corollary}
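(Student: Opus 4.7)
The plan is to combine three ingredients already established: (a) Proposition \ref{AbelianK3}(ii), which characterizes the subgroups $H\le\mathrm{Aut}(A)$ with $A/H$ birational to a K3 surface by the two conditions $\mathcal{L}(H)\subseteq SL(2,R)$ and $\mathcal{L}(H)\neq\{I_2\}$; (b) Proposition \ref{K}, which classifies the finite subgroups of $SL(2,R)$ up to isomorphism as the list $K_1,\ldots,K_8$; and (c) Lemma \ref{GeneratingFiniteSubgroupsAutA}, which reduces the description of a finite $H\le\mathrm{Aut}(A)$ to the datum of generators of $\mathcal{T}(H)$ plus lifts to $H$ of the generators of $\mathcal{L}(H)$.

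Concretely, I would first apply Proposition \ref{AbelianK3}(ii) to restrict attention to $H$ with $\mathcal{L}(H)$ a non-trivial finite subgroup of $SL(2,R)$; this rules out $K_1=\{I_2\}$ and leaves seven isomorphism types $K_2,\ldots,K_8$ for $\mathcal{L}(H)$, each coming with a specific generating set described in Proposition \ref{K} in terms of trace conditions and commutation relations. For each such type I would then invoke Lemma \ref{GeneratingFiniteSubgroupsAutA} to write
\[
H=\langle \tau_{(P_i,Q_i)},\ h_1,\ldots,h_t,\ h_o \ \vert \ 1\le i\le m\rangle,
\]
where $h_o=\mathrm{Id}_A$ (since $s=1$ in the K3 case) and $h_1,\ldots,h_t$ are lifts to $\mathrm{Aut}(A)$ of the generators of $\mathcal{L}(H)$. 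Transporting the defining relations of $K_j$ up to the ambient group $\mathrm{Aut}(A)=\mathcal{T}_A\rtimes GL(2,R)$ produces the explicit presentations $H^{K3}(j,m)$ listed in the statement: the translation parts of the lifts $h_j$ contribute only to $\mathcal{T}(H)$, so all commutator and power identities imposed on $\mathcal{L}(h_j)$ hold for $h_j$ modulo the translation subgroup, which is already among the generators $\tau_{(P_i,Q_i)}$.

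The case $H^{K3}(1,m)$ corresponding to $\mathcal{L}(H)=\langle -I_2\rangle\simeq K_2$ needs a small extra remark: the lift of $-I_2$ is of the form $\tau_{(U_1,V_1)}(-I_2)$ for some $(U_1,V_1)$, and its square $\tau_{2(U_1,V_1)}$ must be of finite order, hence lies in $\mathcal{T}(H)$; this justifies presenting the generator together with its translation part rather than absorbing it. The remaining cases $H^{K3}(2,m),\ldots,H^{K3}(8,m)$ are obtained verbatim from the presentations of $K_3,K_4,K_5,K_6,K_7,K_8$ in Proposition \ref{K}, by adjoining $m$ translation generators.

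The main obstacle will be the routine but tedious bookkeeping of translation parts in the lifts: one must check that whenever a relation $w(\mathcal{L}(h_1),\ldots,\mathcal{L}(h_t))=I_2$ holds in $SL(2,R)$, the corresponding word $w(h_1,\ldots,h_t)\in H$ is a translation (hence in $\mathcal{T}(H)$), and conversely that choices of lifts differ only by translations. This is a formal consequence of $\ker(\mathcal{L}|_H)=\mathcal{T}(H)$, but it must be performed in each of the eight cases to confirm that no additional generators or relations are needed beyond those already displayed, and that the bound $0\le m\le 3$ in the indexing agrees with the rank estimate for the lifted translation lattice $\widetilde{\mathcal{T}(H)}\subset{\mathbb C}^2$ from Lemma \ref{GeneratingFiniteSubgroupsAutA}.
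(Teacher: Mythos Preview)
Your proposal is correct and follows essentially the same route as the paper: the corollary is stated immediately after Lemma \ref{GeneratingFiniteSubgroupsAutA} with only the one-line justification ``Bearing in mind that $A/H$ is birational to a K3 surface exactly when $\mathcal{L}(H)$ is a subgroup of $SL(2,R)$, one obtains the following,'' so the intended argument is precisely the combination of Proposition \ref{AbelianK3}(ii), Proposition \ref{K}, and Lemma \ref{GeneratingFiniteSubgroupsAutA} that you describe. Your additional remarks on how relations in $\mathcal{L}(H)$ lift modulo $\mathcal{T}(H)$ and on the special form of the generator in case $j=1$ are more explicit than anything the paper spells out, but they are the natural details one would fill in.
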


We are going to show that for an arbitrary finite subgroup $H < Aut(A)$ with an abelian linear part $\mathcal{L}(H) < GL(2,R)$, there exist an isomorphic model $F_1 \times  F_2$ of $A$ and a normal subgroup $N_1$ of $H$, embedded in $Aut(F_1)$, such that the quotient group $H / N_1$ is an automorphism group of $F_2$. This result can be viewed as a generalization of Bombieri-Mumford's  classification \cite{BM1} of the hyper-elliptic surfaces. More precisely, if $H = \mathcal{T}(H) \langle h_o \rangle$ for some $h_o \in H$ with eigenvalues $\lambda  _1 \mathcal{L}(h_o) = 1$, $\lambda _2 \mathcal{L}(h_o) = \det \mathcal{L}(h_o) = e^{\frac{ 2 \pi i}{s}}$, $s \in \{ 2, 3, 4, 6 \}$, then there is a translation subgroup $N_1$ of $Aut(F_1)$, such that $G \simeq H / N_1$ is a non-translation group, acting on the split abelian surface $F'_1 \times F_2 = ( F_1 / N_1) \times F_2$. According to  Proposition  \ref{AbelianCoverL1}, the quotient $A/H$ is hyper-elliptic  (respectively, ruled with elliptic base) exactly when the finite Galois covering $A \rightarrow A/H$ is unramified (respectively, ramified). Since $F_1 \rightarrow F_1 / N_1 = F'_1$ is unramified for a translation subgroup $N_1 \mathcal{T}_{F_1} < Aut (F_1)$, the covering $A \rightarrow A/H$ is unramified is and only if the covering $F'_1 \times F_2 \rightarrow ( F'_1 \times F_2) / G$ is unramified for $G = H / N_1$. In particular, the first canonical projection ${\rm pr} _1 : G \rightarrow Aut( F'_1)$ is a group monomorphism and $G$ is an abelian group with at most two generators, according to the classification of the finite translation groups of $F'_1$. Thus, Bombieri-Mumford's classification of the hyper-elliptic surfaces $(F'_1 \times F_2) / G$ reduces to the classification of the split, fixed point free abelian subgroups $G < Aut ( F'_1 \times F_2)$ with at most two generators, for which the canonical projections ${\rm pr} _1 : G \rightarrow Aut(F'_1)$ and ${\rm pr}_2 : G \rightarrow Aut(F_2)$ are injective group homomorphisms.

Towards the classification of the finite subgroups of $Aut(E)$, let us recall that the semi-direct products $\langle a \rangle \rtimes \langle b \rangle \simeq {\mathbb C} _m \rtimes {\mathbb C}_s$ of cyclic groups are completely determined by the adjoint action of $b$ on $a$. Namely, ${\rm Ad} _b (a) = b a b^{-1} = a^j$  for some residue $j \in {\mathbb Z}_m^*$ modulo $m$, relatively prime to $m$. Now ${\rm Ad} _{b^s} (a) = a^{j^s} =a$ requires $j^s \equiv 1 ({\rm mod} m)$. In other words, $j \in {\mathbb Z}_m^*$ is of order $r$, dividing $s$ and $\langle a \rangle  \leftthreetimes \langle b \rangle $ is isomorphic to
\begin{equation}    \label{NotationAutEllipticCurve}
G_s ^{(j)} (m) := {\mathbb C}_m \rtimes _j {\mathbb C} _s = \langle a, \ \  b \ \ \vert \ \  a^m =1, \ \ b^s =1, \ \  b a b^{-1} = a^j \rangle
\end{equation}
for some $j \in {\mathbb Z}_m^*$ of order   $r$, dividing $s$. Form now on, we use the notation (\ref{NotationAutEllipticCurve}) without further reference.
Note that the only $j \in {\mathbb Z}_m^*$ of order $1$ is $j \equiv 1({\rm mod} m)$ and $G_s ^{(1)}(m) = \langle a \rangle \times \langle b \rangle \simeq {\mathbb C}_m \times {\mathbb C}_s$ is the direct product of $\langle a \rangle = {\mathbb C}_m$ and $\langle b \rangle = {\mathbb C}_s$.

\begin{lemma}    \label{FiniteSubgroupsAutE}
Let $G$ be a  finite subgroup of the automorphism group  $Aut(E)$  of an elliptic curve $E$ with endomorphism ring $End(E) =R$. Then $G$ is isomorphic to some of the groups $G_1(m,n)$, $G_2 ^{(-1,-1)} (m,n)$, $G_s^{(j)} (m)$, $s \in \{ 3, 4, 6\}$, where
\[
G_1 (m,n) = \langle \tau _{P_1}, \ \ \tau _{P_2} \rangle \simeq {\mathbb C}_m \times {\mathbb C}_n, \ \ m,n \in {\mathbb N}
\]
is a translation group with at most two generators,
\[
G_2^{(-1,-1)}(m,n)  = \langle \tau _{P_1}, \ \ \tau _{P_2} \rangle \rtimes \langle -1 \rangle \simeq
 ( {\mathbb C} _m \times {\mathbb C}_n) \rtimes _{(-1,-1)} {\mathbb C}_2  =
 \left( \langle a \rangle \times \langle b \rangle \right) \rtimes _{(-1,-1)}  \langle c \rangle =
 \]
 \[
= \langle a, \ \ b, \ \ c \ \ \vert \ \  a^m=1, \ \ b^n=1, \ \ c^2 =1, \ \ cac^{-1} =a, \ \ cbc^{-1} = b^{-1} \rangle
\]
\[
G_3^{(j)} (m) = \langle \tau _{P_1} \rangle \rtimes _j \langle e^{\frac{ 2 \pi i}{3}} \rangle \simeq
{\mathbb C}_m \rtimes _j {\mathbb C}_3 =
\langle a \rangle \rtimes _j \langle c \rangle =
\]
\[
= \langle a, \ \  c \ \ \vert \ \  a^m=1, \ \ c^3=1, \ \ cac^{-1} = a^j \rangle
\]
for some $j \in {\mathbb Z}_m^*$ of order $1$ or $3$, $R = \mathcal{O}_{-3}$,
\[
G_4^{(j)} (m) = \langle \tau _{P_1} \rangle \rtimes _j \langle i \rangle \simeq
{\mathbb C}_m \rtimes _j {\mathbb C}_4 =
\langle a \rangle \rtimes _j \langle c \rangle =
\]
\[
= \langle a, \ \ c \ \ \vert \ \  a^m=1, \ \ c^4=1, \ \ cac^{-1} = a^j \rangle
\]
for some $j \in {\mathbb Z}_{m}^*$ of order $1,2$ or $4$, $R = {\mathbb Z}[i]$,
\[
G_6^{(j)} (m) = \langle \tau _{P_1} \rangle \rtimes _j \langle e^{\frac{ \pi i}{3}} \rangle \simeq
{\mathbb C}_m \rtimes _j {\mathbb C}_6 =
\langle a \rangle \rtimes _j \langle c \rangle =
\]
\[
= \langle a, \ \ c \ \ \vert \ \  a^m=1, \ \ c^6=1, \ \ cac^{-1} = a^j \rangle
\]
for some $j \in {\mathbb Z}_m ^*$ of order $1,2,3$ or $6$.
\end{lemma}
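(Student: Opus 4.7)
The plan is to use the semi-direct product decomposition $\mathrm{Aut}(E) = \mathcal{T}_E \rtimes R^*$ and realize $G$ as an extension of its cyclic image $\mathcal{L}(G) \leq R^*$ by its translation part $\mathcal{T}(G) := G \cap \mathcal{T}_E$. Since $\mathcal{T}_E$ is normal in $\mathrm{Aut}(E)$, the subgroup $\mathcal{T}(G)$ is normal in $G$, and by Lemma \ref{UnitsGroupR} its image $\mathcal{L}(G)$ is cyclic of some order $s$ dividing $o(R^*) \in \{2,4,6\}$. In particular $s \in \{1,2,3,4,6\}$, with $s \in \{3,6\}$ forcing $R = \mathcal{O}_{-3}$ and $s = 4$ forcing $R = {\mathbb Z}[i]$.

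Next, I would argue that the extension $1 \to \mathcal{T}(G) \to G \to \mathcal{L}(G) \to 1$ splits up to conjugation in $\mathrm{Aut}(E)$. For any $c_0 \in G$ with $\mathcal{L}(c_0) = \zeta$ a generator of $\mathcal{L}(G)$, write $c_0 = \tau_P \zeta$. The endomorphism $1 - \zeta \in R \setminus \{0\}$ is a surjective isogeny of $E$, so $(1-\zeta)T = -P$ admits a solution $T \in E$; conjugation by $\tau_T$ carries $c_0$ to $\zeta$ of order exactly $s$ while fixing each translation, and hence realizes $G$ up to $\mathrm{Aut}(E)$-conjugacy as a semi-direct product $\mathcal{T}(G) \rtimes \langle c \rangle$ with $c = \zeta$ acting by $\tau_Q \mapsto \tau_{\zeta Q}$, i.e.\ as multiplication by $\zeta$ in the $R$-module $E$.

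The classification then splits according to $s$. For $s=1$, $G = \mathcal{T}(G)$ is a finite subgroup of $(E,+) \simeq ({\mathbb R}/{\mathbb Z})^2$, hence $G_1(m,n) \simeq {\mathbb C}_m \times {\mathbb C}_n$. For $s=2$, the involution $\zeta=-1$ preserves every finite subgroup of $(E,+)$ and acts by inversion on every generator, giving $G_2^{(-1,-1)}(m,n)$ (the displayed relations should be read $cac^{-1}=a^{-1}$, $cbc^{-1}=b^{-1}$, with the superscript $(-1,-1)$ recording the action of $c$ on each generator). For $s \in \{3,4,6\}$, $\mathcal{T}(G)$ is a finite $R$-submodule of $E_{\mathrm{tors}}$; because $R$ is then a PID of class number one and $E \simeq {\mathbb C}/R$, every finite $R$-submodule of $E_{\mathrm{tors}} \simeq K/R$ has the form $R/(\alpha)$ and is $R$-cyclic, so $\mathcal{T}(G) = \langle \tau_{P_1}\rangle$ as a $\langle c \rangle$-stable submodule. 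Recording $\zeta P_1 \equiv j P_1$ in $\mathcal{T}(G)$ then produces the relation $cac^{-1} = a^j$ with $j \in {\mathbb Z}_m^*$ of order dividing $s$, whence the presentation $G_s^{(j)}(m)$.

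The main obstacle is in the final case: $R$-cyclicity of $\mathcal{T}(G)$ does not imply ${\mathbb Z}$-cyclicity of the underlying abelian group. For example $E[2]$ with the $\zeta_3$-action is $\mathcal{O}_{-3}$-cyclic but is ${\mathbb C}_2 \times {\mathbb C}_2$ as an abelian group, so $E[2] \rtimes \langle \zeta_3\rangle \simeq A_4$ admits no presentation of the advertised form with $\langle a\rangle$ genuinely cyclic of order $4$. To make the lemma statement literally correct one must interpret the symbol $\langle a \rangle \simeq {\mathbb C}_m$ as the abelian group underlying the $R$-cyclic module $R/(\alpha)$ of norm $m = N(\alpha)$, and impose the further ${\mathbb Z}$-relations inherited from the $R$-module structure of $\mathcal{T}(G)$; with this reading the presentations $G_s^{(j)}(m)$ exhaust the possible finite subgroups $G \leq \mathrm{Aut}(E)$.
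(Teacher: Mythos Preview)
Your argument follows the same skeleton as the paper's: decompose $\mathrm{Aut}(E) = \mathcal{T}_E \rtimes R^*$, observe $\mathcal{L}(G) = \langle e^{2\pi i/s}\rangle$ with $s \in \{1,2,3,4,6\}$, split the extension by moving the origin to a fixed point (your surjectivity of $1-\zeta$ is exactly this), and then analyze $\mathcal{T}(G)$ case by case. The paper's proof differs only in how it handles $s \in \{3,4,6\}$: rather than invoking that $R$ is a PID and $\mathcal{T}(G)$ is an $R$-cyclic module, it argues directly that $P_1$ and $e^{2\pi i/s}P_1$ have $\mathbb{Z}$-independent liftings in the rank-$2$ lattice $\widetilde{\mathcal{T}(G)}$, so $\mathcal{T}(G) = \langle \tau_{P_1}, \tau_{e^{2\pi i/s}P_1}\rangle$ and hence $G = \langle \tau_{P_1}, e^{2\pi i/s}\rangle$ is two-generated. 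Your $R$-module route is cleaner and makes the structure more transparent.

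Your final paragraph raises a genuine point that the paper elides. The paper writes $G = \langle \tau_{P_1}\rangle \rtimes \langle e^{2\pi i/s}\rangle \simeq \mathbb{C}_m \rtimes_j \mathbb{C}_s$ and passes directly to the two-generator presentation $\langle a,c \mid a^m = c^s = 1,\ cac^{-1} = a^j\rangle$, without addressing whether $\zeta P_1$ is an integer multiple of $P_1$. Your $A_4$ example (with $\mathcal{T}(G) = E[2]$ and $s=3$) shows this can fail and that the displayed presentation is not literally adequate in all cases; the paper's proof simply asserts the presentation after establishing two-generation and does not confront this. Your proposed reading---that $\langle \tau_{P_1}\rangle$ should be understood as the $R$-cyclic module it generates, with the extra $\mathbb{Z}$-relations coming from the $R$-action---is the correct repair, and you are also right that the relation $cac^{-1}=a$ in the displayed $G_2^{(-1,-1)}(m,n)$ is a typo for $cac^{-1}=a^{-1}$.
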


\begin{proof}

Any finite translation group $G < ( \mathcal{L}_E, +)$ lifts to a lattice $\widetilde{G} < ( \widetilde{E} = {\mathbb C}, +)$ of rank $2$, containing $\pi _1(E)$. By the Structure Theorem for finitely generated modules over the principal ideal domain ${\mathbb Z}$, there exists a ${\mathbb Z}$-basis $\lambda _1 , \lambda _2$ of $\widetilde{G}$ and natural numbers $m,n \in {\mathbb N}$, such that
\[
\widetilde{G} = \lambda _1 {\mathbb Z} + \lambda _2 {\mathbb Z}, \ \ \pi _1(E) = m \lambda _1 {\mathbb Z} + m n \lambda _2 {\mathbb Z}.
\]
As a result, $P_1 = \lambda _1 + \pi _1(E) \in (E,+)$ of order $m$ and $P_2 = \lambda _2 + \pi _1(E) \in (E,+)$ of order $mn$ generate the finite translation group $G = \widetilde{G} / \pi _1(E) \simeq {\mathbb C}_m \times {\mathbb C}_{mn}$.

If $G$ is a finite non-translation subgroup of $Aut(E)$ then the linear part $\mathcal{L}(G)$ of $G$ is a non-trivial subgroup of the units group $R^*$. Bearing in mind that
\[
R^* = \begin{cases}
\langle -1 \rangle \simeq {\mathbb C}_2  &  \text{ for $R \neq {\mathbb Z}[i], \mathcal{O}_{-3}$,}  \\
\langle i \rangle \simeq {\mathbb C}_4  &  \text{ for $R = {\mathbb Z}[i]$,}   \\
\langle e^{\frac{ \pi i}{3}} \rangle & \text{ for $R = \mathcal{O}_{-3}$,}
\end{cases}
\]
one concludes that $\mathcal{G} = \langle e^{\frac{ 2 \pi i}{s}} \rangle \simeq {\mathbb C}_s$ for some $s \in \{ 2, 3, 4, 6 \}$.
Any lifting $g_0 = \tau _U e^{\frac{ 2 \pi i}{s}} \in  G$ of $\mathcal{L}(g_0) = e^{\frac{ 2 \pi i}{s}}$ has a fixed point $P_0 \in E$. After moving the origin of $E$ at $P_0$, one can assume that $g_0 = e^{\frac{ 2 \pi i}{s}}$. Bearing in mind that the translation part $\mathcal{T}(G) = \ker ( \mathcal \vert _G )$,
one observes that $G = \mathcal{T}(G) \langle e^{\frac{ 2 \pi i}{s}} \rangle$. The inclusion $\mathcal{T}(G) \langle e^{\frac{ 2 \pi i}{s}} \rangle \subseteq G$ is clear. For any $g \in G$ with $\mathcal{L}(g) = e^{\frac{ 2 \pi i j}{s}}$ for some $ 0 \leq j \leq s-1$, one has $g \left( e^{\frac{ 2 \pi i}{s}} \right) ^{-j} \in \ker ( \mathcal{L} \vert _G ) = \mathcal{T}(G)$, so that $G \subseteq \mathcal{T}(G) \langle e^{\frac{ 2 \pi i}{s}} \rangle$ and $G = \mathcal{T}(G) \langle e^{\frac{ 2 \pi i}{s}} \rangle$. Note that $\mathcal{T}(G)$ is a normal subgroup of $G$ with $\mathcal{T}(G) \cap \langle e^{\frac{ 2 \pi i}{s}} \rangle = \{ Id _E \}$, so that
\[
G = \mathcal{T}(G) \rtimes \langle e^{\frac{ 2 \pi i}{s}} \rangle
\]
is a semi-direct product. As a result, there is an adjoint action
\[
{\rm Ad} : \langle e^{\frac{ 2 \pi i}{s}} \rangle \longrightarrow Aut ( \mathcal{T}(G)),
\]
\[
{\rm Ad} _{e^{\frac{ 2 \pi i j}{s}}} ( \tau _{P_1}) = e^{\frac{ 2 \pi i j}{s}} \tau _{P_1} e^{- \frac{ 2 \pi i j}{s}} = \tau _{ s^{\frac{ 2 \pi i j}{s}} P_1}
\]
of $\langle e^{\frac{ 2 \pi i}{s}} \rangle$ on $\mathcal{T}(G)$, which is equivalent to the invariance of $\mathcal{T}(G)$ under a multiplication by $e^{\frac{ 2 \pi i}{s}} \in R^*$.   The translation group $'\mathcal{T}(G) = \langle \tau _{P_1}, \ \ \tau _{P_2} \rangle$ has at most two generators, so that
\[
G = \langle \tau _{P_1}, \ \ \tau _{P_2} \rangle \rtimes \langle e^{\frac{ 2 \pi i}{s}} \rangle
\]
for some $s \in \{ 2, 3, 4, 6\}$. If $s=2$ and $\langle \tau _{P_1}, \tau _{P_2} \rangle \simeq {\mathbb C} _m \times {\mathbb C}_n = \langle \tau _{Q_1} \rangle \times \langle \tau _{Q_2} \rangle$, then ${\rm Ad} _{-1} ( \tau _{Q_1}) = \tau _{-Q_k} $ for $1 \leq k \leq 2$. The residue classes $-1({\rm mod} m) \in {\mathbb Z}_m^*$ and $-1 ({\rm mod} n) \in {\mathbb Z}_n^*$ are order $1$ or $2$.

We claim that $G = \langle \tau _{P_1} \rangle \rtimes \langle e^{\frac{ 2 \pi i}{s}} \rangle$ has at most two generators for $s \in \{ 3, 4, 6\}$. Indeed, $\tau _{P_1} \in \mathcal{T}(G)$ implies that ${\rm Ad} _{e^{\frac{ 2 \pi i}{s}}} ( \tau _{P_1}) = \tau _{ e^{\frac{ 2 \pi i}{s}} P_1} \in \mathcal{T}(G)$. For $s \in \{ 3, 4, 6 \}$ the points $P_1$, $e^{\frac{ 2 \pi i}{s}} P_1$ have ${\mathbb Z}$-linearly independent liftings from $\widetilde{\mathcal{T}(G)}$, so that $\mathcal{T}(G) = \langle \tau _{P_1}, \ \ \tau _{P_2} \rangle = \langle \tau _{P_1}, \ \ \tau _{ e^{\frac{2 \pi i}{s}} P_1} \rangle$. As a result,
\[
G == \langle \tau _{P_1}, \ \ e^{\frac{ 2 \pi i}{s}} \tau _{P_1} e^{ - \frac{ 2 \pi i}{s}} \rangle \rtimes \langle e^{\frac{ 2 \pi i}{s}} \rangle =
\langle \tau _{P_1} \rangle \rtimes \langle e^{\frac{ 2 \pi i}{s}} \rangle \simeq
{\mathbb c}_m \rtimes _j {\mathbb C}_s = \langle a \rangle \rtimes _j \langle c \rangle =
\]
\[
\langle a, \ \  c \ \ \vert \ \  a^m=1, \ \ c^s=1, \ \ cac^{-1} = a^j \rangle
\]
for some $j \in {\mathbb Z}_m^*$ of order $r$, dividing $s \in \{3, 4, 6 \}$.

\end{proof}

Let us put $G_1^{(1,1)} (m,n) := G_1(m,n)$, in order to list the finite subgroups of $Aut(E)$ as $G_s ^{(j_1,j_2)} (m,n)$ with $s \in \{ 1, 2\}$ and $G_s ^{(j)} (m)$ with $s \in \{ 3, 4, 6 \}$.

\begin{lemma}   \label{SimultaneousDiagonalization}
Let $H$ be a finite subgroup of $Aut(A)$ with abelian linear part $\mathcal{L}(H)$. Then:

(i) there exists $S \in GL(2, {\mathbb C})$, such that all the elements of
\[
S^{-1} H S = \{ S^{-1} h S = ( \tau _{U_1} \lambda _1 \mathcal{L}(h), \tau _{U_2} \lambda _2 \mathcal{L} ) \ \ \vert \ \ h \in H \} < Aut (S^{-1}A)
\]
have diagonal linear parts;

(ii) if $F_1 = S^{-1} (E \times \check{o}_E)$, $F_2 = S^{-1} ( \check{o}_E \times E)$ then $S^{-1}A = F_1 \times F_2$ and the canonical projections
\[
{\rm pr} _k : S^{-1} H S \longrightarrow Aut(F_k),
\]
\[
{\rm pr} _k ( \tau _{U_1} \lambda _1 \mathcal{L}(h), \tau _{U_2} \lambda _2 \mathcal{L}(h)) = \tau _{U_k} \lambda _k \mathcal{L}(h),
\]
are group homomorphisms with ${\rm pr} _k (S^{-1}HS) \simeq G_s^{(j_1,j_2)} (m,n)$, $s \in \{ 1,2\}$ or $G_s ^{(j)}$, $s \in \{ 3, 4, 6\}$;

(iii)  $S^{-1}HS = \ker( {\rm pr} _2 ) \langle h_1, \ldots , h_t \rangle$ for any liftings $h_j = (\alpha _j, \beta _j) \in S^{-1}HS$ of the generators $\beta _1, \ldots , \beta _t$ of ${\rm pr} _2 (S^{-1}HS)$, $1 \leq t \leq 3$;

(iv) $S^{-1}A / \ker( {\rm pr} _2) = C_1 \times F_2$, where $C_1$ is an elliptic curve for a translation subgroup $\ker ( {\rm pr}_2 ) < ( \mathcal{T}_{F_1},+) < Aut(F_1)$ or a rational curve for a non-translation subgroup $\ker ( {\rm pr} _2) < Aut (F_1)$, $\ker( {\rm pr} _2 ) \setminus (\mathcal{T}_{F_1}, +) \neq \emptyset$;

(v)  $A/H \simeq (C_1 \times F_2) / G$ for
'\[
G := \langle h_1, \ldots , h_t \rangle / ( \langle h_1, \ldots , h_t \rangle \cap \ker ( {\rm pr}_2 ))
\]
with isomorphic second projection
\[
\overline{{\rm pr}_2}  : G \longrightarrow {\rm pr} _2 (S^{-1}HS)
\]
and first projection
\[
\overline{{\rm pr}_1} : G \rightarrow \overline{{\rm pr}_1} (G) < Aut (C_1)
\]
with kernel $\ker( \overline{{\rm pr}_1} \vert _G ) \simeq \ker( {\rm pr} _1 \vert _{S^{-1}HS} )$.
\end{lemma}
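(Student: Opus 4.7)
The plan is to proceed part-by-part, with the main work concentrated in (i) and (iv).

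For (i), I would invoke Proposition \ref{EigenvaluesFiniteOrder}, which guarantees that every element of $\mathcal{L}(H)$ is diagonalizable over $\mathbb{C}$ (finite order rules out Jordan blocks). Since $\mathcal{L}(H)$ is abelian, the family $\{\mathcal{L}(h)\}_{h\in H}$ consists of pairwise commuting diagonalizable matrices, so a standard linear-algebra fact produces a single $S\in GL(2,\mathbb{C})$ simultaneously diagonalizing the entire finite set. After conjugation, every $S^{-1}hS$ has linear part $\mathrm{diag}(\lambda_1\mathcal{L}(h),\lambda_2\mathcal{L}(h))$, and its translation part decomposes componentwise as $(U_1,U_2)$.

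For (ii), $F_1=S^{-1}(E\times\check{o}_E)$ and $F_2=S^{-1}(\check{o}_E\times E)$ are the images under $S^{-1}$ of the two coordinate elliptic curves, and since $S^{-1}$ is a linear automorphism of $\mathbb{C}^2$ (descending to $A$), one has $S^{-1}A=F_1\times F_2$. The fact that each $S^{-1}hS$ is block-diagonal in the $F_1\oplus F_2$ decomposition makes $\mathrm{pr}_k$ a well-defined group homomorphism into $Aut(F_k)$. Its image is a finite subgroup of $Aut(F_k)$, and Lemma \ref{FiniteSubgroupsAutE} classifies such subgroups as $G_s^{(j_1,j_2)}(m,n)$ for $s\in\{1,2\}$ or $G_s^{(j)}(m)$ for $s\in\{3,4,6\}$. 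Part (iii) then follows formally: elements of $\ker(\mathrm{pr}_2)$ together with any preimages of a generating set of $\mathrm{pr}_2(S^{-1}HS)$ generate $S^{-1}HS$, because any element can be multiplied by an appropriate product of $h_j$'s to land in $\ker(\mathrm{pr}_2)$.

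For (iv), the key observation is that $\ker(\mathrm{pr}_2)$ acts trivially on $F_2$, so its action on $F_1\times F_2$ is the product of its action on $F_1$ with the trivial action, yielding $S^{-1}A/\ker(\mathrm{pr}_2)=(F_1/K_1)\times F_2$ where $K_1\leq Aut(F_1)$ is the image of $\ker(\mathrm{pr}_2)$ under $\mathrm{pr}_1$. I would apply Lemma \ref{FiniteSubgroupsAutE} to classify $K_1$: if $K_1$ is a translation group, then $C_1:=F_1/K_1$ is an unramified quotient of an elliptic curve, hence elliptic; otherwise $K_1$ contains a non-translation element with isolated fixed points on $F_1$, and Riemann--Hurwitz (or equivalently the fact that $\kappa(F_1/K_1)=-\infty$, cf.\ Proposition \ref{PRAndQ1}(i) applied in dimension one) forces $C_1\simeq\mathbb{P}^1$.

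For (v), I would take the quotient of $(F_1/K_1)\times F_2=C_1\times F_2$ by the residual group $G=\langle h_1,\ldots,h_t\rangle/(\langle h_1,\ldots,h_t\rangle\cap\ker(\mathrm{pr}_2))$. The two-step quotient $A/H\simeq(S^{-1}A/\ker(\mathrm{pr}_2))/G=(C_1\times F_2)/G$ is formal from $S^{-1}HS=\ker(\mathrm{pr}_2)\langle h_1,\ldots,h_t\rangle$. The induced $\overline{\mathrm{pr}_2}\colon G\to\mathrm{pr}_2(S^{-1}HS)$ is an isomorphism because every element of $\ker(\mathrm{pr}_2)$ has been modded out, while $\ker(\overline{\mathrm{pr}_1}|_G)\simeq\ker(\mathrm{pr}_1|_{S^{-1}HS})$ by a diagram chase. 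The main obstacle I anticipate is the careful verification in (iv) that $F_1/K_1$ is smooth and that the claimed dichotomy (elliptic versus rational) exhausts all cases coming from the Lemma \ref{FiniteSubgroupsAutE} list; everything else is bookkeeping.
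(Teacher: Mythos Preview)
Your proposal is correct and follows essentially the same route as the paper: simultaneous diagonalization of the commuting finite family for (i), coordinate splitting and Lemma~\ref{FiniteSubgroupsAutE} for (ii), the standard lift-and-kernel argument for (iii), and the two-step quotient with the isomorphism theorems for (iv)--(v). Your anticipated ``main obstacle'' in (iv) is not a real obstacle: a quotient of a smooth curve by a finite group is automatically a smooth curve, and Riemann--Hurwitz forces the quotient of an elliptic curve to have genus $0$ or $1$, with genus $1$ occurring exactly when the cover is unramified, i.e., when the group acts by translations---so the dichotomy is complete with no further case analysis needed.
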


\begin{proof}

(i) It is well known that for any finite set $\{ \mathcal{L}(h) \ \ \vert \ \ h \in H \}$ of commuting matrices, there exists $S \in GL(2, {\mathbb C})$, such that
\[
S^{-1} \mathcal{L}(h) S = \mathcal{L} (S^{-1}hS) = \left( \begin{array}{cc}
\lambda _1 \mathcal{L}(h)   &  0  \\
0  &  \lambda _2 \mathcal{L}(h)
\end{array} \right)
\]
are diagonal for all $h \in H$. Namely, if there is $h_o \in H$, whose linear part $\mathcal{L}(h_o)$ has two different eigenvalues $\lambda _1 \mathcal{L}(h_o) \neq \lambda _2 \mathcal{L}(h_o)$, then one takes the $j$-th column of $S \in {\mathbb Q} ( \sqrt{-1}) _{2 \times 2}$ to be an eigenvector, associated with $\lambda _j \mathcal{L}(h_o)$, $1 \leq j \leq 2$. The conjugate $S^{-1} \mathcal{L}(h_o)S$  is a diagonal matrix. It suffices to show that $v_j$ are eigenvectors of all $\mathcal{L}(h)$, in order to conclude that $S^{-1} \mathcal{L}(h) S$ are diagonal, as the matrices of $\mathcal{L}(h)$ with respect to the basis $v_1,v_2$ of ${\mathbb C}^2$. Indeed, for any $h \in H$ the relation  $\mathcal{L}(h) \mathcal{L}(h_o) = \mathcal{L}(h_o) \mathcal{L}(h)$ implies that
\[
\lambda _j \mathcal{L} (h_o) [ \mathcal{L}(h) v_j] = \mathcal{L}(h) \mathcal{L}(h_o) v_j = \mathcal{L}(h_o) [ \mathcal{L}(h) v_j.
\]
Therefore $\mathcal{L}(h) v_j$ is an eigenvector of $\mathcal{L}(h_o)$ with associated eigenvalue $\lambda _j
\mathcal{L}(h_o$, so that $\mathcal{L}(h) v_j$ is proportional to $v_j$, i.e., $\mathcal{L}(h) v_j =  c_h v_j$ for some $c_h \in {\mathbb C}$, which turns to be an eigenvalue $c_h = \lambda _j \mathcal{L}(h)$ of $\mathcal{L}(h)$. If $\lambda _1 \mathcal{L}(h) = \lambda _2 \mathcal{L}(h)$ for $\forall h \in H$ then all $\mathcal{L}(h)$ are scalar matrices. In particular, $\mathcal{L}(h)$ are diagonal.

(ii) Note that the direct product $A = E \times E$ of elliptic curves coincides with their direct sum. If
\[
S^{-1} A := S^{-1} \widetilde{A} / S^{-1} \pi _1(A) = {\mathbb C}^2 / S^{-1} \pi _1(A),
\]
then $S^{-1} A \rightarrow S^{-1}A$ is an isomorphism of abelian surfaces and
\[
S^{-1}(A) = S^{-1} (E \times E) = S^{-1} [ (E \times \check{o}_E ) \times ( \check{o}_E \times E) ] =
\]
\[
= S^{-1} (E \times \check{o}_E) \times S^{-1} (\check{o}_E \times E) = F_1 \times F_2.
\]
The canonical projections ${\rm pr} _k : S^{-1}HS \rightarrow Aut(F_k)$ are group homomorphisms, according to
\[
{\rm pr} _k ( ( \tau _{V_1} \lambda _1 \mathcal{L}(g), \tau _{V_2} \lambda _2 \mathcal{L}(g))
( \tau _{U_1} \lambda _1 \mathcal{L}(h), \tau _{U_2} \lambda _2 \mathcal{L}(h)) =
\]
\[
= {\rm pr} _k ( \tau _{V_1 + \lambda _1 \mathcal{L} (g) U_1} ( \lambda \mathcal{L}(g). \lambda _1 \mathcal{L}(h)),
\tau _{V_2 + \lambda _2 \mathcal{L}(g) U_2} ( \lambda _2 \mathcal{L}(g). \lambda _2 \mathcal{L}(h)) ) =
\]
\[
= \tau _{V_k \lambda _k \mathcal{L}(g) U_k} ( \lambda _k \mathcal{L}(g) . \lambda _k \mathcal{L}(h)) =
( \tau _{V_k} \lambda _k \mathcal{L}(g)) ( \tau _{U_k} \lambda _j \mathcal{L}(h)) =
\]
\[
= {\rm pr} _k ( \tau _{V_1} \lambda _1 \mathcal{L}(g), \tau _{V_2} \lambda _2 \mathcal{L}(h)) .
( {\rm pr} _k ( \tau _{U_1} \lambda _1 \mathcal{L}(h), \tau _{U_2} \lambda _2 \mathcal{L}(h))
\]
for $\forall g,h \in H$ with $S^{-1}gS = \tau _{(V_1,V_2)} \mathcal{L}(S^{-1}gS)$, $S^{-1}hS = \tau _{(U_1,U_2)} \mathcal{L}(S^{-1}hS)$. The image ${\rm pr} _k (S^{-1}HS)$ of $S^{-1}HS$ is a finite subgroup of $Aut ( F_k)$ for $1 \leq k \leq 2$.

(iii) If $h_j = ( \alpha _j, \beta _j ) \in S^{-1}HS$ are liftings of the generators $\beta _j$ of ${\rm pr} _2 (S^{-1}HS)$, then $\ker( {\rm pr}_2)\langle h_1, \ldots , h_t \rangle$ is a subgroup of $S^{-1}HS$, as far as $\ker( {\rm pr}_2)$ is a normal subgroup of $S^{-1}HS$. For any ${\rm pr}_2 (S^{-1}hS) = \beta _1 ^{m_1} \ldots \beta _t ^{m_t}$ for some $m_i \in {\mathbb Z}$, one has $(S^{-1}HS)(h_1^{m_1} \ldots h_t ^{m_t}) \in \ker( {\rm pr}_2)$, so that $S^{-1}hS \in \ker( {\rm pr}_2) \langle h_1, \ldots , h_t \rangle $and $S^{-1}HS = \ker( {\rm pr}_2 ) \langle h_1, \ldots , h_t \rangle$.

(iv)  The subgroup $\ker( {\rm pr} _2)$ of $S^{-1}HS$ acts identically on $F_2$ and can be thought of as a subgroup of $Aut(F_1)$, ${\rm pr} _1( \ker( {\rm pr}_2)) \simeq \ker( {\rm pr}_2)$. Thus,
\[
S^{-1} A / \ker( {\rm pr}_2) \simeq \left[ F_1 / {\rm pr}_1 ( \ker( {\rm pr}_2 ) \right] \times F_2 = C_1 \times F_2
\]with an elliptic curve $C_1$ exactly when ${\rm pr}_1 ( \ker( {\rm pr}_2))$ is a translation subgroup of $Aut(F_1)$ or a rational curve $C_1$ for a non-translation subgroup ${\rm pr}_1 ( \ker( {\rm pr}_2))$ of  the automorphism group $Aut (F_1)$ of $F_1$.

(v) Since $\ker( {\rm pr}_2)$ is a normal subgroup of $S^{-1}HS$ with quotient
\[
S^{-1}HS / \ker( {\rm pr}_2) = [ \ker( {\rm pr}_2 ) \langle h_1, \ldots , h_t \rangle ] / \ker( {\rm pr}_2) =
\]
\[
= \langle h_1, \ldots , h_t \rangle / ( \langle h_1, \ldots , h_t \rangle \cap \ker( {\rm pr} _2 )) = G,
\]
one has
\[
A / H \simeq (S^{-1}A) / (S^{-1}HS) \simeq [ S^{-1}A / \ker( {\rm pr}_2) ] / [ S^{-1}HS / \ker( {\rm pr}_2) ] =
(C_1 \times F_2) / G.
\]
By the First Isomorphism Theorem, the epimorphism ${\rm pr}_2 : S^{-1}HS \rightarrow {\rm pr}_2 (S^{-1}HS)$ gives rise to an isomorphism
\[
\overline{ {\rm pr}_2} : S^{-1}HS / \ker( {\rm pr}_2)  = G  \longrightarrow {\rm pr}_2 (S^{-1}HS).
\]
The homomorphism ${\rm pr}_1 : S^{-1} HS \rightarrow Aut(F_1)$ induces a homomorphism
\[
\overline{ {\rm pr}_1} : S^{-1}HS / \ker( {\rm pr}_2) = G  \longrightarrow Aut(F_1) / {\rm pr}_1 ( \ker( {\rm pr}_2) ) \simeq  Aut (C_1).
\]
in the automorphism group of $C_1 = F_1 / {\rm pr} _1 ( \ker( {\rm pr}_2) )$.  It suffices to show that the kernel
\[
\ker( \overline{{\rm pr}_1}) = \{ S^{-1} h S \ker( {\rm pr }_2 ) \ \   \vert \ \  {\rm pr} _1 (S^{-1}hS) \in {\rm pr}_1 \ker( {\rm pr}_2 ) \} =
\]
\[
[ \ker( {\rm pr}_2 ) \ker( {\rm pr}_1) ] / \ker( {\rm pr}_2 ),
\]
since
\[
[ \ker( {\rm pr}_2 ) \ker( {\rm pr}_1) ] / \ker( {\rm pr}_2 ) \simeq \ker( {\rm pr}_1) / [ \ker( {\rm pi} _2) \cap \ker( {\rm pr}_1) ] = \ker( {\rm pr}_1).
\]
Indeed, if there exists $S^{-1} h_1 S ({\rm pr}_1 (S^{-1}hS), Id_{F_2} ) \in \ker( {\rm pr}_2)$ then
\[
S^{-1} (h_1^{-1} h) S = ( Id_{F_1}, {\rm pr}_2 ( S^{-1}hS)) \in S^{-1}HS \cap \ker( {\rm pr}_1),
\]
so that $S^{-1}h S \in S^{-1} h_1 S \ker( {\rm pr} _1) \subset \ker( {\rm pr}_2) \ker( {\rm pr}_1)$ for $\forall S^{-1} h S \ker( {\rm pr}_2) \in \ker( \overline{ {\rm pr}_1})$. Conversely, any element of  $[ \ker( {\rm pr}_2) \ker( {\rm pr}_1) ] / \ker( {\rm pr}_2)$ is of the form
\[
( g_1, Id_{F_2}) (Id _{F_1}, g_2) \ker( {\rm pr}_2) = (g_1, g_2) \ker( {\rm pr}_2)
 \]
 for some $(g_1, Id_{F_2}), (Id_{F_1}, g_2) \in S^{-1} HS \cap [ Aut(F_1) \times Aut(F_2)]$,  so that
\[
{\rm pr}_1 (g_1,g_2) = g_1 = {\rm pr}_1 ((g_1, Id _{F_2})) \in {\rm pr}_1 \ker( {\rm pr}_2)
\]
reveals that $(g_1,g_2) \ker( {\rm pr}_2 ) \in \ker( \overline{{\rm pr}_1})$.

\end{proof}

According to Lemma \ref{FiniteSubgroupsAutE}, the finite automorphism groups of elliptic curves have at most three generators. Combining with Lemma  \ref{SimultaneousDiagonalization}(iii), one concludes that the finite subgroups $H$ of $Aut(E \times  E)$ with abelian linear part $\mathcal{L}(H)$
have at most six generators. Their linear parts $\mathcal{L}(H)$ have at most two generators.

\begin{lemma}   \label{ FixedPointsUnderLifting}
Let $h = \tau _{(U,V)} \mathcal{L}(h)$ be an automorphism of $A = E \times E$ and $w=(u,v) \in {\mathbb C}^2 = \widetilde{A}$ be a lifting of $(u,v) + \pi _1 (A) = (U,V) \in A$. Then $h$ has no fixed points on $A$ if and only if for any $\mu = ( \mu _1, \mu _2) \in \pi _1(A)$ the affine-linear transformation
\[
\widetilde{h} (w,\mu) = \tau _{w + \mu} \mathcal{L}(h) \in Aff({\mathbb C}^2, R) := ({\mathbb C}^2,+) \leftthreetimes GL(2,R)
\]
has no fixed points on ${\mathbb C} ^2$.
\end{lemma}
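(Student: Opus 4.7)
The plan is to translate the fixed-point condition on $A$ into a congruence on the universal cover and then match it exactly with the fixed-point equation for the affine-linear lift. Write $\pi : \widetilde{A} = {\mathbb C}^2 \to A = {\mathbb C}^2/\pi_1(A)$ for the canonical projection, and for any $z = (x,y) \in {\mathbb C}^2$ denote $P = \pi(z) = (x,y) + \pi_1(A) \in A$. Since $h = \tau_{(U,V)} \mathcal{L}(h)$, the identity $h(P) = P$ reads
\[
\mathcal{L}(h)(z) + w \equiv z \pmod{\pi_1(A)},
\]
where I used that $\mathcal{L}(h) \in GL(2,R)$ preserves $\pi_1(A)$, so the class of $\mathcal{L}(h)(z)$ depends only on $P$.

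Next I would unwind this congruence. It asserts exactly the existence of some $\lambda \in \pi_1(A)$ with
\[
\mathcal{L}(h)(z) + w = z + \lambda,
\]
equivalently $\bigl(I_2 - \mathcal{L}(h)\bigr)(z) = w - \lambda$. On the other hand, by the very definition of $\widetilde{h}(w,\mu)$, a fixed point of $\widetilde{h}(w,\mu)$ on ${\mathbb C}^2$ is a $z \in {\mathbb C}^2$ satisfying $\mathcal{L}(h)(z) + w + \mu = z$, i.e.\ $\bigl(I_2 - \mathcal{L}(h)\bigr)(z) = w + \mu$. Setting $\mu := -\lambda \in \pi_1(A)$ (which is legitimate since $\pi_1(A)$ is a subgroup) identifies the two equations.

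Putting the two directions together gives the lemma. If $h$ has a fixed point $P = \pi(z) \in A$, then the $\lambda$ produced above yields $\mu = -\lambda \in \pi_1(A)$ and a point $z \in {\mathbb C}^2$ fixed by $\widetilde{h}(w,\mu)$, so there exists at least one $\mu$ for which $\widetilde{h}(w,\mu)$ has a fixed point. Conversely, if some $\widetilde{h}(w,\mu)$ with $\mu \in \pi_1(A)$ fixes a point $z \in {\mathbb C}^2$, then the computation above shows $h$ fixes $\pi(z) \in A$. Contrapositively, $h$ has no fixed points on $A$ precisely when $\widetilde{h}(w,\mu)$ has no fixed points on ${\mathbb C}^2$ for every $\mu \in \pi_1(A)$.

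There is no genuine obstacle here; the statement is essentially a restatement of the fixed-point criterion for a covering by an affine transformation. The only point requiring care is the sign bookkeeping $\mu \leftrightarrow -\lambda$ and the observation that the class of $\mathcal{L}(h)(z)$ modulo $\pi_1(A)$ is well-defined because $\mathcal{L}(h) \in GL(2,R)$ preserves the lattice $\pi_1(A) = \pi_1(E) \times \pi_1(E)$; both are used implicitly throughout the paper and need only be noted in passing.
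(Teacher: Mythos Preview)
Your argument is correct and follows essentially the same route as the paper: both reduce the fixed-point condition on $A$ to the congruence $\mathcal{L}(h)(z)+w\equiv z\pmod{\pi_1(A)}$ on the universal cover, then identify the lattice element produced by this congruence with $-\mu$ to match the fixed-point equation for $\widetilde{h}(w,\mu)$. The paper carries out the two directions by the same explicit computation and sign bookkeeping you give.
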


\begin{proof}

The statement of the lemma is equivalent to the fact that $Fix _A(h) \ne q\emptyset$ exactly when $Fix _{{\mathbb C}^2} ( \widetilde{h} (w,\mu)) \neq \emptyset$ for some $\mu \in \pi _1(A)$. Indeed, if $(p,q) \in Fix _{{\mathbb C}^2} ( \widetilde{h}(w,\mu))$ then $(P,Q) = (p + \pi _1(E), q + \pi _1 (E)) \in A$ is a fixed point of $h$, according to
\[
h(P,Q) = \mathcal{L}(h)
\left( \begin{array}{c}
P  \\
Q
\end{array}  \right) +
\left( \begin{array}{c}
U  \\
V
\end{array}  \right) =
 \mathcal{L}(h)
\left( \begin{array}{c}
p   \\
q
\end{array} \right) +
\left( \begin{array}{c}
u  \\
v
\end{array} \right) +
\left( \begin{array}{c}
\mu _1  \\
\mu _2
\end{array}  \right)  +
\left( \begin{array}{c}
\pi _1 (E)  \\
\pi _1(E)
\end{array} \right)   =
\]
\[
= \left( \begin{array}{c}
p   \\
q
\end{array} \right) +
\left( \begin{array}{c}
\pi _1 (E)  \\
\pi _1(E)
\end{array} \right) =
\left( \begin{array}{c}
P  \\
Q
\end{array}  \right).
\]
Conversely, if
\[
\mathcal{L}(h)
\left( \begin{array}{c}
P  \\
Q
\end{array}  \right) +
\left( \begin{array}{c}
U  \\
V
\end{array}  \right) =
\left( \begin{array}{c}
P  \\
Q
\end{array}  \right),
\]
then for any lifting $(p,q) \in {\mathbb C}^2$ of $(P,Q) = (p + \pi _1(E), q + \pi _1 (E))$, one has
\[
\mathcal{L}(h)
\left( \begin{array}{c}
p   \\
q
\end{array} \right) +
\left( \begin{array}{c}
U   \\
V
\end{array} \right) +
\left( \begin{array}{c}
\pi _1 (E)  \\
\pi _1(E)
\end{array} \right) =
\left( \begin{array}{c}
p   \\
q
\end{array} \right) +
\left( \begin{array}{c}
\pi _1 (E)  \\
\pi _1(E)
\end{array} \right).
\]
In other words,
\[
\mu = \left( \begin{array}{c}
\mu _1  \\
\mu _2
\end{array}  \right) :=
\mathcal{L}(h)
\left( \begin{array}{c}
p   \\
q
\end{array} \right) +
\left( \begin{array}{c}
u   \\
v
\end{array} \right) -
\left( \begin{array}{c}
p   \\
q
\end{array} \right) \in
\left( \begin{array}{c}
\pi _1 (E)  \\
\pi _1(E)
\end{array} \right)
\]
and $(p,q) \in Fix _{{\mathbb C}^2} ( \widetilde{h} (w, - \mu ))$.

\end{proof}

Now we are ready to characterize the automorphisms $h \in Aut(A)$ without fixed points

\begin{lemma}   \label{FixedPointFree}
An automorphism $h = \tau _{(U,V)} \mathcal{L}(h) \in Aut (A) \setminus ( \mathcal{T}_A, +)$ acts without fixed points on $A = E \times E$ if and only if its linear part $\mathcal{L}(h)$ has eigenvalues $\lambda _1 \mathcal{L}(h) =1$, $\lambda _2 \mathcal{L}(h) \neq 1$ and
\[
\mathcal{L}(h)
\left( \begin{array}{c}
u  \\
v
\end{array}  \right) \neq \lambda _2
\left( \begin{array}{c}
u  \\
v
\end{array}  \right)
\]
 for any lifting  $(u,v) \in {\mathbb C}^2$ of  $(u + \pi _1(E), v + \pi _1(E)) = (U,V)$.
\end{lemma}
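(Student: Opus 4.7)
The plan is to reduce the question on $A$ to a question on the universal cover $\mathbb{C}^2$ via Lemma \ref{FixedPointsUnderLifting}, then analyze when the affine map $\widetilde{h}(w,\mu)$ admits a fixed point by considering the image of $I_2 - \mathcal{L}(h)$. The key observation is that a fixed point of $\tau_{w+\mu}\mathcal{L}(h)$ on $\mathbb{C}^2$ is precisely a solution $z \in \mathbb{C}^2$ of the linear equation
\[
(I_2 - \mathcal{L}(h)) z = w + \mu,
\]
so the existence of a fixed point is equivalent to $w + \mu \in \mathrm{Im}(I_2 - \mathcal{L}(h))$ for some $\mu \in \pi_1(A)$.

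For the forward direction, assume $h$ has no fixed points on $A$. First I would argue that $1$ must be an eigenvalue of $\mathcal{L}(h)$: otherwise $I_2 - \mathcal{L}(h)$ is invertible, $\mathrm{Im}(I_2 - \mathcal{L}(h)) = \mathbb{C}^2$, and taking $\mu = 0$ produces a fixed point, contradicting Lemma \ref{FixedPointsUnderLifting}. Next I would rule out the case $\lambda_1 = \lambda_2 = 1$: since $\mathcal{L}(h) \in GL(2,R)$ is of finite order (as a consequence of $h \in H$ being of finite order in the relevant setting), Proposition \ref{EigenvaluesFiniteOrder} gives diagonalizability, so a double eigenvalue $1$ forces $\mathcal{L}(h) = I_2$ and hence $h \in \mathcal{T}_A$, contrary to hypothesis. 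Thus $\lambda_1\mathcal{L}(h) = 1$ and $\lambda_2\mathcal{L}(h) \neq 1$. Finally, with $\mathcal{L}(h)$ diagonalizable with distinct eigenvalues $1$ and $\lambda_2$, the image $\mathrm{Im}(I_2 - \mathcal{L}(h))$ coincides with the one-dimensional $\lambda_2$-eigenspace of $\mathcal{L}(h)$, so $w + \mu \in \mathrm{Im}(I_2 - \mathcal{L}(h))$ is equivalent to $\mathcal{L}(h)(w+\mu) = \lambda_2(w+\mu)$. The absence of fixed points therefore translates into $\mathcal{L}(h)(w+\mu) \neq \lambda_2 (w+\mu)$ for every $\mu \in \pi_1(A)$, which is exactly the stated condition since the liftings of $(U,V)$ are precisely $\{w + \mu \mid \mu \in \pi_1(A)\}$.

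The converse is the same chain of implications read in reverse: given the eigenvalue conditions and the inequality $\mathcal{L}(h)\binom{u}{v} \neq \lambda_2 \binom{u}{v}$ for every lifting $(u,v)$, for any $\mu \in \pi_1(A)$ the vector $w + \mu$ fails to lie in the $\lambda_2$-eigenspace $= \mathrm{Im}(I_2 - \mathcal{L}(h))$, so $(I_2 - \mathcal{L}(h))z = w+\mu$ has no solution, and $\widetilde{h}(w,\mu)$ is fixed-point-free on $\mathbb{C}^2$. Lemma \ref{FixedPointsUnderLifting} then yields $\mathrm{Fix}_A(h) = \emptyset$.

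The only delicate point is the exclusion of the double-eigenvalue case $\lambda_1 = \lambda_2 = 1$: abstractly a matrix in $GL(2,R)$ with spectrum $\{1,1\}$ could be a Jordan block and still act without fixed points on $A$ when the off-diagonal entry produces a nontrivial affine part. This is where one must invoke the finite-order context of the paper (Proposition \ref{EigenvaluesFiniteOrder}) to force diagonalizability and conclude $\mathcal{L}(h) = I_2$; I expect this to be the only step where one needs more than linear algebra, all other steps being routine manipulations of the equation $(I_2 - \mathcal{L}(h))z = w + \mu$.
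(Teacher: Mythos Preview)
Your proposal is correct and follows essentially the same route as the paper: both reduce the fixed-point question to solving $(\mathcal{L}(h)-I_2)z = -(w+\mu)$ on $\mathbb{C}^2$ and then identify $\mathrm{Im}(\mathcal{L}(h)-I_2)$ with the $\lambda_2$-eigenspace $\ker(\mathcal{L}(h)-\lambda_2 I_2)$ (the paper invokes Cayley--Hamilton for this, you use diagonalizability directly). The only minor differences are that the paper handles the case ``$1$ not an eigenvalue'' by constructing a fixed point directly on $A$ via the adjugate of $\mathcal{L}(h)-I_2$ rather than through Lemma~\ref{ FixedPointsUnderLifting}, and that the paper does not explicitly discuss the degenerate case $\lambda_1=\lambda_2=1$ you flag --- it tacitly uses the ambient finite-order context, exactly as you suggest.
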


\begin{proof}

The fixed points $(P,Q) \in A$ of $h = \tau _{(U,V)} \mathcal{L}(h)$ are  described  by the equality
\begin{equation}  \label{FixedPointEquation}
( \mathcal{L}(h) - I_2)
\left( \begin{array}{c}
P  \\
Q
\end{array} \right) =
\left( \begin{array}{c}
-U  \\
-V
\end{array} \right).
\end{equation}

If $\det ( \mathcal{L}(h) -I_2) \neq 0$ or $1 \in {\mathbb C}$ is not an eigenvalues of $\mathcal{L}(h)$, then consider the adjoint matrix
\[
( \mathcal{L}(h) - I_2)^* = \left( \begin{array}{rr}
d  &  -b  \\
-c  &  a
\end{array}  \right) \in R_{2 \times 2} \ \ \mbox{  of   }
\]
\[
\mathcal{L}(h) - I_2 = \left( \begin{array}{rr}
a  &  b  \\
c  &  d
\end{array}  \right) \in R_{2 \times 2}.
\]
According to $( \mathcal{L}(h) - I_2 )^* ( \mathcal{L}(h) -I_2) = \det( \mathcal{L}(h) -I_2) I_2 = ( \mathcal{L}(h) - I_2) ( \mathcal{L}(h) - I_2 )^*$,
one obtains
\begin{equation}   \label{DiagonalizedFixedPointEquation}
\det( \mathcal{L}(h) - I_2)
\left( \begin{array}{c}
P   \\
Q
\end{array}  \right) =
( \mathcal{L}(h) - I_2) ^* ( \mathcal{L}(h) - I_2 )
\left( \begin{array}{c}
u  \\
v
\end{array}  \right) =
- ( \mathcal{L}(h) - I_2 )^*
\left( \begin{array}{c}
U  \\
V
\end{array}  \right).
\end{equation}
Then for an arbitrary lifting $(u_1,v_1) \in {\mathbb C}^2$ of
\[
\left( \begin{array}{c}
u_1 + \pi _1(E)   \\
v_1 + \pi _1 (E)
\end{array}  \right) =
\left( \begin{array}{c}
U_1 \\
V_1
\end{array}  \right) := - ( \mathcal{L}(h) - I_2) ^*
\left( \begin{array}{c}
U  \\
V
\end{array}  \right),
\]
the point
\[
(p,q) = \left( \frac{u_1}{\det( \mathcal{L}(h) - I_2)}, \frac{v_1}{\det ( \mathcal{L}(h) - I_2)} \right) \in {\mathbb c}^2
\]
descends to $(P,Q) = ( p + \pi _1 (E), q + \pi _1 (E))$, subject to (\ref{DiagonalizedFixedPointEquation}). As a result,
\[
( \mathcal{L}(h) - I_2 )
\left( \begin{array}{c}
P  \\
Q
\end{array}  \right) = \frac{1}{\det( \mathcal{L}(h) -I_2)} ( \mathcal{L}(h) -I_2)
\left( \begin{array}{c}
u_1  \\
v_1
\end{array}  \right) +
\left( \begin{array}{c}
 \pi _1 (E)  \\
 \pi _1 (E)
 \end{array}  \right) =
 \]
 \[
=  \left( \begin{array}{c}
 u  \\
 v
 \end{array}  \right) +
\left( \begin{array}{c}
 \pi _1 (E)  \\
 \pi _1 (E)
 \end{array}  \right)
 \]
 and $(P,Q) \in Fix _A(h)$.

 From now on, let us suppose that the linear part $\mathcal{L}(h) \in GL(2,R)$ of $h \in Aut (A) \setminus ( \mathcal{T}_A, +)$ has eigenvalues $\lambda _1 \mathcal{L}(h) =1$ and $\lambda _2 \mathcal{L}(h) = \det \mathcal{L}(h) \in R^* \setminus \{ 1\}$. We claim that a lifting $(u,v) \in {\mathbb c}^2$ of $( u + \pi _1(E), v + \pi _1 (E)) = (U,V) \in A$ satisfies
 \[
 \mathcal{L}(h)
 \left( \begin{array}{c}
 u  \\
 v
 \end{array}  \right) = \lambda _2 \mathcal{L}(h)
  \left( \begin{array}{c}
 u  \\
 v
 \end{array}  \right)
 \]
 if and only if there exists $(p,q) \in {\mathbb C}^2$ with
 \[
 ( \mathcal{L}(h) - I_2)
  \left( \begin{array}{c}
 p  \\
 q
 \end{array}  \right) =
  \left( \begin{array}{c}
 - u  \\
 - v
 \end{array}  \right),
 \]
 which amounts to $(p,q) \in Fix _{{\mathbb C}^2} ( \tau _{(u,v)} \mathcal{L}(h))$. To this end, let us view $\mathcal{L}(h)  : {\mathbb C}^2 \rightarrow {\mathbb C}^2 $ as a linear  operator in ${\mathbb C} ^2$ and reduce the claim to the equivalence of $( -u,-v) \in \ker( \mathcal{L}(h) - \lambda _2 \mathcal{L}(h) I_2)$ with $(-u,-v) \in Im ( \mathcal{L}(h) - I_2)$. In other word, the statement of the lemma reads as $\ker ( \mathcal{L}(h) - \lambda _2 \mathcal{L}(h) I_2) = Im ( \mathcal{L}(h) - I_2)$ for the linear operators $\mathcal{L}(h) - \lambda _2 \mathcal{L}(h) I_2$ and $\mathcal{L}(h) - I_2$ in ${\mathbb C}^2$. By Hamilton -Cayley  Theorem, $\mathcal{L}(h) \in {\mathbb C} _{2 \times 2}$ is a root of its characteristic polynomial
 \[
 \mathcal{X} _{\mathcal{L}(h)} ( \lambda ) = ( \lambda - \lambda _1 \mathcal{L}(h)) ( \lambda -1).
 \]
 Thus,
 \[
 ( \mathcal{L}(h) - \lambda _2 \mathcal{L}(h) I_2) Im ( \mathcal{L}(h) - I_2) = \{ (0,0) \}
 \]
 is the zero subspace of ${\mathbb C}^2$  and $Im ( \mathcal{L}(h) - I_2 ) \subseteq \ker( \mathcal{L}(h) - \lambda _2 \mathcal{L}(h) I_2)$. However, $\dim Im ( \mathcal{L}(h) - I_2) = {\rm rk} ( \mathcal{L}(h) - I_2 ) =1$ and
 \[
 \dim \ker( \mathcal{L}(h) - \lambda _2 \mathcal{L}(h)) = 2 - {\rm rk} ( \mathcal{L}(h) - \lambda _2 \mathcal{L}(h) I_2) = 2-1=1,
  \]
  so that $Im ( \mathcal{L}(h) - I_2) = \ker( \mathcal{L}(h) - \lambda _2 \mathcal{L}(h) I_2)$.

\end{proof}

\begin{corollary}  \label{HyperellipticAH}
Let $H = \mathcal{T}(h) \langle h_o \rangle$ be a finite subgroup of $Aut(A)$ for some $h_o \in H$ with
\[
\lambda _1 \mathcal{L}(h_o) =1,  \ \  \lambda _2 \mathcal{L}(h_o) = e^{\frac{ 2 \pi i}{s}},  \ \ s \in \{ 2, 3 ,4,  6 \},
 \]
 $S \in GL(2, {\mathbb Q} ( \sqrt{-d}))$ be a diagonalizing matrix for $h_o$ and
 \[
 S^{-1} h_o S = \left( \tau _W, e^{\frac{ 2 \pi i}{s}} \right)
 \]
 after appropriate choice of an origin of $S^{-1}A = F_1 \times F_2$, $F_1 = S^{-1}( E \times \check{o}_E)$, $F_2 = S^{-1} ( \check{o}_E \times E)$.
 Then $A/H$ is a hyper-elliptic surface if and only if the kernel $\ker( {\rm pr}_1)$ of the first canonical projection ${\rm pr}_1 : S^{-1} HS \rightarrow Aut(F_1)$  is a translation subgroup of $Aut(F_2)$. If so, then
 \[
 S^{-1}A / [ \ker( {\rm pr}_2) \ker( {\rm pr}_1) ] \simeq C_1 \times C_2
 \]
 for some elliptic curves $C_1, C_2$ and
 \[
 A/H \simeq ( C_1 \times C_2 ) / G,
 \]
 where the group $G$ is isomorphic to some of the groups
 \[
 G_2^{HE}  = \langle ( \tau _{U_1}, -1 ) \rangle \simeq {\mathbb C}_2
 \]
 with $U_1 \in C_1^{2-{\rm tor}} \setminus  \{ \check{o}_{C_1} \}$,
 \[
 G_{2,2}^{HE} = \langle \tau _{(P_1, Q_1)} \rangle \times \langle ( \tau _{U_1}, -1) \rangle \simeq {\mathbb C}_2 \times {\mathbb C}_2
 \]
 with $P_1, U_1 \in C_1 ^{2-{\rm tor}} \setminus \{ \check{o}_{C_1} \}$, $Q_1 \in C_2 ^{2-{\rm tor}}$,
 \[
 G_3 ^{HE} = \langle ( \tau _{U_1}, e^{\frac{ 2 \pi i}{3}} ) \rangle \simeq {\mathbb C}_3
 \]
 with $R = \mathcal{O}_{-3}$, $ U_1 \in C_1 ^{3-{\rm tor}} \setminus C_1 ^{2-{\rm tor}}$,
 \[
 G_{3,3}^{HE} = \langle \tau _{(P_1,Q_1)}  \rangle \times \langle \left(  \tau _{U_1}, e^{\frac{ 2 \pi i}{3}} \right) \rangle \simeq
 {\mathbb C}_3 \times {\mathbb C}_3
 \]
 with $R = \mathcal{O}_{-3}$, $P_1, U_1 \in C_1 ^{3-{\rm tor}} \setminus C_1 ^{2-{\rm tor}}$, $Q \in C_2 ^{3-{\rm tor}} \setminus \{ \check{o}_{C_2} \}$,
 \[
 G_4^{HE} = \langle ( \tau _{U_1}, i ) \rangle \simeq {\mathbb C}_4
 \]
 with $R = {\mathbb Z}[i]$, $U_1 \in C_1 ^{4-{\rm tor}} \setminus ( C_1 ^{2-{\rm tor}} \cup C_1^{3-{\rm tor}} )$,
 \[
 G_{4,4} ^{HE} = \langle \tau _{(P_1,Q_1)} \rangle \times \langle ( \tau _{U_1}, i) \rangle \simeq {\mathbb C}_2  \times {\mathbb C}_4
 \]
 with $R = {\mathbb Z}[i]$, $P_1 \in C_1 ^{2-{\rm tor}} \setminus \{ \check{o}_{C_1} \}$, $Q_1 \in C_2 ^{(1_i)-{\rm tor}} \setminus \{ \check{o}_{C_2} \}$,
 $U_1 \in C_1 ^{4-{\rm tor}} \setminus ( C_1 ^{2-{\rm tor}} \cup C_1 ^{3-{\rm tor}} )$,
 \[
 G_6^{HE} = \langle \left(  \tau _{U_1}, e^{\frac{ \pi i}{3}} \right) \rangle \simeq {\mathbb C}_6
 \]
 with $R = \mathcal{O}_{-3}$, $U_1 \in C_1 ^{6-{\rm tor}} \setminus ( C_1 ^{3-{\rm tor}} \cup C_1^{4-{\rm tor}} \cup C_1 ^{5-{\rm tor}} )$.

 In the notations from Proposition \ref{HC1}, $A/H$ is a hyper-elliptic surface exactly when $H \simeq S^{-1}HS$ is isomorphic to some of the groups:
 \[
 H_2 ^{HE} (m,n) = \langle ( \tau _{M_j}, Id _{F_2}), (Id_{F_1}, \tau _{N_k} ), ( \tau _W, -1) \ \ \vert \ \  1 \leq j \leq m, \ \ 1 \leq k \leq n \rangle
 \]
 with $W \not \in \ker( {\rm pr}_2)$, $2W \in \ker( {\rm pr}_2)$, $\mathcal{L}(H_2^{HE} (m,n) ) \simeq H_{C1}(1) \simeq {\mathbb C}_2$,
 \[
 H_{2,2}^{HE} (m,n) = \langle ( \tau _{M_j},  Id _{F_2}),  \ \ (Id_{F_1},  \tau _{N_k}),  \ \ \tau _{(X,Y)}, \ \  (\tau _W, -1)
 \ \ \vert \ \  1 \leq j \leq m, \ \ 1 \leq k \leq n  \rangle
 \]
 with $2X. 2W \in \ker( {\rm pr}_2)$, $X,W \not \in \ker( {\rm pr}_2)$, $2Y \in \ker( {\rm pr}_1)$, $Y \not \in \ker( {\rm pr}_1)$,  \\
 $\mathcal{L} (  H_{2,2}^{HE} (m,n) ) \simeq H_{C1} (1) \simeq {\mathbb C}_2$
 \[
 H_3^{HE} (m,n) = \langle ( \tau _{M_j}, If_{F_2}), \ \ (Id_{F_1}, \tau _{N_k}), \ \ \left( \tau _W, e^{\frac{2 \pi i}{3}} \right)
 \ \ \vert \ \  1 \leq j \leq m, \ \ 1 \leq k \leq n  \rangle
 \]
 with $R = \mathcal{O}_{-3}$, $3 W \in \ker( {\rm pr}_2)$, $2 W \not \in \ker({\rm pr} _2)$,
 $\mathcal{L} (H_3^{HE} (m,n) ) \simeq H_{C1}(2) \simeq {\mathbb C}_3$,
 \[
 H_{3,3}^{HE} (m,n) = \langle ( \tau _{M_j}, Id _{F_2}), \, (Id_{F_1}, \tau _{N_k}), \,  \tau _{(X,Y)}, \,  \left( \tau _W, e^{\frac{ 2 \pi i}{3}} \right)
 \, \vert \,   1 \leq j \leq m, \,  1 \leq k \leq n  \rangle
 \]
 with $R = \mathcal{O}_{-3}$, $3X, 3W \in \ker( {\rm pr}_2)$, $2X, 2W \not \in  \ker( {\rm pr}_2)$, $3 Y \in \ker( {\rm pr} _1)$,
 $Y \not \in  \ker( {\rm pr}_1)$, $\mathcal{L}( H_{3,3}^{HE} (m,n)) \simeq H_{C1}(2) \simeq {\mathbb C}_3$,
 \[
 H_4^{HE}(m,n) = \langle ( \tau _{M_j}, Id_{F_2}), \ \ (Id_{F_1}, \tau _{N_k}), \ \  (\tau _W, i)
 \ \ \vert \ \  1 \leq j \leq m, \ \ 1 \leq k \leq n  \rangle
 \]
 with $R = {\mathbb Z}[i]$, $4W \in \ker( {\rm pr} _2)$, $2W, 3W \not \in \ker( {\rm pr}_2)$,
 $\mathcal{L}( H_4^{HE}(m,n)) \simeq H_{C1} (e) \simeq {\mathbb C}_4$,
 \[
 H_{4,4}^{HE}(m,n) = \langle ( \tau _{M_j}, Id_{F_2}), \ \ ( Id _{F_1}, \tau _{N_k}), \ \ \tau _{(X,Y)}, \ \ ( \tau _W, i)
 \ \ \vert \ \  1 \leq j \leq m, \ \ 1 \leq k \leq n  \rangle
 \]
 with $R = {\mathbb Z}[i]$, $2X \in \ker( {\rm pr}_2)$, $X \not \in \ker( {\rm pr}_2)$, $(1_i) Y \in \ker( {\rm pr}_1)$, $Y \not \in \ker( {\rm pr} _1)$,
 $4W \in \ker( {\rm pr}_2)$, $2W, 3W \not \in \ker( {\rm pr}_2)$,
 $\mathcal{L}( H_{4,4}^{HE} (m,n) \simeq H_{C1} (3) \simeq {\mathbb C}_4$,
 \[
 H _6 ^{HE} (m,n) = \langle ( \tau _{M_j}, Id _{F_2} ), \ \ ( Id _{F_1}, \tau _{N_k} ), \left( \tau _W, e^{\frac{\pi i}{3}} \right)
 \ \ \vert \ \  1 \leq j \leq m, \ \ 1 \leq k \leq n  \rangle
 \]
 with $R = \mathcal{O}_{-3}$, $6W \in \ker( {\rm pr} _2)$, $3W, 4W, 5W \not \in \ker( {\rm pr}_2)$, where $m, n \in \{ 0, 1, 2 \}$.
  \end{corollary}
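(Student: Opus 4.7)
My plan is to combine Proposition \ref{AbelianCoverL1}, the diagonalization in Lemma \ref{SimultaneousDiagonalization}, and the classical Bombieri--Mumford classification of hyper-elliptic surfaces. By Proposition \ref{AbelianCoverL1}(i), the quotient $A/H$ is hyper-elliptic exactly when the $H$-action on $A$ is free, i.e.\ when no $h \in H \setminus \{ Id_A \}$ has a fixed point. Since $H = \mathcal{T}(H)\langle h_o\rangle$ and $\mathcal{L}(h_o)$ has eigenvalues $(1, e^{2\pi i/s})$, the non-translation elements have linear part $\mathcal{L}(h_o)^k$ with eigenvalues $(1,e^{2\pi i k/s})$, so Lemma \ref{FixedPointFree} applies; freeness becomes a purely arithmetic condition on the translation parts of the generators.

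Next I would conjugate by the diagonalizing $S \in GL(2,\mathbb{Q}(\sqrt{-d}))$ supplied by Lemma \ref{SimultaneousDiagonalization} and shift the origin of $F_2$ to a fixed point of $h_o|_{F_2}$, bringing $h_o$ into the form $(\tau_W, e^{2\pi i/s})$. Because the $F_1$-component of $\mathcal{L}(h_o)$ is trivial, every element of $S^{-1}HS$ projects under $\mathrm{pr}_1$ into the translation subgroup of $\mathrm{Aut}(F_1)$; in particular $\mathrm{pr}_1(\ker\mathrm{pr}_2)$ is automatically a translation subgroup of $\mathrm{Aut}(F_1)$. The remaining freeness requirement is that any $(Id_{F_1},\beta) \in \ker\mathrm{pr}_1$ acts freely on $F_2$, which forces $\beta$ to be a translation; this is the asymmetry that explains why only $\ker\mathrm{pr}_1$ appears in the criterion. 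Once both $\ker\mathrm{pr}_1$ and $\ker\mathrm{pr}_2$ are translation groups, passing to the quotient by $\ker(\mathrm{pr}_2)\ker(\mathrm{pr}_1)$ yields $C_1\times C_2$ with $C_j$ elliptic and produces $A/H \simeq (C_1\times C_2)/G$ for $G = \langle h_1,\dots,h_t\rangle / (\langle h_1,\dots,h_t\rangle \cap \ker\mathrm{pr}_2\ker\mathrm{pr}_1)$, where $\overline{\mathrm{pr}_2}: G \to \mathrm{pr}_2(S^{-1}HS)$ is an isomorphism by Lemma \ref{SimultaneousDiagonalization}(v).

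At this stage $G$ acts freely on the split abelian surface $C_1\times C_2$ with $\overline{\mathrm{pr}_2}(G) \simeq \langle e^{2\pi i/s}\rangle \simeq \mathbb{C}_s$ or with $\mathcal{L}(G) \simeq H_{C1}(j)$ from Proposition \ref{HC1}, and the first-projection kernel consists only of translations. The Bombieri--Mumford classification \cite{BM1} of finite, fixed-point-free abelian subgroups of $\mathrm{Aut}(C_1\times C_2)$ with cyclic projection to $\mathrm{Aut}(C_2)$ then forces $G$ to be one of $G_2^{HE}$, $G_{2,2}^{HE}$, $G_3^{HE}$, $G_{3,3}^{HE}$, $G_4^{HE}$, $G_{4,4}^{HE}$, $G_6^{HE}$. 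The arithmetic conditions $U_1 \in C_1^{s\text{-tor}}\setminus \bigcup_{k\mid s,k<s} C_1^{k\text{-tor}}$ come from demanding that $(\tau_{U_1}, e^{2\pi i/s})$ and its powers be fixed-point free; together with $R = \mathcal{O}_{-3}$ (for $s=3,6$) or $R=\mathbb{Z}[i]$ (for $s=4$) forced by Proposition \ref{HC1}, this exhausts the list of $G$.

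Finally I lift the classification of $G$ back to $H \simeq S^{-1}HS$. The translation subgroups $\ker\mathrm{pr}_2 \simeq \langle \tau_{M_j}\rangle$ and $\ker\mathrm{pr}_1 \simeq \langle \tau_{N_k}\rangle$ have at most two generators each, which after a choice of basis may be taken cyclic of orders $m, n \in \{0,1,2\}$ (any higher number of independent generators would force $\pi_1(A)$ to have rank $>4$); the generators $\tau_{(X,Y)}$ and $(\tau_W, e^{2\pi i/s})$ above the $G_{s,s}^{HE}$ or $G_s^{HE}$ data in the quotient provide the remaining generators of $H_s^{HE}(m,n)$ or $H_{s,s}^{HE}(m,n)$. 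The torsion order conditions on $X,Y,W$ in the statement are exactly those inherited from the conditions on $P_1,Q_1,U_1$ via the quotient. I expect the main obstacle to be the bookkeeping in this last paragraph: verifying that every set of generators of a free abelian $H$-action with $\det\mathcal{L}(h_o)$ of order $s$ fits one of the listed normal forms, and that the resulting torsion constraints on $2X,3X,(1+i)Y,\ldots$ are exactly those needed to preserve freeness after lifting from $C_1 \times C_2$ back to $F_1 \times F_2$.
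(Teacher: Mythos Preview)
Your approach is essentially the same as the paper's: both start from Proposition \ref{AbelianCoverL1}(i), diagonalize via Lemma \ref{SimultaneousDiagonalization}, observe that $\mathrm{pr}_1(S^{-1}HS)$ lands in translations, argue that freeness forces $\ker(\mathrm{pr}_1)$ to consist of translations (because any non-translation element of $\ker(\mathrm{pr}_1)$ would fix a point on $\{pt\}\times F_2$), and then reduce to a free action of an abelian group $G_0$ with at most two generators on a product $C_1\times C_2$ of elliptic curves with both canonical projections injective.

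The one substantive difference is in the last step. You invoke Bombieri--Mumford \cite{BM1} as a black box to pin down the seven types $G_s^{HE}$, $G_{s,s}^{HE}$. The paper instead rederives this list directly: it uses that $\overline{\mathrm{pr}_2}(G_0)\simeq \langle\tau_{Q_1}\rangle\times\langle e^{2\pi i/s}\rangle$ with the abelianity forcing $\tau_{Q_1}=\mathrm{Ad}_{e^{2\pi i/s}}(\tau_{Q_1})$, i.e.\ $Q_1\in C_2^{(e^{2\pi i/s}-1)\text{-tor}}$, and then computes this torsion subgroup explicitly for each $s\in\{2,3,4,6\}$ (obtaining $C_2^{2\text{-tor}}$, $C_2^{3\text{-tor}}$, $C_2^{(1+i)\text{-tor}}$, and $\{\check o_{C_2}\}$ respectively). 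This explicit computation is what produces the precise torsion constraints on $Q_1$ listed in the statement, and then the isomorphism $\overline{\mathrm{pr}_1}(G_0)\simeq\overline{\mathrm{pr}_2}(G_0)$ transports them to the constraints on $P_1,U_1\in C_1$. Your citation of \cite{BM1} is a legitimate shortcut, but the paper's direct computation is what makes the torsion conditions in the corollary self-contained rather than imported.
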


\begin{proof}

In the notations from Lemma \ref{SimultaneousDiagonalization}, the kernel $\ker( {\rm pr}_2)$ of the second canonical projection ${\rm pr}_2 : S^{-1}HS \rightarrow Aut(F_2)$ is a translation group, so that
\[
S^{-1}A \rightarrow S^{-1}A / \ker( {\rm pr}_2) = C_1 \times F_2
 \]
 is unramified and $C_1$ is an elliptic curve. Thus, the covering $A \rightarrow A/H$ is unramified if and only if $C_1 \times F_2 \rightarrow (C_1 \times F_2) / G \simeq A/H$ is unramified. In other words, $A/H$ is a hyper-elliptic surface   exactly when the group $G$ has no fixed point on $C_1 \times F_2$. For any $g \in G$ with $\mathcal{L}(g) \neq I_2$ the second component $\overline{ {\rm pr}_2} (g) = \tau _{V_2} e^{ \frac{ 2 \pi i j}{s}}$ for some $1 \leq j \leq s-1$, $V_2 \in F_2$ has a fixed point on $F_2$. Towards $Fix _{C_1 \times F_2} (g) = \emptyset$ one has to have $\overline{ {\rm pr}_1} (g) \neq Id _{C_1}$, so that $\ker ( \overline{ {\rm pr}_1}) \subseteq \mathcal{T}(G) = G \cap \ker( \mathcal{L})$ and $\ker( {\rm pr}_1) \subseteq \mathcal{H} = H \cap \ker( \mathcal{L})$ are translation groups. The covering $C_1 \times F_2 \rightarrow (C_1 \times F_2 ) / \ker( \overline{ {\rm pr}_1} ) = C_1 \times C_2$ is unramified, $C_2$ is an elliptic curve and $A/H$ is a hyper-elliptic surface exactly when $G_o = G / \ker( \overline{ {\rm pr}_1} )$ has no fixed points on $(C_1 \times F_2) / \ker( \overline{ {\rm pr}_1})$. The canonical projections
 \[
 \overline{ {\rm pr}_1} : G_o \longrightarrow Aut(C_1) \ \ \mbox{  and   } \ \
 \overline{ {\rm pr}_2} : G_o \longrightarrow Aut(C_2)
 \]
 are injective. Since $\overline{ {\rm pr}_1} (G_o)$ is a translation subgroup of $Aut (C_1)$, the group $G_o \simeq \overline{ {\rm pr}_1}$ is abelian and has at most two generators.  As a result, $\overline{ {\rm pr}_2} (G_o) \simeq G_o$ is an abelian subgroup of $Aut (C_2)$ with at most two generators and non-trivial linear part $\mathcal{L} \left( \overline{ {\rm pr}_2} (G_o) \right) = \langle e^{\frac{ 2 \pi i}{s}} \rangle \simeq {\mathbb C}_s$ for some $s \in \{ 2, 3, 4, 6 \}$. According to Lemma \ref{FiniteSubgroupsAutE},
 \[
 \overline{ {\rm pr}_2} (G_o) \simeq \langle \tau _{Q_1} \rangle \times \langle e^{\frac{ 2 \pi i}{s}} \rangle \simeq {\mathbb C}_m \times {\mathbb C}_s
 \]
 for some $Q_1 \in C_2$ with $\tau _{Q_1} = {\rm Ad} _{e^{\frac{ 2 \pi i}{s}}} ( \tau _{Q_1} ) = \tau _{e^{\frac{2 \pi i}{s}} Q_1}$. In other words,  the point $Q_1 \in C_2^{\left( e^{\frac{ 2 \pi i}{s}} -1 \right)-{\rm tor}} \setminus \{ \check{o}_{C_2} \}$. If $s=2$ then any $Q_1 \in C_2^{2-{\rm tor}}$ works out and the order of $Q_1 \in (C_2,+)$ is $m=2$.

 For $s=3$ note that the endomorphism ring of $C_2$ is $End(C_2) = \mathcal{O}_{-3}$. Therefore the fundamental group $\pi _1(C_2) = c( {\mathbb Z} + \tau {\mathbb Z})$ for some $\tau \in {\mathbb Q}( \sqrt{-3})$ and $c \in {\mathbb C}^*$. By $c \in \pi _1(C_2)$ and $e^{\frac{ \pi i}{3}} \in End(C_2)$ one has $e^{\frac{ \pi i}{3}} c \in \pi _1 (C_2)$. Due to the linear independence of $c$ and $e^{\frac{ \pi i}{3}}$ over ${\mathbb Z}$, one has $\pi _1 (C_2) = c {\mathbb Z} + e^{\frac{ \pi i}{3}} c {\mathbb Z} = c \mathcal{O}_{-3}$. For $\alpha = e^{\frac{ 2 \pi i}{3}} -1 = - \frac{3}{2} + \frac{\sqrt{3}}{2} i$ the equation
 \[
 \alpha \left( x + e^{\frac{ \pi i}{3}} y \right) = \left(  a + e^{\frac{ \pi i}{3}} b \right)  c \ \ \mbox{  for some } \ \ a,b \in {\mathbb Z}
 \]
 has a solution $x = \frac{ -a+b}{3}$, $y = \frac{ -a-2b}{3}$. Note that $x ({\rm mod} {\mathbb Z}) \equiv y ({\rm mod} {\mathbb Z})$ and
 \[
 \left(  x + e^{\frac{ \pi i}{3}} y \right)  c \left( {\rm mod} {\mathbb Z} + e^{\frac{ \pi i}{3}} {\mathbb Z} \right) =
 \left( x + e^{\frac{ \pi i}{3}} \right) \left( {\rm mod} \pi _1(C_2) \right) \in
 \]
 \[
 \left \{ \check{o}_{C_2}, \ \ \pm \left( 1 + e^{\frac{ \pi i}{3}} \right) ({\rm mod} \pi _1 (C_2)  )\right \} = C_2 ^{3-{\rm tor}},
 \]
whereas $C_2^{\alpha-{\rm tor}} = C_2 ^{3-{\rm tor}}$ and $m=3$. Thus, $Q_1 \in C_2^{3-{\rm tor}} \setminus \{ \check{o}_{C_2} \}$ in the case of $s=3$.

If $s=4$ then $End(C_2) = {\mathbb Z}[i]$ and $\pi _1(C_2) = c {\mathbb Z}[i]$ for some $c \in {\mathbb C}^*$. The equation $(i-1) (x + iy) c = (a+bi) c$ for some $a,b \in {\mathbb Z}$ has a solution $x = \frac{ -a+b}{2}$, $y= \frac{-a-b}{2}$ with
\[
(x+iy) c ({\rm mod} {\mathbb Z}[i]) = x + iy ( {\rm mod} \pi _1 (C_2)) \in
\]
\[
\left \{ \check{o}_{C_2}, \left( \frac{1+i}{2} \right) c ({\rm mod} \pi _1 (C_2) ) \right \} = C_2 ^{(i+1)-{\rm tor}},
\]
so that $m=4$ and $Q_1 \in C_2^{(i+1)-{\rm tor}} \setminus \{ \check{o}_{C_2} \}$.

For $s=6$ one has $e^{\frac{ \pi i}{3}} -1 = e^{\frac{ 2 \pi i}{3}}$ and $C_2 ^{e^{\frac{ 2 \pi i}{3}}-{\rm tor}} = \{ \check{o}_{C_2} \}$, Therefore
$\overline{ {\rm pr}_2} (G_o) = \langle e^{\frac{ \pi i}{3}} \rangle \simeq {\mathbb C}_6$ in this case.

The restrictions on $P_1, U_1 \in C_1$ arise  from the isomorphism $G_o \simeq  \overline{ {\rm pr}_1} (G_o) \simeq \overline{ {\rm pr}_2} (G_o)$. Namely, $\left( \tau _{U_1}, e^{\frac{ 2 \pi i}{s}} \right) \in G_o$ with $\overline{ {\rm pr}_2} \left( \tau _{U_1}, e^{\frac{ 2 \pi i}{s}} \right) = E^{\frac{ 2 \pi i}{s}}$ of order $s \in \{ 2, ,3 4, 6 \}$ has to have $\tau _{U_1} = \overline{ {\rm pr}_1} \left( \tau _{U_1}, e^{\frac{ 2 \pi i}{s}} \right) \in (C_1,+)$ of order $s$. That amounts to $U_1 \in C_1 ^{s-{\rm tor}}$ and $U_1 \not \in C_1^{t-{\rm tor}}$ for all $1 \leq t < s$. If $\overline{ {\rm pr}_2} (G_o) = \langle \tau _{Q_1} \rangle \times \langle e^{\frac{ 2 \pi i}{s}} \rangle$ with $Q_1 \neq \check{o}_{C_2}$ then the order $m$ of $Q_1 \in C_2$ has to coincide with the order of $P_1 \in C_1$.

In order to relate the classification $G_s^{HE}$, $G_{m,s} ^{HE}$ of $G_o$ with the classification of the groups $H_s^{HE} (m,n)$, $H_{s,s} ^{HE} (m,n)$ of $H \simeq S^{-1}HS$, note that $P_1, U_1 \in C_1 ^{p-{\rm tor}} \setminus C_1 ^{q-{\rm tor}}$ for some natural numbers $p >q$ exactly when the corresponding liftings $X,W \in F_1$ are subject to $pX, pQ \in \ker( {\rm pr}_2)$, $qX, qW \not \in \ker( {\rm pr}_2)$. Similarly, $Q_1 \in C_2^{p-{\rm tor}} \setminus C_2 ^{q-{\rm tor}}$ for $p,q \in  {\mathbb N}$, $P >q$ if and only if an arbitrary lifting $Y \in F_2$ satisfies $pY \in \ker( {\rm pr} _1)$, $qY \not \in \ker( {\rm pr}_1)$.

\end{proof}

Bearing in mind that $A/H$ with $H = \mathcal{T}(H) \langle h_o \rangle$, $\lambda _1 \mathcal{L}(h_o) =1$, $\lambda _2 \mathcal{L}(h_o) \in R^* \setminus \{ 1 \}$ is either hyper-elliptic or a ruled surface with an elliptic base, one obtains the following

\begin{corollary}   \label{RuledAHEllipticBase}
Let $H = \mathcal{T}(H) \langle h_o \rangle$ be a finite subgroup of $Aut(A)$ for some $h_o \in H$ with $\lambda _1 \mathcal{L}(h_o) =1$, $\lambda _2 \mathcal{L}(h_o) = e^{\frac{ 2 \pi i}{s}}$, $s \in \{ 2, 3, 4, 6 \}$, $S \in GL(2, {\mathbb Q}( \sqrt{-d}))$ be a diagonalizing matrix for $h_o$ and
\[
S^{-1} h_o S = \left( \tau _{U_1}, e^{\frac{ 2 \pi i}{s}} \right)
\]
after an appropriate choice of an origin of $S^{-1}(A) = F_1 \times F_2$, $F_1 = S^{-1} ( E \times \check{o}_E )$, $F_2 = S^{-1} ( \check{o}_E \times E)$.
Then $A/H$ is a ruled surface with an elliptic base if and only if the kernel $\ker( {\rm pr} _1)$  of the first canonical projection ${\rm pr}_1 : S^{-1}HS \rightarrow Aut(F_1)$ contains a non-translation element $S^{-1} h S = \left( Id_{F_1}, \tau _{V_2}  e^{\frac{ 2 \pi ik}{s}} \right)$ for some $1 \leq k \leq s-1$, $V_2 \in F_2$.

In the notations from Lemma \ref{SimultaneousDiagonalization}, the quotient  $A/H \simeq (C_1 \times F_2) / G$ of  the split abelian surface $C_1 \times F_2 = S^{-1}A / \ker( {\rm pr}_2 )$ by  its finite automorphism group $G = S^{-1}HS / \ker( {\rm pr}_2 )$  is a ruled surface with an elliptic base exactly when $G$ is isomorphic to some of the groups
\[
G_2^{RE}(m,n) = \langle \tau _{(P_1,Q_1)}, \ \ \tau _{(P_2, Q_2)}, \ \ \rangle \rtimes \langle ( \tau _{U_1}, -1 ) \rangle \simeq
( {\mathbb C}_m \times {\mathbb C}_n) \rtimes _{(-1,-1)}  {\mathbb C}_2 =
\]
\[
= ( \langle a \rangle \times \langle b \rangle ) \rtimes _{(-1,-1)} \langle c \rangle =
\langle a, \ \ b, \ \ c \ \ \vert \ \  a^m=1, \ \ b^n=1, \ \ cac^{-1} = a^{-1}, \ \ cbc^{-1} = b^{-1} \rangle
\]
with $\tau _{U_1} \in \left( \langle \tau _{P_1}, \tau _{P_2} \rangle , + \right) \simeq {\mathbb C}_m \times {\mathbb C}_n$ for some $m,n \in {\mathbb N}$,
\[
G_3^{RE}(m,j) = \langle \tau _{(P_1,Q_1)} \rangle \rtimes \langle \left( \tau _{U_1}, e^{\frac{2 \pi i}{3}} \right) \rangle \simeq
{\mathbb C}_m \rtimes _j {\mathbb C}_3 =
\]
\[
= \langle a \rangle \rtimes _j \langle c \rangle =
\langle a, \ \  c \ \ \vert \ \ a^m=1, \ \ c^3=1, \ \ cac^{-1} = a^j \rangle
\]
with $R = \mathcal{O}_{-3}$, $2 U_1 \in \left( \langle \tau _{P_1} \rangle , + \right) \simeq {\mathbb C}_m$ for some $j \in {\mathbb Z}_m ^*$ of order $1$ or $3$,
\[
G_4 ^{RE} (m,j) = \langle \tau _{(P_1,Q_1)} \rangle \rtimes \langle ( \tau _{U_1}, i) \rangle
\simeq {\mathbb C}_m \rtimes _j {\mathbb C}_4 =
\]
\[
= \langle a \rangle \rtimes _j \langle c \rangle =
\langle a, \ \ c \ \ \vert \ \  a^m=1, \ \ c^4=1,  \ \ cac^{-1} = a^j \rangle
\]
with $R = {\mathbb Z}[i]$ for some $j \in {\mathbb Z}_m^*$ or order $1, 2$ or $4$,
\[
G_6^{RE} (m,j) = \langle \tau _{(P_1,Q_1)} \rangle \rtimes \langle \left( \tau _{U_1}, e^{\frac{ \pi i}{3}} \right) \rangle
\simeq {\mathbb C}_m \rtimes _j {\mathbb C}_6 =
\]
\[
= \langle a \rangle \rtimes _j \langle c \rangle =
\langle a, \ \ c \ \ \vert \ \  a^m=1, \ \ c^6=1, \ \ cac^{-1} = a^j \rangle
\]
with $R = \mathcal{O}_{-3}$ and at least one of $3U_1, 4U_1$ or $5U_1$ from $( \langle \tau _{P_1} \rangle , +)$ for some $j \in {\mathbb Z} _m^*$ of order $1, 2, 3$ or $6$.
\end{corollary}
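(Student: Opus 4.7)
The strategy naturally splits into a qualitative characterization of when $A/H$ is ruled with elliptic base, and then a group-theoretic enumeration of the possibilities for $G$.

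For the characterization, I would invoke Proposition \ref{AbelianCoverL1}: since $h_o \in E_1(H)$, the surface $A/H$ is either hyper-elliptic or ruled with an elliptic base, distinguished by whether $\mathrm{Fix}_A(H)=\emptyset$. Corollary \ref{HyperellipticAH} shows that the hyper-elliptic alternative corresponds exactly to $\ker(\mathrm{pr}_1)$ being a translation subgroup of $\mathrm{Aut}(F_2)$; taking the complement of this equivalence proves the characterization. Any non-translation element of $\ker(\mathrm{pr}_1)$ has, after diagonalization, linear part $\mathcal{L}(h_o)^k$ with $1\le k\le s-1$, giving precisely the stated form $(\mathrm{Id}_{F_1},\tau_{V_2}e^{2\pi i k/s})$.

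For the structure of $G$, I would first verify that $\ker(\mathrm{pr}_2)$ is a pure translation subgroup of $\mathrm{Aut}(F_1)$: indeed, every element of $S^{-1}HS$ has linear part $\mathcal{L}(h_o)^k$ whose second eigenvalue $e^{2\pi i k/s}$ equals $1$ only when $s\mid k$, in which case $\mathcal{L}(h_o)^k=I_2$. Consequently $C_1$ is elliptic and Lemma \ref{SimultaneousDiagonalization} applies, giving $A/H\simeq (C_1\times F_2)/G$ with $\overline{\mathrm{pr}_2}:G\xrightarrow{\sim}\overline{\mathrm{pr}_2}(G)\le\mathrm{Aut}(F_2)$ an isomorphism. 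The image is a finite subgroup of $\mathrm{Aut}(F_2)$ whose linear part is $\langle e^{2\pi i/s}\rangle\simeq \mathbb{C}_s$; Lemma \ref{FiniteSubgroupsAutE} classifies such subgroups as $G_2^{(-1,-1)}(m,n)$, $G_3^{(j)}(m)$, $G_4^{(j)}(m)$, $G_6^{(j)}(m)$ for the respective values of $s$, with the complex multiplication constraints $R=\mathcal{O}_{-3}$ or $\mathbb{Z}[i]$ for $s\in\{3,4,6\}$. Pulling back through $\overline{\mathrm{pr}_2}^{-1}$ yields $G$ as a semi-direct product with prescribed first-coordinate lifts $P_i$, $U_1$ to $C_1$, producing the abstract presentations $G_s^{RE}$.

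For the torsion conditions on $U_1$, I would use the isomorphism $\ker(\overline{\mathrm{pr}_1}|_G)\simeq\ker(\mathrm{pr}_1|_{S^{-1}HS})$ from Lemma \ref{SimultaneousDiagonalization}(v), reducing the ruled-with-elliptic-base criterion to the existence of a non-translation element of $G$ with trivial first coordinate. A generic non-translation element of $G$ is of the form $\tau_{(X_1,X_2)}(\tau_{U_1},e^{2\pi i/s})^k$ with $1\le k\le s-1$; since $\mathcal{L}(h_o)$ acts trivially on $F_1$, its first coordinate is $kU_1+X_1$ with $X_1\in\langle \tau_{P_1}\rangle$ (or $\langle\tau_{P_1},\tau_{P_2}\rangle$ when $s=2$). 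Vanishing of this coordinate in $C_1$ imposes $kU_1\in\langle\tau_{P_1}\rangle$ for some admissible $k$, and the weakest such $k$ gives precisely the stated conditions: $\tau_{U_1}\in\langle\tau_{P_1},\tau_{P_2}\rangle$ for $s=2$, $2U_1\in\langle\tau_{P_1}\rangle$ for $s=3$, and the disjunction on $3U_1,4U_1,5U_1$ for $s=6$ (with the $s=4$ case following implicitly from the semi-direct product compatibility, since any non-trivial membership $kU_1\in\langle\tau_{P_1}\rangle$ for $k\in\{1,2,3\}$ suffices).

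The main obstacle will be the bookkeeping in the $s\in\{4,6\}$ cases, where $\mathbb{Z}_m^*$ admits several subgroups (encoded in the parameter $j$ of the semi-direct action) and one must confirm that the conjugation relation $ca c^{-1}=a^j$ in $G$ is compatible with both the semi-direct structure inherited from $\overline{\mathrm{pr}_2}(G)$ and the ruled condition on $U_1$. Verifying that no spurious structures arise from the interaction between the prescribed $j$-action and the $U_1$-membership condition, and that the listed $G_s^{RE}$ exhaust all possibilities, is the delicate portion of the enumeration.
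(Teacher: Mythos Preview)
Your proposal is correct and follows essentially the same approach as the paper. The paper's own justification is extremely terse: it notes in the lead-in paragraph that $A/H$ is either hyper-elliptic or ruled with elliptic base (by Proposition~\ref{AbelianCoverL1}), so the first characterization is simply the complement of Corollary~\ref{HyperellipticAH}; and it disposes of the classification of $G$ in a single sentence, citing the isomorphism $\overline{{\rm pr}_2}:G\to{\rm pr}_2(S^{-1}HS)$ from Lemma~\ref{SimultaneousDiagonalization}(v) together with Lemma~\ref{FiniteSubgroupsAutE}. Your write-up supplies considerably more detail than the paper does---in particular your discussion of why $\ker({\rm pr}_2)$ consists of translations and your derivation of the torsion conditions on $U_1$ go beyond what the paper makes explicit---but the underlying argument is the same.
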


The classification of $G$ is an immediate application of the group isomorphism $\overline{ {\rm pr}_2} : G \rightarrow {\rm pr}_2 ( S^{-1}HS)$ from Lemma \ref{SimultaneousDiagonalization} (v) and the classification of $Aut(F_2)$, given in Lemma \ref{FiniteSubgroupsAutE}.

\begin{lemma}   \label{PreEnriquesH}
Let $G$ be a finite subgroup of $GL(2,R)$ with $G \cap SL(2,R) \neq \{ I_2 \}$, such that any $g \in G \setminus SL(2,R) \neq \emptyset$ has an eigenvalue $\lambda _1 (g) =1$. Then:

 (i) $G = G_s = \langle g_s, g_o \rangle$ is generated by $g_s \in SL(2,R)$ of order $s \in \{ 2, 3, 4, 6 \}$ and $g_o \in GL(2,R)$ with $\det (g_o) = -1$, $\tr(g_o) = 0$, subject to $g_o g_s g_o^{-1} = g_s^{-1}$;

 (ii) and $g \in G \setminus SL(2,R)$ has eigenvalues $\lambda _1 (g) =1$ and $\lambda _2 (g) = -1$;

 (iii) the group
 \[
 G_s = \langle g_s, \ \ g_o \ \ \vert \ \  g_s^s = I_2, \ \ g_o^2 = I_2, \ \ g_o g_s g_o^{-1} = g_s^{-1} \rangle \simeq \mathcal{D}_s
 \]
 is dihedral of order $2s$  for $s \in \{ 3, 4, 6 \}$ or the Klein group $G_2 \simeq {\mathbb C}_2 \times {\mathbb C}_2$ for $s=2$.
\end{lemma}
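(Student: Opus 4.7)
The plan is to prove (ii) first, then use it to show that $K:=G\cap SL(2,R)$ is cyclic, and finally read off (i) and (iii) from the resulting structure.

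For (ii), I split on whether $-I_2\in K$. If $-I_2\in K$, then for any $g\in G\setminus SL(2,R)$ the element $-g$ also lies in $G\setminus SL(2,R)$; by hypothesis it carries the eigenvalue $1$, and since its eigenvalues are $\{-1,-\lambda_2(g)\}$, this forces $\lambda_2(g)=-1$. If $-I_2\notin K$, then $K$ has no order-two element, so Proposition \ref{SylowSubgroupsK} (writing $|K|=2^a3^b$ with $a\le 3$, $b\le 1$) forces $a=0$, leaving $K=\langle g_3\rangle\simeq{\mathbb C}_3$. Passing to a ${\mathbb C}$-basis of eigenvectors for $g_3=\bigl(\begin{smallmatrix}\omega&0\\0&\omega^{-1}\end{smallmatrix}\bigr)$ with $\omega=e^{2\pi i/3}$, and writing an arbitrary $g_o\in G\setminus SL(2,R)$ as $\bigl(\begin{smallmatrix}a&b\\c&d\end{smallmatrix}\bigr)$ with $\det=\beta$, the requirement that each of $g_o$, $g_3g_o$, $g_3^2g_o$ have eigenvalue $1$ yields
\[
a+d \;=\; \omega a+\omega^{-1}d \;=\; \omega^{-1}a+\omega d \;=\; 1+\beta.
\]
Summing the three equations and using $1+\omega+\omega^{-1}=0$ gives $3(1+\beta)=0$, hence $\beta=-1$. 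In both cases every $g\in G\setminus SL(2,R)$ has eigenvalues $\{1,-1\}$, so $\tr(g)=0$ and $\det(g)=-1$, which is (ii).

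Next I would show $K$ is cyclic. By (ii), any $g_o\in G\setminus SL(2,R)$ is an involution, and for every $k\in K$ the element $g_ok$ again lies in $G\setminus SL(2,R)$ and is therefore an involution, which is equivalent to $g_okg_o^{-1}=k^{-1}$. Thus $g_o$ inverts every element of $K$. For any $h_1,h_2\in K$,
\[
h_1^{-1}h_2^{-1}=(g_oh_1g_o^{-1})(g_oh_2g_o^{-1})=g_o(h_1h_2)g_o^{-1}=(h_1h_2)^{-1}=h_2^{-1}h_1^{-1},
\]
so $K$ is abelian. Inspection of Proposition \ref{K} shows that the only abelian $K_j$ are the cyclic $K_1,K_2,K_3,K_5,K_6$, and $K\neq\{I_2\}$ forces $K=\langle g_s\rangle\simeq{\mathbb C}_s$ with $s\in\{2,3,4,6\}$.

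Statement (i) is now immediate: $G=K\langle g_o\rangle=\langle g_s,g_o\rangle$ with $g_o^2=g_s^s=I_2$, $\det(g_o)=-1$, $\tr(g_o)=0$, and $g_og_sg_o^{-1}=g_s^{-1}$. For (iii), the coset decomposition $G=K\sqcup Kg_o$ gives $|G|=2s$. When $s=2$, $g_s=-I_2$ is central, so $G=\langle g_s\rangle\times\langle g_o\rangle\simeq{\mathbb C}_2\times{\mathbb C}_2$ is the Klein four-group. When $s\in\{3,4,6\}$, the presentation $\langle g_s,g_o\mid g_s^s=g_o^2=1,\;g_og_sg_o^{-1}=g_s^{-1}\rangle$ is precisely $\mathcal{D}_s$ of order $2s$. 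The main obstacle I anticipate is Case 2 of the first step: one must extract the constraint $\det(g_o)=-1$ from the bare assumption that $g_o$, $g_3g_o$, $g_3^2g_o$ all carry the eigenvalue $1$, and the argument necessarily passes to a ${\mathbb C}$-basis in which the entries of $g_o$ leave $R$; the key point is that the conclusion $\det(g_o)=-1$ is basis-invariant, so this complication is harmless.
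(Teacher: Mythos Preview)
Your proof is correct and takes a genuinely different route from the paper's.

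For part (ii), both you and the paper split on whether $-I_2\in K$, and the $-I_2\in K$ case is handled identically. In the $-I_2\notin K$ case, however, the paper invokes the full classification of Corollary~\ref{HC3} to identify $G$ as the $S_3$ entry and then verifies by direct matrix computation that every element of the nontrivial coset has trace zero. Your trace-sum trick (summing $\tr(g_3^kg_o)=1+\beta$ over $k=0,1,2$ and using $1+\omega+\omega^{-1}=0$) reaches $\beta=-1$ in one line, bypassing the classification entirely.

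The more substantial difference is in proving $K$ cyclic. The paper argues by contradiction: if $K$ is non-cyclic it contains a copy of $\mathbb{Q}_8$ or equals $\mathbb{Q}_{12}$, and then carries out explicit matrix computations in a diagonalizing basis to produce an element of the nontrivial coset with nonzero trace. Your argument is far more conceptual: from (ii) every element of $G\setminus K$ is an involution, so $(g_ok)^2=I_2$ for all $k\in K$ forces $g_o$ to invert $K$ pointwise, whence $K$ is abelian, and Proposition~\ref{K} then shows the only abelian options are cyclic. This also gives you the dihedral relation $g_og_sg_o^{-1}=g_s^{-1}$ for free, whereas the paper needs a separate argument (simultaneous diagonalization to rule out $g_og_s=g_sg_o$) to obtain it.

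What your approach buys is brevity and a structural explanation of why $K$ must be cyclic; what the paper's approach buys is independence from the abstract observation that an inversion automorphism forces commutativity, at the cost of heavier computation. Both are complete.
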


\begin{proof}

Note that $g \in G \setminus SL(2,R)$ has an eigenvalue $1$ exactly when the characteristic polynomial $\mathcal{X}_g ( \lambda ) = \lambda ^2 - \tr(g) \lambda + \det(g) \in R[ \lambda]$ of $g$ vanishes at $\lambda =1$. This is equivalent to
\[
\tr(g) = \det(g) + 1.
\]
If $- I_2 \not \in G$, then Proposition \ref{K} specifies that $G \cap SL(2,R) = \langle g_3 \rangle \simeq {\mathbb C}_3$. In the notations from Proposition \ref{HC3}, all the finite subgroups $H_{C3}(i) = [ H_{C3}(i) \cap SL(2,R)] \langle g_o \rangle$ of $GL(2,R)$ with $H_{C3}(i) \cap SL(2,R) \simeq {\mathbb C}_3$, such that $g_o$ has an eigenvalue $\lambda _1 (g_o) =1$ are isomorphic to
\[
H_{C3} (4) = \langle g, \ \  g_o \ \  g^3 = g_o^3 = I_2, \ \ g_o g g_o^{-1} = g^{-1} \rangle \simeq S_3 \simeq \mathcal{D}_3
\]
for some $g \in SL(2,R)$ with $\tr(g) = -1$ and $\lambda _1 (g_o) = 1$, $\lambda _2(g_o) = -1$. Since $g_o$ is of order $2$, the complement
\[
H_{C3}(4) \setminus SL(2,R) = \langle g \rangle g_o = \{ g^j g_o \ \ \vert \ \ 0 \leq j \leq 2 \}
\]
consists of matrices $g^jg_o$ of determinant $\det(g^jg_o) = \det( g_o) = -1$ and $g \in H_{C3} (4) \setminus SL(2,R)$ has as eigenvalue $1$ exactly when $\tr(g^jg_o) = 0$. Bearing in mind the invariance of the trace under conjugation, one can consider
\[
g = \left( \begin{array}{cc}
e^{\frac{ 2 \pi i}{3}}   &   0  \\
0  &  e^{- \frac{ 2 \pi i}{3}}
\end{array}  \right) \ \ \mbox{  and  } \ \
g_o = \left( \begin{array}{rr}
a_o   &  b_o  \\
c_o   & -a_o
\end{array}  \right)
\]
with $a_o^2 + b_oc_o=1$. Then
\[
g_ogg_o^{-1} = g_ogg_o = \left( \begin{array}{cc}
e^{- \frac{ 2 \pi i}{3}} + \sqrt{-3} a_o^2   &  \sqrt{-3} a_ob_o   \\
\mbox{  }  &  \mbox{  }  \\
\sqrt{-3} a_oc_o   &  e^{\frac{ 2 \pi i}{3}} + \sqrt{-3} a_o^2
\end{array} \right) =
\left(  \begin{array}{cc}
e^{- \frac{ 2 \pi i}{3}}   &  0  \\
\mbox{  }  &  \mbox{  }  \\
0  &  e^{\frac{ 2 \pi i}{3}}
\end{array}  \right) = g^{-1}
\]
is equivalent to $a_o=0$ and
\[
g^jg_o = \left( \begin{array}{cc}
e^{\frac{ 2 \pi i j}{3}}   &  0  \\
\mbox{  }  &  \mbox{  }  \\
0  &  e^{ - \frac{ 2 \pi i j}{3}}
\end{array}  \right)
\left(  \begin{array}{cc}
0  &  b_o  \\
\mbox{  }  &  \mbox{  }  \\
\frac{1}{b_o}   & 0
\end{array}  \right) =
\left( \begin{array}{cc}
0  &  e^{\frac{ 2 \pi i j}{3}} b_o  \\
\frac{ e^{ - \frac{ 2 \pi i j}{3}}}{b_o}  &  0
\end{array} \right)
\]
have $\tr( g^jg_o) =0$ for all $0 \leq j \leq 2$. Thus, any $g \in H_{C3}(4) \setminus SL(2,R)$ has an eigenvalue $\lambda _1 (g)=1$.

If $- I_2 \in G$, then for any $g \in G \setminus SL(2,R)$ with $\lambda _1 (g) =1$, $\lambda _2 (g) = \det( g) \in R^* \setminus \{ 1 \}$, one has $ - g \in G \setminus SL(2,R)$ with $\lambda _1 (-g) = -1$, $\lambda _2 (-g) = - \det(g)$. Thus, $-g$ has an eigenvalue $1$ exactly when $\lambda _2 (-g) = - \det(g) =1$ or $\lambda _2 (g) = \det(g) = -1$. In particular,
\[
G = [ G \cap SL(2,R)] \langle g_o \rangle
\]
for some $g_o \in G$ with $\det(g_o)=-1$, $\tr(g_o) =0$ and $G \setminus SL(2,R) = [ G \cap SL(2,R)] g_o$. Thus, for any $g \in G \setminus SL(2,R)$ has $\det(g) = -1$ and $g$ has an eigenvalue $\lambda _1 (g) =1$ exactly when $\tr(g) = 0$.

We claim that $\tr(g_1g_o) = 0$ for all $g_1 \in G \cap SL(2,R)$ and some $g_o \in G$ with $\det(g_o) = -1$, $\tr(g_o) = -1$ requires $G \cap SL(2,R)$ to be a cyclic group. Assume the opposite. Then by Proposition \ref{K}, either $G \cap SL(2,R)$ contains a subgroup
\[
K_4 = \langle g_1, g_2 \ \ \vert \ \ g_1^2 = g_2^2 = -I_2, \ \ g_1 g_2 g_1^{-1} = g_2^{-1} \rangle \simeq {\mathbb Q}_8
\]
isomorphic to the quaternion group ${\mathbb Q}_8$ of order $8$, or
\[
G \cap SL(2,R) = K_7 = \langle g_1, \ \ g_4 \ \ \vert \ \ g_1^2 = g_4^3 = - I_2, \ \ g_1 g_4 g_1 ^{-1} = g_4^{-1} \rangle \simeq {\mathbb Q}_{12}
\]
is isomorphic to the dicyclic group ${\mathbb Q}_{12}$ of order $12$. In either case, one has $h_1, h_2 \in SL(2,R)$ with $\tr(h_1) =0$ and $h_2$ of order $s \in \{ 4, 6 \}$, such that $h_1 h_2 h_1^{-1} = h_2^{-1}$. Let us consider
\[
D_1 = S^{-1}h_1S = \left( \begin{array}{rr}
a_1  &  b_1  \\
c_1  &  -a_1
\end{array}  \right)  \in SL \left( 2, {\mathbb Q} \left( \sqrt{-d}, E^{\frac{ 2 \pi i}{s}} \right)  \right),
\]
\[
D_2 = S^{-1}h_2S = \left( \begin{array}{cc}
e^{\frac{ 2 \pi i}{s}}   &   0  \\
0  &  e^{-\frac{ 2 \pi i}{s}}
\end{array} \right) \ \ \mbox{  and  }
\]
\[
D_o = S^{-1} g_o S = \left( \begin{array}{rr}
a_o  &  b_o   \\
c_o  &  -a_o
\end{array}  \right) \in GL \left( 2,  {\mathbb Q} \left( \sqrt{-d}, e^{\frac{ 2 \pi i}{s}} \right) \right)
\]
with $a_o^2 + b_oc_o =1$. The relation
\[
D_1D_2D_1^{-1} = - D_1 D_2 D_1 = \left(  \begin{array}{ccc}
e^{- \frac{ 2 \pi i}{s}} - 2i Im \left( e^{\frac{ 2 \pi i}{s}} \right) a_1^2    & \mbox{  }  & - 2i Im \left( e^{\frac{ 2 \pi i}{s}} \right) a_1b_1  \\
\mbox{  }   & \mbox{  }  & \mbox{  } \\
- 2i Im \left( e^{\frac{ 2 \pi i}{s}} \right) a_1c_1   & \mbox{  }  &  e^{\frac{2 \pi i}{s}} + 2 i Im \left(  e^{\frac{ 2 \pi i}{s}} \right) a_1^2
\end{array}  \right) =
\]
\[
= \left( \begin{array}{cc}
e^{ - \frac{ 2 \pi i}{s}}   &   0  \\
\mbox{  }   & \mbox{  }  \\
0  &  e^{\frac{ 2 \pi i}{s}}
\end{array} \right) = D_2^{-1}
\]
requires $a_1=0$ and
\[
D_1 = \left(  \begin{array}{cc}
0  &  b_1  \\
- \frac{1}{b_1}  & 0
\end{array}  \right) \ \ \mbox{  for some  } \ \ b_1 \in {\mathbb Q} \left( \sqrt{-d}, e^{\frac{ 2 \pi i}{s}} \right).
\]
Now,
\[
\tr(D_2D_o) = \tr \left( \begin{array}{cc}
e^{\frac{ 2 \pi i}{s}} a_o  &  e^{\frac{ 2 \pi i}{s}} b_o  \\
 \mbox{  }  & \mbox{  } \\
 e^{- \frac{ 2 \pi i}{s}} c_o  &  - e^{ - \frac{ 2 \pi i}{s}} a_o
 \end{array}  \right) = 2 i Im \left( e^{\frac{ 2 \pi i}{s}} \right) a_o = 0
 \]
 specifies the vanishing of $a_o$, whereas
 \[
 D_o = \left( \begin{array}{cc}
 0  &  b_o  \\
 \frac{1}{b_o}   &  0
 \end{array}  \right) \ \ \mbox{  for some  } \ \  b_o \in {\mathbb Q} \left( \sqrt{-d} e^{\frac{ 2 \pi i}{s}} \right).
 \]
 The condition
 \[
 \tr(D_1D_o) = \tr \left( \begin{array}{cc}
 \frac{b_1}{b_o}   &  0  \\
 0  &   - \frac{b_o}{b_1}
 \end{array}  \right) = \frac{ b_1}{b_o} - \frac{b_o}{b_1} = 0
 \]
requires $b_1 = \varepsilon b_o$ for some $\varepsilon \in \{ \pm \}$ and
\[
\tr ( D_1 D_2 D_o) = \tr \left(  \begin{array}{rr}
\varepsilon e^{ - \frac{ 2 \pi i}{s}}  &  0  \\
mbox{  }   &  \mbox{  }  \\
0  &  - \varepsilon e^{\frac{ 2 \pi i}{s}}
\end{array}  \right) = - \varepsilon \left( e^{\frac{ 2 \pi i}{s}} - e^{ - \frac{ 2 \pi i}{s}} \right) =
- 2i Im \left(  e^{\frac{ 2 \pi i}{s}} \right) \varepsilon \neq0
\]
contradicts the assumption. Therefore $G \cap SL(2,R) = \langle g \rangle \simeq  {\mathbb C}_s $ is cyclic group of order $s \in \{ 2, 4, 6 \}$.
If $G = [ G \cap SL(2,R)] \langle g_o \rangle$ has a normal subgroup $G \cap SL(2,R) = \langle g \rangle \simeq {\mathbb C}_2$ then $g = -I_2$ and $g_o (-I_2) = (-I_2) g_o$, as far as $-I_2$ is a scalar matrix. As a result, $G = \langle g \rangle  \times \langle g_o \rangle \simeq {\mathbb C}_2 \times {\mathbb C}_2$. For $G = [ G \cap SL(2,R)] \langle g_o \rangle$ with a normal subgroup $G \cap SL(2,R) = \langle g \rangle \simeq {\mathbb C}_s$ of order $\{ 4, 6 \}$ note that the element $g_ogg_o^{-1}$ of $\langle g \rangle$  is of order $s$, so that either $g_o g g_o^{-1} =g$ or $g_o g g_o^{-1} = g^{-1}$, according to ${\mathbb Z}_4^* = \{ \pm 1({\rm mod}4) \}$, ${\mathbb Z}_6 ^* = \{ \pm 1({\rm mod}6) \}$. If $g_o g = g g_o$ then there exists a matrix $S \in GL \left( 2,  {\mathbb Q} \left( \sqrt{-d}, e^{\frac{ 2 \pi i}{s}} \right) \right)$, such that
\[
D = S^{-1} g S = \left( \begin{array}{cc}
e^{\frac{ 2 \pi i}{s}}   &  0  \\
0  &  e^{ - \frac{ 2 \pi i}{s}}
\end{array}  \right) \ \ \mbox{  and  } \ \
D_o = S^{-1} g_oS = \left( \begin{array}{rr}
1   &  0  \\
0  &  -1
\end{array} \right)
\]
are diagonal. Then $\tr( g g_o) = \tr( D D_o) = e^{\frac{ 2 \pi i}{s}} - e^{ - \frac{ 2 \pi i}{s}} = 2 i Im \left( e^{\frac{ 2 \pi i}{s}} \right) \neq 0$ and $1$ is not an eigenvalue of $g g_o$. Therefore $g_o g g_o^{-1} = g^{-1}$. If
\[
D = S^{-1} g S = \left( \begin{array}{cc}
e^{\frac{ 2 \pi i}{s}}   &  0  \\
0   & e^{ - \frac{ 2 \pi i}{s}}
\end{array}  \right)  \ \ \mbox{   and   }
\]
\[
D_o = S^{-1} g_o S = \left( \begin{array}{rr}
a_o   &  b_o  \\
C_o   & -a_o
\end{array}  \right) \in GL \left( 2, {\mathbb Q} \left( \sqrt{-d} , e^{\frac{ 2 \pi i}{s}} \right) \right) \ \ \mbox{  with  } \ \ a_o^2 + b_oc_o =1,
\]
then the relation
\[ D_o D D_o^{-1} = D_o D D_o = \left( \begin{array}{ccc}
e^{ - \frac{ 2 \pi i}{s}} + 2 i Im \left( e^{\frac{ 2 \pi i}{s}} \right) a_o^2   & \mbox{  }   & 2 i Im \left( e^{\frac{ 2 \pi i}{s}} \right) a_o b_o \\
\mbox{  }  & \mbox{  }  & \mbox{  } \\
2 i Im \left( e^{\frac{ 2 \pi i}{s}} \right) a_o c_o   & \mbox{  }  &  e^{\frac{ 2 \pi i}{s}} - 2i Im \left( e^{\frac{ 2 \pi i}{s}} \right) a_o^2
\end{array} \right) =
\]
\[
= \left( \begin{array}{cc}
e^{ - \frac{2 \pi i}{s}}   & 0  \\
0  &  e^{\frac{ 2 \pi i}{s}}
\end{array}  \right) = D^{-1}
\]
specifies that $a_o=0$ and
\[
D_o = \left( \begin{array}{cc}
0  &  b_o  \\
\frac{1}{b_o}   &  0  \\
\end{array}   \right) \ \ \mbox{  for some } \ \ b_o \in {\mathbb Q} \left(  \sqrt{-d}, e^{\frac{2 \pi i}{s}} \right).
\]
The non-trivial coset
\[
S^{-1}GS \setminus SL \left( 2, {\mathbb Q} \left( \sqrt{-d}, e^{\frac{2 \pi i}{s}} \right) \right) =
\langle D \rangle D_o = \{ D^j D_o \ \ \vert \ \ 0 \leq j \leq s-1 \}
\]
consists of elements of trace
\[
\tr ( D^jD_o) = \tr \left( \begin{array}{cc}
0   &  e^{\frac{ 2 \pi i j}{s}} b_o  \\
\frac{e^{- \frac{ 2 \pi i j}{s}}}{b_o}  & 0
\end{array} \right) = 0,
\]
so that any $\Delta \in S^{-1}GS \setminus SL \left( 2, {\mathbb Q} \left( \sqrt{-d}, e^{\frac{ 2 \pi i}{s}} \right) \right)$ has an eigenvalue $1$ and any $g = S \Delta S^{-1} \in G \setminus SL(2,R)$ has an eigenvalue $1$.

\end{proof}

\begin{proposition}    \label{EnriquesAH}
The quotient $A/H$ of $A = E \times E$ is an Enriques surface if and only if $H$ is generated by $h \in H$  of order $s \in \{ 2, 3, 4, 6 \}$ with $\mathcal{L}(h) \in SL(2,R)$ and $h_o \in H$ with $\lambda _1 \mathcal{L}(h_o) =1$, $\lambda _2 \mathcal{L}(h_o) = -1$, $\tau (h_o) = h_o \mathcal{L}(h_o) ^{-1} = \tau _{(U_o,V_o)}$, subject to $h_o h h_o^{-1} = h_o h h_o = h^{-1}$ and
\begin{equation}   \label{Assumption}
\mathcal{L} ( h_o)
\left(  \begin{array}{cc}
U_o  \\
V_o
\end{array}  \right) \neq
- \left(  \begin{array}{cc}
U_o  \\
V_o
\end{array}  \right).
\end{equation}
In particular, for $s=2$ the group
\[
H \simeq \mathcal{L}(H) \simeq {\mathbb C}_2 \times {\mathbb C}_2
\]
is isomorphic to the Klein group of order $4$, while for $s \in \{ 3, 4, 6 \}$ one has a dihedral group
\[
H \simeq \mathcal{L}(H) \simeq \mathcal{D}_s = \langle a, \ \ b \ \ \vert \ \  a^s=1, \ \ b^2=1, \ \ bab^{-1} = a^{-1} \rangle
\]
of order $2s$.
\end{proposition}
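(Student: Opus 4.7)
The plan is to combine the K3 double cover criterion of Proposition~\ref{K3Cover}(ii) with the structural Lemma~\ref{PreEnriquesH} and the fixed-point-free test of Lemma~\ref{FixedPointFree}. For the forward direction, $A/H$ being Enriques forces by Proposition~\ref{K3Cover}(ii) a subgroup $K = \ker(\det\mathcal{L}|_H) \leq H$ of index $2$ with $A/K$ a K3 model and $\mathrm{Fix}_A(H\setminus K) = \emptyset$; Proposition~\ref{AbelianK3}(ii) yields $\mathcal{L}(K) \subsetneq SL(2,R)$ nontrivial. Each $h' \in H\setminus K$ has $\det\mathcal{L}(h') = -1$, and by Lemma~\ref{FixedPointFree} also admits $1$ as an eigenvalue, hence eigenvalues $\{1,-1\}$.

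These conditions are precisely the hypotheses of Lemma~\ref{PreEnriquesH}, which yields $\mathcal{L}(H) = \langle g_s, g_o\rangle$ with $g_s \in SL(2,R)$ of order $s \in \{2,3,4,6\}$, $g_o$ an involution with eigenvalues $1,-1$, and $g_o g_s g_o^{-1} = g_s^{-1}$, so that $\mathcal{L}(H) \simeq \mathcal{D}_s$ (the Klein four group for $s=2$). Lifting $g_s$ to some $h \in H$ automatically gives $h^s = \mathrm{Id}$ since the absence of $1$ among the eigenvalues of $g_s$ forces $I + g_s + \cdots + g_s^{s-1} = 0$. Lifting $g_o$ to $h_o = \tau_{(U_o,V_o)} g_o$ makes~(\ref{Assumption}) exactly the translation criterion of Lemma~\ref{FixedPointFree} for $h_o$ to be fixed-point-free. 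The relation $h_o h h_o^{-1} = h^{-1}$ holds on linear parts by Lemma~\ref{PreEnriquesH}, and the residual translation discrepancy in $\mathcal{T}(H)$ is cleared by adjusting the lifts, using that $\mathcal{T}(H) = \{\mathrm{Id}\}$ follows from the dihedral rigidity of $\mathcal{L}(H)$ and the unramified K3 cover, which also yields $H \simeq \mathcal{L}(H)$.

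Conversely, given $H = \langle h, h_o \rangle$ as presented, I would take $K := \langle h \rangle \simeq {\mathbb C}_s$, so $\mathcal{L}(K) \subset SL(2,R)$ is nontrivial and $K \supsetneq \mathcal{T}(K) = \{\mathrm{Id}\}$, whence Proposition~\ref{AbelianK3}(ii) gives $A/K$ a K3 surface with $[H:K] = 2$. Each element of $H\setminus K = Kh_o$ has linear part $g_s^j g_o$ of eigenvalues $\{1,-1\}$ by the dihedral structure, so fixed-point-freeness reduces via Lemma~\ref{FixedPointFree} to a translation non-eigenvector condition. Using $h_o h h_o^{-1} = h^{-1}$ I would rewrite $h^{2i} h_o = h^i h_o h^{-i}$ as a conjugate of $h_o$ and inherit~(\ref{Assumption}); for odd coset elements when $s$ is even, the same argument applies with $hh_o$ in place of $h_o$. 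Proposition~\ref{K3Cover}(ii) then identifies $A/H$ as an Enriques surface.

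The main obstacle is this translation propagation. The $(-1)$-eigenspace of $g_s^j g_o$ varies with $j$, so the translation condition of Lemma~\ref{FixedPointFree} is a genuinely distinct inequality for each coset element, and it cannot be deduced directly from~(\ref{Assumption}) for $h_o$ alone. The conjugation identity $h_o h h_o^{-1} = h^{-1}$ must be unpacked at the full $\mathrm{Aut}(A)$ level — not merely on linear parts — to track the translation parts through the group multiplication and transfer the non-eigenvector property uniformly across $Kh_o$. Here the rigidity $\mathcal{T}(H) = \{\mathrm{Id}\}$ is essential, and the simultaneous-diagonalization frame of Lemma~\ref{SimultaneousDiagonalization} provides the convenient computational setup.
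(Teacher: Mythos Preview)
Your overall strategy matches the paper's: invoke Proposition~\ref{K3Cover}(ii), Lemma~\ref{PreEnriquesH}, and Lemma~\ref{FixedPointFree} in that order. But two steps in your sketch are genuine gaps, and the paper fills them with specific mechanisms you have not identified.

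\textbf{Forward direction: why $\mathcal{T}(H)=\{\mathrm{Id}\}$.} Appealing to ``dihedral rigidity and the unramified K3 cover'' does not do the work. The paper's argument is concrete: writing a general element of $H\setminus K$ as $\tau_{\sum z_i(P_i,Q_i)}\,h^j h_o$ and applying Lemma~\ref{FixedPointFree}, fixed-point-freeness for all $z\in\mathbb{Z}^m$ forces the $\mathbb{R}$-span of the lifts of $(P_i,Q_i)$ to lie inside the $(-1)$-eigenspace $\ker\psi_j$ of $h^j\mathcal{L}(h_o)$. In the diagonal frame of Lemma~\ref{PreEnriquesH} one computes $\ker\psi_0=\mathrm{Span}_{\mathbb C}(b_o,-1)$ and $\ker\psi_1=\mathrm{Span}_{\mathbb C}(e^{2\pi i/s}b_o,-1)$, which intersect only in $0$; hence $m=0$. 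This transversality of the eigenlines is the actual reason the translation subgroup collapses.

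\textbf{Converse: propagating (\ref{Assumption}) across $H\setminus K$.} Your conjugation identity $h^i h_o h^{-i}$ has linear part $\mathcal{L}(h_o)h^{-2i}$, so it reaches only the even-index coset representatives when $s$ is even; your proposed patch ``the same argument applies with $hh_o$'' is not justified, since there is no reason $hh_o$ satisfies (\ref{Assumption}) merely because $h_o$ does. The paper sidesteps this entirely by using the \emph{left} coset $H\setminus K = h_oK=\{h_oh^j\}$: since $h=\mathcal{L}(h)$ is linear (origin moved to a fixed point of $h$), every $h_oh^j$ has the \emph{same} translation part $\tau_{(U_o,V_o)}$. The relation $h_o h h_o^{-1}=h^{-1}$ at the level of $\mathrm{Aut}(A)$ then forces $(U_o,V_o)$ to be an $h$-fixed point of $A$, so on liftings $h^j(u_o,v_o)-(u_o,v_o)\in\pi_1(A)$. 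With $\varphi_j:=\mathcal{L}(h_o)h^j+I_2$ this gives $\varphi_j(u_o,v_o)-\varphi_0(u_o,v_o)\in\pi_1(A)$ for every $j$, so (\ref{Assumption}) (which says $\varphi_0(u_o,v_o)\notin\pi_1(A)$ for all liftings) propagates uniformly to all $j$ at once, with no parity splitting. This left-coset/same-translation observation, together with the $h$-invariance of $(U_o,V_o)$, is the missing idea in your sketch.
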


\begin{proof}

According to Lemmas \ref{GeneratingFiniteSubgroupsAutA} and \ref{PreEnriquesH}, the finite subgroups $H$ of $Aut (E \times E)$ with Enriques quotient $A/H$ are of the form
\[
H = \langle \tau _{(P_i, Q_i)}, \ \ h, \ \ h_o \ \ \vert \ \  1 \leq i \leq m \rangle
\]
with $0 \leq m \leq 3$ and
\[
\mathcal{L}(H) = \langle \mathcal{L}(h), \ \ \mathcal{L}(h_o) \ \ \mathcal{L}(h) ^s = I_2, \ \ \mathcal{L}(h_o) ^2 = I_2, \ \ \mathcal{L}(h_o) \mathcal{L}(h) \mathcal{L}(h_o)^{-1} = \mathcal{L}(h_)^{-1} \simeq \mathcal{D}_s
\]
for some $\mathcal{L}(h) \in SL(2,R)$, $\mathcal{L}(h_o) \in GL(2,R)$, $\lambda _1 \mathcal{L}(h_o) =1$, $\lambda _2 \mathcal{L}(h_o) = -1$. Note that
\[
K := \mathcal{L}^{-1} ( \mathcal{L}(H) \cap SL(2,R)) = \langle \tau _{(P_i,Q_i)} \ \ \vert \ \  1 \leq i \leq m \rangle \langle h \rangle
\]
is a normal subgroup of $H$ with a single non-trivial coset
\[
H \setminus K = K h_o =
 \left \{ \tau _{  h(z,j) =\sum\limits _{i=1}^m z_i (P_i,Q_i)}  h^j h_o \ \ \vert \ \ z_i \in {\mathbb Z}, \ \ 0 \leq j \leq s-1 \right \}.
\]
The automorphism $h$, whose linear part $\mathcal{L}(h)$ has eigenvalues $\lambda _1 \mathcal{L}(h) = e^{\frac{2 \pi i}{s}}$, $\lambda _2 \mathcal{L}(h) = e^{ - \frac{ 2 \pi i}{s}}$, different from $1$ has always a fixed point on $A$. Without loss of generality, one can assume that $h = \mathcal{L}(h) \in GL(2,R)$, after moving the origin of $A$ at a fixed point of $h$. If $h_o = \tau _{(U_o,V_o)} \mathcal{L}(h_o)$ for some $(U_o,V_o) \in A$ then the translation parts
\[
\tau ( h(z,j)) = h(z,j) \mathcal{L}( h(z,j))^{-1} = \tau _{ \sum\limits _{i=1} ^m z_i (P_i,Q_i) + h^j (U_o,V_o)} \ \ \mbox{  for } \ \
\forall z = (z_1, \ldots , z_m) \in {\mathbb Z}^m
 \]
and $ 0 \leq j \leq s-1$. The linear parts $\mathcal{L} ( h(z,j)) = \mathcal{L}(h^jh_o) = h^j \mathcal{L}(h_o)$ have eigenvalues $\lambda _1 ( h^j \mathcal{L}(h_o)) =1$, $\lambda _2 ( h^j \mathcal{L}(h_o)) = -1$ for all $0 \leq j \leq s-1$. Applying Lemma \ref{FixedPointFree}, one concludes that $Fix _A(h(z,j)) = \emptyset$ if and only if no one lifting $(x(z,j), y(z,j)) \in {\mathbb C}^2$ of $\tau (h(z,j))$ is in the kernel of the linear operator $\psi _j = h^j \mathcal{L} ( h_o) + I_2 : {\mathbb C}^2 \rightarrow {\mathbb C}^2$. For any fixed $0 \leq j \leq s-1$, note that $(x(z,j), y(z,j)) \not \in \ker( \phi _j)$ for all $z=(z_1, \ldots , z_m) \in {\mathbb Z}^m$ implies that the lifting of the ${\mathbb R}$-span of $\langle \tau _{(P_i,Q_i)} \ \ \vert \ \  1 \leq i \leq m \rangle$ to ${\mathbb C}^2$ is parallel to $\ker( \psi _j)$. It suffices to establish that $\ker( \psi _0) \cap \ker( \psi _1) = \{ (0,0) \}$, in order to conclude that $m=0$ and
$
H = \langle h, \ \ h_o \rangle = \langle h_o, \ \ h \rangle .
$
 Since the claim $\ker( \psi _0) \cap \ker( \psi _1) = \{ (0,0) \}$ is independent on the choice of a coor\-di\-nate system on ${\mathbb C}^2$, one can use Lemma \ref{PreEnriquesH} to assume that
 \[
 \mathcal{L}(h_o) = D_o = \left( \begin{array}{cc}
 0   &  b_o  \\
 \frac{1}{b_o}   & 0
 \end{array}  \right) \ \ \mbox{  and  } \ \
 h = \mathcal{L}(h) = \left( \begin{array}{cc}
 e^{\frac{ 2 \pi i}{s}}   &  0  \\
 0  &  e^{ - \frac{2 \pi i}{s}}
 \end{array}  \right)
 \]
 for some $s \in \{ 2, 3, 4, 6 \}$. Then $\psi _0 = \mathcal{L}(h_o) + I_2$ has kernel $\ker( \psi _0) =  {\rm Span} _{\mathbb C} (b_o, -1)$, while
 \[
 \psi _1 = h \mathcal{L}(h_o) + I_2 = \left( \begin{array}{cc}
 1   &  e^{\frac{ 2 \pi i}{s}} b_o  \\
 e^{ - \frac{ 2 \pi i}{s}} b_o^{-1}   &  1
 \end{array}  \right)
 \]
has kernel $\ker( \psi _1) = {\rm Span} _{\mathbb C} \left( e^{\frac{ 2 \pi i}{s}} b_o, -1 \right)$. For $s \in \{ 2, ,3 4, 6 \}$ the vectors $(b_o,-1)$ and $\left( e^{\frac{ 2 \pi i}{s}} b_o, -1 \right)$ are linearly independent over ${\mathbb C}$, so that $\ker( \psi _0) \cap \ker( \psi _1 ) = \{ (0,0) \}$.
Now, $\mathcal{L}( h^j h_o) = h^j \mathcal{L}(h_o) \neq I_2$ for any $0 \leq j \leq s-1$, as far as $\mathcal{L}(h_o) \not \in \langle h \rangle < SL(2,R)$.  On the other hand, the subgroup $\langle h = \mathcal{L}(h) \rangle$ of $H$ is contained in $SL(2,R)$, so that the translation part $\mathcal{T}(H) = \ker ( \mathcal{L} \vert _H ) = Id _A$ is trivial. As a result, $\mathcal{L} : H \rightarrow \mathcal{L}(H)$ is a group isomorphism and the relation $\mathcal{L}(h_o) h \mathcal{L}(h_o)^{-1} = h^{-1}$ implies that
\[
h_o h h_o^{-1} = \left( \tau _{(U_o,V_o)} \mathcal{L}(h_o) \right) h \left( \tau _{ - \mathcal{L} (h_o)^{-1} (U_o,V_o)} \mathcal{L}(h_o) ^{-1} \right) =
\]
\[
= \tau _{(U_o,V_o) - \mathcal{L}(h_o) h \mathcal{L}(h_o) ^{-1} (U_o,V_o)} [ \mathcal{L}(H_o) h \mathcal{L}(h_o) ^{-1}] =
\tau _{(U_o,V_o) - h^{-1} (U_o,V_o)} h^{-1} = h^{-1}.
\]
After acting by $h$ on $(U_o,V_o) = h^{-1} (U_o,V_o)$, one obtains that $h(U_o,V_o) = (U_o,V_o)$, or $(U_o,V_o) \in A$ is a fixed point of $h$. Bearing in mind that $K = \langle h \rangle \simeq \langle \mathcal{L}(h) \rangle = \mathcal{L}(H) \cap SL(2,R)$ is a normal subgroup of $H \simeq \mathcal{L}(H) = [ \mathcal{L}(H) \cap SL(2,R) ] \langle \mathcal{L}(h_o) \rangle$, let us represent the complement $H \setminus K$  as the set of the entries of the left coset
\[
H \setminus K = h_o K = \{ h_o h^j \ \ \vert \ \  0 \leq j \leq s-1 \}.
\]
Then $h_o h^j = \tau _{(U_o,V_o)} ( \mathcal{L}(h_o) h^j)$ have translation parts
\[
\tau ( h_o h^j) = h_o h^j \mathcal{L} (h_o h^j) ^{-1} = h_o \mathcal{L}(h_o) ^{-1} = \tau ( h_o) = \tau _{(U_o, V_o)}
\]
and linear parts $\mathcal{L}(h_o) h^j$ with eigenvalues $\lambda _1 ( \mathcal{L}(h_o) h^j) =1$, $\lambda _2 ( \mathcal{L}(h_o) h^j ) = -1$. According to Lemma \ref{FixedPointFree}, the automorphism $h_o h^j \in Aut(A)$ has no  fixed point on $A$ if and only if no one lifting $(u_o,v_o) \in {\mathbb C}^2$ of $(u_o + \pi _1(E), v_o + \pi _1 (E)) = (U_o,V_o)$ is in the kernel of $\varphi _j = \mathcal{L} (h_o) h^j + I_2$. We claim that if
\[
h \left( \begin{array}{c}
u_o  \\
v_o
\end{array}  \right) =
\left( \begin{array}{c}
u_o  \\
v_o
\end{array}  \right) +
\left( \begin{array}{c}
\mu _1  \\
\mu _2
\end{array}   \right)  \ \ \mbox{ for some  } \ \ (\mu _1, \mu _2) \in \pi _1 (A),
\]
then $\varphi _j (u_o,v_o) - \varphi _0 (u_o,v_o) \in \pi _1(A)$. Indeed, by an induction on $j$, one has
\[
h^j \left( \begin{array}{c}
u_o  \\
v_o
\end{array}  \right) -
\left( \begin{array}{c}
u_o  \\
v_o
\end{array}  \right) \in \pi _1 (A),
\]
whereas
\[
\varphi _j (u_o,v_o) - \varphi _0 (u_o,v_o) = \mathcal{L} (h_o) h^j
\left( \begin{array}{c}
u_o  \\
v_o
\end{array}  \right) - \mathcal{L}(h_o)
\left( \begin{array}{c}
u_o  \\
v_o
\end{array}  \right) \in \pi _1 (A).
\]
Thus, the assumption $(u_o,v_o) \in \ker( \varphi _j)$ implies that
\[
\varphi _0 (u_o,v_o) = \mathcal{L}(h_o) (u_o,v_o) + (u_o,v_o) = ( \mu '_1, \mu '_2) \in \pi _1 (A),
\]
whereas
\[
\mathcal{L}(h_o)
\left( \begin{array}{c}
U_o  \\
V_o
\end{array}  \right) =
 - \left( \begin{array}{c}
U_o  \\
V_o
\end{array}  \right),
\]
contrary to the assumption (\ref{Assumption}). Note that (\ref{Assumption}) is equivalent to $\varphi _0 (u_o,v_o) \not \in \pi _1 (A)$ for all liftings $(u_o,v_o) \in {\mathbb C}^2$ of $( u_o + \pi _1 (E), v_o + \pi _1 (E)) = (U_o,V_o)$ and is slightly stronger than $Fix _A (h_o) = \emptyset$, which amounts to
$\varphi _0 (u_o,v_o) \neq 0$ for $\forall (u_o,v_o) \in {\mathbb C}^2$ with $(u_o + \pi _1 (E), v_o + \pi _1 (E)) = (U_o,V_o)$.

\end{proof}

\newpage

%%%%%%%%%%%%%%%%%%%%%%%%%%%%%%%%%%%%%%%%%%%%%%%%%%%%%%%%%%%%%%%%%%%%%%%%%%%%%%%%%%%%%%%%%%%%%%%%%%%%%%%%%%%%%%%%%%%%%%%%%%%%%%%%%%%%%%%%%%%%%%%%%%%%%%%%%%%%%

%%%%%%%%%%%%%%%%%%%%%%%%%%%%%%%%%%%%%%%%%%%%%%%%%%%%%%%%%%%%%%%%%%%%%%%%%%%%%%%%%%%%%%%%%%%%%%%%%%%%%%%%%%%%%%%%%%%%%%%%%%%%%%%%%%%%%%%%%%%%%%%%%%%%%%5


\begin{thebibliography}{99}




\bibitem{Armstrong}
Armstrong  M.,
On the fundamental group of an orbit space of a discontinuos group,
Proc. Cambridge Phil. Soc
{\bf 64 } (1968), 299-301.


\bibitem{AM}
Atiyah M. F., I. G. Macdonald,
{\it Introduction to Commutative Algebra},
Addison-Wesley Publ. Co., 1969.




\bibitem{BM1}
Bombieri E., D. Mumford,
Enriques'classification of surfaces  in char p - II,
in {\it Complex Analysis and Algebraic Geometry},
Iwanami-Shoten, Tokyo (1977), 23-42.




\bibitem{BSA}
Holzaprel R.-P.,
{\it Ball and Surface Arithmetic},
Aspects vol. {\bf E29}, Vieweg, Braunschweig, 1998.





\bibitem{Chevalley}
Chevalley C., Invariants of finite groups generated by reflections,
Amer. Jour. Math {\bf 77} (1955), 778-782.


\bibitem{WeakHolzConject}
Kasparian A., B. Kotzev,
Weak form of Holzapfel's Conjecture,
Proc. Eleventh Int. Conf. Geometry, Integrability and Quantization,
 Avangard Prima, 2009, pp. 134-145.



\bibitem{KN}
Kasparian A., L. Nikolova,
Ball quotients of non-positive Kodaira dimension, C. R. Bulg. Acad. Sci. {\bf 64} (2011), 195-200.



\bibitem{EllConfig}
Kasparian A., Elliptic configurations, Preprint.


\bibitem{APA}
Kasparian A.,
Applications of principal isogenies to construction of ball quotient surfaces,
 Preprint.


 \bibitem{Peters}
 Peters C.,
 Classification of complex algebraic surfaces from the point of view of Mori theory,
 Preprint.



\bibitem{TY}
Tokunaga S.,  M. Yoshida's,
Complex crystalographic groups II,
Jour. Math. Soc. Japan {\bf 34} (1982), 595-605.


   \end{thebibliography}
\end{document}